\definecolor{darkred}{RGB}{208,0,32}
\newtheorem*{thm-supp-1}{Proposition~\ref{thm:1step-ld}}
\newtheorem*{thm-supp-2}{Theorem~\ref{thm:normality}}
\newcommand\diff{\,{\mathrm d}}
\newtheorem{assumption}{Assumption}
\newtheorem{thm}{\bf Theorem}[section]
\newtheorem{lemma}[thm]{Lemma}
\newtheorem{prop}[thm]{\bf Proposition}
\newtheorem{cor}[thm]{\bf Corollary}
\newtheorem{remark}{Remark}
\newcommand{\be}{\bm{\beta}}
\newcommand{\hbe}{\widehat{\bm{\beta}}}
\newcommand{\wphi}{\widetilde{\phi}}
\newcommand{\bz}{\bm{z}}
\newcommand{\bZ}{\bm{Z}}
\newcommand{\bv}{\bm{v}}
\newcommand{\bu}{\bm{u}}
\newcommand{\R}{\mathbb{R}}
\newcommand{\E}{\mathbb{E}}
\newcommand{\I}{\mathbb{I}}
\newcommand{\Prob}{\mathbb{P}}
\renewcommand{\mid}{\,|\,}
\newcommand{\calS}{\mathcal{S}}
\newcommand{\calD}{\mathcal{D}}
\newcommand{\calH}{\mathcal{H}}
\newcommand{\abs}[1]{\left\lvert#1\right\rvert}
\newcommand{\norm}[1]{\left\lVert#1\right\rVert}
\newcommand{\paren}[1]{\left(  #1 \right)}
\newcommand{\brackets}[1]{\left[ #1 \right]}
\newcommand{\braces}[1]{\left\{ #1 \right\}}
\newcommand{\mSMSE}{\tt{(mSMSE)}}
\newcommand{\AvgSMSE}{\tt{(Avg-SMSE)}}
\newcommand{\AvgMSE}{\tt{(Avg-MSE)}}
\newcommand{\wAvgSMSE}{\tt{(wAvg-SMSE)}}
\newcommand{\wmSMSE}{\tt{(wmSMSE)}}
\newcommand{\ov}{\overline}
\DeclareMathOperator*{\argmax}{\arg\!\max}
\DeclareMathOperator*{\argmin}{\arg\!\min}
\definecolor{DSgray}{cmyk}{0,1,0,0}
\begin{document}
	\title{Distributed Estimation and Inference for Semi-parametric Binary Response Models}
	\author{Xi Chen\footnote{Stern School of Business, New York University, e-mail: xc13@stern.nyu.edu}\quad
		Wenbo Jing\footnote{Stern School of Business, New York University, e-mail: wj2093@nyu.edu}\quad
		Weidong Liu\footnote{School of Mathematical Sciences, Shanghai Jiao Tong University, e-mail: weidongl@sjtu.edu.cn}\quad
		Yichen Zhang\footnote{Krannert School of Management, Purdue University, e-mail: zhang@purdue.edu}
	}
	
	\date{}
	\maketitle
	
	\begin{abstract}
		The development of modern technology has enabled data collection of unprecedented size, which poses new challenges to many statistical estimation and inference problems. This paper studies the maximum score estimator of a semi-parametric binary choice model under a distributed computing environment without pre-specifying the noise distribution. An intuitive divide-and-conquer estimator is computationally expensive and restricted by a non-regular constraint on the number of machines, due to the highly non-smooth nature of the objective function. 
		We propose (1) a one-shot divide-and-conquer estimator after smoothing the objective to relax the constraint, and (2) a multi-round estimator to completely remove the constraint via iterative smoothing. We specify an adaptive choice of kernel smoother with a sequentially shrinking bandwidth to achieve the superlinear improvement of the optimization error over the multiple iterations. The improved statistical accuracy per iteration is derived, and a quadratic convergence up to the optimal statistical error rate is established. We further provide two generalizations to handle the heterogeneity of datasets and high-dimensional problems where the parameter of interest is sparse.
	\end{abstract}

\section{Introduction}
\label{sec:introduction}

In many statistical applications, the phenomena that practitioners would
like to explain are dichotomous: the outcome/response can take only two
values. This model, usually referred to as binary response model, is central
to a wide range of fields such as applied econometrics, pharmaceutical
studies, clinical diagnostics and political sciences. The most commonly
used parametric approaches, notably logit and probit models, assume that
the functional form of the model, typically the distribution of the response
variable conditional on the explanatory variables, is known. Nonetheless,
there is usually little justification for assuming that the functional
form of conditional probability is known in practice. Once the functional
form is misspecified, the estimate of the underlying parameter of interest
and the corresponding inference results can be highly misleading (see
\citealp{white1982maximum,Horowitz1993,horowitz2001adaptive,greene2009discrete}
for illustration). To balance between the potential model misspecification
in parametric models and the curse of dimensionality in nonparametric models,
practitioners may assume that the threshold, instead of the conditional
probability, can be approximated by some prespecified function.

In this paper, we consider the following semiparametric binary response
model:
%
\begin{equation}
\label{eq:binary-model} %
\begin{aligned} Y=\mathrm{sign} \bigl(
Y^{*} \bigr)\in \{-1,+1\},\qquad Y^{*}=X+
\bm{Z}^{\top }\bm{\beta }^{*}+\epsilon , \quad \bm{\beta
}^{*} \in \mathbb{R}^{p}, \end{aligned}
%
\end{equation}
where $Y$ is the binary response variable, $X$ and $\bm{Z}$ are covariates
and $\epsilon $ denotes a random noise that is not required to be independent
of $\bm{Z}$. Assume that
$\{  (y_{i}, x_{i}, \bm{z}_{i}, \epsilon _{i}  )\}_{i=1,2,
\dots , n}$ are \emph{i.i.d.} copies of
$  (Y, X, \bm{Z}, \epsilon   )$. Our goal is to estimate
$\bm{\beta }^{*}$ given
$\{  ( y_{i}, x_{i}, \bm{z}_{i}  )\}_{i=1,2,\dots ,n}$. If the
distribution of the noise $\epsilon $ is prespecified, the model becomes
a traditional parametric model such as the probit and logit models. However,
as misspecification of the noise distribution may cause poor estimation,
practitioners prefer to use a semiparametric model to estimate
$\bm{\beta }^{*}$. \cite{manski1975maximum} proposed the
\emph{Maximum Score Estimator} (MSE),
%
\begin{equation}
\label{eq:MSE} \widehat{\bm{\beta }}_{\mathrm{{MSE}}}=\argmax
_{\bm{\beta }\in \mathbb{R}^{p}} \frac{1}{n}\sum_{i=1}^{n}
\bigl[\mathbb{I} \bigl( y_{i}=1, x_{i}+
\bm{z}_{i}^{\top}\bm{\beta }\geq 0 \bigr)+\mathbb{I} \bigl(
y_{i}=-1, x_{i}+\bm{z}_{i}^{\top}
\bm{\beta }<0 \bigr) \bigr], 
\end{equation}
where $\mathbb{I}(\cdot )$ denotes the indicator function. Manski referred
to the objective function in \eqref{eq:MSE} as a score function, and the
estimator $\widehat{\bm{\beta }}_{\mathrm{{MSE}}}$ is obtained via maximizing
the score function. The statistical properties of MSE have been well studied
in the literature \citep{chamberlain1986asymptotic,kim1990cube}. Nonetheless,
the existing literature rarely examines the practical feasibility to achieve
the estimator in a large data set, even though it is essential to make
use of the estimator in real-world applications. Despite a macroscopical
intuition underneath the optimization problem \eqref{eq:MSE}, the objective
function is hard to optimize due to nonconvexity and nonsmoothness, especially
when the sample size is large.

Indeed, the rapid development of modern technology has enabled data collection
of unprecedented size. Such a large amount of data are usually generated
and subsequently stored in a decentralized fashion. Due to the concerns
of data privacy, data security, ownership, transmission cost and others,
the decentralized data may not be pooled \citep{zhou2018statistical}. In
distributed settings, classical statistical results, which are developed
under the assumption that the data sets across different local machines
can be pooled together, are no longer applicable. Therefore, many estimation
and inference methods need to be reinvestigated.

In a distributed setting, a common estimation strategy is the divide-and-conquer
(DC) algorithm, which estimates a local estimator on each local machine
and then aggregates the local estimators to obtain the final estimator
(please refer to Section~\ref{sec:related-work} below for a detailed discussion
of existing literature). Combining the idea of DC with the Maximum Score
Estimator (MSE) intuitively leads to an averaged divide-and-conquer MSE
\texttt{(Avg-MSE)} approach, where one can solve an MSE on each local machine
and then aggregate the obtained solutions by an averaging operation. We
use $n$ to denote the total sample size and assume that the data is stored
on $L$ machines where each machine has $m=n/L$ data points.
\cite{shi2018massive} studied the \texttt{(Avg-MSE)} approach and obtained
a convergence rate $O(L^{-\frac{1}{2}}m^{-\frac{1}{3}})$ under a restrictive
constraint on the number of local machines: $L=o(m^{\frac{1}{6}})$, ignoring
the logarithm term. Therefore, given $m$, the established analysis is only
valid as $L=n/m$ increases to $O(m^{\frac{1}{6}})$, after which the estimator
will not improve anymore. In other words, the best convergence rate that
\texttt{(Avg-MSE)} can achieve is limited to
$O ((m^{\frac{1}{6}})^{-\frac{1}{2}}m^{-\frac{1}{3}} )=O(m^{-
\frac{5}{12}})$, regardless of how large the total sample size $n$ is.\footnote{It
is possible to relax the constraint on $L$ under some special cases. For
example, \cite{shi2018massive} showed that when $p=1$, the constraint can
be relaxed to $L=o(m^{\frac{4}{9}})$. In such cases, the corresponding
best convergence rate is limited to
$O ((m^{\frac{4}{9}})^{-\frac{1}{2}}m^{-\frac{1}{3}} )=O(m^{-
\frac{5}{9}})$.} Besides the aforementioned limitation on slow convergence
rate and the constraint of $L$, the limiting distribution of MSE cannot
be given in an explicit form \citep{kim1990cube}, and hence it can be hard
in practice to apply it for inference. Additionally, an exact solver of
MSE requires solving a mixed integer programming, which is computationally
heavily demanding when the dimension $p$ is large.

In order to improve the convergence rate and relax the restriction on the
number of machines, we first consider a natural alternative, an
\emph{averaged Smoothed Maximum Score Estimator} \texttt{(Avg-SMSE)}, which
optimizes a smoothed version of the MSE objective
\citep{horowitz1992smoothed} on each local machine and then aggregates
the obtained estimators by averaging. In contrast to \texttt{(Avg-MSE)},
\texttt{(Avg-SMSE)} achieves a convergence rate
$O (n^{-\frac{\alpha}{2\alpha +1}} )$ under a constraint
$L= o (m^{\frac{2}{3}  ( \alpha -1  )} )$, assuming
$p$ is fixed and ignoring the logarithm term, where $\alpha $ is the smoothness
parameter of the kernel function used to smooth the objective function.
This implies that given $m$, the best achievable convergence rate is
$O(m^{-\frac{\alpha}{3}})$, regardless of how large $n$ is. When
$\alpha \geq 2$, \texttt{(Avg-SMSE)} achieves a convergence rate of at least
$O(m^{-\frac{2}{3}})$, faster than that of \texttt{(Avg-MSE)}, under a weakened
constraint of $L$. The detailed algorithm is described in Section~\ref{sec:method-avg-smse}, and the theoretical analysis of \texttt{(Avg-SMSE)}
is presented in Section~\ref{sec:theory-AvgSMSE}. The analysis of \texttt{(Avg-SMSE)}
is not trivially adopted from that of \texttt{(Avg-MSE)}, since it involves
handpicking an optimal bandwidth $h$ to balance the bias and variance.

The analysis reveals a fundamental bottleneck in divide-and-conquer algorithms:
the bias-variance trade-off in the mean squared error analysis. In many
statistical problems under a nondistributed environment, an asymptotic
unbiased estimator is often satisfactory as it enables theorists to establish
the asymptotic normality and design statistical inference procedures based
on the asymptotic distribution, even though the order of the asymptotic
bias depends on the sample size.
\label{disc:bias-variance-constraint}
Nonetheless, in a distributed environment, the biases across multiple local
machines cannot be reduced by aggregation. When the number of machines
$L$ is large, the bias term of the divide-and-conquer estimator is dominant
in the estimation error, which leads to an efficiency loss and limits the
improvement of the divide-and-conquer algorithms as the total sample size
$n$ increases. In an extreme scenario, as the local sample size $m$ stays
fixed and $L$ increases, the convergence rate of the divide-and-conquer
algorithms does not improve. Unfortunately, this scenario is indeed more
practically realistic as each local machine has its storage limit but the
number of machines may always be increasing.

To feature the scenario that $L$ exceeds
$m^{\frac{2}{3}(\alpha -1)}$, this paper further proposes a new approach
called \emph{multiround Smoothed Maximum Score Estimator} \texttt{(mSMSE)}
in Section~\ref{sec:method-mSMSE}, which successively refines the estimator
with multiple iterations. In iteration $t$, the algorithm implements a
Newton step to optimize the smoothed objective function governed by a smoothing
parameter $h_{t}$. Over the multiple rounds, the algorithm iteratively
smooths the score function \eqref{eq:MSE} by a diminishing sequence
$\{h_{t}\}$. The decay rate of $\{h_{t}\}$ is carefully designed by examining
the rate improvement in each iteration of the algorithm, details of which
are provided in Section~\ref{sec:theory-mSMSE}. Under this iterative smoothing
scheme, the proposed \texttt{(mSMSE)} converges to the optimal statistical
rate $O(n^{-\frac{\alpha}{2\alpha +1}})$ in $O(\log \log n)$ iterations
and is scalable in dimension and computationally efficient. More specifically,
the algorithm performs a quadratic (superlinear) convergence across iterations
until it reaches the optimal accuracy. We thereafter establish the asymptotic
normality of \texttt{(mSMSE)} in Theorem~\ref{thm:normality}, followed by
a bias-correction procedure to construct the confidence interval of \texttt{(mSMSE)}
from samples in Corollary~\ref{cor:inference} for the purpose of distributed
inference.

Lastly, we consider two important extensions of the proposed methods. First,
an important question in distributed environments is the heterogeneity
of data sets on different local machines. In Section~\ref{sec:hetero},
we consider heterogeneous data sets with a shared parameter of interest
$\bm{\beta }^{\star}$ and different distributions of covariates
$(X,\bm{Z})$ across the machines. We show that \texttt{(mSMSE)} performs
better than \texttt{(Avg-SMSE)} with a weaker condition. While handling heterogeneous
data, the performance of the divide-and-conquer algorithm relies on the
smallest sample size among the local machines, while the \texttt{(mSMSE)}
method does not rely on such conditions. Section~\ref{sec:4.2} delves
into a coefficient shift setting, which further allows different regression
coefficients $\bm{\beta }^{*}$ across some machines. We design a machine
selection method to choose machines with similar coefficients, based on
which our multiround algorithm can be extended. We show the effectiveness
of our approach by establishing an improved convergence rate of our algorithm
compared to \texttt{(Avg-SMSE)} under the coefficient shift setting.

We further consider a high-dimension extension in Section~\ref{sec:highd} where the parameter of interest
$\bm{\beta }^{*}\in \mathbb{R}^{p}$ is a sparse vector with $s$ nonzero
elements and $s<n<p$. We modify \texttt{(mSMSE)} to adapt the idea of the
Dantzig selector \citep{candes2007dantzig} to reach the convergence rate
$\sqrt{s}(\log p/n)^{\frac{\alpha}{2\alpha +1}}$ in a distributed environment,
which is very close to the minimax optimal rate
$(s\log p/n)^{\frac{\alpha}{2\alpha +1}}$ for the linear binary response
model established by \cite{feng2019nonregular} in a nondistributed environment.
Compared to the low-dimensional settings above, the algorithm in this setting
reduces the per-iteration communication cost to $p\times 1$ vectors.

We emphasize the technical challenges below and summarize the methodology
contribution and theoretical advances compared to the existing literature.
\begin{itemize}
\label{bullets}
\item We propose two algorithms for distributed estimation and inference
based on (1) a divide-and-conquer estimator \texttt{(Avg-SMSE)} that improves
the convergence rate and relaxes the constraint on $L$ of \texttt{(Avg-MSE)}
in \cite{shi2018massive}, and (2) a multiround estimator \texttt{(mSMSE)}
that further completely removes the constraint by sequentially smoothing
the objective function over iterations. \texttt{(mSMSE)} ensures a quadratic
algorithmic convergence toward optimal statistical accuracy and can be
applied even to nondistributed settings as an efficient computational algorithm
to solve MSE on the pooled data set. We further show that \texttt{(mSMSE)}
achieves the same statistical efficiency uniformly over a class of models
in a neighborhood of the given model.
\item The discontinuity in the objective function is the major technical
challenge. Existing algorithms for distributed estimation rarely handle
nonsmooth objectives, with a few exceptions that either provide statistical
results without algorithmic guarantee or heavily rely on subgradient-based
algorithms to solve the convex objective. However, the subgradient of
\eqref{eq:MSE} is almost everywhere zero. The proposed \texttt{(mSMSE)} handles
this challenge by carefully identifying a smoothing objective to approximate
\eqref{eq:MSE}, which varies over the iterations. The proposed procedure
shares the spirit of the Newton-type multiround algorithms established
for smooth objectives in \cite{jordan2016communication} and
\cite{fan2021communication}. Nonetheless, the proposed \texttt{(mSMSE)} is
not a simple application of the algorithms therein. Instead, \texttt{(mSMSE)}
maximizes different smoothed objectives over multiple iterations. A proper
decay of the smoothing parameter (bandwidth) over iterations is critical,
due to the dilemma that large bandwidths cannot achieve the optimal statistical
rate, while small bandwidths are invalid in the earlier stage of the proposed
algorithm when the initial error is poor. Furthermore, the analysis of
\cite{jordan2016communication} exploits the assumption that the Hessian
matrix is Lipschitz continuous with respect to the parameter
$\bm{\beta }$. However, the Lipschitz constant of our objective function
tends to infinity if we smooth the objective with the optimal bandwidth.
To solve this challenge, we build an analysis to establish a uniform quantification
for the improvement in each round as the bandwidth shrinks over iterations.
The uniformness is necessary, since in each iteration, the initial estimator
is the output of the last iteration, which has already utilized the complete
sample and is therefore data-dependent.
\item We consider two common settings for data heterogeneity across different
machines: covariate shift and coefficient shift. For the covariate shift
setting, we establish the asymptotic normality for the weighted algorithms
and provide a way to choose optimal weight matrices, which further highlights
that the multiround method \texttt{(wmSMSE)} is at least as efficient as
the divide-and-conquer method \texttt{(wAvg-SMSE)}. To our best knowledge,
the weighted multiround estimator and its superior efficiency over the
weighted divide-and-conquer method is not established in the literature.
\item To our best knowledge, the existing literature of low-dimensional
maximum score estimator
\citep{horowitz1992smoothed,shi2018massive,Banerjee2019Circumventing} established
the theoretical results on $p=1$ or fixed $p$, in both nondistributed and
distributed learning. Our theory formally allows diverging dimension
$p$. Additionally, we show that the constraint for the divide-and-conquer
methods is more strict for increasing $p$. Lastly, in high-dimensional
settings, neither the nonsmooth objective nor the smoothed objective is
guaranteed to be convex, which causes difficulty in applying the
$\ell _{1}$-regularization methods. Instead, we adopt the Dantzig estimator
in each iteration to make the optimization problem feasible while encouraging
the sparse structure.
\end{itemize}

The remainder of the paper is organized as follows: Section~\ref{sec:related-work} reviews the related literature on distributed
estimation and inference. Section~\ref{sec:method} describes the methodology
of the proposed \texttt{(Avg-SMSE)} and \texttt{(mSMSE)} procedures. Sections~\ref{sec:theory-AvgSMSE}--\ref{sec:theory-mSMSE} present the theoretical
results for the two estimators, respectively. Section~\ref{sec:Inference} uses the asymptotic results to facilitate distributed
inference.  In Section~\ref{sec:4}, we discuss the effect of data heterogeneity
on distributed estimation and inference. In Section~\ref{sec:highd}, we
modify \texttt{(mSMSE)} to apply to high-dimensional semiparametric binary
response models. Numerical experiments in Section~\ref{sec:simulations} lend empirical support to our theory, followed by
conclusions and future directions in Section~\ref{sec:conclusion}. Further
discussions and additional theoretical and experimental results including
all technical proofs are relegated to Appendix.
\subsection{Related works}
\label{sec:related-work}

For distributed estimation and large-scale data analysis, the divide-and-conquer
(DC) strategy has been recently adopted in many statistical estimation
problems (see, e.g.,
\citealp{zhang2012communication,li2013statistical, chen2014split, 
 lee2017communication,battey2015distributed, shi2018massive, banerjee2019divide, huang2019distributed, volgushev2017distributed,fan2019distributed, dobriban2020wonder}).
The DC strategy is utilized to handle massive data when the practitioner
observes a large-scale centralized data set, partitions it into subsamples,
performs an estimation on each subsample and finally aggregates the subsample-level
estimates to generate a (global) estimator. The divide-and-conquer principle
also fits into the scenarios where data sets are collected and stored originally
in different locations (local machines), such as sensor networks, and cannot
be centralized due to high communication costs or privacy concerns. A standard
DC approach computes a local estimator (or local statistics) on each local
machine and then transports them to the central machine in order to obtain
a global estimator by appropriate aggregation. Recently,
\cite{shi2018massive} and \cite{banerjee2019divide} studied the divide-and-conquer
principle in nonstandard problems such as isotonic regression and cube-root
M-estimation. They showed that the DC strategy improves the statistical
rate of the cubic-rate estimator, while the aggregated estimators often
entail the super-efficiency phenomenon, a circumvention of which is proposed
by \cite{Banerjee2019Circumventing} via synthesizing certain summary statistics
on local machines instead of aggregating the local estimators.

DC approaches often require a restriction that the number of machines
(subgroups) does not increase very fast compared to the smallest local
subsample size, such as to retain the optimal statistical efficiency
\citep{zhang2012communication}. Such a restriction can be stringent in
many decentralized systems with strong privacy and security concerns. In
these scenarios, \cite{shamir2014communication},
\cite{wang2016efficient} and \cite{jordan2016communication} proposed multiround
procedures for refinement, followed by
\cite{tu2021variance,dobriban2021distributed,chen2021first,duan2022heterogeneity,luo2022distributed,fan2021communication}
and others. These frameworks typically use the outputs of the preceding
iteration as the input for the succeeding iterations. After a number of
rounds, the estimator is refined to achieve the optimal statistical rate.
Such procedures, typically performed with continuous optimization algorithms,
are generally inapplicable to nonsmooth problems due to their requirement
that the loss function needs to be sufficiently smooth, although a nonsmooth
regularization term is permitted. The understanding of the multiround improvement
under the nonsmooth scenarios is still in many ways nascent, and existing
analyses mostly depend on the specific statistical model.
\cite{wang2019distributed}, \cite{chen2019quantile} and
\cite{battey2021communication} proposed remedies for specific continuous
objective functions that violate second-order differentiability but retain
root-$n$ consistency, mainly featuring linear support vector machines and
quantile regression. For a general nonsmooth objective,
\cite{chen2021first} showed that a multiround distributed estimator still
entails a slow rate of statistical convergence as well as deficiencies
in algorithmic convergence.

Moreover, for some nonstandard problems, the aforementioned restriction
of the standard (one-shot) DC procedures deteriorates in nonstandard problems
\citep{shi2018massive}, due to the slower than root-$n$ convergence rate.
It remains unexplored whether the restriction can be relaxed or removed
by multiround methods in nonstandard problems. In this paper, we spotlight
the semiparametric binary response model whose corresponding loss function
is nonconvex and discontinuous, violating the assumptions in the multiround
distributed learning literature mentioned above.

For distributed inference, existing works, for example,
\cite{jordan2016communication}, \cite{chen2021first} and others, established
asymptotic normality for their distributed estimators and yielded distributed
approaches to construct confidence regions using the sandwich-type covariance
matrices. In addition to the above,
\citeauthor{yu2020simultaneous} (\citeyear{yu2020simultaneous,yu2022distributed}) proposed distributed bootstrap
methods for simultaneous inference in generalized linear models that allow
a flexible number of local machines. \cite{wang2022reboot} proposed a bootstrap-and-refitting
procedure that improved the one-shot performance of distributed bootstrap
via refitting. We refer the readers to \cite{gao2022review} for an extended
literature review of distributed estimation and inference.

\subsection{Notation}
\label{sec1.2}

For any vector
$\bm{v}=  ( v_{1},\ldots,v_{p}  )\in \mathbb{R}^{p}$, we denote the
$\ell _{q}$-norm by
$\|\bm{v}\|_{q}:=  ( \sum_{k=1}^{p}|v_{k}|^{q}  )^{1/q}$ and
the $\ell _{\infty}$-norm by
$\|\bm{v}\|_{\infty}:=\max_{k} |v_{k}|$. Denote by
$\bm{v}_{S}=  ( v_{i_{1}},v_{i_{2}},\ldots,v_{i_{s}}  )^{\top}$ for
any given $S=\{i_{1},i_{2},\ldots,i_{s}\} \subset \{1,2,\ldots,p\}$. For any
matrix $A=  ( a_{ij}  )$, we define
$\|A\|_{1}=\max_{j}   ( \sum_{i} |a_{ij}|  )$,
$\|A\|_{2}=\max_{\|\bm{v}\|_{2}=1} \|A\bm{v}\|_{2}$,
$\|A\|_{\infty}=\max_{i}   ( \sum_{j} |a_{ij}|  )$ and
$\|A\|_{\max}:=\max_{i,j}|a_{ij}|$. Also, for a sequence of random variables
$X_{n}$ and a sequence of real numbers $a_{n}$, we let
$X_{n}=O_{\mathbb{P}}(a_{n})$ denote that
$  \{X_{n}/a_{n}  \}$ is bounded in probability and
$X_{n}=o_{\mathbb{P}}(a_{n})$ denote that
$  \{X_{n}/a_{n}  \}$ converges to zero in probability. For any
positive sequences $\{a_{n}\}$ and $\{b_{n}\}$, we write
$a_{n} \lesssim b_{n}$ if $a_{n}=O(b_{n})$, and $a_{n} \asymp b_{n}$ if
$a_{n} \lesssim b_{n}$ and $b_{n} \lesssim a_{n}$.

\section{Methodology}
\label{sec:method}

\subsection{Preliminaries}
\label{sec:Preliminaries}

To estimate $\bm{\beta }^{*}$ in model \eqref{eq:binary-model},
\cite{manski1975maximum} proposed the \emph{Maximum Score Estimator} (MSE)
in \eqref{eq:MSE}. The objective function that MSE maximizes is the a
\emph{score function}, which counts the number of correct predictions given
$\bm{\beta }$. By rewriting
\begin{align*}
&\quad \mathbb{I} \bigl( y_{i}=1, x_{i}+
\bm{z}_{i}^{\top}\bm{\beta } \geq 0 \bigr)+\mathbb{I}
\bigl( y_{i}=-1, x_{i}+\bm{z}_{i}^{\top}
\bm{\beta }<0 \bigr) =\mathbb{I} (y_{i}=-1 )+y_{i}
\mathbb{I} \bigl( x_{i}+\bm{z}_{i}^{\top}\bm{
\beta }\geq 0 \bigr),
\end{align*}
the maximum score estimator in \eqref{eq:MSE} can be simplified as the
following form:
%
\begin{equation}
\label{eq:MSE-simplified} \widehat{\bm{\beta }}_{\mathrm{{MSE}}}=\argmin
_{\bm{\beta }\in \mathbb{R}^{p}} F^{*} ( \bm{\beta } ):=\argmin _{\bm{\beta }\in \mathbb{R}^{p}}
\frac{1}{n}\sum_{i=1}^{n} (
-y_{i} )\mathbb{I} \bigl( x_{i}+
\bm{z}_{i}^{\top}\bm{\beta }\geq 0 \bigr). 
\end{equation}
For the identifiability of $\bm{\beta }^{*}$, we assume the following conditions
about the distribution of $X$, $\bm{Z}$ and $Y$ hold for the entire paper:
\begin{enumerate}[(a)]
\item[(a)] The median of the noise conditional on $X$ and $\bm{Z}$ is 0, that
is, $\mathrm{median}  (\epsilon |X, \bm{Z}  )=0$.
\item[(b)] The support of $(X, \bm{Z})$ is not contained in any proper linear
subspace of $\mathbb{R}^{p+1}$.
\item[(c)] For almost every $(X, \bm{Z})$,
$0<\mathbb{P}  ( Y=-1 |X, \bm{Z}  )<1$.
\item[(d)] The distribution of $X$ conditional on $\bm{Z}$ has positive density
almost everywhere.\vadjust{\goodbreak}
\end{enumerate}
See \cite{manski1985semiparametric,horowitz1992smoothed} for the roles
that these conditions play in ensuring model identifiability.

\begin{remark}
\label{rmk:MSE}
 The original semiparametric binary response model considered in
\cite{manski1975maximum} is defined as
$Y=\mathrm{sign}  ( \bm{W}^{\top }\bm{\theta}^{*}+\epsilon
  )$, where
$\bm{W}=(W_{1}, \dots , W_{p+1}) \in \mathbb{R}^{p+1}$ is the covariate
vector, and
$\bm{\theta}^{*}=(\theta ^{*}_{1}, \dots , \theta ^{*}_{p+1})\in
\mathbb{R}^{p+1}$ is the true underlying parameter. To guarantee the identifiability
of the model, the underlying parameter $\bm\theta ^{*}$ is normalized,
since multiplying a positive scale constant on both $\bm\theta ^{*}$ and
$\epsilon $ does not change response variable $Y$. In parametric models
such as probit model or logit model, the identifiability is generally guaranteed
by fixing the scale of the distribution of the noise $\epsilon $. However,
for the semiparametric model, which does not assume the distribution of
$\epsilon $, it is necessary to fix the scale of $\bm\theta ^{*}$ instead
by normalization. One popular and convenient normalization method suggested
in \cite{horowitz1992smoothed} is to require $|\theta ^{*}_{1}|=1$, where
$\theta _{1}$ denotes the first entry of $\bm{\theta}^{*}$. Without loss
of generality, we suppose $\theta ^{*}_{1}=1$ and let
$\bm{\theta}^{*}= (1, \bm{\beta }^{*\top} )^{\top}$ and
$\bm{W}=  (X, \bm{Z}^{\top}  )^{\top}$, which leads to model
\eqref{eq:binary-model} and estimator \eqref{eq:MSE}. The variable
$X$ is just a continuous covariate separated from the entire covariates
$\bm{W}$ such that its corresponding coefficient is positive, and the distribution
of $X$ conditional on the remaining covariates has positive density almost
everywhere. This condition ensures the identifiability and consistency
of MSE.

 Moreover, the MSE is further connected to covariate-adjusted threshold
models. The variable $X$ can be interpreted as a continuous subject-specific
random threshold, and the assumed model
$Y=\mathbb{I}(\bm{Z}^{\top}\bm{\beta }^{*}+\epsilon >-X)$ indicates that
the binary response $Y=1$ if and only if a linear predictor
$\bm{Z}^{\top}\bm{\beta }^{*}$ adding a noise is greater than the threshold
$-X$. The MSE provides an estimator for the coefficient
$\bm{\beta }^{*}$ without specifying the noise type. The binary-response
covariate-adjusted threshold model is applied in various problems, including
dose-response study \citep{brockhoff1997random}, hypertension prediction
\citep{csenturk2009covariate}, ROC-curve cut-point estimation
\citep{xu2014model}, among others.
\end{remark}

The MSE in \eqref{eq:MSE} is known to suffer from a slow rate of convergence
due to the discontinuity of the objective function. Specifically,
\cite{kim1990cube} showed that, due to the non-smoothness of the objective
function, the MSE is subject to a cubic rate
$O  ( n^{-1/3}  )$, which is slower than the parametric rate
$O  (n^{-1/2}  )$ of the maximum likelihood estimator. They also
established that the limiting distribution of the maximum score estimator
cannot be given in an explicit form, and hence it can be hard to apply
it in practice for inference.

To overcome the drawbacks of MSE, \cite{horowitz1992smoothed} proposed
a \emph{Smoothed Maximum Score Estimator} (SMSE) by replacing the objective
function $F^{*}(\bm{\beta })$ in \eqref{eq:MSE-simplified} with a sufficiently
smooth function $F_{h}  ( \bm{\beta }  )$. More specifically,
the indicator function $\mathbb{I}  (\cdot \geq 0  )$ is approximated
by a kernel smoother $H  ( \cdot /h  )$, where $h$ is the bandwidth,
and SMSE is defined as
%
\begin{equation}
\label{SMSE} \widehat{\bm{\beta }}_{\mathrm{{SMSE}}}:=\argmin
_{\bm{\beta }\in
\mathbb{R}^{p}} F_{h} ( \bm{\beta } )=\argmin _{
\bm{\beta }\in \mathbb{R}^{p}}
\frac{1}{n}\sum_{i=1}^{n} (
-y_{i} )H \biggl( \frac{x_{i}+\bm{z}_{i}^{\top}\bm{\beta }}{h} \biggr). 
\end{equation}
The asymptotic behavior of the smoothed estimator can be analyzed using
the classical non-parametric kernel regression theory. The SMSE of
$\bm{\beta }^{*}$ is consistent and has a typical non-parametric rate of
convergence $n^{-\frac{\alpha}{2\alpha +1}}$, where $\alpha $ is the order
of the kernel function (see Assumption~\ref{A1} for a formal definition).
When $\alpha =1$, this convergence rate matches
$O  (n^{-1/3}  )$ of MSE. When $\alpha \geq 2$, this rate is at
least $O  (n^{-2/5}  )$, and it can be closer to
$O  (n^{-1/2}  )$ with a larger $\alpha $. Meanwhile, in contrast
to MSE, the limiting distribution of SMSE is in an explicit form, and the
parameters in the distribution can be estimated to feature inference applications.

\subsection{Divide-and-conquer SMSE}
\label{sec:method-avg-smse}

Under the distributed environment, the data is split into $L$ equally sized
subsets (machines) $\{\mathcal{D}_{\ell}, \ell =1,2,\dots , L\}$, where
each subset $\mathcal{D}_{\ell}$ has $m=n/L$ data points and the index
is denoted by $\mathcal{H}_{\ell}$, that is,
$\mathcal{D}_{\ell}=\{y_{i},x_{i}, \bm{z}_{i},i\in \mathcal{H}_{\ell}
\}$. A natural solution to a distributed learning task is
\emph{Divide-and-Conquer} via \emph{Averaging}, which computes a local estimator
on each subset and then averages the local estimators over all subsets.
Particularly, we define the \emph{Averaged Maximum Score Estimator} \texttt{(Avg-MSE)}
as
%
\begin{equation}
\label{eq:AvgMSE} \widehat{\bm{\beta }}_{\mathrm{{{\mathtt{{(Avg-MSE)}}}}}}:=\frac{1}{L}
\sum^{L}_{
\ell =1} \biggl[\argmin
_{\bm{\beta }\in \mathbb{R}^{p}} \frac{1}{m} \sum_{i\in \mathcal{H}_{\ell}}
( -y_{i} )\mathbb{I} \bigl(x_{i}+
\bm{z}_{i}^{\top}\bm{\beta }\geq 0 \bigr) \biggr].
\end{equation}
Similarly, the \emph{Averaged Smoothed Maximum Score Estimator} \texttt{(Avg-SMSE)}
is defined by
%
\begin{equation}
\label{eq:AvgSMSE} \widehat{\bm{\beta }}_{\mathrm{{{\mathtt{{(Avg-SMSE)}}}}}} :=
\frac{1}{L}\sum_{
\ell =1}^{L}
\biggl[ \argmin _{\bm{\beta }\in \mathbb{R}^{p}}\frac{1}{m} \sum
_{i \in \mathcal{H}_{\ell}} ( -y_{i} )H \biggl(
\frac{x_{i}+\bm{z}_{i}^{\top}\bm{\beta }}{h} \biggr) \biggr]. 
\end{equation}

\cite{shi2018massive} showed that the \texttt{(Avg-MSE)} has the convergence
rate $O (L^{-1/2}m^{-1/3} )$ under the constraint that
$L=o  (m^{1/6}  )$. In Theorem~\ref{thm:asym-DC}, we will show
that \texttt{(Avg-SMSE)} achieves the optimal convergence rate
$O (n^{-\frac{\alpha}{2\alpha +1}} )$ under a constraint that
$L=o ( m^{\frac{2}{3}  ( \alpha -1  )} )$, ignoring the
logarithm term. A comparison under different specifications of
$(L,m,n)$ is provided in Remark~\ref{rem:rate_shi} after we establish the
theoretical properties of \texttt{(Avg-SMSE)} in Section~\ref{sec:theory-AvgSMSE}. In short, when $\alpha \geq 2$, the convergence
rate of \texttt{(Avg-SMSE)} is at least $O(n^{-\frac{2}{5}})$, better than
\texttt{(Avg-MSE)} under a much weakened constraint of $L$. That being said,
both methods require a constraint on the number of machines $L$. To remove
this constraint, we propose a multiround SMSE in the following section.

\subsection{Multiround SMSE}
\label{sec:method-mSMSE}

In addition to satisfying the statistical properties, the SMSE objective
in \eqref{SMSE} is twice differentiable, which enables one to use continuous
optimization algorithms to iteratively improve an inefficient estimator.
For any initial estimator $\bm{\beta }$, we adopt a Newton step,
%
\begin{equation}
\label{newton} \widehat{\bm{\beta }}= \bm{\beta }- \bigl[\nabla
^{2} F_{h} ( \bm{\beta } ) \bigr] ^{-1}
\nabla F_{h} (\bm{\beta } ), 
\end{equation}
where the gradient vector and Hessian matrix have the following analytical
form:
\begin{align*}
\nabla F_{h} ( \bm{\beta } )&=\frac{1}{nh}\sum
_{i=1}^{n} ( -y_{i}
)H^{\prime} \biggl( \frac{x_{i}+\bm{z}_{i}^{\top}\bm{\beta }}{h} \biggr)\bm{z}_{i},\\
\nabla ^{2} F_{h} ( \bm{\beta } )&=
\frac{1}{nh^{2}}\sum_{i=1}^{n} (
-y_{i} )H^{\prime \prime} \biggl( \frac{x_{i}+\bm{z}_{i}^{\top}\bm{\beta }}{h} \biggr)
\bm{z}_{i}\bm{z}_{i}^{
\top}.
\end{align*}
Newton's one-step estimator not only provides us with a fast implementation
to obtain the minimizer numerically but also fits easily into a divide-and-conquer
scheme for distributed estimation and inference. Concretely, in the
$t$th iteration, for each batch of data $\mathcal{D}_{\ell}$, we compute
the gradient and Hessian on each local machine $\ell$ by
%
\begin{equation}
{\label{eq:def:Umlt} %
\begin{aligned} U_{m,\ell}^{  ( t  )}&=
\frac{1}{mh_{t}}\sum_{i\in
\mathcal{H}_{\ell}} (
-y_{i} )H^{\prime} \biggl( \frac{ x_{i}+ \bm{z}_{i}^{\top}\widehat{\bm{\beta }}^{  ( t-1  )}}{h_{t}} \biggr)
\bm{z}_{i},\\
 V_{m,\ell}^{  ( t  )}&=
\frac{1}{mh^{2}_{t}} \sum_{i\in \mathcal{H}_{\ell}} (
-y_{i} )H^{\prime
\prime} \biggl( \frac{ x_{i}+\bm{z}_{i}^{\top}\widehat{\bm{\beta }}^{  ( t-1  )}}{h_{t}} \biggr)
\bm{z}_{i}\bm{z}_{i}^{\top}. \end{aligned}
} 
\end{equation}
We then send
$ \{U_{m,\ell}^{  ( t  )}, V_{m,\ell}^{  ( t  )}
 \}_{\ell =1}^{L}$ to a central machine and average them to obtain the
gradient and the Hessian of the entire data set. Now we formally present
the \emph{multiround Smoothed Maximum Score Estimator} \texttt{(mSMSE)} in
the distributed setting in Algorithm~\ref{alg:mSMSE}. In the following
section, we show that \texttt{(mSMSE)} achieves the optimal rate without
restrictions on the number of machines $L$.

Our algorithm \texttt{(mSMSE)} is not simply applying the Newton's method
to a smoothed objective with a fixed smoothing bandwidth, since a large
initial error prevents using a small bandwidth in the earlier stage, but
keeping bandwidth large will lead to a suboptimal rate. To deal with this
problem, \texttt{(mSMSE)} iteratively smooths the objective function with
a diminishing bandwidth sequence $\{h_{t}\}$. The decay rate of
$\{h_{t}\}$ is carefully designed by examining the rate improvement in
each iteration of the algorithm, which is in detail explained in Section~\ref{sec:theory-mSMSE}.
\label{additional-discussion}%
An alternative algorithm for Newton's method is using a first-order method
to minimize a surrogate loss, studied by
\cite{jordan2016communication}, \cite{chen2021first} and others for distributed
estimation and inference. We anticipate an analogous analysis can be done
if one replaces the second-order approach in our algorithm with the surrogate-loss
procedures, which we leave for future work.
%
\begin{algorithm}[t]
        \caption{Multiround Smoothed Maximum Score Estimator \texttt{(mSMSE)}}
        \vspace*{0.08in} {\textbf{Input:}}
        Data sets distributed on local machines $\{x_{i}, \bm{z}_{i},y_{i}\}_{i\in \mathcal{H}_{\ell}}$ $  ( \ell =1,2,\dots ,L  )$, 
        the total number of iterations $T$, a sequence of bandwidths $\{h_{t}\}$ $  ( t=1,2,\dots ,T  )$.
        \label{alg:mSMSE}
\begin{algorithmic}[1]
                \State Compute an initial estimator $\widehat{\bm{\beta }}^{  (0  )}$ by minimizing \eqref{eq:MSE} or \eqref{SMSE} on a small subset of the data;
                \For{$t=1,2,\dots ,T$} \do \\
                \State Send $\widehat{\bm{\beta }}^{   ( t-1  )}$ to each machine;
                \For {$\ell =1,2,\dots ,L$} \do\\
                \State Compute $U^{  ( t  )}_{m,\ell}$, $V^{  ( t  )}_{m,\ell}$ by \eqref{eq:def:Umlt};
                \State Send $U^{  ( t  )}_{m,\ell}$, $V^{  ( t  )}_{m,\ell}$ back to a central machine;
                \EndFor
                \State Compute $U^{  ( t  )}_{n}=\frac{1}{L}\sum_{\ell =1}^{L}U^{  ( t  )}_{m,\ell}$ and $V^{  ( t  )}_{n}=\frac{1}{L}\sum_{\ell =1}^{L}V^{  ( t  )}_{m,\ell}$;
                \State Update $\widehat{\bm{\beta }}^{   ( t  )}=\widehat{\bm{\beta }}^{   ( t-1  )}- (V^{  ( t  )}_{n} )^{-1}U^{  ( t  )}_{n}$;
                \EndFor
                \State Output $\widehat{\bm{\beta }}^{  ( T  )}$.
        \end{algorithmic}
\end{algorithm}

\section{Theoretical results}
\label{sec:theory}

In this section, we first study the consistency and asymptotic distribution
of \texttt{(Avg-SMSE)} and \texttt{(mSMSE)} under low-dimensional homogeneous
settings, where the data distribution on each local machine is identical
to each other. Extension to data heterogeneity and high-dimensional
settings are left to Section~\ref{sec:4} later.

For both methods, we need the following regularity conditions.

\begin{assumption}
\label{A1}
Assume that the function $H(x)$ is the integral of an $\alpha $-order kernel,
that is,
\begin{equation*}
\pi _{U}:=
\int _{-1}^{1} x^{\alpha }H^{\prime}
( x )\, { \mathrm{d}}x \neq 0,\quad \text{and}\quad
\int _{-1}^{1} x^{k}
H^{
\prime} ( x ) \,{\mathrm{d}}x = 0, \quad k=1,2,\dots , \alpha -1,
\end{equation*}
where $\alpha \geq 2$ is a positive integer. Further, assume that the kernel
is square-integrable, that is,
$\pi _{V}:=\int _{-1}^{1}  [ H^{\prime}  ( x  )  ]^{2}
\,{\mathrm{d}}x<\infty $, and also has a bounded and Lipschitz continuous
derivative $H^{\prime \prime}  (x  )$. Finally, assume that
$H  ( x  )=1$ when $x>1$ and $H  (x  )=0$ when
$x<-1$.
\end{assumption}

\begin{assumption}
\label{A2}
Let $\zeta :=X+\bm{Z}^{\top}\bm{\beta }^{*}$. Assume the distribution density
function of $\zeta $ conditional on $\bm{Z}$, denoted by
$\rho   ( \cdot |\bm{Z}  )$, is positive and bounded uniformly
for almost every $\bm{Z}$. Furthermore, for any integer
$1\leq k\leq \alpha $, the $k$th order derivative of
$\rho   ( \cdot |\bm{Z}  )$ exists and is bounded for almost
every $\bm{Z}$, where $\alpha $ is the order of
$H^{\prime}  (x  )$ defined in Assumption~\ref{A1}.
\end{assumption}

\begin{assumption}
\label{A3}
Let $F  ( \cdot |\bm{Z}  )$ denote the conditional cumulative
distribution function of the noise $\epsilon $ in
\eqref{eq:binary-model} given $\bm{Z}$, and assume that $\epsilon $ and
$X$ are independent given $\bm{Z}$. Furthermore, assume that, for
$1\leq k\leq \alpha +1$, the $k$th order derivative of
$F  ( \cdot |\bm{Z}  )$ exists and is bounded uniformly for
almost every $\bm{Z}$. %
\end{assumption}

\begin{assumption}
\label{A4}
Define
$V:=2\mathbb{E}  [\rho   ( 0|\bm{Z}  )F^{\prime}
  ( 0 |\bm{Z}  )\bm{Z}\bm{Z}^{\top}  ]$ and
$ V_{s}:=\pi _{V} \mathbb{E}  [ \bm{Z}\bm{Z}^{\top}\rho   ( 0
|\bm{Z}  )  ]$, where $\rho $, $F$ are defined in Assumptions
\ref{A2} and \ref{A3} and $\pi _{V}$ is defined in Assumption~\ref{A1}. Assume that there exists a constant $c_{1}>1$ such that
$c_{1}^{-1}<\Lambda _{\min}  ( V  )<\Lambda _{\max}  (V
  )<c_{1}$ and
$c_{1}^{-1}<\Lambda _{\min}  ( V_{s}  )<\Lambda _{\max}
  (V_{s}  )<c_{1}$, where $\Lambda _{\min}$ ($\Lambda _{\max}$)
denotes the minimum (maximum) eigenvalue.
\end{assumption}

\begin{assumption}
\label{A5}
Assume that the covariates are sub-Gaussian, that is, there exists
$\eta >0$ such that
$\sup_{  \lVert \bm{v}  \rVert _{2}=1} \mathbb{E}e^{\eta (
\bm{v}^{\top}\bm{Z})^{2}} < +\infty $.
\end{assumption}

 Assumptions \ref{A1}--\ref{A4} are similar to the classical assumptions
in \cite{horowitz1992smoothed} for establishing the asymptotic properties
of the smoothed maximum score estimator. Assumption~\ref{A1}, which assumes
that $H^{\prime}  (x  )$ is a continuous $\alpha $-order kernel
with Lipschitz-continuous derivative, is standard for kernel-smoothing
estimation. Note that Assumption~\ref{A1} implies
$\int _{\mathbb{R}}H^{\prime}  (x  )\,{\mathrm{d}}x=1$, which is
necessary in a standard kernel definition. The restriction that
$H^{\prime}  (x  )$ is supported on $[-1, 1]$ is only a technical
simplicity. One example is given in Section~\ref{sec:simulations} with
order $\alpha =2$. Assumptions \ref{A2} and \ref{A3} are analogous to the
smoothness assumptions made in kernel density estimation. In kernel density
estimation, it is well known that the bias of the density estimator is
$O  (h^{\alpha}  )$, using an $\alpha $-order kernel and assuming
the $\alpha $-order smoothness of the underlying density function. Similarly,
the SMSE can also achieve the bias $O  (h^{\alpha}  )$ with similar
assumptions on the density function of
$  (X, \bm{Z},\epsilon   )$. Since
$y=\mathrm{sign}  (X+\bm{Z}^{\top}\bm{\beta }^{*}+\epsilon
  )$, we separately assume the smoothness of the distribution of
$\zeta =X+\bm{Z}^{\top}\bm{\beta }^{*}$ given $\bm{Z}$ and the distribution
of $\epsilon $ given $\bm{Z}$. The independence between $X$ and
$\epsilon $ given $\bm{Z}$ is assumed for simplicity of presentation, which
can be generalized by assuming a weaker condition on $F$ (see
\citealp{horowitz1992smoothed}, Assumption~9). Assumption~\ref{A4} is a
standard assumption for the asymptotic theory, which assumes the positive-definiteness
of the population Hessian matrix and the second moment of the score function.
Assumption~\ref{A5} assumes the sub-Gaussianity of $\bm{Z}$, which is key
to guarantee the uniform convergence between the empirical and population
quantities.%

\subsection{Theoretical properties of {\texttt{(Avg-SMSE)}}}
\label{sec:theory-AvgSMSE}

Let $\widehat{\bm{\beta }}_{\mathrm{{SMSE}},\mathrm{\ell}}$ denote the smoothed maximum
score estimator on the $\ell $th machine. Now we present the asymptotic
distribution of
$\widehat{\bm{\beta }}_{\mathrm{{{\mathtt{{(Avg-SMSE)}}}}}}= \frac{1}{L}\sum_{
\ell =1}^{L} \widehat{\bm{\beta }}_{\mathrm{{SMSE}},\mathrm{\ell}}$ under the assumptions
above.

\begin{thm}%
\label{thm:asym-DC}
Assume Assumptions \ref{A1}--\ref{A5} hold and
$L=o ( m^{\frac{2}{3}  ( \alpha -1  )}/   ( {p}\log m
  )^{\frac{2\alpha +1}{3}} )$. By specifying
$h = h^{*} :=   ( 1/n  )^{\frac{1}{2\alpha +1}}$, for any
$\bm{\vartheta}\in \mathbb{R}^{p} \setminus \{\bm{0}\}$, we have, as
$n \to \infty $,
%
\begin{equation}
\frac{n^{\frac{\alpha}{2\alpha +1}}\bm{\vartheta}^{\top} ( \widehat{\bm{\beta }}_{\mathrm{{{\mathtt{{(Avg-SMSE)}}}}}}-\bm{\beta }^{*} )-\bm{\vartheta}^{\top} V^{-1}U}{\sqrt{\bm{\vartheta}^{\top}V^{-1}V_{s} V^{-1}\bm{\vartheta}}} \stackrel{d} {\longrightarrow} \mathcal{N}(0, 1),
\label{eq:asym-DC} 
\end{equation}
 where $V$ and $V_{s}$ are defined in Assumption~\ref{A4}, and $U$ is
defined below,
%
\begin{equation}
\label{eq:def:U} U:=\pi _{U}\mathbb{E} \Biggl( \sum
_{k=1}^{\alpha} \frac{2(-1)^{k+1}}{k!  ( \alpha -k  )!}F^{   ( k  )}
( 0|\bm{Z} )\rho ^{   ( \alpha -k  )} ( 0|\bm{Z} )\bm{Z} \Biggr),%
\end{equation}
with constant $\pi _{U}$ defined in Assumption~\ref{A1}.
\end{thm}

Theorem~\ref{thm:asym-DC} can be interpreted using a bias-variance decomposition.
For any $\bm{\vartheta}\in \mathbb{R}^{p} $,
\begin{align*}
\bm{\vartheta}^{\top} \bigl(\widehat{\bm{\beta }}_{\mathrm{{{\mathtt{{(Avg-SMSE)}}}}}} -
\bm{\beta }^{*} \bigr)&= \bigl(\mathbb{E} \bigl[\bm{
\vartheta}^{\top} \widehat{\bm{\beta }}_{\mathrm{{{\mathtt{{(Avg-SMSE)}}}}}} \bigr] - \bm{
\vartheta}^{\top}\bm{\beta }^{*} \bigr)\\
&\quad + \bigl(\bm{
\vartheta}^{
\top}\widehat{\bm{\beta }}_{\mathrm{{{\mathtt{{(Avg-SMSE)}}}}}} - \mathbb{E}
\bigl[\bm{\vartheta}^{\top}\widehat{\bm{\beta }}_{\mathrm{{{\mathtt{{(Avg-SMSE)}}}}}} \bigr]
\bigr),
\end{align*}
the estimation error can be decomposed into two terms, where the first
term represents the bias and shows up as
$h^{\alpha }\bm{\vartheta}^{\top}V^{-1}U$ in the asymptotic bias in
\eqref{eq:asym-DC}. The second term is zero-mean and has asymptotic variance
$\frac{1}{nh}\bm{\vartheta}^{\top}V^{-1}V_{s}V^{-1}\bm{\vartheta}$ in
a sandwich form, where $V_{s}$ and $V$ are analogous to the outer product
and the Hessian matrix in the quasi-maximum likelihood estimation
(\citeauthor{white1981consequences} (\citeyear{white1981consequences,white1982maximum})), respectively. Expressions
of $U$, $V_{s}$ and $V$ result from the Taylor's expansion of the objective
\eqref{SMSE}. For detailed derivations, see the proof of Theorem~\ref{thm:asym-DC} in Section \ref{sec:pf-AvgSMSE} of Appendix. Note that
$V_{s}$ is different from $V$ defined in Assumption~\ref{A4}. The matrix $V$ includes an additional
$F^{\prime}  ( 0 |\bm{Z}  )$ term, the derivative of the conditional
c.d.f. of noise $\epsilon $ at $0$. When $\epsilon $ is homoscedastic,
the asymptotic variance in \eqref{eq:asym-DC} can be simplified as
$V^{-1}$ multiplied by a constant.

When $\frac{p\log m}{mh^{3}} =o  (1  )$, SMSE on a single machine
with bandwidth $h$ has a bias of the order
$O  (h^{\alpha}  )$ and a standard deviation of the order
$O (\sqrt{1/mh} )$. By averaging the SMSEs on all machines, the standard
deviation can be reduced to
$O (\sqrt{1/mLh} )=O (\sqrt{1/nh} )$, while the bias
$O(h^{\alpha})$ does not change. In order to obtain the optimal rate, the
bandwidth can be chosen as
$h^{*} \asymp n^{-1/{  (2\alpha +1  )}}$ to balance the bias and
variance. As a result,
$\widehat{\bm{\beta }}_{\mathrm{{{\mathtt{{(Avg-SMSE)}}}}}}$ matches the best nonparametric
convergence rate $n^{-\alpha /  (2\alpha +1  )}$, the same as
performing SMSE on the entire data set.

\begin{remark}
\label{rmk:h><h*}
If one chooses a bandwidth $h$ that is larger than $ h^{*} $, \texttt{(Avg-SMSE)}
can still provide a consistent estimator with a slower rate. As established
in the bias-variance decomposition above, a bandwidth larger than
$h^{*}$ makes the bias dominate the standard deviation, and thus the convergence
rate decreases to $O  (h^{\alpha}  )$. The result we have in this
case is
\begin{equation*}
h^{-\alpha} \bigl( \bm{\vartheta}^{\top}\widehat{\bm{\beta
}}_{\mathrm{{{
\mathtt{{(Avg-SMSE)}}}}}} - \bm{\vartheta}^{\top}\bm{\beta }^{*}
\bigr)= \bm{\vartheta}^{\top}V^{-1}U+o_{\mathbb{P}} (1
).
\end{equation*}
For example, if one specifies $h=m^{-1/  (2\alpha +1  )}$, that
is, the local optimal bandwidth, the convergence rate will be
$m^{-\alpha /  (2\alpha +1  )}$. Conversely, if one chooses a
smaller bandwidth $h\lesssim h^{*}$, the standard deviation will be the
dominant term, and the convergence rate of
$\widehat{\bm{\beta }}_{\mathrm{{{\mathtt{{(Avg-SMSE)}}}}}}$ reduces to
$(nh)^{-1/2}$.
\end{remark}

As compared to the \texttt{(Avg-MSE)} proposed in \cite{shi2018massive},
which has the convergence rate $O  (L^{-1/2}m^{-1/3}  )$ under
the constraint that $L=o  (m^{1/6}  )$ for fixed dimension
$p$, \texttt{(Avg-SMSE)} has a faster convergence rate and a weaker restriction
on the number of machines
$L= o (m^{\frac{2}{3}  ( \alpha -1  )} )$ when
$\alpha \geq 2$, assuming $p$ is fixed and ignoring the logarithm term.
This restriction comes from the condition
$\frac{\log m}{mh^{3}} =o  (1  )$ to ensure the convergence of
the empirical Hessian to the population Hessian of the smoothed objective
in \eqref{SMSE}. The constraint on the number of machines is therefore
obtained by plugging in $h^{*}$ to
$\frac{\log m}{mh^{3}} =o  (1  )$, implying that the number of
machines can not be too large compared to $m$. Moreover, when the dimension
$p$ diverges with $n$, the constraint becomes
$L=o ( m^{\frac{2\alpha -2}{3}}/ p^{\frac{2\alpha +1}{3}} )$, which
is more stringent. We will show in Section~\ref{sec:theory-mSMSE} that
the proposed multiround estimator removes this constraint.

\begin{remark}%
\label{rem:rate_shi}
\cite{shi2018massive} claimed that the averaging method improves the convergence
rate by reducing the standard deviation through averaging. This is true
for \texttt{(Avg-MSE)}, because the standard deviation of MSE on a local
machine $O (m^{-\frac{1}{3}} )$ is larger than the bias
$O (m^{-\frac{5}{12}} )$ when the dimension $p$ is fixed. Through
averaging over $L$ machines, the standard deviation
$O (m^{-\frac{1}{3}} )$ decreases to
$O (L^{-\frac{1}{2}}m^{-\frac{1}{3}} )$ and the bias remains at
$O (m^{-\frac{5}{12}} )$. Therefore, the convergence rate of \texttt{(Avg-MSE)}
is
$O (m^{-\frac{5}{12}}\vee L^{-\frac{1}{2}}m^{-\frac{1}{3}} )$. The
constraint $L=o (m^{\frac{1}{6}} )$ is indeed placed to get the bias
dominated by the standard deviation.

Compared to MSE, SMSE on each local batch should have the same order of
bias and standard deviation
$O (m^{-\frac{\alpha}{2\alpha +1}} )$, if one used a locally optimal
bandwidth $h=m^{-\frac{1}{2\alpha +1}}$. Using such a locally optimal bandwidth,
the convergence rate is not improved by averaging and remains at
$O (m^{-\frac{\alpha}{2\alpha +1}} )$. Nonetheless, in Theorem~\ref{thm:asym-DC}, we artificially specify a ``globally optimal'' bandwidth
$h^{*}$ instead of the locally optimal bandwidth. As a consequence, \texttt{(Avg-SMSE)}
with $h^{*}$ improves the convergence rate from
$O (m^{-\frac{\alpha}{2\alpha +1}} )$ to
$O (n^{-\frac{\alpha}{2\alpha +1}} )$ under a constraint on
$L$ that is weaker than the constraint of \texttt{(Avg-MSE)}.
\end{remark}

\subsection{Theoretical results for {\texttt{(mSMSE)}}}
\label{sec:theory-mSMSE}

In this section, we present the theoretical benefit of the proposed multiround
procedure \texttt{(mSMSE)}. First, we present a quantification of the performance
improvement in one iteration of \texttt{(mSMSE)} initialized at
$\widehat{\bm{\beta }}^{  (0  )}$ with bandwidth parameter
$h_{1}>0$.
%
\begin{prop}%
\label{thm:1step-ld}
Assume Assumptions \ref{A1}--\ref{A5} hold. Further, assume that
$\|\widehat{\bm{\beta }}^{  ( 0  )}-\bm{\beta }^{*}\|_{2}= O_{
\mathbb{P}}  ( \delta _{m,0}  )$,
$h_{1}=o  ( 1  )$ and $\frac{p\log n}{nh_{1}^{3}}= o  (1
  )$. We have
%
\begin{equation}
\big\| \widehat{\bm{\beta }}^{   ( 1  )}-\bm{\beta }^{*}
\big\|_{2}=O_{\mathbb{P}} \biggl(\delta
_{m,0}^{2}+h_{1}^{\alpha}+ { \sqrt{
\frac{p}{nh_{1}}}+\delta _{m,0}\sqrt{\frac{p\log n}{nh_{1}^{3}}}}
\biggr). \label{eq:1stepbeta} 
\end{equation}
\end{prop}

Proposition~\ref{thm:1step-ld} quantifies the estimation error of the one-step
estimator $\widehat{\bm{\beta }}^{  ( 1  )}$, whose proof is provided
in Section \ref{sec:pf-mSMSE} of Appendix. The right-hand side of
\eqref{eq:1stepbeta} has four components. The first term,
$\delta ^{2}_{m,0}$, comes from the property that the Newton's one-step
estimator will reduce the estimation error to the square of it in each
step. The second term, $h_{1}^{\alpha}$, as discussed before, is the order
of the bias. The third term represents the order of the standard deviation,
and the fourth term is the higher-order deviation.

Among these four terms, the dominance depends on the choice of the bandwidth
$h_{1}$. Suppose that the initialization is good, that is,
$\delta _{m,0}$ is small enough, then the estimation error of
$\widehat{\bm{\beta }}^{  (1  )}$ is dominated by
$\max  (h_{1}^{\alpha},\sqrt{p/nh_{1}} )$. Therefore,
$\widehat{\bm{\beta }}^{  (1  )}$ nearly achieves the optimal
nonparametric rate $(p/n)^{\frac{\alpha}{2\alpha +1}}$ if one chooses
$h_{1}=(p/n)^{\frac{1}{2\alpha +1}}$. On the other hand, if the initialization
is poor, the first term $\delta _{m,0}^{2}$ dominates the others. With
a properly specified $h_{1}$ such that
$\delta _{m,0}^{2}=h_{1}^{\alpha}$, the one-step estimator improves the
estimation error from $\delta _{m,0}$ to $\delta _{m,0}^{2}$.%

The previous discussions involve only one iteration. After $t$ iterations
of the multiround algorithm, the estimation error of
$\widehat{\bm{\beta }}^{  ( t  )}$ is improved geometrically from
$\delta _{m,0}$ to $\delta _{m,0}^{2^{t}}$ until it is no longer the dominating
term, and then \texttt{(mSMSE)} reaches the optimal nonparametric rate of
$(p/n)^{\frac{\alpha}{2\alpha +1}}$. To detail the convergence rate of
the multiround estimator, we have the following theorem, whose proof is
given in Section \ref{sec:pf-mSMSE} of Appendix.

\begin{thm}%
\label{thm:tstep-ld}
Assume Assumptions \ref{A1}--\ref{A5} hold, and there exists a constant
$0<c_{2}<1$ such that $p=O(m^{c_{2}})$. Further, assume the initial estimator
satisfies
$ \|\widehat{\bm{\beta }}^{   ( 0  )}-\bm{\beta }^{*}
 \|_{2}=O_{\mathbb{P}} ( (p/m)^{\frac{1}{3}} )$. By choosing
$h_{t}=\max  \lbrace   (p/n  )^{\frac{1}{2\alpha +1}}, (p/m)^{
\frac{2^{t}}{3\alpha}} \rbrace $ at iteration $t=1,2,\dots , T$, we
have
%
\begin{equation}
\big\| \widehat{\bm{\beta }}^{   ( t  )}-\bm{\beta }^{*}
\big\| _{2}=O_{\mathbb{P}} \bigl( ( p/n)^{\frac{\alpha}{2\alpha +1}}+(p/m)^{
\frac{2^{t}}{3}}+(p/m)^{\frac{2^{t-1}}{3}}
(p/n)^{
\frac{\alpha -1}{2\alpha +1}}\sqrt{\log n} \bigr). \label{eq:tstepbeta} 
\end{equation}
\end{thm}
In Theorem~\ref{thm:tstep-ld}, we assume that the initial estimator has
a mild convergence rate
$O_{\mathbb{P}} ( (p/m)^{\frac{1}{3}} )$, which can be obtained by
applying MSE \eqref{eq:MSE} or SMSE \eqref{SMSE} to a subset of the data
with size $m$. The second and third terms in \eqref{eq:tstepbeta} indicate
that the initial condition is forgotten double-exponentially fast as the
iterations proceed, and it is easy to see that as long as
%
\begin{equation}
\label{eq:iteration_number} T \geq \log _{2} \biggl( \frac{3\alpha}{2\alpha +1}
\cdot \frac{\log (n/p)}{\log (m/p)} \biggr), 
\end{equation}
the first term in \eqref{eq:tstepbeta},
${(p/n)}^{\frac{\alpha}{2\alpha +1}}$, dominates the other two terms. Therefore,
\texttt{(mSMSE)} achieves the optimal rate
$ O ( (p/n)^{\frac{\alpha}{2\alpha +1}} )$ in
$O  (\log \log n  )$ steps. For example, when $n=10^{9}$,
$m=1000$, $p=10$ and $\alpha =2$, \texttt{(mSMSE)} only takes three steps
to converge.

\begin{remark}[Superefficiency phenomenon]
\label{rem4}
\cite{banerjee2019divide} and \cite{shi2018massive} showed that in many
cube-root problems, as compared to the estimator on the entire data set,
although the divide-and-conquer averaging estimator achieves a better convergence
rate under a fixed model, the maximal risk over a class of models in a
neighborhood of the given model diverges to infinity. This is referred
to as the superefficiency phenomenon, which often appears in nonparametric
function approximation \citep{brown1997superefficiency} and indicates a
trade-off between performance under a fixed model and performance in a
uniform sense. In contrast to the divide-and-conquer estimator \texttt{(Avg-MSE)},
the proposed multiround estimator \texttt{(mSMSE)} closely approximates the
global estimate (the estimator on the entire data set), which suggests
that \texttt{(mSMSE)} may not entail the superefficiency phenomenon. We show
in Section \ref{sec:sup-eff} of Appendix that, over a large class of
models $\Theta $, it holds that: $\forall \varepsilon >0$,
$\exists M_{\varepsilon}$, $N_{\varepsilon}$, such that
$\forall n \geq N_{\varepsilon}$,
%
\begin{equation*}
\sup_{\Theta}\mathbb{P} \bigl( \big\| \widehat{\bm{\beta
}}^{   (
T  )}-\bm{\beta }^{*} \big\| _{2} >
M_{\varepsilon}({p}/n)^{
\frac{\alpha}{2\alpha +1}} \bigr) < \varepsilon ,
\label{eq:sup-eff}
\end{equation*}
where $T$ satisfies \eqref{eq:iteration_number}.
\end{remark}

We further derive the asymptotic distribution for
$\widehat{\bm{\beta }}^{  ( T+1  )}$ and give the optimal choice
for the bandwidth $h$ based on the asymptotic mean squared error.\begin{thm}%
\label{thm:normality}
Assume the assumptions in Theorem~\ref{thm:tstep-ld} hold. Further, assume
that the local size $m > n^{c_{3}}$ for some constant $0<c_{3}<1$ and
$p=o (n^{\frac{2(\alpha -1)}{4\alpha +1}}(\log n)^{-
\frac{2\alpha +1}{4\alpha +1}} )$. When $T$ satisfies
\eqref{eq:iteration_number}, let
$h_{T+1}=(\lambda _{h}/n)^{\frac{1}{2\alpha +1}}$ for some constant
$\lambda _{h}>0$, and then for any
$\bm{\vartheta}\in \mathbb{R}^{p} \setminus \{\bm{0}\}$, we have
%
\begin{equation}
\frac{n^{\frac{\alpha}{2\alpha +1}}\bm{\vartheta}^{\top} ( \widehat{\bm{\beta }}^{  (T+1  )}-\bm{\beta }^{*} )-\lambda _{h}^{\frac{\alpha}{2\alpha +1}}\bm{\vartheta}^{\top} V^{-1}U}{\lambda _{h}^{\frac{-1}{2(2\alpha +1)}}\sqrt{\bm{\vartheta}^{\top}V^{-1}V_{s} V^{-1}\bm{\vartheta}}} \stackrel{d} {\longrightarrow} \mathcal{N}(0 , 1),
\label{eq:asym-mSMSE} 
\end{equation}
 where $V$ and $V_{s}$ are defined in Assumption~\ref{A4} and $U$ is defined
in \eqref{eq:def:U}.
\end{thm}
The proof of Theorem~\ref{thm:normality} is given in Section \ref{sec:pf-inference} of Appendix, which also provides a characterization of the asymptotic
mean squared error of
$\bm{\vartheta}^{\top}\widehat{\bm{\beta }}^{(T+1)}$. The optimal
$\lambda _{h}^{*}$ can be chosen by minimizing the asymptotic mean squared
error, as
%
\begin{equation}
\lambda _{h}^{*}:= \frac{\bm{\vartheta}^{\top} V^{-1}V_{s}V^{-1}\bm{\vartheta}}{2\alpha \bm{\vartheta}^{\top}V^{-1}UU^{\top}V^{-1}\bm{\vartheta}}.
\label{eq:lambda_opt} 
\end{equation}
We also note that the assumption $m > n^{c_{3}}$ for some
$0<c_{3}<1$ in Theorem~\ref{thm:normality} guarantees that \texttt{(mSMSE)}
converges in a finite number of iterations, that is, the lower bound in
\eqref{eq:iteration_number} is finite.

\subsection{Bias correction and statistical inference}
\label{sec:Inference}

In this section, we construct a confidence interval for
$\bm{\beta }^{*}$ using the proposed estimator
$\widehat{\bm{\beta }}^{(T+1)}$. Given a vector
$\bm{\vartheta}_{0} \in \mathbb{R}^{p} \setminus \{0\}$ and a prespecified
level $1-\xi $, Theorem~\ref{thm:normality} provides the following confidence
interval for $\bm{\vartheta}^{\top}_{0}\bm{\beta }^{*}$:
\begin{equation*}
\bm{\vartheta}^{\top}_{0} \widehat{\bm{\beta
}}^{  (T+1  )}-n^{-
\frac{\alpha}{2\alpha +1}}\lambda _{h}^{\frac{\alpha}{2\alpha +1}}
\bm{\vartheta}^{\top}_{0} V^{-1} U \pm \tau
_{1-\frac{\xi}{2}} \sqrt{n^{-
\frac{2\alpha}{2\alpha +1}}\lambda _{h}^{-\frac{1}{2\alpha +1}}
\bigl( \bm{\vartheta}^{\top}_{0} V^{-1}
V_{s} V^{-1} \bm{\vartheta}_{0} \bigr)},
\end{equation*}
where $\tau _{1-\frac{\xi}{2}}$ denotes the
$ (1-\frac{\xi}{2} )$-th quantile of a standard normal distribution.
In contrast to common confidence intervals for parametric models, it requires
a bias-correction term (i.e., the term
$-n^{-\frac{\alpha}{2\alpha +1}}\lambda _{h}^{
\frac{\alpha}{2\alpha +1}} \bm{\vartheta}^{\top}_{0} V^{-1} U$) due to
the bias of the nonparametric method.

Practically, one needs to estimate the unknown matrices $V$, $U$ and
$V_{s}$. Their empirical estimators $\widehat{V}$, $\widehat{U}$ and
$\widehat{V}_{s}$ can be naturally constructed in a distributed environment.
For example, $V$ can be estimated by aggregating $V_{m,\ell}^{(T)}$ in
\eqref{eq:def:Umlt} during the final iteration. The other two can be estimated
similarly, and we relegate the details to Section \ref{sec:pf-inference} of Appendix. With $\widehat{V}$, $\widehat{U}$ and $\widehat{V}_{s}$ on hand,
we conclude the following corollary for inference in practice.

\begin{cor}
\label{cor:inference}
Given a vector $\bm{\vartheta}_{0} \in \mathbb{R}^{p}$ and a prespecified
level $1-\xi $, we have
\begin{equation*}
\mathbb{P} \bigl\{\bm{\vartheta}_{0}^{\top}\bm{\beta
}^{*}\in \bigl[ \bm{\vartheta}_{0}^{\top}
\widehat{\bm{\beta }}^{(T+1)}+ \widehat{\mathrm{bias}}-\tau
_{1-\frac{\xi}{2}}\widehat{\mathrm{se}}, \bm{\vartheta}_{0}^{\top}
\widehat{\bm{\beta }}^{(T+1)}+ \widehat{\mathrm{bias}}+\tau
_{1-\frac{\xi}{2}}\widehat{\mathrm{se}} \bigr] \bigr\} \to 1-\xi ,
\end{equation*}
where
\begin{equation*}
\widehat{\mathrm{bias}}=-n^{-\frac{\alpha}{2\alpha +1}}\lambda _{h}^{
\frac{\alpha}{2\alpha +1}}
\bm{\vartheta}_{0}^{\top} \widehat V^{-1}
\widehat U,\qquad \widehat{\mathrm{se}}=\sqrt{n^{-\frac{2\alpha}{2\alpha +1}} \lambda
_{h}^{-\frac{1}{2\alpha +1}} \bigl( \bm{\vartheta}_{0}^{\top}
\widehat V^{-1} \widehat V_{s} \widehat
V^{-1} \bm{\vartheta}_{0} \bigr)},
\end{equation*}
and $\tau _{1-\frac{\xi}{2}}$ denotes the
$ (1-\frac{\xi}{2} )$-th quantile of a standard normal distribution.
\end{cor}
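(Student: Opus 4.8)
The plan is to reduce the statement to the asymptotic normality already delivered by Theorem~\ref{thm:normality} together with the consistency of the plug-in estimators $\widehat V,\widehat U,\widehat V_{s}$, and then close the argument with Slutsky's theorem. Since Theorem~\ref{thm:normality} supplies the full joint Gaussian limit of $n^{\frac{\alpha}{2\alpha+1}}\big(\hbe^{(T)}-\be^*\big)$, any linear functional is handled by projecting the limiting normal vector onto the fixed direction $\bv_0$. This yields the ``oracle'' confidence interval built from the \emph{true} matrices $V,U,V_{s}$; the remaining work is purely to show that replacing them by $\widehat V,\widehat U,\widehat V_{s}$ does not disturb the asymptotic coverage.

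Concretely, I would proceed as follows. (i) Projecting \eqref{eq:asym-mSMSE} onto $\bv_0$ gives $n^{\frac{\alpha}{2\alpha+1}}\big(\bv_0^{\top}\hbe^{(T)}-\bv_0^{\top}\be^*\big)\stackrel{d}{\longrightarrow}\mathcal{N}\big(\lambda_h^{\frac{\alpha}{2\alpha+1}}\bv_0^{\top}V^{-1}U,\ \lambda_h^{\frac{-1}{2\alpha+1}}\bv_0^{\top}V^{-1}V_{s}V^{-1}\bv_0\big)$, so that the standardized statistic $\big(\bv_0^{\top}\hbe^{(T)}+\mathrm{bias}-\bv_0^{\top}\be^*\big)/\mathrm{se}$ converges to a standard normal when the \emph{population} bias and standard error are used. (ii) I would establish $\widehat V\stackrel{\Prob}{\longrightarrow}V$; this is essentially already contained in the analysis behind Theorems~\ref{thm:tstep-ld}--\ref{thm:normality}, where the convergence of the aggregated empirical Hessian $V_{n}^{(T)}$ (formed from the $V_{m,\ell}^{(T)}$ in \eqref{eq:def:Umlt}) to $V$ is invoked; Assumption~\ref{A4} bounds the eigenvalues of $V$ away from $0$ and $\infty$, so continuity of matrix inversion gives $\widehat V^{-1}\stackrel{\Prob}{\longrightarrow}V^{-1}$. (iii) I would then show $\widehat V_{s}\stackrel{\Prob}{\longrightarrow}V_{s}$ and $\widehat U\stackrel{\Prob}{\longrightarrow}U$. (iv) Finally, Slutsky's theorem implies $\widehat{\mathrm{bias}}$ and $\widehat{\mathrm{se}}$ may replace their population counterparts: the ratio $\widehat{\mathrm{se}}/\mathrm{se}\to1$ in probability, and $n^{\frac{\alpha}{2\alpha+1}}\big(\widehat{\mathrm{bias}}-\mathrm{bias}\big)\stackrel{\Prob}{\longrightarrow}0$, so the studentized statistic $\big(\bv_0^{\top}\hbe^{(T)}+\widehat{\mathrm{bias}}-\bv_0^{\top}\be^*\big)/\widehat{\mathrm{se}}\stackrel{d}{\longrightarrow}\mathcal{N}(0,1)$, from which the coverage claim $\Prob\{\cdots\}\to1-\xi$ follows directly.

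The main obstacle is step (iii), in particular the consistency of $\widehat U$ (and, to a lesser degree, $\widehat V_{s}$), because these estimators must be built from kernel quantities evaluated with the shrinking bandwidth $h_T\asymp n^{-1/(2\alpha+1)}$ and at the data-dependent plug-in $\hbe^{(T-1)}$. The matrix $U$ encodes the asymptotic bias constant and involves high-order derivatives of the unknown conditional c.d.f.\ $F(\cdot\mid\bZ)$ and density $\rho(\cdot\mid\bZ)$; it is recovered either from the rescaled aggregated smoothed score (of order $h_T^{-\alpha}$) or from an analogous kernel estimator. Since $H'$ and $H''$ are divided by diminishing powers of $h_T$, the integrands have Lipschitz constants that diverge as $h_T\to0$, so a naive law of large numbers does not apply; instead one needs the bias/variance control of the kernel estimators developed in the proof of Theorem~\ref{thm:asym-DC}, combined with the rate $\norm{\hbe^{(T-1)}-\be^*}_2=o_{\Prob}(1)$ to bound the plug-in error. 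The bandwidth condition $\tfrac{\log n}{nh_T^{3}}=o(1)$ and the requirement $m>n^{c}$ from Theorem~\ref{thm:normality} are exactly what make these concentration bounds, and hence the desired convergence $\widehat U\stackrel{\Prob}{\longrightarrow}U$, go through.
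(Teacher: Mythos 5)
Your overall architecture matches the paper's: project the Gaussian limit of Theorem~\ref{thm:normality} onto $\bv_0$, prove consistency of the plug-in quantities $\widehat V$, $\widehat U$, $\widehat V_{s}$, and finish with Slutsky. Steps (i), (ii), and (iv) are exactly how the paper proceeds: $\widehat V = V_{n,h_T}\big(\hbe^{(T)}\big)\stackrel{\Prob}{\to}V$ follows from \eqref{eq:1stepV} together with Assumption~\ref{A4}, and $\widehat V_s\stackrel{\Prob}{\to}V_s$ is a short bias computation plus Chebyshev (Theorem~\ref{thm:estimators-WVa}, \eqref{eq:est-Va}). However, your step (iii) has a genuine gap at precisely the point you flag as the main obstacle: you assert that $U$ can be recovered from the rescaled smoothed score at the final bandwidth $h_T\asymp n^{-1/(2\alpha+1)}$, and that the conditions $\tfrac{\log n}{nh_T^3}=o(1)$ and $m>n^c$ make the concentration bounds go through. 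They cannot. The optimal bandwidth is chosen so that $h_T^{-\alpha}\asymp\sqrt{nh_T}$ (bias and standard deviation balanced), so by Lemma~\ref{lem:CentralLimit} the centered fluctuation $h_T^{-\alpha}\left(1-\E\right)U_{n,h_T}\big(\hbe^{(T-1)}\big)$ converges in distribution to $\mathcal{N}\big(0,\lambda_h^{-1}V_s\big)$, a nondegenerate limit. Hence the rescaled score at $h_T$ is asymptotically $U$ \emph{plus} nonvanishing Gaussian noise and is inconsistent for $U$. This failure is a variance phenomenon, not a Lipschitz or covering-number issue, so no sharper concentration argument at bandwidth $h_T$ can repair it.

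The paper's fix, which your proposal is missing, is to estimate $U$ with a deliberately \emph{larger} (oversmoothing) bandwidth: Theorem~\ref{thm:estimators-WVa} sets $h_\kappa=n^{-\kappa/(2\alpha+1)}$ for some $0<\kappa<1$ and defines $\widehat U$ as in \eqref{eq:est-U}. Then $h_\kappa^{-\alpha}\ll\sqrt{nh_\kappa}$, so after rescaling by $h_\kappa^{-\alpha}$ the stochastic term is $o_{\Prob}(1)$ while the bias term delivers $U$; the plug-in error is controlled because $\norm{\hbe^{(T)}-\be^*}_2=O_{\Prob}\big(\sqrt{\log n}\cdot n^{-\alpha/(2\alpha+1)}\big)=o_{\Prob}\big(h_\kappa^{\alpha}\big)$, which also absorbs the Hessian correction $V_{n,h_\kappa}\big(\hbe^{(T)}\big)\big(\hbe^{(T)}-\be^*\big)$ appearing in the decomposition of $\widehat U$. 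Note the asymmetry: $\widehat V_s$ may safely use $h_T$ because only consistency of a kernel average with variance $O\big(1/(nh_T^2)\big)$ is needed, whereas $\widehat U$ requires undoing an $h^{\alpha}$-order bias, which forces the separate bandwidth. With $\widehat V^{-1}\widehat U\stackrel{\Prob}{\to}V^{-1}U$ and $\widehat V^{-1}\widehat V_s\widehat V^{-1}\stackrel{\Prob}{\to}V^{-1}V_sV^{-1}$ so established, your Slutsky step (iv) concludes the corollary exactly as you describe.
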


\section{Data heterogeneity}
\label{sec:4}

Until now, we assumed homogeneity across the data stored on different machines,
which means the distribution of
$  (x_{i}, \bm{z}_{i}, \epsilon _{i}  )$ are the same for all
$i$. It is of practical interest to consider the heterogeneous setting,
since data on different machines may not be identically distributed. Therefore,
we establish the theoretical results in the presence of heterogeneity in
this section. We first consider a covariate shift setting in Section~\ref{sec:hetero}, where we assume the marginal distributions of the covariates
$X$, $\bm{Z}$ are different among the machines, but the regression coefficients
$\bm{\beta }^{*}$ remain unchanged. In Section~\ref{sec:4.2}, we further
extend \texttt{(mSMSE)} to a coefficient shift setting that allows
$\bm{\beta }^{*}$ to be different across different machines.

\subsection{Covariate shift}
\label{sec:hetero}

In this section, we first remove the restriction that the sample size on
each machine is the same. Denote the number of observations on the machine
$\ell$ to be $m_{\ell}$, which satisfies
$\sum_{\ell =1}^{L}m_{\ell}=n$. Then we have to modify Assumptions
\ref{A2}--\ref{A4} for different distributions on different machines.

\begin{assumption}
\label{A2'}
For $X$ and $\bm{Z}$ in $\mathcal H_{\ell}$, define
$\zeta :=X+\bm{Z}^{\top}\bm{\beta }^{*}$, and assume that the conditional
distribution density function of $\zeta $, denoted by
$\rho _{\ell}  ( \cdot |\bm{Z}  )$, is positive and uniformly
bounded for almost every $\bm{Z}$. Further, for any integer
$1\leq k\leq \alpha $, assume that
$\rho _{\ell}^{   ( k  )}  ( \cdot |\bm{Z}  )$ exists
and is uniformly bounded for all $\ell $ and almost every $\bm{Z}$, that
is, $\exists M_{\rho , k}>0$ such that
$\sup_{\zeta , \ell}   \big\lvert \rho _{\ell }^{   ( k  )}
  ( \zeta |\bm{Z}  )  \big\rvert \leq M_{\rho , k}$.
\end{assumption}

\begin{assumption}
\label{A3'}
For $X$, $\bm{Z}$, $\epsilon $ in $\mathcal H_{\ell}$, let
$F_{\ell}  ( \cdot |\bm{Z}  )$ denote the conditional cumulative
distribution function of the noise $\epsilon $ given $\bm{Z}$, and assume
that $\epsilon $ and $X$ are independent given $\bm{Z}$. For any integer
$1\leq k\leq \alpha +1$, assume that
$F_{\ell}^{  ( k  )}  ( \cdot |\bm{Z}  )$ exists
and is uniformly bounded for all $\ell $ and almost every $\bm{Z}$, that
is, $\exists M_{F, k}>0$ such that
$\sup_{\epsilon , \ell}  \big\lvert F_{\ell }^{  ( k  )}
  ( \epsilon |\bm{Z}  )  \big\rvert \leq M_{F, k}$. Still,
we assume $\mathrm{median}  (\epsilon |\bm{Z}  )=0$ on each
machine.
\end{assumption}
%
\begin{assumption}
\label{A4'}
Assume that there exists a constant $c_{1}>1$ such that
$c_{1}^{-1}<\Lambda _{\min}  (V_{\ell}  )<\Lambda _{\max}
  (V_{\ell}  )<c_{1}$,
$c_{1}^{-1}<\Lambda _{\min}  (V_{s,\ell}  )<\Lambda _{\max}
  (V_{s,\ell}  )<c_{1}$, $\forall \ell $, where
$$ V_{\ell}:=2\mathbb{E}_{\bm{Z}\in \mathcal{H}_{\ell}} ( \rho _{
\ell}  ( 0|\bm{Z}  )\*F_{\ell}^{\prime}  ( 0 |
\bm{Z}  )\bm{Z}\bm{Z}^{\top} ),\quad V_{s,\ell}:=\pi _{V}\mathbb{E}_{\bm{Z}\in \mathcal{H}_{\ell}} (
\rho _{\ell}  ( 0|\bm{Z}  )\bm{Z}\bm{Z}^{\top} ),$$with
$\pi _{V}$ defined in Assumption~\ref{A1}.
\end{assumption}

Assumptions \ref{A2'}--\ref{A4'} are parallel to Assumptions \ref{A2}--\ref{A4},
requiring the uniform boundedness of the higher-order derivatives and the
eigenvalues of $V_{\ell}$ and $V_{s,\ell}$. As an example, suppose that
$X_{\ell} \sim \mathcal{N}(\mu _{X, \ell}, \sigma _{X,\ell}^{2})$,
$Z_{\ell} \sim (\mu _{Z,\ell}, \sigma _{Z, \ell}^{2} I_{p \times p})$ on
machine $\ell $, and $(X_{\ell}, Z_{\ell})$ are independent, and for simplicity,
suppose the noise is homogenous among the machines. Then Assumptions
\ref{A2'}--\ref{A4'} are satisfied if there exist positive constants
$\overline{\mu}$, $\underline{\sigma}$, $\overline{\sigma}$ such that
$  \lVert \mu _{X, \ell }  \rVert _{2} \leq \overline{\mu}$,
$   \lVert \mu _{Z,\ell }  \rVert _{2} \leq \overline{\mu}$ and
$\underline{\sigma} \leq \sigma _{X,\ell}$, $\sigma _{Z,\ell} \leq
\overline{\sigma}$, $\forall \ell $, that is, the mean of the covariates
is upper bounded in $\ell _{2}$ norm uniformly for all $\ell $, and the
variance of the covariates is bounded away from zero and infinity uniformly
for all $\ell $.

Similar to \eqref{eq:def:U}, we define
\begin{equation*}
U_{\ell}:=\pi _{U}\mathbb{E}_{\bm{Z}\in \mathcal{H}_{\ell}} \Biggl(
\sum_{k=1}^{\alpha}\frac{2(-1)^{k+1}}{k!  ( \alpha -k  )!}F_{
\ell}^{  ( k  )}
( 0|\bm{Z} )\rho _{\ell}^{
  ( \alpha -k  )} ( 0|\bm{Z} )\bm{Z} \Biggr),
\end{equation*}
which is related to the bias of SMSE on each machine.

Under the modified assumptions, the data on each machine are no longer
identically distributed and, therefore, it is natural to allocate a different
weight matrix $W_{\ell}$ to each machine, with
$\sum_{\ell =1}^{L} W_{\ell}=I_{p\times p}$. Formally, the
\emph{weighted-Averaged SMSE} \texttt{(wAvg-SMSE)} is defined as follows:
%
\begin{equation}
\label{eq:weighted-AvgSMSE} \widehat{\bm{\beta }}_{\mathrm{{{\mathtt{{(wAvg-SMSE)}}}}}} := \sum
_{\ell =1}^{L} W_{\ell} \widehat{\bm{
\beta }}_{\mathrm{{SMSE}},\mathrm{\ell}}, 
\end{equation}
where $\widehat{\bm{\beta }}_{\mathrm{{SMSE}},\mathrm{\ell}}$ is the SMSE on the
$\ell $th machine that minimizes the objective function
%
\begin{equation}
\label{eq:def:Fhl} F_{h,\ell} (\bm{\beta } ) := \frac{1}{m_{\ell}}
\sum_{i
\in \mathcal{H}_{\ell}} ( -y_{i} ) H \biggl(
\frac{x_{i}+\bm{z}_{i}^{\top}\bm{\beta }}{h} \biggr). 
\end{equation}
For the multiround method, we aim to apply the iterative smoothing to minimize
a weighted sum of the objective functions on each machine, that is,
$\sum_{\ell =1}^{L} W_{\ell }F_{h,\ell}  (\bm{\beta }  )$, which
leads to updating the \emph{weighted} mSMSE \texttt{(wmSMSE)} in the
$t$th iteration by
%
\begin{equation}
\widehat{\bm{\beta }}_{\mathrm{{{\mathtt{{(wmSMSE)}}}}}}^{   ( t  )}= \widehat{\bm{\beta
}}_{\mathrm{{{\mathtt{{(wmSMSE)}}}}}}^{   ( t-1  )} - \Biggl(\sum^{L}_{\ell =1}W_{\ell}
\nabla ^{2}F_{h,\ell} \bigl( \widehat{\bm{\beta
}}_{\mathrm{{{\mathtt{{(wmSMSE)}}}}}}^{   ( t-1  )} \bigr) \Biggr)^{-1} \Biggl(\sum
^{L}_{\ell =1}W_{\ell}\nabla
F_{h,\ell} \bigl( \widehat{\bm{\beta }}_{\mathrm{{{\mathtt{{(wmSMSE)}}}}}}^{   ( t-1  )}
\bigr) \Biggr). \label{eq:weighted-mSMSE} 
\end{equation}

A natural choice of weights is proportional to the local sample size, that
is, $W_{\ell}=\frac{m_{\ell}}{n}I_{p\times p}$. Using this weight, the
variances in the asymptotic distribution in \eqref{eq:asym-DC} and
\eqref{eq:asym-mSMSE} become
$\frac{1}{n}\sum_{\ell =1}^{L}m_{\ell}V_{\ell}^{-1}V_{s,\ell}V_{\ell}^{-1}$
and
$ (\frac{1}{n}\sum_{\ell =1}^{L}m_{\ell}V_{\ell} )^{-1} (
\frac{1}{n}\sum_{\ell =1}^{L}m_{\ell}V_{s,\ell} ) (\frac{1}{n}
\sum_{\ell =1}^{L}m_{\ell}V_{\ell} )^{-1}$, respectively, which can
be seen as a special case of Theorems \ref{thm:asym-DC-weighted} and
\ref{thm:asym-mSMSE-weighted} below. Nonetheless, such a choice is by no
means optimal. We can further decrease both asymptotic variances by choosing
a different weight matrix $W_{\ell}$ for each machine. To illustrate the
choices, we first derive theoretical results for general weight matrices
$W_{\ell}$ that satisfy the following restriction.

\begin{assumption}
\label{assumption:weightrestriction}
There exist constants $c_{w}, C_{W}>0$ such that
$c_{W}m_{\ell}/n\leq   \lVert W_{\ell }  \rVert _{2} \leq C_{W}m_{
\ell}/n$, with $n=\sum_{\ell =1}^{L}m_{\ell}$ and
$\sum_{\ell =1}^{L} W_{\ell}=I_{p\times p}$.
\end{assumption}

Assumption~\ref{assumption:weightrestriction} requires that the 2-norm
of $W_{\ell}$ is not too far away from $m_{\ell}/n$, violation of which
may lead to a low convergence rate. An extreme counterexample is
$W_{1}=I_{p \times p}$ and $W_{2}=\cdots=W_{L}=\bm{0}$, in which case only
the data on a single machine will be used. Following the procedures in
Sections~\ref{sec:theory-AvgSMSE} and \ref{sec:theory-mSMSE}, we establish
the asymptotic normality for \texttt{(wAvg-SMSE)} and \texttt{(wmSMSE)} in
Theorems \ref{thm:asym-DC-weighted} and \ref{thm:asym-mSMSE-weighted},
whose proof is given in Section \ref{sec:pf-weighted} of Appendix.

\begin{thm}[wAvg-SMSE]
\label{thm:asym-DC-weighted}
Suppose Assumptions \ref{A1} and \ref{A5}--\ref{assumption:weightrestriction}
hold and the sample size of the smallest local batch
$\min_{\ell} m_{\ell} \gtrsim pn^{3/(2\alpha +1)}\log n$. By taking
$h = n^{-1/(2\alpha +1)}$, we have that, for any
$\bm{\vartheta} \in \mathbb{R}^{p}\setminus \{\bm{0}\}$,
%
\begin{equation}
\label{eq:asym-DC-weighted} \frac{n^{\frac{\alpha}{2\alpha +1}}\bm{\vartheta}^{\top} (\widehat{\bm{\beta }}_{\mathrm{{{\mathtt{{(wAvg-SMSE)}}}}}}-\bm{\beta }^{*} )-\sum_{\ell =1}^{L}\bm{\vartheta}^{\top}W_{\ell}V_{\ell}^{-1}U_{\ell}}{\sqrt{\sum_{\ell =1}^{L}\frac{n}{m_{\ell}}\bm{\vartheta}^{\top}W_{\ell }V_{\ell}^{-1}V_{s,\ell}V_{\ell}^{-1}W_{\ell}^{\top}\bm{\vartheta}}} \stackrel{d} {\longrightarrow}
\mathcal{N}(0 , 1). 
\end{equation}
\end{thm}
%
\begin{thm}[wmSMSE]
\label{thm:asym-mSMSE-weighted}
Suppose Assumptions \ref{A1} and \ref{A5}--\ref{assumption:weightrestriction}
hold and
$ \|\widehat{\bm{\beta }}^{   ( 0  )}-\bm{\beta }^{*}
 \|_{2}=O_{\mathbb{P}} ( \delta _{0} )$. Further, assume that
the local size $m > n^{c_{3}}$ for some constant $0<c_{3}<1$ and $p=o
 (n^{\frac{2(\alpha -1)}{4\alpha +1}}(\log n)^{-
\frac{2\alpha +1}{4\alpha +1}} )$. By taking
$h_{t} = \max  \{(p/n)^{\frac{1}{2\alpha +1}},\delta _{0}^{2^{t}/
\alpha} \}$ at iteration $t=1,2,\dots , T$ and
$h_{T+1}=n^{-1/(2\alpha +1)}$, we have
%
\begin{equation}
\label{eq:asym-mSMSE-weighted} \frac{n^{\frac{\alpha}{2\alpha +1}}\bm{\vartheta}^{\top} ( \widehat{\bm{\beta }}_{\mathrm{{{\mathtt{{(wmSMSE)}}}}}}^{   ( T+1  )}-\bm{\beta }^{*} )-\bm{\vartheta}^{\top}\overline{V}_{W}^{-1}\overline{U}_{W}}{\sqrt{\bm{\vartheta}^{\top}\overline{V}_{W}^{-1}\sum_{\ell =1}^{L} \frac{n}{m_{\ell}}W_{\ell}V_{s,\ell}W_{\ell}^{\top}
\overline{V}_{W}^{-\top}\bm{\vartheta}}}\stackrel{d} {\longrightarrow}
\mathcal{N}(0, 1), 
\end{equation}
for any $\bm{\vartheta} \in \mathbb{R}^{p} \setminus \{\bm{0}\}$ and sufficiently
large $T$,  where
$\overline{U}_{W}:=\sum_{\ell =1}^{L} W_{\ell}U_{\ell}$,
$\overline{V}_{W}:=\sum_{\ell =1}^{L} W_{\ell}V_{\ell}$.
\end{thm}
Assumptions \ref{A4'} and \ref{assumption:weightrestriction} ensure the
variances in both \eqref{eq:asym-DC-weighted} and
\eqref{eq:asym-mSMSE-weighted} are finite. In particular, in the homogeneous
setting, $W_{\ell}=(1/L)I_{p \times p}$, then
\eqref{eq:asym-DC-weighted} and \eqref{eq:asym-mSMSE-weighted} are identical
to \eqref{eq:asym-DC} and \eqref{eq:asym-mSMSE}.

\begin{remark}
\label{rem5}
In Theorem~\ref{thm:asym-DC-weighted}, the condition
$\min_{\ell} m_{\ell} \gtrsim pn^{3/(2\alpha +1)}\log n$ is placed to
ensure $p\log m_{\ell}/  (m_{\ell}h^{3}  )=o(1)$ for all
$\ell $, which is necessary to guarantee the convergence of
$\widehat{\bm{\beta }}_{\mathrm{{SMSE}},\mathrm{ \ell}}$ to $\bm{\beta }^{*}$ on each
machine. In the homogeneous setting, this is equivalent to the constraint
$L=o ( m^{\frac{2}{3}  ( \alpha -1  )}/ {p}^{
\frac{2\alpha +1}{3}} )$ in Theorem~\ref{thm:asym-DC} (ignoring the
logarithm term). This condition requires the sample size of the smallest
batch should increase at a certain rate as $n\rightarrow \infty $. On the
other hand, for \textup{\texttt{(wmSMSE)}}, there is no restriction on the smallest
local sample size.
\end{remark}

Based on the above results, we are able to artificially choose the weight
matrices $\{W_{\ell}\}$ to minimize the variances of the two methods in
\eqref{eq:asym-DC-weighted} and \eqref{eq:asym-mSMSE-weighted}. For any
$\bm{\vartheta}$, the variance in \textup{\eqref{eq:asym-DC-weighted}} is minimized
by choosing
$ W^{*,\mathrm{{{\mathtt{{(wAvg-SMSE)}}}} }}_{\ell}= (\sum^{L}_{\ell =1}m_{
\ell}V_{\ell}V^{-1}_{s,\ell}V_{\ell} )^{-1} m_{\ell}V_{\ell}V^{-1}_{s,
\ell}V_{\ell} $, and the corresponding minimum variance is
%
\begin{equation}
\label{eq:var_opt_avgsmse} \Sigma ^{*}_{\mathrm{{{\mathtt{{(wAvg-SMSE)}}}}}}:=n\bm{
\vartheta}^{\top} \Biggl( \sum^{L}_{\ell =1}m_{\ell}V_{\ell}V^{-1}_{s,\ell}V_{\ell}
\Biggr)^{-1} \bm{\vartheta}. 
\end{equation}
 For \texttt{(wmSMSE)}, if one chooses
$ W_{\ell}^{*, \mathrm{{{\mathtt{{(wmSMSE)}}}}}}= (\sum_{\ell =1}^{L}m_{\ell}V_{
\ell}V^{-1}_{s,\ell} )^{-1}m_{\ell}V_{\ell}V^{-1}_{s,\ell} $, the asymptotic
variance in \eqref{eq:asym-mSMSE-weighted} will be the same as
$\Sigma ^{*}_{\mathrm{{{\mathtt{{(wAvg-SMSE)}}}}}}$ in \eqref{eq:var_opt_avgsmse}.
Therefore, the multiround method \texttt{(wmSMSE)} is at least as efficient
as \texttt{(wAvg-SMSE)} by specifying certain weight matrices. Note that
it is easy to verify the above optimal weights
$W^{*,\mathrm{{{\mathtt{{(wAvg-SMSE)}}}} }}_{\ell}$ and
$W_{\ell}^{*, \mathrm{{{\mathtt{{(wmSMSE)}}}}}}$ satisfy Assumption~\ref{assumption:weightrestriction}. The detailed derivation is given in
Section \ref{sec:pf-weighted} of Appendix.

\subsection{Coefficient shift}
\label{sec:4.2}

 In this section, we consider another type of data heterogeneity by allowing
the regression coefficient $\bm{\beta }^{*}$ to be different on different
machines, referred to as \emph{coefficient shift}. This setting is also
known as \emph{conditional shift} in some literature, since the distribution
of the response $Y$ conditional on the covariates $(X, \bm{Z})$ depends
on $\bm{\beta }^{*}$. Formally, we assume that
$y_{i}=\mathrm{sign}(x_{i}+\bm{z}_{i}^{\top}\bm{\beta }^{*}_{\ell}+
\epsilon _{i})$ for $i \in \mathcal{H}_{\ell}$,
$\ell =1,2,\dots , L$. Without loss of generality, our goal is to estimate
the parameter $\bm{\beta }^{*}_{1}$ on the first machine, and we assume
that there exists a nonempty set
$\mathcal{A^{*}}:= \{\ell :\bm{\beta }^{*}_{1} = \bm{\beta }^{*}_{
\ell},  \ell = 1, \dots , L \}$ with cardinality
$(1-\varepsilon ) L$, where $0< \varepsilon <1$. In other words, there
are $(1-\varepsilon ) L$ machines on which the coefficients
$\bm{\beta }^{*}_{\ell}$ are the same as that on the first machine, while
the coefficients on the remaining $\varepsilon L$ machines are shifted
away.

\begin{algorithm}[!tp]
        \caption{Multiround Maximum Score Estimator for Coefficient Shift}
        \vspace*{0.08in} {\textbf{Input:}}
        Data sets distributed on local machines $\{x_{i}, \bm{z}_{i},y_{i}\}_{i\in \mathcal{H}_{\ell}}$ $  ( \ell =1,2,\dots ,L)$, the total number of iterations $T$, bandwidth sequence $\{h_{t}\}_{t=0}^{T}$ and preselected constants $\omega $ and $C_{0}$.
        \label{alg:cond-shift}
\begin{algorithmic}[1]
                \For{$\ell =1,2,\dots ,L$} \do \\
                \State Randomly select a subset $\mathcal{H}_{\ell}^{(0)} \subset \mathcal{H}_{\ell}$, with cardinality $  \lvert \mathcal{H}_{\ell }^{(0)}  \rvert  = \lfloor \omega m\rfloor $;
                \State Compute the SMSE $\widehat{\bm{\beta }}_{\ell , \mathrm{SMSE}}^{(0)}$ using the subset $\mathcal{H}_{\ell}^{(0)}$ with bandwidth $h_{0}$, and send $\widehat{\bm{\beta }}_{\ell , \mathrm{SMSE}}^{(0)}$ back to  the first machine;
                \EndFor
                \State Compute $\mathcal{A}=\{\ell :  \big\lVert \widehat{\bm{\beta }}_{\ell ,\mathrm{SMSE}}^{(0)}-\widehat{\bm{\beta }}_{1,\mathrm{SMSE}}^{(0)}  \big\rVert _{2}\leq C_{0}  (\frac{p\log L}{\omega m}  )^{\frac{\alpha}{2\alpha +1}}  \}$, and let $\widehat{\bm{\beta }}_{1}^{(0)}=\widehat{\bm{\beta }}_{1,\mathrm{SMSE}}^{(0)}$;
                \For{$t=1,2,\dots ,T$} \do \\
                \For {$\ell \in \mathcal{A}$} \do\\
                \State Send $\widehat{\bm{\beta }}_{1}^{(t-1)}$ to machine $\ell$;
                \State Compute
\begin{align*}
&\widetilde{U}^{  ( t  )}_{m,\ell}=\frac{1}{(1-\omega )mh_{t}}\sum
_{i\in \mathcal{H}_{\ell}\setminus \mathcal{H}_{\ell}^{(0)}} (-y_{i} ) H^{\prime}
\biggl(\frac{x_{i}+\bm{z}_{i}^{\top}\widehat{\bm{\beta }}_{1}^{(t-1)}}{h_{t}} \biggr) \bm{z}_{i},
\\
&\widetilde{V}^{  ( t  )}_{m,\ell}=\frac{1}{(1-\omega )mh_{t}^{2}}\sum
_{i \in \mathcal{H}_{\ell}\setminus \mathcal{H}_{\ell}^{(0)}} (-y_{i} )H''
\biggl(\frac{x_{i}+\bm{z}_{i}^{\top}\widehat{\bm{\beta }}_{1}^{(t-1)}}{h_{t}} \biggr)\bm{z}_{i}\bm{z}_{i}^{\top};
\end{align*}
\State Send $\widetilde{U}_{m, \ell}^{(t)}$, $\widetilde{V}^{  ( t  )}_{m,\ell}$ back to the first machine;
                \EndFor
                \State Compute $\widetilde{U}^{  ( t  )}_{n}\hspace{-0.02in}=\hspace{-0.02in}\frac{1}{  \lvert \mathcal{A}  \rvert }\sum_{\ell \in \mathcal{A}}\widetilde{U}^{  ( t  )}_{m,\ell}~$ and
                $\widetilde{V}^{  ( t  )}_{n}\hspace{-0.02in}=\hspace{-0.02in}\frac{1}{  \lvert \mathcal{A}  \rvert }\sum_{\ell \in \mathcal{A}}\widetilde{V}^{  ( t  )}_{m,\ell}~$;
                \State Update $\widehat{\bm{\beta }}_{1}^{   ( t  )}=\widehat{\bm{\beta }}_{1}^{   ( t-1  )}-  (\widetilde{V}^{  ( t  )}_{n}  )^{-1}\widetilde{U}^{  (t  )}_{n}$;
                \EndFor
                \State Output $\widehat{\bm{\beta }}_{1}^{  ( T  )}$.
        \end{algorithmic}
\end{algorithm}

To clearly demonstrate the strategy we use to deal with the coefficient
shift setting, we assume there is no covariate shift in this section and
the local sample size $m$ is identical for all machines, the violation
of which is analyzed in the previous Section~\ref{sec:hetero}. We propose
Algorithm~\ref{alg:cond-shift}, which applies \texttt{(mSMSE)} after selecting
an estimate set $\mathcal{A}$ for $\mathcal{A}^{*}$ based on local SMSE
estimators computed on subsets $\{\mathcal{H}_{\ell}^{(0)}\}$, where
$\mathcal{H}_{\ell}^{(0)}$ contains a constant proportion $\omega $ of
the local data. Now we provide the convergence rates for the estimators
$ \{\widehat{\bm{\beta }}^{(t)} \}$ in Algorithm~\ref{alg:cond-shift}.
%
\begin{thm}%
\label{thm:tstep-cond-shift}
Assume the assumptions in Theorem~\ref{thm:tstep-ld} hold. Further, assume
that $\varepsilon \leq \varepsilon _{0}$ for some constant
$\varepsilon _{0}<1$. By choosing
$h_{0}= (\frac{p\log L}{m} )^{\frac{1}{2\alpha +1}}$ and
$h_{t}=\max   \{\delta ^{2^{t}/\alpha}_{m ,0},  (\frac{p}{n}
 )^{\frac{1}{2\alpha +1}}  \}$ for $t=1,2,\dots ,T$, we have that
%
\begin{equation}
\label{eq:tstepbeta-hetero} \bigl\lVert \widehat{\bm{\beta }}_{1}^{(t)}-
\bm{\beta }_{1}^{*} \bigr\rVert _{2}=O_{\mathbb{P}}
\biggl( \biggl( \frac{p}{(1-\varepsilon )n} \biggr)^{ \frac{\alpha}{2\alpha +1}}+\delta
_{m
,0}^{2^{t}}+\delta _{m ,0}^{2^{t-1}}
\biggl(\frac{p}{(1-\varepsilon )n} \biggr)^{\frac{\alpha -1}{2\alpha +1}}\sqrt{\log n}+\varepsilon
\delta _{m
,0}^{2} \biggr), 
\end{equation}
where
$\delta _{m ,0}=(\frac{p\log L}{\omega m})^{\frac{\alpha}{2\alpha +1}}$.
\end{thm}
The proof for Theorem~\ref{thm:tstep-cond-shift} is provided in Section \ref{sec:pf-weighted} of Appendix. Analogous to Theorem~\ref{thm:tstep-ld}, the second and third terms in
\eqref{eq:tstepbeta-hetero} decrease double-exponentially as the iterations
proceed and will be finally dominated by the statistical rate, that is,
the first term
$ (\frac{p}{(1-\varepsilon )n} )^{\frac{\alpha}{2\alpha +1}}$, in
$O(\log \log n)$ iterations. In addition, as compared to homogeneous settings,
there remains a bias term $\varepsilon \delta _{m ,0}^{2}$ due to the difference
among the coefficients on $\mathcal{A}\setminus \mathcal{A^{*}}$.
%
\begin{remark}
\label{rmk:coef-shift}
A one-shot averaging divide-and-conquer algorithm (such as \texttt{(Avg-MSE)}
and \texttt{(Avg-SMSE)}) can be designed similar to deal with the coefficient
shift setting. Concretely, we can first select the set $\mathcal{A}$ as
in Algorithm \textup{\ref{alg:cond-shift}} and then apply \texttt{(Avg-SMSE)} on
the machines in $\mathcal{A}$. In this case, the averaging estimator has
an error rate of the order
$O  ( (\frac{p}{(1-\varepsilon )n} )^{
\frac{\alpha}{2\alpha +1}}+\varepsilon \delta _{m, 0}   )$, under
the constraint
$L=o ( m^{\frac{2}{3}  ( \alpha -1  )}/   ( {p}\log m
  )^{\frac{2\alpha +1}{3}} )$. The proposed Algorithm~\ref{alg:cond-shift} not only removes the constraint on $L$ but also improves
the term $\varepsilon \delta _{m, 0}$ into
$\varepsilon \delta _{m, 0}^{2}$.
\end{remark}%
%

\section{High-dimensional settings}
\label{sec:highd}

In this section, we extend \texttt{(mSMSE)} to high-dimensional settings,
where the dimension $p$ is much larger than $n$. We assume that
$\bm{\beta }^{*}\in \mathbb{R}^{p}$ is a sparse vector with $s$ nonzero
elements. Recall \eqref{newton},
%
\begin{equation}
\label{eq:newton_high_d} %
\begin{aligned} \widehat{\bm{\beta
}}^{   ( t  )}&= \widehat{\bm{\beta }}^{
   ( t-1  )}- \bigl[\nabla
^{2} F_{h_{t}} \bigl( \widehat{\bm{\beta
}}^{   ( t-1  )} \bigr) \bigr] ^{-1} \nabla F_{h_{t}}
\bigl(\widehat{\bm{\beta }}^{   (t-1  )} \bigr) =\widehat{\bm{\beta
}}^{   ( t-1  )}- \bigl(V_{n}^{(t)} \bigr)
^{-1} U_{n}^{(t)}. \end{aligned}
%
\end{equation}
It is generally infeasible to compute the inverse of the Hessian matrix
$V_{n}^{(t)}\in \mathbb{R}^{p \times p}$ in the high-dimensional case.
Furthermore, it requires unacceptably high complexity to compute and communicate
$L$ high-dimensional matrices. To tackle these problems, we first note
that \eqref{eq:newton_high_d} is equivalent to solving the following quadratic
optimization problem:
%
\begin{equation}
\widehat{\bm{\beta }}^{  ( t  )}:=\argmin _{\bm{\beta }}
\frac{1}{2}\bm{\beta }^{\top}V^{  ( t  )}_{n}
\bm{\beta }-\bm{\beta }^{\top} \bigl(V^{  ( t  )}_{n}
\widehat{\bm{\beta }}^{   ( t-1  )}- U_{n}^{  (t  )}
\bigr). \label{eq:beta-opt-1} 
\end{equation}
Due to high communication complexity, we only estimate the Hessian matrix
$V_{n}^{(t)}$ using the samples on a single machine, for example,
$V^{  (t  )}_{m,1}$ on the first machine. Then
\eqref{eq:beta-opt-1} can be written as
%
\begin{equation}
\widehat{\bm{\beta }}^{  ( t  )}:=\argmin _{\bm{\beta }}
\frac{1}{2}\bm{\beta }^{\top}V^{  ( t  )}_{m,1}
\bm{\beta }-\bm{\beta }^{\top} \bigl(V^{  ( t  )}_{m,1}
\widehat{\bm{\beta }}^{   ( t-1  )}- U_{n}^{  (t  )}
\bigr). \label{eq:beta-opt-2} 
\end{equation}

We adapt the idea of the Dantzig selector proposed by
\cite{candes2007dantzig}, an $\ell _{1}$-regularization approach known
for estimating high-dimensional sparse parameters. Formally, in the
$t$th iteration, given $\widehat{\bm{\beta }}^{  ( t-1  )}$, the
bandwidth $h_{t}$ and a regularization parameter
$\lambda _{n}^{  (t  )}$, we compute
$\widehat{\bm{\beta }}^{  (t  )}$ by
%
\begin{equation}
\begin{aligned} \widehat{\bm{\beta }}^{   ( t  )}= \argmin
_{\bm{\beta }
\in \mathbb{R}^{p}} \bigl\lbrace \|\bm{\beta } \| _{1} :
\big\| V^{  ( t  )}_{m,1}\bm{\beta }- \bigl(
V^{  ( t
  )}_{m,1} \widehat{\bm{\beta }}^{   ( t-1  )}-
U_{n}^{
  ( t  )} \bigr) \big\| _{\infty} \leq
\lambda ^{  (t
  )}_{n} \bigr\rbrace . \end{aligned}
\label{eq:beta-opt-Dan} 
\end{equation}
Note that a feasible solution of \eqref{eq:beta-opt-Dan} can be obtained
by linear programming. A complete algorithm is presented in Algorithm~\ref{alg:hd}.

\begin{algorithm}[!tp]
        \caption{High-dimensional Multiround Maximum Score Estimator}
        \vspace*{0.08in} {\textbf{Input:}}
        Data sets distributed on local machines $\{x_{i}, \bm{z}_{i},y_{i}\}_{i\in \mathcal{H}_{\ell}}$ $  ( \ell =1,2,\ldots,L  )$, an initial estimator $\widehat{\bm{\beta }}^{(0)}$, the total number of iterations $T$, bandwidth sequence $\{h_{t}\}$ and parameters $\{\lambda ^{  ( t  )}_{n}\}$.
        \label{alg:hd}
\begin{algorithmic}[1]
                \For{$t=1,2,\ldots,T$} \do \\
                \State Send $\widehat{\bm{\beta }}^{   ( t-1  )}$ to each machine;
                \For {$\ell =1,2,\ldots,L$} \do\\
                \State Compute $U^{  ( t  )}_{m,\ell}$ by equation \eqref{eq:def:Umlt} and send $U^{  ( t  )}_{m,\ell}$ back to the first machine;
                \EndFor
                \State Compute $U^{  ( t  )}_{n}=\frac{1}{L}\sum_{\ell =1}^{L}U^{  ( t  )}_{m,\ell}$;
                \State Compute
                $V^{  ( t  )}_{m,1}\widehat{\bm{\beta }}^{   ( t-1  )}=\frac{1}{mh_{t}^{2}}\sum_{i\in \mathcal{H}_{1}} (-y_{i}) H^{\prime \prime} ( \frac{x_{i}+\bm{z}_{i}^{\top}\widehat{\bm{\beta }}^{   ( t-1  )}}{h_{t}} ) ( \bm{z}_{i}^{\top}\widehat{\bm{\beta }}^{   (t-1  )} )\bm{z}_{i}$;
                \State Obtain $\widehat{\bm{\beta }}^{   ( t  )}$ by solving \eqref{eq:beta-opt-Dan};
                \EndFor
                \State Output $\widehat{\bm{\beta }}^{  ( T  )}$.
        \end{algorithmic}
\end{algorithm}

Now we give the convergence rate of the estimator
$\widehat{\bm{\beta }}^{(t)}$ at iteration $t$. First, we state the one-step
improvement in the following theorem.

\begin{thm}%
\label{thm:1step-hd}
Assume Assumptions \ref{A1}--\ref{A4} hold, and the covariates are uniformly
bounded, that is, there exists $\overline{B}$ such that
$  \lVert \bm{z}_{i}  \rVert _{\infty} \leq \overline{B}$, with
finite second moment, that is,
$\sup_{  \lVert \bm{v}  \rVert _{2}=1}\mathbb{E}  [(
\bm{v}^{\top}\bm{Z})^{2}  ] < +\infty $. Further, assume that the
dimension $p=O  ( n^{\nu}  )$ for some $\nu >0$, the local sample
size $m=O(n^{c})$ for some $0<c<1$, the sparsity
$s=o (m^{1/4} )$ and the initial value
$\widehat{\bm{\beta }}^{   (0  )}$ satisfies
$ \|\widehat{\bm{\beta }}^{   ( 0  )} - \bm{\beta }^{*}
 \|_{2} = O_{\mathbb{P}}   (\delta _{m,0}  )$ and
$ \|\widehat{\bm{\beta }}^{   (0  )} - \bm{\beta }^{*}
 \|_{1} = O_{\mathbb{P}}   (\sqrt{s}\delta _{m,0}  )$ for
some $\delta _{m,0}=o(1)$. Moreover, assume that
$\sqrt{s}\delta _{m,0}=O ( h_{1}^{3/2} )$ and
$\frac{s^{2}\log m}{mh_{1}^{3}}=o  (1  )$. By specifying
\begin{equation*}
\lambda ^{  ( 1  )}_{n} = C_{0} \biggl(s\delta
_{m,0}^{2}+h_{1}^{
\alpha}+\sqrt{
\frac{\log p}{nh_{1}}}+\sqrt{\frac{s\log p}{mh_{1}^{3}}} \delta _{m,0}
\biggr),
\end{equation*}
with a sufficiently large constant $C_{0}$, it holds that
%
\begin{equation}
\big\|\widehat{\bm{\beta }}^{   ( 1  )}-\bm{\beta }^{*}
\big\|_{2} = O_{\mathbb{P}}\bigl(\sqrt{s}\lambda
_{n}^{(1)}\bigr)=O_{\mathbb{P}}
\biggl[s^{3/2}\delta _{m,0}^{2}+
\sqrt{s}h_{1}^{\alpha}+\sqrt{ \frac{s\log p}{nh_{1}}}+\sqrt{
\frac{s^{2}\log p}{mh_{1}^{3}}}\delta _{m,0} \biggr], \label{eq:1step-hd-Dan}
\end{equation}
and
$ \|\widehat{\bm{\beta }}^{   ( 1  )}-\bm{\beta }^{*}
 \|_{1} \leq 2\sqrt{s}  \|\widehat{\bm{\beta }}^{   ( 1
  )}-\bm{\beta }^{*} \|_{2}$ with probability tending to one.
\end{thm}
\label{thm:1step-hd-end}
The proof of Theorem~\ref{thm:1step-hd} is given in Section Section \ref{sec:pf-hd} of Appendix. The requirement $s=o(m^{1/4})$ is to ensure the
consistency of the estimator, which is further explained and discussed
in Remark \ref{rmk:s<m^1/4} in Section \ref{sec:supp-hd} of Appendix. An initial estimator
$\widehat{\bm{\beta }}^{(0)}$ can be obtained by applying the path-following
algorithm proposed in \cite{feng2019nonregular} to the local data on a
single machine, which satisfies the assumptions in Theorem~\ref{thm:1step-hd} with
$\delta _{m,0}=(s\log p/m)^{\alpha /(2\alpha +1)}$. In practice, the initial
can be other estimators, for example, \cite{mukherjee2019non}, or a mild
guess, for example, an estimator computed from a (potentially misspecified)
parametric model. The assumption
$\frac{s^{2}\log p}{mh_{1}^{3}}=o  (1  )$ is necessary to guarantee
the so-called restricted eigenvalue condition for $V^{(1)}_{m,1}$, which
is standard to ensure the convergence rate of the Dantzig selector in theory.
Another assumption $\sqrt{s}\delta _{m,0}=O ( h_{1}^{3/2} )$ is a
technical condition to determine the dominant term in the convergence rate.

The convergence rate in \eqref{eq:1step-hd-Dan} contains four terms. The
first and the last terms can be rewritten as
$  (\sqrt{\frac{s^{2}\log p}{mh_{1}^{3}}}+s^{3/2}\delta _{m,0}
  )\delta _{m,0}$, which is related to the initial error
$\delta _{m,0}$. If we further suppose that
$s^{3/2}\delta _{m,0}=o(1)$, then it will become $o(\delta _{m,0})$, which
can be iteratively refined in the algorithm. The remaining terms (the second
and third terms) can be minimized by specifying a bandwidth $h_{1}$ such
that $\sqrt{\frac{s\log p}{nh_{1}}} \asymp \sqrt{s}h_{1}^{\alpha}$, leading
to the rate
$\sqrt{s}  (\log p/n  )^{\frac{\alpha}{2\alpha +1}}$. Based on
these results, we are ready to give the convergence rate of the estimator
$\widehat{\bm{\beta }}^{(t)}$ at iteration $t$.

\begin{thm}%
\label{thm:tstep-hd-simplified}
Assume the assumptions in Theorem~\ref{thm:1step-hd} hold. With
proper bandwidth $h_{t}$, parameter $\lambda _{n}^{(t)}$ and kernel function
$H(\cdot )$, we can obtain that for $1\leq t\leq T$,
%
\begin{equation}
\big\|\widehat{\bm{\beta }}^{   ( t  )}-\bm{\beta }^{*}
\big\| _{2} = O_{\mathbb{P}} \biggl(\sqrt{s} \biggl(
\frac{\log p}{n} \biggr)^{
\frac{\alpha}{2\alpha +1}}+(r_{m})^{t}
\delta _{m,0} \biggr), \label{eq:tstep-hd-simplified} 
\end{equation}
and
$ \|\widehat{\bm{\beta }}^{   ( t  )}-\bm{\beta }^{*}
 \|_{1} =O_{\mathbb{P}} ( \sqrt{s} \|\widehat{\bm{\beta }}^{
   ( t  )}-\bm{\beta }^{*} \|_{2} )$, where $r_{m}$ is
an infinitesimal quantity.
\end{thm}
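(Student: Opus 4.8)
The plan is to establish \eqref{eq:tstep-hd-simplified} by induction on the iteration count $t$, using the one-step bound of Theorem \ref{thm:1step-hd} as the inductive engine, in complete analogy with the low-dimensional multi-round argument behind Theorem \ref{thm:tstep-ld}. At iteration $t$ the previous output $\hbe^{\left(t-1\right)}$ plays the role of the ``initial'' estimator, so I would carry through the induction \emph{both} an $\ell_2$ control $\|\hbe^{\left(t-1\right)}-\be^*\|_2=O_{\Prob}\left(\delta_{m,t-1}\right)$ and its companion $\ell_1$ control $\|\hbe^{\left(t-1\right)}-\be^*\|_1=O_{\Prob}\left(\sqrt{s}\,\delta_{m,t-1}\right)$, writing $\delta_{m,t}:=\|\hbe^{\left(t\right)}-\be^*\|_2$. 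The $\ell_1$ bound is exactly the second conclusion of Theorem \ref{thm:1step-hd}, and it is what keeps the hypotheses of that theorem in force at every round; hence the two estimates must be propagated jointly.

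The crux is the choice of the decreasing bandwidth sequence $\{h_t\}$, mirroring the schedule of Theorem \ref{thm:tstep-ld} but now carrying the extra $\sqrt{s}$ factors. Rewriting the one-step bound \eqref{eq:1step-hd-Dan} as
\begin{equation*}
\delta_{m,t}\lesssim s^{3/2}\delta_{m,t-1}^2+\sqrt{s}\,h_t^{\alpha}+\sqrt{\frac{s\log p}{nh_t}}+r_{m,t}\,\delta_{m,t-1},\qquad r_{m,t}:=\sqrt{\frac{s^2\log p}{mh_t^3}},
\end{equation*}
I would choose $h_t$ large enough to meet the two preconditions of Theorem \ref{thm:1step-hd}, namely $\sqrt{s}\,\delta_{m,t-1}=O(h_t^{3/2})$ and $r_{m,t}=o(1)$, while shrinking $h_t$ toward the floor $h^*\asymp(\log p/n)^{1/(2\alpha+1)}$, at which the bias term $\sqrt{s}\,h_t^{\alpha}$ and the variance term $\sqrt{s\log p/(nh_t)}$ balance at the target rate $a:=\sqrt{s}\,(\log p/n)^{\alpha/(2\alpha+1)}$. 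Since the precondition couples $h_t$ to the current error but relaxes as $\delta_{m,t-1}$ decreases, the schedule can be driven down to $h^*$ once the error has contracted sufficiently; the ``proper kernel function $H(\cdot)$'' is fixed by selecting an order $\alpha$ satisfying Assumption \ref{A1} for which $h^*$ and the floor $a$ are mutually consistent with the sparsity and sample-size constraints.

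The contraction then proceeds as follows. Once $\delta_{m,t-1}$ is small enough that $s^{3/2}\delta_{m,t-1}\lesssim r_{m,t}$ — a condition preserved along the iterations, and in particular valid at the floor under $s=O(m^{1/4})$ and $m>n^c$ — the quadratic term is absorbed into the linear one, yielding the clean geometric recursion $\delta_{m,t}\lesssim a+2\,r_{m,t}\,\delta_{m,t-1}$. Unrolling this with $r_m:=\sup_t r_{m,t}=o(1)$ produces $\delta_{m,t}\lesssim a+(r_m)^t\delta_{m,0}$, which is precisely \eqref{eq:tstep-hd-simplified}; the accompanying $\ell_1$ estimate is regenerated at each step by the second conclusion of Theorem \ref{thm:1step-hd}, closing the induction.

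The main obstacle is twofold. First, $\hbe^{\left(t-1\right)}$ is data-dependent, having already consumed the entire sample in previous rounds, so the one-step bound must hold \emph{uniformly} over admissible initial values in a shrinking neighborhood of $\be^*$ rather than at a fixed deterministic point; in particular the restricted-eigenvalue property of each fresh Hessian $V^{\left(t\right)}_{m,1}$ must be guaranteed uniformly, which is exactly where the condition $s^2\log p/(mh_t^3)=o(1)$ is consumed. Second, one must verify feasibility of the schedule: the lower bound on $h_t$ imposed by $\sqrt{s}\,\delta_{m,t-1}=O(h_t^{3/2})$ must eventually drop below $h^*$, i.e. the error must contract fast enough for the bandwidth to reach the statistical floor within the allotted $T$ iterations. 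Tracking these two couplings simultaneously, while keeping both the $\ell_2$ and $\ell_1$ controls in force, is the delicate part; the remaining algebra is a direct high-dimensional transcription of the argument for Theorem \ref{thm:tstep-ld}.
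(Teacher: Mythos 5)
Your induction skeleton is exactly the paper's: the detailed version of this theorem (Theorem \ref{thm:tstep-hd} in the appendix) is proved by iterating the one-step bound of Theorem \ref{thm:1step-hd}, propagating the $\ell_2$ bound together with the companion $\ell_1$ bound $\|\hbe^{(t)}-\be^*\|_1\leq 2\sqrt{s}\|\hbe^{(t)}-\be^*\|_2$ so that the hypotheses of the one-step theorem remain in force, and using the uniform (over admissible data-dependent initializers) concentration in Lemma \ref{lem:D-hd}. Where you depart from the paper is the bandwidth: you import the shrinking schedule from the low-dimensional Theorem \ref{thm:tstep-ld}, with $h_t$ starting above the floor and coupled to the current error through $\sqrt{s}\,\delta_{m,t-1}=O(h_t^{3/2})$. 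The paper does something simpler: it fixes $h_t\equiv h^*=(\log p/n)^{1/(2\alpha+1)}$ for \emph{all} $t$, and the role of the ``proper kernel function'' in the statement is precisely to choose the kernel order $\alpha>\alpha_0$ (Remark \ref{rmk:alpha0}) so large that $\sqrt{s}\,\delta_{m,0}=O((h^*)^{3/2})$ and $s^2\log p/(m(h^*)^3)=o(1)$ hold already at the \emph{initial} error level; since $\delta_{m,t}<\delta_{m,0}$ along the iterations, the preconditions then hold at every round with the constant bandwidth, and no schedule feasibility needs to be tracked. Your schedule can presumably be made to work, but it is exactly the part you flag as delicate, and it must additionally ensure $\sqrt{s}\,h_t^{\alpha}$ at early rounds is dominated by the decaying term, a verification your sketch leaves open; the paper's fixed-bandwidth route avoids all of this.

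There is one concrete soft spot in your contraction step. You absorb the quadratic term into the linear one under the condition $s^{3/2}\delta_{m,t-1}\lesssim r_{m,t}=\sqrt{s^2\log p/(mh_t^3)}$ and then unroll with $r_m:=\sup_t r_{m,t}$. That condition is \emph{not} guaranteed at early iterations: in regimes where $s^{3/2}\delta_{m,0}\gg\sqrt{s^2\log p/(m(h^*)^3)}$ the quadratic coefficient dominates, and your asserted recursion (hence your $r_m$) is not justified there. The paper sidesteps this entirely by defining $r_m:=\sqrt{s^2\log p/(m(h^*)^3)}+s^{3/2}\delta_{m,0}$, i.e., folding the quadratic coefficient into the contraction factor via the monotonicity $\delta_{m,t}\leq\delta_{m,0}$, so that the per-step bound $\bigl(\sqrt{s^2\log p/(m(h^*)^3)}+s^{3/2}\delta_{m,t-1}\bigr)\delta_{m,t-1}\leq r_m\,\delta_{m,t-1}$ holds unconditionally, with $r_m=o(1)$ secured by $\alpha>\alpha_0$, $s=O(m^{1/4})$, and $m>n^c$. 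With that redefinition of $r_m$ your unrolling goes through verbatim; as written, your version is incomplete in the early-iteration regime.
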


Theorem~\ref{thm:tstep-hd-simplified} summarizes the $\ell _{2}$ and
$\ell _{1}$ error bounds of $\widehat{\bm{\beta }}^{(t)}$ in Algorithm~\ref{alg:hd}. The detailed statement, including proper choices for $h_{t}$,
$\lambda _{n}^{(t)}$ and $H(\cdot )$ and the formal definition of
$r_{m}$, are relegated to Section \ref{sec:supp-hd} of Appendix. The
choice for parameters $\lambda _{n}^{(t)}$ depends on the sparsity
$s$, which may be unknown in practice. We provide an estimation method
to deal with unknown $s$ and additional discussions for the dependency
of $\lambda _{n}^{(t)}$ on $s$ in Section \ref{sec:Lepski} and Remark \ref{rmk:sinlambda} in Appendix.

The upper bound in \eqref{eq:tstep-hd-simplified} contains two terms. The
second term comes from the error of the initial estimator, and it decreases
exponentially as $t$ increases. As the algorithm operates, this quantity
finally gets dominated by the first term within at most $O(\log n)$ iterations.
Furthermore, the conditions on $m$, $n$ and $s$ in Theorem~\ref{thm:tstep-hd-simplified} are placed to guarantee that the algorithm
converges in finite iterations. The first term,
$\sqrt{s}  (\log p/n  )^{\frac{\alpha}{2\alpha +1}}$, represents
the statistical convergence rate of our proposed estimator, which is very
close to the optimal rate that one can obtain without a distributed environment.
\cite{feng2019nonregular} recently established the minimax optimal rate
$  (s\log p / n  )^{\frac{\alpha}{2\alpha +1}}$ of the maximum
score estimator in high-dimensional settings. Compared to that, the established
rate in Theorem~\ref{thm:tstep-hd-simplified},
$\sqrt{s}  (\log p/n  )^{\frac{\alpha}{2\alpha +1}}$, is slightly
slower due to the different designs in the algorithms, with a difference
of $s^{\frac{1}{4\alpha +2}}$. Since our proposed algorithm is designed
based on the Dantzig selector that directly controls the infinity norm
of the gradient, which is different from the path-following algorithm used
in \cite{feng2019nonregular}, their techniques cannot be directly applied
to our proposed distributed estimator. It would be a potentially interesting
future work to improve the estimator in distributed settings to match the
optimal rate.

It is also worthwhile noting that our proposed estimator intrinsically
relies on the smoothness condition of the conditional density as
\cite{feng2019nonregular} does. Instead, \cite{mukherjee2019non} studied
the high-dimensional maximum score estimator with an $\ell _{0}$ penalty
under the soft margin condition. They established a convergence rate
$(s\log p\log n/n)^{\frac{\alpha '}{\alpha '+2}}$ for their estimator,
where $\alpha ' \geq 1$ is a smoothness parameter in the soft margin condition.
Their result and ours are different due to assuming the underlying smoothness
in different ways, and it can be an interesting future direction to investigate
their strategy in distributed settings.

\section{Simulations}
\label{sec:simulations}

In this section, we use Monte Carlo simulations to verify the theoretical
properties developed in Section~\ref{sec:theory} for distributed estimation
and inference. Recall that the binary response model is
%
\begin{equation*}
\label{eq:simulation} y_{i}=\mathrm{sign} \bigl( x_{i}+
\bm{z}_{i}^{\top}\bm{\beta }^{*}+ \epsilon
_{i} \bigr),\quad  i=1,2,\dots ,n.
\end{equation*}
For each $i$, we independently generate $x_{i} \in \mathcal{N}(0, 1)$
and $\bm{z}_{i} =  ( z_{i,1},\dots ,z_{i,p}  )^{\top}$ from
$\mathcal{N}(0,\bm \Sigma _{0.5})$, where $\bm \Sigma _{0.5}$ denotes an
AR(1) matrix with correlation coefficient $0.5$, that is,
$(\bm\Sigma _{0.5})_{j_{1}, j_{2}}=0.5^{|j_{1}-j_{2}|}$. We let the true
parameter $\bm{\beta }^{*}=\bm{1}_{p}/\sqrt{p}$, where $\bm{1}_{p}$ denotes
the $p$-dimensional vector with all elements being one. We consider the
following three distributions to generate the noise $\epsilon _{i}$:
\begin{enumerate}[(3)]
\item[(1)] Homoscedastic normal distribution:
$\mathcal{N}  ( 0, \sigma ^{2}_{\epsilon}  )$;
\item[(2)] Homoscedastic uniform distribution:
$\mathcal{U}  [-\sqrt{3}\sigma _{\epsilon}, \sqrt{3}\sigma _{
\epsilon}  ]$;
\item[(3)] Heteroscedastic normal distribution:
$\mathcal{N} ( 0, \sigma ^{2}_{\epsilon} [1+ 0.5 z_{i, 1}^{2}
 ]^{2}/2.76 )$ when $p=1$ and

$\mathcal{N} ( 0, \sigma ^{2}_{\epsilon} [1+ 0.5   (z_{i,1}-z_{i,2}
  )^{2} ]^{2}/10.76 )$ when $p>1$.
\end{enumerate}

In each distribution, the constants such as $\sqrt{3}$ and $2.76$ are to
guarantee that the standard deviation of $\epsilon $ is
$\sigma _{\epsilon}$, and we set $\sigma _{\epsilon}=0.25$. The simulation
results under the homoscedastic normal noise are reported and discussed
in this section, and the remaining two settings are discussed in Section
\ref{sec:supp-non-Gaussian} of Appendix. The $n$ observations are evenly divided
into $L=\lfloor n/m \rfloor $ subsets, each with size $m$, to simulate
the different machines. We fix $m=1000$ and vary the total sample size
$n$ from $m^{1.3}$ to $m^{1.9}$. On the simulated data set, we compare
the performance of the following four algorithms:
\begin{enumerate}[4.]
\item[1.] ``\texttt{(Avg-MSE)}'': Averaged Maximum Score Estimator given by
\eqref{eq:AvgMSE};
\item[2.] ``\texttt{(Avg-SMSE)}'': Averaged Smoothed Maximum Score Estimator
given by \eqref{eq:AvgSMSE};
\item[3.] ``\texttt{(mSMSE)}'': Proposed multiround Smoothed Maximum Score Estimator
(Algorithm~\ref{alg:mSMSE});
\item[4.] ``pooled-SMSE'': The Smoothed Maximum Score Estimator using the entire
data set.
\end{enumerate}

For the smoothing kernel, we use the following function $H(x)$, which is
the integral of the second-order biweight kernel function that satisfies
Assumption~\ref{A1} with $\alpha =2$:
%
\begin{align*}
\nonumber
\label{eq:kernel} H (x ) = %
\begin{cases} 0 &
\text{if } x<-1,
\\
\frac{1}{2} + \frac{15}{16} \biggl( x - \frac{2}{3}
x^{3} + \frac{1}{5} x^{5} \biggr) &
\text{if } \lvert x \rvert \leq 1,
\\
1 & \text{if } x>1. \end{cases} %
\end{align*}
We choose the parameter dimension $p$ to be $1$ and $10$ in the main text
and relegate additional simulations for $m=2000$ and $p=20$ to Section
\ref{sec:supp-p=20} of Appendix. For the case of $p=1$,
\cite{horowitz1992smoothed} suggested finding the optimum of SMSE and MSE
by searching the optimal $\bm{\beta }$ over a discrete one-dimensional
set. However, for $p=10$, it is not computationally feasible to search
for the optimal solution via a $p$-dimensional set. Meanwhile, an exact
solver of MSE \citep{florios2008exact} involves solving a mixed integer
programming, which can be extremely slow for $m=1000$, thus relegated
in the following experiments. We would like to mention that existing literature
\citep{horowitz1992smoothed,shi2018massive} rarely examines the cases of
$p>1$ due to the computational difficulty. For SMSE, we use a first-order
gradient descent algorithm initialized in a local region containing
$\bm{\beta }^{*}$ where the population objective is convex. In our \texttt{(mSMSE)}
algorithm, we compute the initial estimator
$\widehat{\bm{\beta }}^{  ( 0  )}$ by solving an SMSE on the first
machine. For the constant $\lambda _{h}$ in the bandwidth in Theorem~\ref{thm:normality}, we set $\lambda _{h}=1$ for all of the experiments
except for Section \ref{sec:sensitivity} of Appendix, where we discuss
the effect of $\lambda _{h}$ on \texttt{(mSMSE).}

To evaluate the performance of the competitor algorithms, we record the
coverage rate of each method for estimating the projected parameter
$\bm{1}_{p} ^{\top} \bm{\beta }^{*}$, that is, the rate that the estimated
confidence interval covers the true value of
$\bm{1}_{p} ^{\top} \bm{\beta }^{*}$. By Corollary~\ref{cor:inference}, for each $t$, our \texttt{(mSMSE)} algorithm constructs
a ($1-\xi $)-level confidence interval for
$\bm{1}_{p} ^{\top} \bm{\beta }^{*}$ as follows:
%
\begin{equation}
\begin{aligned}
&\label{eq:interval} \bm{1}_{p} ^{\top} \widehat{\bm{\beta
}}^{  ( t  )}-n^{-
\alpha /(2\alpha +1)}\lambda _{h}^{\alpha /(2\alpha +1)}
\bm{1}_{p}^{
\top}\widehat V^{-1} \widehat U \\
&\quad \pm
\tau _{1-\frac{\xi}{2}} \sqrt{n^{-
2\alpha /(2\alpha +1)}\lambda _{h}^{-1/(2\alpha +1)}
\bigl( \bm{1}_{p}^{
\top} \widehat V^{-1}
\widehat V_{s}\widehat V^{-1} \bm{1}_{p}
\bigr)}, 
\end{aligned}
\end{equation}
where $\alpha =2$ and $\tau _{1-\frac{\xi}{2}}$ is the
$  (1-\frac{\xi}{2}  )$-th quantile of
$\mathcal{N}  ( 0, 1  )$. Similarly, by Theorem~\ref{thm:asym-DC} and \cite{horowitz1992smoothed}, the confidence interval
of the \texttt{(Avg-SMSE)} and the pooled-SMSE can also be given by
\eqref{eq:interval}, replacing $\widehat{\bm{\beta }}^{(t)}$ with
$\widehat{\bm{\beta }}_{\mathrm{{{\mathtt{{(Avg-SMSE)}}}}}}$ and
$\widehat{\bm{\beta }}_{\mathrm{{pooled-SMSE}}}$, respectively. As claimed before,
the asymptotic distribution of \texttt{(Avg-MSE)} cannot be given in an explicit
form, and thus we can only construct the interval using the sample standard
error, as \cite{shi2018massive} does. Concretely, the confidence interval
of \texttt{(Avg-MSE)} is computed by
\begin{equation*}
\widehat{\bm{\beta }}_{\mathrm{{{\mathtt{{(Avg-MSE)}}}}}}\pm \tau _{1-
\frac{\xi}{2}}\sqrt{
\frac{1}{L(L-1)}\sum_{\ell =1}^{L} (
\widehat{\bm{\beta }}_{\mathrm{{MSE}},\mathrm{ \ell}}-\widehat{\bm{\beta }}_{\mathrm{{{
\mathtt{{(Avg-MSE)}}}}}}
)^{2}},
\end{equation*}
and $\widehat{\bm{\beta }}_{\mathrm{{MSE}},\mathrm{ \ell}}$ denotes the MSE on machine
$\ell $. In all of the experiments, we always set $\xi =0.05$. The results
are averaged over 200 independent runs.

\begin{figure}[!t]
	\centering
	\begin{subfigure}[b]{0.45\textwidth}
		\centering
		\includegraphics[width=\textwidth]{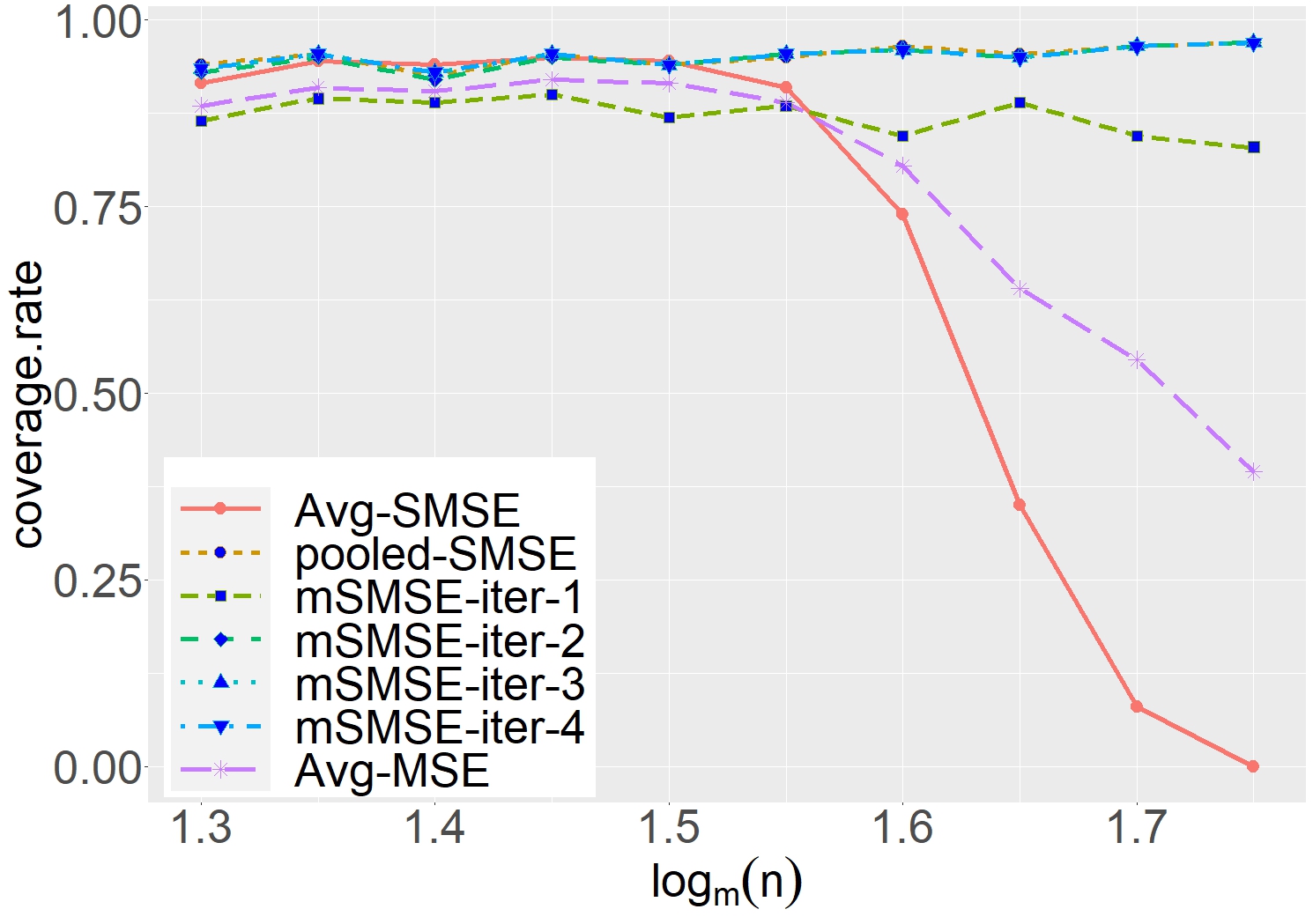}
		\caption{$p=1$}
		\label{fig:coveragerate_p=1}
	\end{subfigure}
	\hfill
	\begin{subfigure}[b]{0.45\textwidth}
		\centering
		\includegraphics[width=\textwidth]{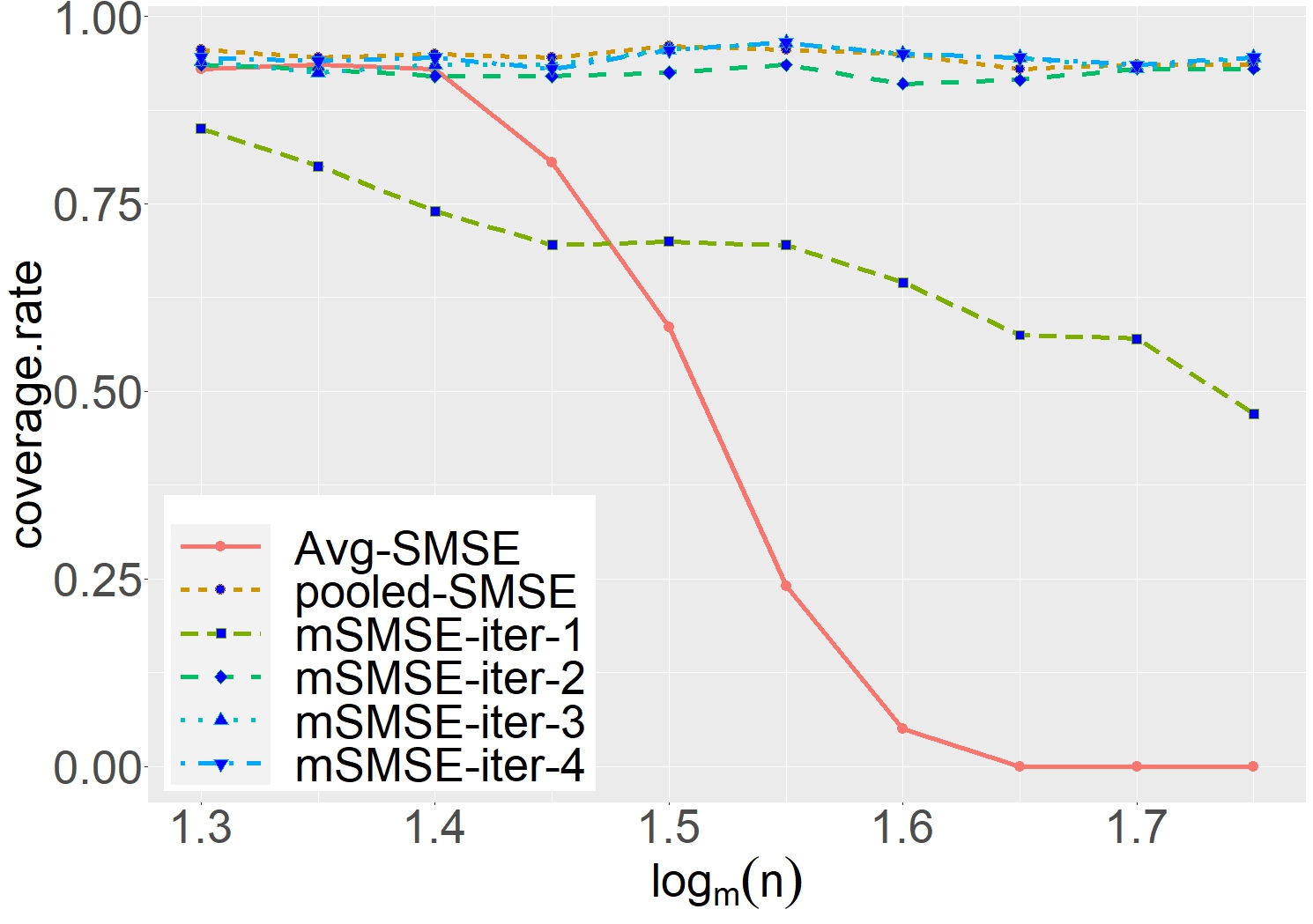}
		\caption{$p=10$}
		\label{fig:coveragerate_p=10}
	\end{subfigure}
	\caption{The coverage rates of different methods with homoscedastic normal noise}
	\label{fig:CoverageRates}
\end{figure}

Figure~\ref{fig:CoverageRates} shows the coverage rates as a function of
$\log _{m}(n)$ with $p=1$ and $10$. For both cases, our proposed \texttt{(mSMSE)},
as well as the pooled estimator, achieves a high coverage rate around 95\%
no matter how large $\log _{m}(n)$ is, while the two averaging methods
both fail when $\log _{m}(n)$ is large, as expected. This is consistent
with the findings of \cite{shi2018massive}. Note that since we fix the
local size $m$, increasing $\log _{m}(n)$ is equivalent to increasing
$L$, so the failure of averaging methods with large $\log _{m}(n)$ verifies
that there exists a restriction on the number of machines, which is illustrated
in our theoretical results. Furthermore, we also see that our \texttt{(mSMSE)}
algorithm is efficient and stable since the coverage rates of \texttt{(mSMSE)}
achieve around 95\% in only three iterations and change little in the following
fourth iteration. In fact, \texttt{(mSMSE)} converges within three iterations
in most runs, but we force it to run the fourth iteration to show stability.
Additionally, we observe that \texttt{(Avg-SMSE)} performs better than \texttt{(Avg-MSE)}
for small $\log _{m}(n)$ and $p=1$, but when the number of machines is
too large and does not satisfy the constraint in Theorem~\ref{thm:asym-DC},
\label{add}
the coverage rates of \texttt{(Avg-SMSE)} may be lower than those of \texttt{(Avg-MSE)}.
Recall that, Theorem~\ref{thm:asym-DC} has a constraint on the number of
machines
$L=o ( m^{\frac{2}{3}  ( \alpha -1  )}/   ( {p}\log m
  )^{\frac{2\alpha +1}{3}} )$. This constraint is necessary to
guarantee that $p\log m /(mh^{3}) = o(1)$, the violation of which results
in an additional bias term of the order
$O (\sqrt{\frac{p\log m}{mh^{3}}} )$. When $\log _{m}(n)$ is large,
this term diverges and invalidates the asymptotic normality established
in Theorem~\ref{thm:asym-DC}, which leads to the failure of distributed
inference. It is noteworthy to mention that the theoretical analysis for
\texttt{(Avg-SMSE)} only covers the cases when the constraint is satisfied,
and the reason that we further develop \texttt{(mSMSE)} is to completely
remove the constraint. In Section \ref{sec:bandwidth-supp} of Appendix, we
provide further discussions for the performance of \texttt{(Avg-SMSE)} when
the constraint is not satisfied. We relegate the reports of the bias and
variance, sensitivity analysis and time complexity of the above simulations
to Sections \ref{supp:simu-bias}--\ref{sec:supp_time} of Appendix.

\section{Conclusion and future works}
\label{sec:conclusion}

In this paper, we study the semiparametric binary response model in a distributed
environment. The problem has been studied in \cite{shi2018massive} with
the maximum score estimator \texttt{(Avg-MSE)} from the perspective of its
statistical properties. We provide two algorithms to achieve faster convergence
rates under relaxed constraints on the number of machines $L$: (1) the
first approach \texttt{(Avg-SMSE)} is a one-shot divide-and-conquer type
algorithm on a smoothed objective; (2) the second approach \texttt{(mSMSE)}
completely removes the constraints via iterative smoothing in a multiround
procedure. From the statistical perspective, \texttt{(mSMSE)} achieves the
optimal nonparametric rate of convergence; from the algorithmic perspective,
it achieves quadratic convergence with respect to the number of iterations,
up to the statistical error rate. We further provide two generalizations
of the problem: how to handle the heterogeneity of data sets and how to translate the problem to high-dimensional settings.

It is worthwhile noting that, in this paper, we assume that the communication
of $p\times p$ matrices is allowed in a distributed environment under low-dimensional
settings, but not under high-dimensional settings. If such communication is prohibited in the low-dimensional settings,
one may implement our high-dimensional algorithm, which entails a slower
algorithmic convergence over the iterations but achieves nearly the same statistical
accuracy when the algorithm terminates. Another option that can be considered
is applying our sequential smoothing toward the surrogate loss in
\cite{jordan2016communication} and \cite{chen2021first} to approximate
the second-order information matrix by first-order information, which we
leave for future studies.

In addition, the smoothing technique is closely related to other nonparametric
and semi-parametric methods such as the change plane problem
\citep{mukherjee2020asymptotic} and kernel density estimators
\citep{hardle2004nonparametric}. We anticipate the multiround smoothing
refinements can be adapted to other such estimators and improve the rate
of convergence without a restriction on the number of machines. It is also
worthwhile noting that recently a new smoothing method has been proposed
for quantile regression (see, e.g.,
\cite{fernandes2021smoothing}, \citeauthor{he2021smoothed} (\citeyear{he2021smoothed, he2022scalable}), \cite{battey2021communication}).
They proposed a specific form of the kernel function to ensure the loss
function in the smoothed quantile regression to be strictly convex. It
is not clear whether we can devise the choice of
$H  ( z/h  )$ to achieve the same advantage. Lastly, for the high-dimensional
settings, our proposed algorithm is based on the spirit of the Dantzig
selector. \cite{feng2019nonregular} considered a regularized
objective and obtained a minimax optimal rate for the linear binary response
model. It could be potentially interesting to apply this approach to the
maximum score estimator under the distributed setting.


\paragraph{Acknowledgements} The authors are very grateful to Professor Moulinath Banerjee and  anonymous referees, Associate Editor, and the Editor for the constructive comments that improved the quality of this paper.
\paragraph{Funding} Xi Chen's research is supported
by NSF Grant IIS-1845444. Weidong Liu's research is supported by NSFC Grant
11825104.


\bibliography{Smoothed_MSE_revised}

  \newpage
\appendix


\section{Theoretical Results of the High-dimensional multi-round SMSE}
\label{sec:supp-hd}

In this section, we give the complete theoretical analysis of the high-dimensional multi-round SMSE in Algorithm \ref{alg:hd}. 
First, we restate the conditions as the following three assumptions.

\begin{assumption}
	\label{B1}
	Assume that the initial value $\hbe^{\,\left(0\right)}$ satisfies
	\begin{equation}
		\left\|\hbe^{\,\left( 0\right)} - \be^{*}\right\|_2 = O_{\Prob} \left(\delta_{m,0}\right), \quad
		\left\|\hbe^{\,\left(0\right)} -  \be^{*}\right\|_1  = O_{\Prob} \left(\sqrt{s}\delta_{m,0}\right),
	\end{equation}
	for some $\delta_{m,0}=o(1)$.
\end{assumption}

\begin{assumption}
	\label{B2}
	Assume that the dimension $p=O\left( n^{\nu}\right)$ for some $\nu>0$, the local sample size $m=O(n^c)$ for some $0<c<1$, and the sparsity $s=O\left(m^r\right)$ for some $0<r<1/4$.
\end{assumption}
\begin{assumption}
	\label{B3}
	Assume that the covariates are uniformly bounded, i.e., there exists $\overline{B}$ such that $\norm{\bZ}_{\infty} \leq \overline{B}$. Further, assume that the covariates have finite second moment, i.e., $\sup_{\norm{\bv}_2=1}\E\left[(\bv^{\top}\bZ)^2\right] < +\infty$.
\end{assumption} 

Assumption \ref{B1} requires that the error of the initial value can be upper bounded in both $\ell_1$ and $\ell_2$ norm. Moreover, the bound of the $\ell_1$ error is of the same order as $\sqrt{s}$ times the bound of the $\ell_2$ error. This can be achieved by the path-following method proposed by \cite{feng2019nonregular}, with $\delta_{m,0}=\left(s\log p/m\right)^{\alpha/(2\alpha+1)}$.  Assumption \ref{B2} restricts the dimension $p$, the local size $m$, and the sparsity $s$. The first two requirements on $p$ and $m$ imply that $\log p = O \left( \log n\right)$ and $\log m=O(\log n)$, which is necessary for ensuring that the algorithm converges in finite iterations. The third assumption $S=o(m^r)$ for $0<r<1/4$ is also necessary to ensure the consistency of our estimator, which is further discussed in Remark \ref{rmk:s<m^1/4} below. Assumption \ref{B3} assumes the uniform boundedness of the predictors, which can be achieved by scaling each element of $\bZ$ into $[-1, 1]$, without
impacting $\be^*$ and $Y$. 
\begin{remark}\label{rmk:s<m^1/4}
	The assumption $S=o(m^r)$ for $0<r<1/4$ arises from the requirement that $s^{3/2}\delta_{m,0}=o(1)$, which ensures that the estimator can be iteratively refined in the algorithm. As shown in Equation \eqref{eq:1step-hd-Dan} in the main text, the convergence rate of $\hbe^{(1)}$ contains a bias term $s^{3/2}\delta_{m,0}^2$, which is an improvement from the initial error $\delta_{m,0}$ if and only if $s^{3/2}\delta_{m,0}=o(1)$. Plugging in the initial error $\delta_{m,0}=(s\log p/m)^{\alpha/(2\alpha+1)}$ yields the requirement $s=o(m^{1/4})$. 
	
	This requirement can be relaxed to $s=o(m^{1/2})$ if we assume the sub-Gaussianity of the covariate $\bZ$ in the high-dimensional settings, since the bias term is reduced to $s^{1/2}\delta_{m,0}^2$ under this assumption. The condition $s=o(m^{1/2})$ is also necessary for ensuring the restricted eigenvalue condition for the local Hessian $V_{m, 1}$.
	
	Furthermore, we note that this assumption $s=o(m^{1/4})$ is essentially $s=o\big(m_1^{1/4}\big)$, where $m_1$ is the local sample size on the first machine, since we compute the initial estimator $\hbe^{(0)}$ and the Hessian matrix $V_{m_1, 1}$ on the first machine. For the ease  of presentation, we assume the sample size $m$ is identical on each local machine, i.e., $m_1=m=n/L$. If there is a machine where its local sample size is larger, we can choose this machine as the first machine. Further, if a certain amount of the entire data is allowed to be pooled to one machine, we can use that pooled size as the $m_1$. 
\end{remark} 

Under these assumptions, we formally restate Theorem \ref{thm:tstep-hd-simplified}:

\begin{thm}\label{thm:tstep-hd}
	For $t= 1,2,\dots, T$, define  $h^*:=\big(\frac{\log p}{n}\big)^{\frac{1}{2\alpha+1}}$and $r_m:=\sqrt{\frac{s^2\log p}{m(h^*)^3}}+s^{3/2}\delta_{m,0}$.
	Assume Assumptions \ref{A1}--\ref{A4} and  \ref{B1}--\ref{B3} hold. Then there exists a constant $\alpha_0$ such that, by choosing a kernel $H^{\prime}(\cdot)$ with order $\alpha>\alpha_0$,  bandwidth $h_t \equiv h^*$, and parameters 
	\[\lambda^{\left( t\right)}_{n}=C_\lambda\left[\left(\frac{\log p}{n}\right)^{\frac{\alpha}{2\alpha+1}}+\frac{1}{\sqrt{s}}(r_m)^t\delta_{m,0}\right],\]
	with a sufficiently large constant $C_\lambda$, we have that $r_m=o(1)$, 
	\begin{equation}\label{eq:tstep-hd}
		\left\|\hbe^{\,\left( t\right)}-\be^{*}\right\|_2 = O_\Prob\paren{\sqrt{s}\lambda^{\left( t\right)}_{n}} \quad {\rm and} \quad
		\left\|\hbe^{\,\left( t\right)}-\be^{*}\right\|_1 \leq 2\sqrt{s}\left\|\hbe^{\,\left( t\right)}-\be^{*}\right\|_2,
	\end{equation}
	with probability tending to one.
\end{thm}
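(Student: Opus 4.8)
The plan is to establish Theorem~\ref{thm:tstep-hd} by induction on the iteration index $t$, using the one-step improvement bound of Theorem~\ref{thm:1step-hd} as the driving engine and the initialization Assumption~\ref{B1} as the base case $t=0$. At each step I feed the previous iterate $\hbe^{\,(t-1)}$ into Theorem~\ref{thm:1step-hd} as the ``initial estimator,'' with its rate $\delta_{m,t-1}$ playing the role of $\delta_{m,0}$ there and the fixed bandwidth $h_t \equiv h^* = (\log p/n)^{1/(2\alpha+1)}$. The crux is that plugging $h^*$ into the one-step rate collapses the bias and stochastic terms to the common order $\sqrt{s}(\log p/n)^{\alpha/(2\alpha+1)}$, while the two remaining terms shrink the carried-over initial error by the contraction factor $r_m$.

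Before running the induction I would verify $r_m = o(1)$, which also fixes the threshold $\alpha_0$. Using $\log p \asymp \log n$ (from $p = O(n^\nu)$), the quantity $s^2\log p/(m(h^*)^3)$ controlling the first piece of $r_m$ scales, up to polylogarithmic factors, like $m^{2r-1}n^{3/(2\alpha+1)}$, while $s^{3/2}\delta_{m,0}$ scales like $m^{3r/2 + (r-1)\alpha/(2\alpha+1)}$ when $\delta_{m,0} = (s\log p/m)^{\alpha/(2\alpha+1)}$. Under Assumption~\ref{B2} ($s = O(m^r)$ with $r<1/4$), letting $\alpha \to \infty$ sends the relevant exponents to $2r-1 < 0$ and $(4r-1)/2 < 0$, so there is a finite $\alpha_0$ with $\alpha > \alpha_0$ forcing $r_m = o(1)$; this simultaneously supplies the precondition $s^2\log m/(m(h^*)^3) = o(1)$ needed to invoke Theorem~\ref{thm:1step-hd}.

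For the inductive step, assume $\|\hbe^{\,(t-1)} - \be^*\|_2 = O_\Prob(\delta_{m,t-1})$ together with the companion bound $\|\hbe^{\,(t-1)} - \be^*\|_1 \le 2\sqrt{s}\|\hbe^{\,(t-1)} - \be^*\|_2$. The $\ell_1$/$\ell_2$ relation is exactly the inductive hypothesis, and the technical requirement $\sqrt{s}\,\delta_{m,t-1} = O((h^*)^{3/2})$ holds at $t=1$ by assumption and only weakens as $\delta_{m,t-1}$ decreases, so Theorem~\ref{thm:1step-hd} applies. I would first check that the prescribed $\lambda_n^{(t)}$ matches the one-step choice: substituting $h^*$ and using the identity $\tfrac12 - \tfrac{1}{2(2\alpha+1)} = \tfrac{\alpha}{2\alpha+1}$ shows both $(h^*)^\alpha$ and $\sqrt{\log p/(nh^*)}$ equal $(\log p/n)^{\alpha/(2\alpha+1)}$, so the $\lambda_n^{(t)}$ of Theorem~\ref{thm:tstep-hd} is the one-step $\lambda$ with $C_\lambda$ absorbing the constant. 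Applying Theorem~\ref{thm:1step-hd} and simplifying its four output terms, the bias term $\sqrt{s}(h^*)^\alpha$ and the variance term $\sqrt{s\log p/(nh^*)}$ both reduce to $\sqrt{s}(\log p/n)^{\alpha/(2\alpha+1)}$, while the first and last terms combine as $\bigl(s^{3/2}\delta_{m,t-1} + \sqrt{s^2\log p/(m(h^*)^3)}\bigr)\delta_{m,t-1}$, whose prefactor is $O_\Prob(r_m)$ after using $\delta_{m,t-1} \le \delta_{m,0}$ and the definition of $r_m$ (this is the main order-bookkeeping step). Thus $\|\hbe^{\,(t)} - \be^*\|_2 = O_\Prob\bigl(\sqrt{s}(\log p/n)^{\alpha/(2\alpha+1)} + r_m\,\delta_{m,t-1}\bigr)$.

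Finally I would close the recursion: since $r_m \le 1$, one has $r_m\,\delta_{m,t-1} = r_m\sqrt{s}(\log p/n)^{\alpha/(2\alpha+1)} + (r_m)^t\delta_{m,0} \le \sqrt{s}(\log p/n)^{\alpha/(2\alpha+1)} + (r_m)^t\delta_{m,0} = \delta_{m,t}$, which yields $\|\hbe^{\,(t)} - \be^*\|_2 = O_\Prob(\delta_{m,t})$, and the $\ell_1$ bound is inherited directly from Theorem~\ref{thm:1step-hd}. The main obstacle is not the algebra but the uniform-over-iterations probabilistic control: each $\hbe^{\,(t-1)}$ is data-dependent (it reuses the full sample), so Theorem~\ref{thm:1step-hd} must be read as a statement holding uniformly over all initial estimators in a shrinking neighborhood of $\be^*$, and I would then chain the ``probability tending to one'' events across $t = 1,\dots,T$ by a union bound, checking that $T = O(\log n)$ in the $r_m = o(1)$ regime does not overwhelm the per-step failure probabilities.
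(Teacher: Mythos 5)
Your proposal is correct and follows essentially the same route as the paper's proof: induction on $t$ driven by Theorem~\ref{thm:1step-hd} with the fixed bandwidth $h^*$, the observation that $(h^*)^\alpha \asymp \sqrt{\log p/(nh^*)} \asymp (\log p/n)^{\alpha/(2\alpha+1)}$, the choice $\alpha>\alpha_0$ guaranteeing $r_m=o(1)$ together with the preconditions ($s^{3/2}\delta_{m,t}=o(1)$, $\sqrt{s}\,\delta_{m,t}=O((h^*)^{3/2})$, via $\delta_{m,t}\lesssim\delta_{m,0}$) at every step, and the same $r_m$-contraction bookkeeping to close the recursion. Your concern about uniformity over data-dependent iterates is exactly how the paper's one-step bounds are in fact established (the concentration lemmas hold uniformly over the neighborhood $\Theta$ with polynomial failure probabilities, so the union bound over $T=O(\log n)$ rounds is harmless).
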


The proof of Theorem \ref{thm:tstep-hd} is given in Section \ref{sec:pf-hd}. The upper bound $\sqrt{s}\lambda^{\left( t\right)}_{n}$ in Theorem \ref{thm:tstep-hd} denotes the order of the $\ell_2$-error of $\hbe^{(t)}$, which contains two components:
\[\sqrt{s}\lambda^{\left( t\right)}_{n}=\sqrt{s}\left(\log p/n\right)^{\alpha/(2\alpha+1)}+(r_m)^t\delta_{m,0}.\] 
The first term $\sqrt{s}\left(\log p/n\right)^{\alpha/(2\alpha+1)}$ is the best rate our method can achieve, and the second term comes from the error of the initial estimator. As long as $r_m = o(1)$, the second term decreases exponentially as $t$ increases, which implies that $\hbe^{(t)}$ converges after at most $O(\log n)$ iterations. Concretely, the number of required iterations is
\begin{equation}\label{eq:iterations-hd}
	\frac{\frac{\alpha}{2\alpha+1}\paren{\log n-\log \log p}-\frac{1}{2}\log s+ \log \delta_{m,0}}{\left[-\log\paren{r_m}\right]},
\end{equation}
which is larger than that in the low-dimensional case. Under the Assumption \ref{B2}, it's easy to see that \eqref{eq:iterations-hd} can be upper bounded by a finite number.

The condition $r_m = o(1)$ can be ensured by choosing a kernel function with order higher than a certain constant $\alpha_0$. See Remark \ref{rmk:alpha0} for explanation.

\begin{remark}
	\label{rmk:alpha0}
	To make sure that $r_m=\frac{s^2\log p}{m(h^*)^3}+s^{3/2}\delta_{m,0}=o(1)$ and $\sqrt{s}\delta_{m,0}=O\big((h^*)^{3/2}\big)$, we need to choose a kernel $H^{\prime}(\cdot)$ with order $\alpha$ such that
	$\alpha>\alpha_0:=\max\big\{\frac{3r}{2(1-4r)}, \frac{3}{2c\,\left( 1-2r\right)}+\frac{r}{2(1-2r)} \big\}$,
	where $(\log m) / (\log n)<c<1$ and $(\log s)/(\log m)< r< 1/4$ are supposed in Assumption \ref{B2}. Here we plug in the rate $\delta_{m,0}=(s\log p /m)^{\alpha/(2\alpha+1)}$ obtained by the path-following algorithm in \cite{feng2019nonregular}. If the order of the kernel is not sufficiently high (i.e., less than $\alpha_0$), Algorithm \ref{alg:hd} still works by choosing  $h_m=m^{\frac{r-2\alpha(1-2r)}{3(2\alpha+1)} + \varepsilon}$ for some small constant $\varepsilon>0$, and the corresponding convergence rate will be $\sqrt{s}h^{\alpha}_m$.
	
\end{remark}

In Remark \ref{rmk:sinlambda}, we explain the dependency of the parameter $\lambda_{n}^{(t)}$ on $s$ and provide solutions in cases where $s$ is unknown in practice.

\begin{remark}\label{rmk:sinlambda}
	In Theorem \ref{thm:tstep-hd}, the tuning parameter $\lambda_{n}^{(t)}$ for the Dantzig Selector depends on $s$. This dependence arises from the requirement that for any iteration $t$, the parameter $\lambda_{n}^{(t)}$ has to satisfy $  \Big\|V^{\left( t\right)}_{m,1}\be^* -\paren{V^{\left( t\right)}_{m,1} \hbe^{\,\left(t-1\right)}- U_{n}^{\left( t\right)}}\Big\|_{\infty} \leq \lambda^{\left(t\right)}_{n}$, i.e., the true parameter $\be^*$ must lie in the feasible set of the Dantzig Selector \eqref{eq:beta-opt-Dan}.  This is necessary for ensuring the consistency of the Dantzig Selector. When $t=1$, as shown in the proof of Theorem \ref{thm:1step-hd} in Section \ref{sec:pf-hd}, the order of $\left\|V^{\left( 1\right)}_{m,1}\be^* -\paren{V^{\left(1\right)}_{m,1} \hbe^{\,\left(0\right)}- U_{n}^{\left( 1\right)}}\right\|_{\infty}$ is $O_{\Prob}\Big(s\delta_{m,0}^2+h^{\alpha}+\sqrt{\frac{\log p}{nh}}+\sqrt{\frac{s\log p}{mh^3}}\delta_{m,0}\Big)$, which depends on $s$ and leads to the choice of $\lambda_n^{(1)}$ in Theorem \ref{thm:1step-hd}. The dependence of  $\lambda_{n}^{(t)}$ for $t>1$ on $s$ follows from a similar reason. 
	
	We also note that, if the sparsity $s$ is unknown in practice, one can apply the Lepski's method provided in Section \ref{sec:Lepski} to estimate $s$ and obtain the same convergence rate as in Theorem \ref{thm:tstep-hd}. Furthermore, if an upper bound for  $s$ is known in practice, say $s=O(m^r)$ for some constant $0<r<1/4$, then it is possible to replace $s$ in the above equation with the upper bound, which leads to a slower algorithmic convergence (i.e., requires more rounds to converge) but does not affect the final rate $\sqrt{s}\left(\frac{\log p}{n}\right)^{\frac{\alpha}{2\alpha+1}}$ that our algorithm can achieve.  
\end{remark}

\subsection{Data Dependent Method for Unknown Parameters}
\label{sec:Lepski}

{
	In Algorithm \ref{alg:hd}, the choice of regularization parameters $\big\{\lambda_{n}^{(t)}\big\}$  and the bandwidth parameters $\{h_t\}$ depends on the sparsity $s$ and the smoothness $\alpha$, which might be unknown in practice. In this section, we provide data-adaptive estimation methods to deal with either unknown $s$ or $\alpha$ using the Lepski's approach \citep{lepskii1991problem, feng2019nonregular}. We show that the estimators obtained by the data-adaptive method achieve the same convergence rate as that in Algorithm \ref{alg:hd}.}

{\textbf{Unknown $s$.} We first consider the case where $s$ is unknown and $\alpha$ is known. Let $\calS:=\{2^q: q=0,1,\dots,\left\lfloor\log_2(p)\right\rfloor\}$, $\delta_{m,0,s'}:=\big(\frac{s'\log p}{m}\big)^{\frac{\alpha}{2\alpha+1}}$, and $h^*=\big(\frac{\log p}{n}\big)^{\frac{1}{2\alpha+1}}$. For $s' \in \calS$ and $t= 1,2,\dots, T$, define $r_{m, s'}:=\sqrt{\frac{(s')^2\log p}{m(h^*)^3}}+(s')^{3/2}\delta_{m,0,s'}$ 
	and $\lambda^{\left( t\right)}_{n,s'}=C_\lambda\Big[\big(\frac{\log p}{n}\big)^{\frac{\alpha}{2\alpha+1}}+\frac{1}{\sqrt{s'}}(r_{m. s'})^{t} \delta_{m,0}\Big]$,
	with a sufficiently large constant $C_\lambda$. In the $t$th iteration, let $\hbe^{(t)}_{s'}$ denote the estimator obtained by solving \eqref{eq:beta-opt-Dan} with parameter $\lambda^{(t)}_{n, s'}$ for any $s' \in \calS$. Then, with a sufficiently large constant $\overline{C}$, we estimate the sparsity as
	\begin{equation}
		\widehat{s}^{(t)}:=\min\Big\{\widetilde{s} \in \calS: \Big\|\hbe^{(t)}_{ \widetilde{s}}-\hbe^{(t)}_{s'}\Big\|_2 \leq \overline{C} \sqrt{s'}\lambda_{n, s'}^{(t)}, ~ \Big\|\hbe^{(t)}_{ \widetilde{s}}-\hbe^{(t)}_{s'}\Big\|_1 \leq \overline{C} s'\lambda_{n, s'}^{(t)}, ~  \forall s' > \widetilde{s}\Big\}.
	\end{equation}
	The convergence rate of $\hbe^{(t)}_{\widehat{s}^{(t)}}$ is given by the following theorem.
	\begin{thm}\label{thm:tstep-hd-unknowns}
		Assume the conditions in Theorem  \ref{thm:tstep-hd-simplified} hold, and then for $t=1,2,\dots,T$, we have
		\begin{equation}\label{eq:tstep-hd-unknowns}
			\left\|\hbe^{\left( t\right)}_{\widehat{s}^{(t)}}-\be^{*}\right\|_2 = O_{\Prob}(\delta_{m ,t}), \quad \left\|\hbe^{\left( t\right)}_{\widehat{s}^{(t)}}-\be^{*}\right\|_1 = O_{\Prob}(\sqrt{s}\delta_{m ,t})
		\end{equation}
		where $\delta_{m ,t}:=\sqrt{s}\Big(\frac{\log p}{n}\Big)^{\frac{\alpha}{2\alpha+1}}+(r_{m})^{t} \delta_{m,0}$
		is the same as the error rate in Theorem \ref{thm:tstep-hd-simplified}.
	\end{thm}
	Theorem \ref{thm:tstep-hd-unknowns} shows that the data-adaptive estimator $\hbe^{\,\left( t\right)}_{\widehat{s}^{(t)}}$ obtained by the Lepski's method achieves the same convergence rate as $\hbe^{\,\left( t\right)}$ in Algorithm \ref{alg:hd}. The proof of Theorem \ref{thm:tstep-hd-unknowns} is provided in Section \ref{sec:pf-hd}.}

{\textbf{Unknown $\alpha$.} Now we present a data adaptive method for tuning the bandwidth $h$ when the smoothing parameter $\alpha$ is unknown and the sparsity $s$ is known. In this part, we assume that the initial error bound  $\delta_{m, 0}$ does not depend on $\alpha$, for example, $\big(\frac{s\log p}{m}\big)^{\frac13}$. We then define the bandwidth set as $\mathcal{D}=\Big\{2^{-q}, q=0, 1, \dots, \min\big\{\lfloor-\frac{2}{3}\log_2(\sqrt{s}\delta_{m ,0})\rfloor, \lfloor-\frac{1}{3}\log_2(s^2\log p / m)\rfloor \big\}\Big\}$, which ensures that for all $h' \in \calD$, the assumptions $\sqrt{s}\delta_{m ,0}=O\big((h')^{\frac32}\big)$ and $\frac{s^2\log p}{m(h')^3}=o(1)$ are satisfied.  For $h' \in \calD$ and $t= 1,2,\dots, T$, define $r_{m, h'}:=\sqrt{\frac{s^2\log p}{m(h')^3}}+s^{3/2}\delta_{m,0}$ 
	and $\lambda^{\left( t\right)}_{n,h'}=C_\lambda\left[\sqrt{\frac{\log p}{nh'}}+(h')^{\alpha}+\frac{1}{\sqrt{s}}(r_{m, h'})^{t} \delta_{m,0}\right]$,
	with a sufficiently large constant $C_\lambda$. In the $t$th iteration, let $\hbe^{(t)}_{h'}$ denote the estimator obtained by solving \eqref{eq:beta-opt-Dan} with bandwidth $h'$ and parameter $\lambda^{\left( t\right)}_{n,h'}$ for any $h' \in \calD$. Then, with a sufficiently large constant $\overline{C}$, we estimate the optimal bandwidth as
	\begin{equation}
		\widehat{h}^{(t)}:=\max\left\{\widetilde{h} \in \calD: \bigl\|\hbe^{(t)}_{ \widetilde{h}}-\hbe^{(t)}_{h'}\bigr\|_2 \leq \overline{C} \sqrt{s}\lambda_{n, h'}^{(t)}, ~ \bigl\|\hbe^{(t)}_{ \widetilde{h}}-\hbe^{(t)}_{h'}\bigr\|_1 \leq \overline{C} s\lambda_{n, h'}^{(t)}, ~  \forall h' < \widetilde{h}\right\}.
	\end{equation}
	Similar to Theorem \ref{thm:tstep-hd-unknowns}, the convergence rate of $\hbe^{(t)}_{\widehat{h}^{(t)}}$ is given by the following theorem.
	\begin{thm}\label{thm:tstep-hd-unknownalpha}
		Assume the conditions in Theorem  \ref{thm:tstep-hd-simplified} hold, and further assume that $s^{3/2}\delta_{m ,0}=o(1)$.  Then for $t=1,2,\dots,T$, we have
		\begin{equation}\label{eq:tstep-hd-unknownalpha}
			\left\|\hbe^{\,\left( t\right)}_{\widehat{h}^{(t)}}-\be^{*}\right\|_2 = O_\Prob\paren{\delta_{m,t}}  ~ {\rm and} ~ \left\|\hbe^{\,\left( t\right)}_{\widehat{h}^{(t)}}-\be^{*}\right\|_1 = O_\Prob\paren{\sqrt{s}\delta_{m,t}} ,
		\end{equation}
		where $\delta_{m ,t}=\sqrt{s}\Big(\frac{\log p}{n}\Big)^{\frac{\alpha}{2\alpha+1}}+(r_{m})^{t} \delta_{m,0}$
		is the same as the error rate in Theorem \ref{thm:tstep-hd-simplified}.
	\end{thm}
	Theorem \ref{thm:tstep-hd-unknownalpha} shows that the estimator $\hbe^{\,\left( t\right)}_{\widehat{h}^{(t)}}$ obtained by the Lepski's method achieves the same convergence rate as $\hbe^{\,\left( t\right)}$ in Algorithm \ref{alg:hd}. The proof of Theorem \ref{thm:tstep-hd-unknownalpha} is provided in Section \ref{sec:pf-hd}.}

\section{Technical Proof of the Theoretical Results}\label{sec:supp-pf}

\subsection{Proof of the Results for the $\ell_2$ Error Bound of \mSMSE}
\label{sec:pf-mSMSE}

\textbf{Proof of Proposition \ref{thm:1step-ld}}

We first restate Proposition \ref{thm:1step-ld} in a detailed version. The first step of {\mSMSE} can be written as
\begin{equation}
	\hbe^{(1)}-\be^*=\Big(V_{n,h_1}\big(\hbe^{( 0)}\big)\Big)^{-1}U_{n,h_1}\big(\hbe^{\left(0\right)}\big),
	\label{eq:beta1-beta*=V-1U}
\end{equation}
where
\begin{align}\label{eq:def:Vnh}
	V_{n,h}\left( \be\right)=&\nabla^2F_h\left(\be\right)=\frac{1}{nh^2}\sum_{i=1}^{n} \left( -y_i\right) H^{\prime\prime}\left( \frac{x_i+\bz_i^{\top}\be}{h}\right)\bz_i\bz_i^{\top},\\
	\label{eq:def:Unh}
	U_{n,h}\left( \be\right)
	=&\frac{1}{nh^2}\sum_{i=1}^{n} \left( -y_i\right) H^{\prime\prime}\left( \frac{x_i+\bz_i^{\top}\be}{h}\right)\bz_i\bz_i^{\top}\left(\be-\be^*\right)-\frac{1}{nh}\sum_{i=1}^{n}\left( -y_i\right) H^{\prime}\left( \frac{x_i+\bz_i^{\top}\be}{h}\right)\bz_i.
\end{align}

\begin{thm-supp-1}\label{thm:1step-ld-supp}
	Assume Assumptions \ref{A1}--\ref{A5} hold. Further assume that $\norm{\hbe^{\left( 0\right)}-\be^*}_2= O_{\Prob}\left( \delta_{m,0}\right)$, 
	$h_1=o\left( 1\right)$ and $\frac{p\log n}{nh_1^3}= o\left(1\right)$. We then have
	\begin{equation}
		\left\| U_{n,h_1}\left( \hbe^{\,\left( 0\right)}\right)\right\|_2=O_{\mathbb{P}}\left( \delta_{m,0}^2+h_1^{\alpha}+\sqrt{\frac{p}{nh_1}}+\delta_{m,0}\sqrt{\frac{p\log n}{nh_1^3}}\right),
		\label{eq:1stepU}
	\end{equation}
	\begin{equation}
		\left\| V_{n,h_1}\left( \hbe^{\,\left( 0\right)}\right)-V\right\|_2=O_{\mathbb{P}}\left( \sqrt{\frac{p\log n}{nh_1^3}}+\delta_{m,0}+h_1^{\alpha}\right),
		\label{eq:1stepV}
	\end{equation}
	and therefore
	\begin{equation}
		\left\| \hbe^{\,\left( 1\right)}-\be^{*}\right\|_2=O_{\mathbb{P}}\left( \delta_{m,0}^2+h_1^{\alpha}+\sqrt{\frac{p}{nh_1}}+\delta_{m,0}\sqrt{\frac{p\log n}{nh_1^3}}\right).
		\label{eq:1stepbetax}
	\end{equation}
\end{thm-supp-1}

\begin{proof}[Proof of Proposition \ref{thm:1step-ld}]
	Throughout the whole proof, without loss of generality, we assume that $\norm{\hbe^{\left(0\right)}-\be^*}_2 \leq \delta_{m,0}$ with probability approaching one, i.e., we assume the constant in $O_{\Prob}(\delta_{m,0})$ to be 1. For simplicity, we replace the notation $h_1$ with $h$. 
	
	\noindent \textbf{Proof of \eqref{eq:1stepU}}
	
	We first prove \eqref{eq:1stepU}. It suffices to show that
	\begin{equation}
		\label{eq:1stepU_detailed}
		\sup_{\be: \norm{\be-\be^*}_2 \leq \delta_{m,0} }	\left\| U_{n,h}\left( \be\right)\right\|_2  = O_{\Prob}\left(\delta_{m,0}^2+h^{\alpha}+\sqrt{\frac{p}{nh}}+\delta_{m,0}\sqrt{\frac{p\log n}{nh^3}}\right),
	\end{equation}
	which implies \eqref{eq:1stepU} since $\norm{\hbe^{\left(0\right)}-\be^*}_2 \leq \delta_{m,0}$ with probability approaching one. 	
	
	By the proof of Lemma 3 in \cite{cai2010optimal}, there exists $\bv_1,...,\bv_{5^p} \in \mathbb{R}^p$, s.t. for any $\bv$ in the unit sphere $\mathbb{S}^{p-1}=\braces{\bv\in \R^{p}: \norm{\bv}_2=1}$, there exists $j_v \in [5^p]$ satisfying $\norm{\bv-\bv_{j_v}}_2 \leq 1/2$. Then we have 
	\[\norm{U_{n,h}\left(\be\right)}_2= \sup_{\bv \in \mathbb{S}^{p-1}}\abs{\bv^{\top}U_{n,h}\left(\be\right)} \leq \sup_{j_v \in [5^p]} \abs{\bv_{j_v}^{\top} U_{n,h}\left(\be\right) } + \frac{1}{2}\norm{U_{n,h}\left(\be\right)}_2,\]
	and thus 
	\[\norm{U_{n,h}\left(\be\right)}_2 \leq \sup_{{j_v} \in [5^p]} 2\abs{\bv_{j_v}^{\top} U_{n,h}\left(\be\right) }.\]
	Therefore, to show \eqref{eq:1stepU_detailed}, it suffices to show that
	\begin{equation}
		\label{eq:1stepU_detailed_supv}
		\sup_{\be:\norm{\be-\be^*}_2\leq \delta_{m,0}}\sup_{{j_v} \in [5^p]} \abs{\bv_{j_v}^{\top}U_{n,h}\left(\be\right)}= O_{\mathbb{P}}\left(\sqrt{\frac{p}{nh}}+\delta_{m,0}\sqrt{\frac{p\log n}{nh^3}}+\delta_{m,0}^2+h^{\alpha}\right).
	\end{equation}

	Recall the definition
	\begin{align*}
		&\quad U_{n,h}\left(\be\right)\\
		&=\frac{1}{nh^2}\sum_{i=1}^{n} \left(-y_i\right) H^{\prime\prime}\left(\frac{x_i+\bz_i^{\top}\be}{h}\right)\bz_i\bz_i^{\top}\left(\be-\be^*\right)-\frac{1}{nh}\sum_{i=1}^{n}\left(-y_i\right) H^{\prime}\left(\frac{x_i+\bz_i^{\top}\be}{h}\right)\bz_i\\
		&=:\frac{1}{n}\sum_{i=1}^n U_{h,i}\left(\be\right),
	\end{align*}
	where
	\begin{equation}\label{eq:def:Uhi}
		U_{h,i}\left(\be\right):=\frac{1}{h^2} \left(-y_i\right) H^{\prime\prime}\left(\frac{x_i+\bz_i^{\top}\be}{h}\right)\bz_i\bz_i^{\top}\left(\be-\be^*\right)-\frac{1}{h}\left(-y_i\right) H^{\prime}\left(\frac{x_i+\bz_i^{\top}\be}{h}\right)\bz_i.
	\end{equation} 
	For any $\bm{v}$ in $\mathbb{S}^{p-1}$, we have the following decomposition: 
	\begin{equation}
		\label{eq:vU-break-ld}
		\begin{aligned}
			&\quad \bv^{\top}U_{n,h}\left( \be\right)\\
			&= (1-\E)\left[\bv^{\top}U_{n,h}\left( \be\right)-\bv^{\top}U_{n,h}\left(\be^*\right)\right]+(1-\E)\left[\bv^{\top}U_{n,h}\left(\be^*\right)\right]+\E\left[ \bv^{\top}U_{n,h}\left( \be\right)\right]\\
			&=\frac{1}{n}\sum_{i=1}^n \phi^U_{i,\bv}(\be)+(1-\E)\left[\bv^{\top}U_{n,h}\left(\be^*\right)\right]+\E \left[\bv^{\top}U_{n,h}\left( \be\right)\right]\\
		\end{aligned}
	\end{equation}
	where $\phi^U_{i,\bv}(\be):=(1-\E)\brackets{\bv^{\top}U_{h,i}(\be)-\bv^{\top}U_{h,i}(\be^*)}$. We will separately bound the three terms in \eqref{eq:vU-break-ld}.
	through the following three steps.
	
	\noindent \hypertarget{step1}{\textbf{Step 1}}
	
	We will show that, for some sufficiently large constant $\gamma>0$, there exists $C_{\phi}>0$ such that  
	\begin{equation}\label{eq:ld-U-step1}
		\sup_{\norm{\be-\be^*}_2\leq \delta_{m,0}}\sup_{{j_v} \in [5^p]} \abs{ \frac{1}{n}\sum_{i=1}^n \phi^U_{i,\bv_{j_v}}(\be)} \leq C_{\phi} \delta_{m,0}\sqrt{\frac{p\log n}{nh^3}},
	\end{equation}
	with probability at least $1-n^{-\gamma/2}-2(5n^{-\gamma})^{p}$. 
	
	For any positive $\gamma$ and each $j \in \braces{1,\dots,p}$, divide the interval $[\beta^*_j-\delta_{m,0},\beta^*_j+\delta_{m,0}]$ into $n^{\gamma}$ small intervals, each with length $\frac{2\delta_{m,0}}{n^{\gamma}}$. This division creates $n^{\gamma}$ intervals on each dimension, and the direct product of those intervals divides the hypercube $\braces{\be: \norm{\be-\be^*}_\infty\leq \delta_{m,0}}$ into $n^{\gamma p}$ small hypercubes. By arbitrarily picking a point on each small hypercube, we can find  $\braces{\be_1,...,\be_{n^{\gamma p}}} \subset \mathbb{R}^{p}$,  such that for all $\be$ in the ball $\braces{\be: \norm{\be-\be^*}_2\leq \delta_{m,0}}$ (which is a subset of $\braces{\be: \norm{\be-\be^*}_\infty\leq \delta_{m,0}}$), there exists $j_{\beta} \in [n^{\gamma p}]$ such that $\norm{\be-\be_{j_{\beta}}}_\infty \leq \frac{2\delta_{m,0}}{n^{\gamma}}$. 
	
	Assumption \ref{A1} ensures that $H^{\prime\prime}\left(x\right)$, $H^{\prime}\left(x\right)$ are both bounded and Lipschitz continuous, and thus we have, for any $\bv$ such that $\norm{\bv}_2=1$,
	\begin{align*}
		&\quad \abs{\bv^{\top}U_{h,i}(\be)-\bv^{\top}U_{h,i}(\be_{j_{\beta}})}\\
		&\leq \frac{\abs{\bv^{\top}\bz_i}}{h^2}\abs{\bz_i^{\top}\paren{\be-\be^*}\brackets{H^{\prime\prime}\left(\frac{x_i+\bz_i^{\top}\be}{h}\right)-H^{\prime\prime}\left(\frac{x_i+\bz_i^{\top}\be_{j_{\beta}}}{h}\right)}+\bz_i^{\top}\paren{\be-\be_{j_\beta}}H^{\prime\prime}\paren{\frac{x_i+\bz_i^{\top}\be_{j_{\beta}}}{h}}}\\
		& \quad + \frac{\abs{\bv^{\top}\bz_i}}{h}\abs{H^{\prime}\left(\frac{x_i+\bz_i^{\top}\be_{j_{\beta}}}{h}\right)-H^{\prime}\left(\frac{x_i+\bz_i^{\top}\be}{h}\right)}\\
		&\leq  \frac{2C_Hp^{1/2}\delta^2_{m,0}\norm{\bz_i}_2^3}{n^{\gamma}h^3}+\frac{4C_Hp^{1/2}\delta_{m,0}\norm{\bz_i}_2^2}{n^{\gamma}h^2},
	\end{align*}
	where $C_H$ is a constant that is larger than the upper bounds and Lipschitz constants of $H^{\prime\prime}\left(x\right)$ and $H^{\prime}\left(x\right)$. Therefore,
	\begin{align*}
		&\quad \sup_{{j_v} \in [5^p]}\sup_{\norm{\be-\be^*}_2\leq \delta_{m,0}}\inf_{j_{\beta}\in [n^{\gamma p}]} \abs{\frac{1}{n}\sum_{i=1}^n\phi_{i,\bv_{j_v}}^U(\be)-\frac{1}{n}\sum_{i=1}^n\phi_{i, \bv_{j_v}}^U\left(\be_{j_{\beta}}\right)}\\
		&\leq \abs{(1-\E)\frac{1}{n}\sum\limits_{i=1}^n\left[\bv_{j_v}^{\top}U_{h,i}(\be)-\bv_{j_v}^{\top}U_{h,i}(\be_{j_{\beta}})\right]}\\
		&\leq 8C_{H}\left(\frac{p^{1/2}\delta^2_{m,0}\sum\limits_{i=1}^n\norm{\bz_i}_2^3}{n^{\gamma+1}h^3}+\frac{p^{1/2}\delta_{m,0}\sum\limits_{i=1}^n\norm{\bz_i}_2^2}{n^{\gamma+1}h^2}\right).
	\end{align*}
	The assumption $\sup_{\norm{\bv}_2 \leq 1}\E \exp(\eta (\bv^{\top}\bz_i)^2) < \infty$ implies that  $\sup_{i, j}\E |z_{i, j}|^3 < \infty$. By Markov's inequality, it holds that $\frac{1}{n}\sum_{i=1}^n \norm{\bz_i}_3^3=\frac{1}{n}\sum_{i=1}^n \sum_{j=1}^p|z_{i, j}|^3 \leq 2n^{\gamma/2}p\sup_{i, j}\E |z_{i, j}|^3 $, with probability at least $1-\frac{1}{2}n^{-\gamma/2}$. Since $\norm{\bz_i}_2^3 \leq n^{1/2}\norm{\bz_i}_3^3$, we obtain that $\frac{p^{1/2}\delta^2_{m,0}\sum\limits_{i=1}^n\norm{\bz_i}_2^3}{n^{\gamma+1}h^3} \leq 2\sup_{i, j}\E |z_{i, j}|^3\frac{p^{3/2}\delta^2_{m,0}}{n^{(\gamma-1)/2}h^3}$. Similarly, $\frac{p^{1/2}\delta_{m,0}\sum\limits_{i=1}^n\norm{\bz_i}_2^2}{n^{\gamma+1}h^2} \leq 2\sup_{i, j}\E |z_{i, j}|^2\frac{p^{3/2}\delta_{m,0}}{n^{\gamma/2}h^2}$, with probability at least $1-\frac{1}{2}n^{-\gamma/2}$. Using the assumption $p\log n/(nh^3)=o(1)$, we can choose $\gamma$ to be sufficiently large such that \[\frac{p^{3/2}\delta^2_{m,0}}{n^{(\gamma-1)/2}h^3}+\frac{p^{3/2}\delta_{m,0}}{n^{\gamma/2}h^2} = o\left(\delta_{m,0}\sqrt{\frac{p\log n}{nh^3}}\right),\]
	and then we have, with probability at least $1-n^{-\gamma/2}$, 
	\begin{equation}
		\begin{aligned}
			&\quad \sup_{\norm{\be-\be^*}_2\leq \delta_{m,0}}\sup_{{j_v} \in [5^p]} \abs{ \frac{1}{n}\sum_{i=1}^n \phi^U_{i,\bv_{j_v}}(\be)}-\sup_{j_{\beta}\in [n^{\gamma p}]}\sup_{{j_v} \in [5^p]} \abs{ \frac{1}{n}\sum_{i=1}^n \phi^U_{i,\bv_{j_v}}(\be_{j_\beta})}\\
			&=o\left(\delta_{m,0}\sqrt{\frac{p\log n}{nh^3}}\right).
		\end{aligned}
		\label{eq:ld-phi-cover-beta}
	\end{equation}

	
	Now let $\bv$ be a fixed vector in $\mathbb{S}^{p-1}$ and $\be$ be a fixed vector in $\{\be:\norm{\be-\be^*}_2\leq \delta_{m,0}\}$. 
	Recall that $\zeta=X+\bZ^{\top}\be^*$ and $\rho\left(\cdot\mid \bZ\right)$ denotes the density of $\zeta$ given $\bZ$.  Let $\mathbb{E}_{\cdot|\bZ}$ denote the expectation conditional on $\bZ$. By Assumption \ref{A2}, the density function $\rho\left(\cdot\mid \bZ\right)$ is bounded, 
	and hence,
	\begin{equation}
		\begin{aligned}
			&\quad\mathbb{E}_{.|\bZ} \brackets{H^{\prime\prime}\left(\frac{X+\bZ^{\top}\be}{h}\right)}^2=\mathbb{E}_{.|\bZ} \brackets{H^{\prime\prime}\left(\frac{\bZ^{\top}\left(\be-\be^*\right)+\zeta}{h}\right)}^2\\
			&=h\int_{-1}^1 H^{\prime\prime }\left(\xi\right)^2\rho\left(\xi h-\bZ^{\top}\left(\be-\be^*\right)\mid\bZ\right)\diff \xi=O(h),\\
		\end{aligned}\label{eq:EH2}
	\end{equation}
	where we also use the facts that $h=o(1)$ and $H^{\prime\prime }\left(x\right)$ is bounded.
	Similarly, for some $\breve{\be}$ between $\be$ and $\be^*$,
	\begin{equation}
		\begin{aligned}
			\mathbb{E}_{.|\bZ} \brackets{H^{\prime}\left(\frac{X+\bZ^{\top}\be}{h}\right)-H^{\prime}\left(\frac{X+\bZ^{\top}\be^*}{h}\right)}^2&=	\mathbb{E}_{.|\bZ} \brackets{\frac{\bZ^{\top}(\be-\be^*)}{h}H^{\prime\prime}\left(\frac{X+\bZ^{\top}\breve{\be}}{h}\right)}^2\\
			&=O\left(\frac{\abs{\bZ^{\top}\left(\be-\be^*\right)}^2}{h}\right).
		\end{aligned}\label{eq:EdH2}
	\end{equation}
	Let\begin{align*}
		&\quad \widetilde{\phi}^U_{i,\bv}(\be):=
		\frac{h^2}{\delta_{m ,0}}\phi^U_{i,\bv}(\be)\\
		&=(1-\E)(-y_i)(\bv^{\top}\bz_i)\left\{\frac{\bz_i^{\top}\left(\be-\be^*\right)}{\delta_{m ,0}}  H^{\prime\prime}\left(\frac{x_i+\bz_i^{\top}\be}{h}\right)-\frac{h}{\delta_{m ,0}} \left[H^{\prime}\left(\frac{x_i+\bz_i^{\top}\be}{h}\right)-H^{\prime}\left(\frac{x_i+\bz_i^{\top}\be^*}{h}\right)\right]\right\},
	\end{align*} 
	Then, using the inequalities $e^x\leq 1+x+x^2e^{\max(x, 0)}$, $1+x \leq e^x$, and the fact that $\E\left[\widetilde{\phi}^U_{i,\bv}(\be)\right]=0$, for any $b, t>0$, we have 
	\begin{equation}
		\begin{aligned}
			&\quad \Prob\left(\sum_{i=1}^n \wphi^U_{i,\bv}(\be)  > b\right)\\
			&= \Prob\left[\exp\left(t\sum_{i=1}^n \wphi^U_{i,\bv}(\be)\right)  > e^{tb}\right]\\
			& \leq e^{-tb} \prod_{i=1}^n \E\left[\exp\left(t\wphi^U_{i,\bv}(\be)\right)\right]\\
			&\leq  e^{-tb} \prod_{i=1}^n \E\left[1+\left(t\wphi^U_{i,\bv}(\be)\right)^2\exp\left(t\abs{\wphi^U_{i,\bv}(\be)}\right)\right]\\
			&\leq \exp\left(-tb + t^2\sum_{i=1}^n\E\left[\left(\wphi^U_{i,\bv}(\be)\right)^2\exp\left(t\abs{\wphi^U_{i,\bv}(\be)}\right)\right]\right).
		\end{aligned}
		\label{eq:phiUi-bound}
	\end{equation}
	We note that the technique used in \eqref{eq:phiUi-bound} is similar to Lemma 1 in \cite{cai2011adaptive}. By \eqref{eq:EH2} and \eqref{eq:EdH2},
	\begin{align*}
		&\quad\E\left[\left(\wphi^U_{i,\bv}(\be)\right)^2\exp\left(t\abs{\wphi^U_{i,\bv}(\be)}\right)\right] \\
		&\lesssim h\E\left[(\bv^{\top}\bz_i)^2(\bu^{\top}\bz_i)^2\exp\left(C'_{H}t\abs{\bv^{\top}\bz_i}\abs{\bu^{\top}\bz_i}\right)\right],	
	\end{align*}
	where $\bu:=(\be-\be^*)/\norm{\be-\be^*}_2$ is a unit vector, and $C'_{H}$ is a constant depending on $H(\cdot)$. By the assumption that $\sup_{\norm{\bv}_2=1}\E \left[\exp(\eta(\bv^{\top}\bz_i)^2)\right] <+\infty$ and Cauchy-Schwartz inequality, we obtain that 
	\[\E\left[\left(\wphi^U_{i,\bv}(\be)\right)^2\exp\left(t\abs{\wphi^U_{i,\bv}(\be)}\right)\right]  \leq C^2_1h,\]
	for some absolute constant $C_1$ and sufficiently small $t$. In particular, let $t=\sqrt{\frac{\gamma_1p\log n}{4C^2_1nh}}$ and  $b=C_1\sqrt{nh\gamma_1p \log n}$, where $\gamma_1$ is an arbitrary positive constant. By \eqref{eq:phiUi-bound},  
	\[\Prob\left(\sum_{i=1}^n \wphi^U_{i,\bv}(\be)  >C_1 \sqrt{nh\gamma_1p \log n}\right) \leq \exp\left[-\frac{1}{4}\gamma_1p\log n\right]=n^{-\gamma_1 p/4},\]
	which implies that 
	\[\Prob\left(\abs{\frac{1}{n}\sum_{i=1}^n \phi^U_{i,\bv}(\be)}  > C_1\sqrt{\frac{\delta_{m,0}^2\gamma_1p \log n}{nh^3}}\right) \leq 2n^{-\gamma_1 p/4}.\]
	Let $\gamma_1=8\gamma$ and $C_{\phi}=C_1\sqrt{\gamma_1}$. The above inequality is true for any $\be\in \mathbb{R}^{p}$ that satisfies $\norm{\be-\be^*}_2 \leq \delta_{m,0}$ and any $\bv \in \mathbb{S}^{p-1}$. In particular, for any $j_{\beta} \in [n^{\gamma p}]$ and $j_v \in [5^p]$, with probability at least $1-2n^{-2\gamma p}$, 
	\[\abs{\frac{1}{n}\sum_{i=1}^n\phi^U_{i,\bv_{j_v}}\left(\be_{j_{\beta}}\right)}\leq C_\phi\delta_{m,0}\sqrt{\frac{p\log n}{nh^3}},\]
	which implies that with probability at least $1-2(5n^{-\gamma})^{p}$,
	\[\sup_{j_{\beta}\in [n^{\gamma p}]} \sup_{j_v \in [5^p]}\abs{ \frac{1}{n}\sum_{i=1}^n \phi^U_{i,\bv_{j_v}}(\be_{j_\beta})} \leq C_{\phi}\delta_{m,0}\sqrt{\frac{p\log n}{nh^3}}.\]
	Combining with \eqref{eq:ld-phi-cover-beta}, we obtain \eqref{eq:ld-U-step1}.
	
	\noindent
	\hypertarget{step2}{\textbf{Step 2}} 
	We will show that, for any constant $\gamma_2>0$, there exists a constant $C^*>0$, such that
	\begin{equation}\label{eq:ld-U-step2}
		\sup_{j_v \in [5^p]}\abs{(1-\E)\bv_{j_v}^{\top}U_{n,h}\left(\be^*\right)}  \leq C^* \sqrt{\frac{p}{nh}},
	\end{equation}
	with probability at least $1-2(5e^{-\gamma_2/4})^{p}$. 
	
	Let $\bv$ be any fixed vector in $\mathbb{S}^{p-1}$. Note that  \[(1-\E)\bv^{\top}U_{n,h}\left(\be^*\right) =\frac{1}{n}\sum_{i=1}^{n}(1-\E)\brackets{\bv^{\top}U_{h,i}\left(\be^*\right)}=\frac{1}{n}\sum_{i=1}^n(1-\E)\frac{(\bv^{\top}\bz_i)y_i}{h}H^{\prime}\left(\frac{x_i+\bz_i^{\top}\be^*}{h}\right),\] and $(1-\E)\brackets{\bv^{\top}U_{h,i}\left(\be^*\right)}$ are i.i.d. among different $i$. By Assumption \ref{A2}, the density function of $\zeta=X+\bZ^{\top}\be^*$ satisfies that $\rho^{(1)}(\cdot\mid\bZ)$ is bounded uniformly for all $\bZ$, which implies that $\rho(t \mid \bZ) = \rho(0\mid \bZ)+O(t)$. The constant in $O(t)$ is the same for all $t$. Therefore,
	\begin{equation}\label{eq:EH'2}
		\begin{aligned}
			&\E_{\cdot\mid \bZ}\brackets{\bv^{\top}U_{h,i}\left(\be^*\right)}^2\\
			=&\frac{\paren{\bv^{\top}\bZ}^2}{h^2}\E_{\cdot\mid \bZ} \brackets{H^{\prime}\left(\frac{X+\bZ^{\top}\be^*}{h}\right)}^2\\
			=& \int_{-1}^1\frac{\paren{\bv^{\top}\bZ}^2}{h}\brackets{H^{\prime}\left(\xi\right)}^2\rho\left(\xi h\mid \bZ\right) \diff \xi\\
			=&\frac{\paren{\bv^{\top}\bZ}^2}{h}\int_{-1}^1\brackets{H^{\prime}\left(\xi\right)}^2 \rho\left(0\mid \bZ\right)\diff \xi + O\left((\bv^{\top}\bZ)^2\right)\\
			=& O\left(\frac{(\bv^{\top}\bZ)^2}{h}\right).
		\end{aligned}
	\end{equation}
	Let $\wphi^{U, *}_{i, \bv}=h(1-\E)\brackets{\bv^{\top}U_{h,i}\left(\be^*\right)}$. Analogous to the proof in \textbf{Step 1} for the bound of $\wphi^{U}_{i, \bv}(\be)$, we have
	\[\Prob\left(\sum_{i=1}^n \wphi^{U, *}_{i,\bv}  > b\right) \leq \exp\left(-tb + nt^2C^2_2h\right),\]
	for some absolute constant $C_2$ and small enough $t$. Letting $b=C_2\sqrt{\gamma_2nph}$ and $t=\sqrt{\frac{\gamma_2p}{4C_2^2nh}}$ leads to
	\[\Prob\left(\sum_{i=1}^n \wphi^{U, *}_{i,\bv} > C_2 \sqrt{\gamma_2nhp }\right) \leq \exp\left[-\frac{1}{4}\gamma_2p\right],\]
	which leads to
	\[\Prob\left(\frac{1}{n}\abs{\sum_{i=1}^n (1-\E)\brackets{\bv^{\top}U_{h,i}\left(\be^*\right)}} >  C_2 \sqrt{\frac{\gamma_2p}{nh} }\right) \leq 2e^{-\gamma_2 p/4}.\]
	The above inequality is true for all $\bv$ in $\mathbb{S}^{p-1}$. Therefore, with probability at least $1-2(5e^{-\gamma_2/4})^p$,
	\[\sup_{j_v \in [5^p]}\abs{(1-\E)\bv_{j_v}^{\top}U_{n,h}\left(\be^*\right)}  \leq C^* \sqrt{\frac{p}{nh}},\]
	where $C^*=C_2\sqrt{\gamma_2}$. 
	
	\noindent \hypertarget{step3}{\textbf{Step 3}}
	
	We will show that, 
	\begin{equation}\label{eq:ld-U-step3}
		\sup_{\bv\in \mathbb{S}^{p-1}}\sup_{\be: \norm{\be-\be^*}_2 \leq \delta_{m,0} }\E\brackets{\bv^{\top}U_{n,h}\left(\be\right)}  \leq C_E(\delta_{m,0}^2+h^{\alpha}),
	\end{equation}
	where $C_E$ is an absolute constant.
	
	By Assumptions \ref{A2} and \ref{A3}, for almost every $\bZ$,
	\[\rho\left(t \mid \bZ\right)=\sum_{k=0}^{\alpha-1} \frac{1}{k!}\rho^{\,\left(k\right)}\left(0\mid\bZ\right)t^k+\frac{1}{\alpha!}\rho^{\left(\alpha\right)}\left(t^{\prime} \mid \bZ\right)t^{\alpha},\]
	\[F\left(-t \mid \bZ\right)=\frac{1}{2}+\sum_{k=1}^\alpha \frac{1}{k!}F^{\,\left(k\right)}\left(0\mid\bZ\right)(-t)^k+\frac{1}{\left(\alpha+1\right)!}F^{\left(\alpha+1\right)}\left(t^{\prime\prime} \mid \bZ\right)(-t)^{\alpha+1},\]
	where $t^{\prime}, t^{\prime\prime}$ are between 0 and $t$. Therefore, 
	\begin{equation}
		\begin{aligned}
			& \left(2F\left(-t \mid \bZ\right)-1\right)\rho\left(t \mid \bZ\right)
			=\sum_{k=1}^{2\alpha+1}M_k\left(\bZ\right)t^k,
		\end{aligned}
		\label{eq:Taylor}
	\end{equation}
	where $M_{k}\left(\bZ\right)$'s are constants depending on $\rho$, $F$, $\bZ$, and $t$. Since $\rho^{\left(k\right)}\left(\cdot\mid \bZ\right)$ and  $F^{\left(k\right)}\left(\cdot\mid \bZ\right)$ are bounded for all $k$ and almost all $\bZ$, we know there exists a constant $M$ such that $\sup_{k, \bZ, t}\abs{M_k\left(\bZ\right)} \leq M$ for all $t$. (In particular, we can obtain that \[M_{1}(\bZ)=2F^{(1)}(0\mid\bZ)\rho(0\mid\bZ), \quad M_\alpha\left(\bZ\right)=\sum_{k=1}^\alpha\frac{2(-1)^{-k}}{\left(\alpha-k\right)!k!}F^{\,\left(k\right)}\left(0\mid\bZ\right)\rho^{\,\left(\alpha-k\right)}\left(0\mid\bZ\right),\] which will be used in the proof of the following theorems.)
	
	By Assumption \ref{A1}, when $x>1$ or $x<-1$,  $H^{\prime}\left(x\right)=H^{\prime\prime}\left(x\right)=0$. The kernel $H^{\prime}\left(x\right)=\int_{-1}^x H^{\prime\prime}\left(t\right) \diff t$ is bounded, satisfying $\int_{-1}^1 H^{\prime}\left(x\right)\diff x=1$ and $\int_{-1}^1 x^kH^{\prime}\left(x\right)\diff x=0$ for any $1\leq k\leq\alpha-1$. Using integration by parts, we have $\int_{-1}^1 xH^{\prime\prime}\left(x\right)\diff x=-1$ and $\int_{-1}^1 x^kH^{\prime\prime}\left(x\right)\diff x=0$ for $k=0$ and $2\leq k\leq\alpha$.
	
	Now we are ready to compute the expectation of $\bv^{\top}U_{n,h}\left(\be\right)$ for any $\bv \in \mathbb{S}^{p-1}$ and $\be \in \{\be:\norm{\be-\be^*}_2\leq \delta_{m ,0}\}$. Since $\E\left[\bv^{\top}U_{n,h}\left(\be\right)\right]=\E\left[\bv^{\top}U_{h,i}\left(\be\right)\right]$ for all $i$, we omit $i$ in the following computation of expectation. Let $\mathbb{E}_{\cdot|\bZ}$ denote the expectation conditional on $\bZ$. Define $\Delta\left(\be\right):=\be-\be^*$ and recall that $\zeta=X+\bZ^{\top}\be^*$. 
	\begin{equation}
		\begin{aligned}
			&\mathbb{E}_{\cdot|\bZ}  \left[\bm{v}^{\top}U_{n,h}\left(\be\right)\right] \\
			=&\bZ^{\top}\bm{v} \cdot \mathbb{E}_{\cdot|\bZ} \bigg[ \frac{\bZ^{\top}\Delta\left(\be\right)}{h^2} \brackets{2\I\left(X+\bZ^{\top}\be^*+\epsilon<0\right)-1} H^{\prime\prime}\left(\frac{X+\bZ^{\top}\be}{h}\right)\\
			&\quad\quad\quad\quad\quad-\frac{1}{h}\brackets{2\I\left(X+\bZ^{\top}\be^*+\epsilon<0\right)-1}H^{\prime}\left(\frac{X+\bZ^{\top}\be}{h}\right) \bigg]\\
			=&\bZ^{\top}\bm{\bm{v}} \cdot \mathbb{E}_{\cdot|\bZ} \braces{ \brackets{2\I\left(\zeta+\epsilon<0\right)-1}\brackets {\frac{\bZ^{\top}\Delta\left(\be\right)}{h^2}  H^{\prime\prime}\left(\frac{\zeta+\bZ^{\top}\Delta\left(\be\right)}{h}\right)-\frac{1}{h}H^{\prime}\left(\frac{\zeta+\bZ^{\top}\Delta\left(\be\right)}{h}\right)}}\\
			=& \paren{\bZ^{\top}\bv} \int_{-1}^{1} \brackets{2F\left(\bZ^{\top}\Delta\left(\be\right)-\xi h \mid\bZ\right)-1}\rho\left(\xi h-\bZ^{\top}\Delta\left(\be\right) \mid  \bZ\right)\\
			&\quad\cdot\left( \frac{\bZ^{\top}\Delta\left(\be\right)}{h}H^{\prime\prime}\left(\xi\right)-H^{\prime}\left(\xi\right)\right)  \diff \xi \quad \left(\mathrm{by \, changing \, variable \,}\xi=\frac{\zeta+\bZ^{\top}\Delta\left(\be\right)}{h}\right)\\
			=&\,\left(\bZ^{\top}\bv\right) \sum_{k=1}^{2\alpha+1} M_k\paren{\bZ}\int_{-1}^{1} \left(\xi h-\bZ^{\top}\Delta\left(\be\right)\right)^{k}\left( \frac{\bZ^{\top}\Delta\left(\be\right)}{h}H^{\prime\prime}\left(\xi\right)-H^{\prime}\left(\xi\right)\right) \diff \xi.
		\end{aligned}
		\label{eq:EvU-computation}
	\end{equation}
	When $1\leq k\leq\alpha-1$,
	\begin{equation}
		\begin{aligned}
			&\int_{-1}^{1} \left(\xi h-\bZ^{\top}\Delta\left(\be\right)\right)^k\left( \frac{\bZ^{\top}\Delta\left(\be\right)}{h}H^{\prime\prime}\left(\xi\right)-H^{\prime}\left(\xi\right)\right) \diff \xi\\
			=&\sum_{k^{\prime}=0}^{k} \binom{k}{k^{\prime}}h^{k^{\prime}}\left(-\bZ^{\top}\Delta\left(\be\right)\right)^{k-k^{\prime}}\left[\left(\bZ^{\top}\Delta\left(\be\right)/h\right) \int_{-1}^1\xi^{k^{\prime}}H^{\prime\prime}\left(\xi\right)\diff\xi- \int_{-1}^1\xi^{k^{\prime}}H^{\prime}\left(\xi\right)\diff\xi \right]\\ =&\,\left(k-1\right)\left(-\bZ^{\top}\Delta\left(\be\right)\right)^k,\\
			\text{When \,} k=\alpha,&\\
			&\int_{-1}^{1} \left(\xi h-\bZ^{\top}\Delta\left(\be\right)\right)^{\alpha}\left( \frac{\bZ^{\top}\Delta\left(\be\right)}{h}H^{\prime\prime}\left(\xi\right)-H^{\prime}\left(\xi\right)\right) \diff \xi\\
			=&\sum_{k^{\prime}=0}^{\alpha} \binom{\alpha}{k^{\prime}}h^{k^{\prime}}\left(-\bZ^{\top}\Delta\left(\be\right)\right)^{\alpha-k^{\prime}}\left[\left(\bZ^{\top}\Delta\left(\be\right)/h\right) \int_{-1}^1\xi^{k^{\prime}}H^{\prime\prime}\left(\xi\right)\diff\xi- \int_{-1}^1\xi^{k^{\prime}}H^{\prime}\left(\xi\right)\diff\xi \right]\\ =&\,\left(\alpha-1\right)\left(-\bZ^{\top}\Delta\left(\be\right)\right)^\alpha-\pi_Uh^{\alpha},\\
		\end{aligned}
	\end{equation}
	where $\pi_U=\int_{-1}^1\xi^{\alpha}H^{\prime}\left(\xi\right)\diff\xi$ is defined in Assumption \ref{A1}. 
	
	When $\alpha+1 \leq k\leq 2\alpha+1$, using the fact that $H', H''$ are both bounded and the inequality $a^{k'}b^{k-k'} \leq (a+b)^k \leq 2^{k-1}(a^k+b^k)$, we have
	\begin{align*}
		&\int_{-1}^{1} \left(\xi h-\bZ^{\top}\Delta\left(\be\right)\right)^k\left( \frac{\bZ^{\top}\Delta\left(\be\right)}{h}H^{\prime\prime}\left(\xi\right)-H^{\prime}\left(\xi\right)\right) \diff \xi\\
		=&\sum_{k^{\prime}=0}^{k} \binom{k}{k^{\prime}}h^{k^{\prime}}\left(-\bZ^{\top}\Delta\left(\be\right)\right)^{k-k^{\prime}}\left[\left(\bZ^{\top}\Delta\left(\be\right)/h\right) \int_{-1}^1\xi^{k^{\prime}}H^{\prime\prime}\left(\xi\right)\diff\xi- \int_{-1}^1\xi^{k^{\prime}}H^{\prime}\left(\xi\right)\diff\xi \right]\\
		= &(k-1)\left(-\bZ^{\top}\Delta\left(\be\right)\right)^{k}-\sum_{k^{\prime}=\alpha+1}^{k}\binom{k}{k^{\prime}}h^{k^{\prime}-1}\left(-\bZ^{\top}\Delta\left(\be\right)\right)^{k-k^{\prime}+1}\left[ \int_{-1}^1\xi^{k^{\prime}}H^{\prime\prime}\left(\xi\right)\diff\xi\right]\\
		&-\sum_{k^{\prime}=\alpha}^{k}\binom{k}{k^{\prime}}h^{k^{\prime}}\left(-\bZ^{\top}\Delta\left(\be\right)\right)^{k-k^{\prime}}\left[\int_{-1}^1\xi^{k^{\prime}}H^{\prime}\left(\xi\right)\diff\xi \right]\\
		= &O\brackets {h^{k}+(\bZ^{\top}\Delta(\be))^{k}}.
	\end{align*}
	
	Note that $\abs{\bZ^{\top}\Delta(\be)} \leq \delta_{m ,0} \abs{\bZ^{\top}\bu}$, where $\bu=\Delta(\be)/\norm{\Delta(\be)}_2$.
	By Assumption \ref{A5}, it holds that  $\sup_{\bv \in \mathbb{S}^{p-1}}\E(\bZ^{\top}\bv)^{2k} \leq \infty$ for all positive integer $k$. Adding that $M_k(\bZ)$'s are uniformly bounded , we finally obtain that
	\begin{equation}
		\abs{\mathbb{E} \left[\bv^{\top}U_{n,h}\left(\be\right)\right]}\leq C_E\left(\delta_{m,0}^2+h^{\alpha}\right),
		\label{eq:EvU}
	\end{equation}
	where $C_E$ is a constant not depending on $\bv$ and $\be$. This leads to \eqref{eq:ld-U-step3}.

	Combining the three steps with \eqref{eq:1stepU_detailed_supv} completes the proof of \eqref{eq:1stepU}.
	
	\noindent \textbf{Proof of \eqref{eq:1stepV}}
	
	The proof of \eqref{eq:1stepV} is similar to that of \eqref{eq:1stepU}. We use the same $\{\bv_{j_v}\}$ and $\{\be_{j_\beta}\}$ as in the proof of \eqref{eq:1stepU}. By the proof of Lemma 3 in \cite{cai2010optimal}, 
	\[\norm{A}_2\leq 4\sup_{{j_v} \in [5^p]} \abs{\bv^{\top}_{j_v}A\bv_{j_v}},\]
	for any symmetric $A \in \mathbb{R}^{p \times p}$. 
	Therefore, it suffices to bound $\sup_{{j_v} \in [5^p]} \abs{\bv^{\top}_{j_v}\brackets{V_{n,h}\left(\be^{\left(0\right)}\right)-V}\bv_{j_v}}$.
	By the choice of $\braces{\be_{j_{\beta}}}$, for all $\be$ in the ball $\braces{\be: \norm{\be-\be^*}_2\leq \delta_{m,0}}$, there exists $j_{\beta} \in [n^{\gamma p}]$ such that $\norm{\be-\be_{j_{\beta}}}_\infty \leq \frac{2\delta_{m,0}}{n^{\gamma}}$. Recall
	\[V_{n,h}\left(\be\right)=\frac{1}{nh^2}\sum_{i=1}^{n} \left(-y_i\right) H^{\prime\prime}\left(\frac{x_i+\bz_i^{\top}\be}{h}\right)\bz_i\bz_i^{\top}=:\frac{1}{n}\sum_{i=1}^n V_{h,i}\left(\be\right),\]
	where
	\begin{equation}\label{eq:def:Vhi}
		V_{h,i}\left(\be\right):=\frac{1}{h^2} \left(-y_i\right) H^{\prime\prime}\left(\frac{x_i+\bz_i^{\top}\be}{h}\right)\bz_i\bz_i^{\top}.
	\end{equation}
	
	By the Lipschitz property of $H''(x)$, we have
	\begin{align*}
		&\quad\sup_{{j_v} \in [5^p]}\sup_{\be: \norm{\be-\be^*}_2 \leq \delta_{m,0} }\inf_{j_{\beta}\in [n^{\gamma p}]}\abs{\bv_{j_v}^{\top}V_{n,h}\left(\be\right)\bv_{j_v}-\bv_{j_v}^{\top}V_{n,h}\left(\be_{j_\beta}\right)\bv_{j_v}} \\
		&\leq \sup_{{j_v} \in [5^p]}\sup_{\be: \norm{\be-\be^*}_2 \leq \delta_{m,0} }\inf_{j_{\beta}\in [n^{\gamma p}]}\frac{1}{n}\sum_{i=1}^n \frac{\paren{\bz_i^{\top}\bv_{j_v}}^2}{h^3}\abs{\bz_i^{\top}\paren{\be-\be_{j_\beta}}} \\
		&= O\left(\sum_{i=1}^n\frac{\delta_{m,0}\norm{\bz_i}_2^3}{n^{\gamma+1}h^3}\right).
	\end{align*}
	
	In the \hyperlink{step1}{\textbf{Step 1}} of the proof of \eqref{eq:1stepU}, we show that $\frac{p^{1/2}\delta_{m,0}\sum\limits_{i=1}^n\norm{\bz_i}_2^3}{n^{\gamma+1}h^3} \leq \sup_{i, j}\E |z_{i, j}|^3\frac{p^{3/2}\delta_{m,0}}{n^{(\gamma-1)/2}h^3}$, with probability and least $1-n^{-\gamma/2}$. By taking $\gamma>0$ large enough such that 
	\[\frac{p^{3/2}\delta_{m,0}}{n^{(\gamma-1)/2}h^3} = o\left(\sqrt{\frac{p\log n}{nh^3}}\right),\] 
	we obtain
	\begin{equation}
		\label{eq:1stepV-jbeta}
		\begin{aligned}
			&\quad\sup_{{j_v} \in [5^p]}\sup_{\be: \norm{\be-\be^*}_2 \leq \delta_{m,0}}\abs{\bv^{\top}_{j_v}\brackets{V_{n,h}\left(\be\right)-V}\bv_{j_v}} -\sup_{{j_v} \in [5^p]}\sup_{j_{\beta}\in [n^{\gamma p}]}\abs{\bv^{\top}_{j_v}\brackets{V_{n,h}\left(\be_{j_\beta}\right)-V}\bv_{j_v}}\\
			&=o\paren{\sqrt{\frac{p\log n}{nh^3}}}.
		\end{aligned}
	\end{equation}
	
	Note that $(1-\E)\paren{\bv^{\top}\brackets{V_{n,h}\left(\be\right)-V}\bv}=\frac{1}{n}\sum_{i=1}^n (1-\E)\bv^{\top}V_{h,i}(\be)\bv$
	and let
	\[\phi_{i,\bv}^{V}(\be):=h^2(1-\E)\bv^{\top}V_{h,i}(\be)\bv=(1-\E)(\bv^{\top}\bz_i)^2 \left(-y_i\right) H^{\prime\prime}\left(\frac{x_i+\bz_i^{\top}\be}{h}\right).\]
	By repeating the procedure in \hyperlink{step1}{\textbf{Step 1}} of the proof for \eqref{eq:1stepU}, we obtain
	\[\Prob\left(\phi_{i,\bv}^{V}(\be) > b\right) \leq \exp\left(-tb+t^2C_3^2nh\right),\]
	for some absolute constant $C_3$ and small enough $t>0$. Let $b=C_3\sqrt{\gamma_3nhp\log n}$ and $t=b/(2C_3^2nh)$, where $\gamma_3$ is an arbitrary positive constant to be determined, it holds that
	\[\Prob\left(\sum_{i=1}^{n}\phi_{i,\bv}^{V}(\be) > C_3\sqrt{\gamma_3nhp\log n}\right) \leq n^{-\gamma_3p/4},\]
	and hence
	\[\Prob\left(\abs{(1-\E)\paren{\bv^{\top}\brackets{V_{n,h}\left(\be\right)-V}\bv}} > C_3\sqrt{\frac{\gamma_3p\log n}{nh^3}}\right) \leq 2n^{-\gamma_3p/4}.\]
	The above inequality is true for all $\bv_{j_v}$ and $\be_{j_{\beta}}$, and thus
	\[\Prob\left(\sup_{{j_v} \in [5^p]}\sup_{j_{\beta}\in [n^{\gamma p}]}\abs{(1-\E)\paren{\bv_{j_v}^{\top}\brackets{V_{n,h}\left(\be_{j_{\beta}}\right)-V}\bv_{j_v}}} > C_3\sqrt{\frac{\gamma_3p\log n}{nh^3}}\right) \leq 2(5n^{\gamma-\gamma_3/4})^p.\]
	Letting $\gamma_3=8\gamma$ and  combining the above inequality with \eqref{eq:1stepV-jbeta} lead to that, for any sufficiently large $\gamma>0$, there exists a constant $C$, such that
	\begin{equation}\label{eq:V-var}
		\sup_{\bv \in \mathbb{S}^{p-1}}\sup_{\be: \norm{\be-\be^*}_2 \leq \delta_{m,0}}\abs{(1-\E)\paren{\bv^{\top}\brackets{V_{n,h}\left(\be\right)-V}\bv}} \leq C\sqrt{\frac{p\log n}{nh^3}},
	\end{equation}
	with probability at least $1-n^{-\gamma/2}-2(5n^{-\gamma})^p$.
	
	In the following proof, we first consider any fixed $\be \in \mathbb{R}^{p}, \bv\in \mathbb{R}^{p}$ that satisfy $\norm{\be-\be^*}_2 \leq \delta_{m,0}$ and $\norm{\bv}_2=1$, and then apply the result to the specific $\bv_{j_v}$ and $\be_{j_{\beta}}$. 
	The computation of $\E\brackets{\bv^{\top}\brackets{V_{n,h}\left(\be\right)-V}\bv}$ is similar to \hyperlink{step3}{\textbf{Step 3}} in the proof of \eqref{eq:1stepU}, where we obtain that
	\[ \left(2F\left(-t \mid \bZ\right)-1\right)\rho\left(t \mid \bZ\right)
	=\sum_{k=1}^{2\alpha+1}M_k\left(\bZ\right)t^k,\]
	with uniformly bounded $M_k(\bZ)$ and $M_1(\bZ)=2F'(0\mid\bZ)\rho(0\mid\bZ)$.
	Recall that $\int_{-1}^1 xH^{\prime\prime}\left(x\right)\diff x=-1$ and $\int_{-1}^1 x^kH^{\prime\prime}\left(x\right)\diff x=0$ for $k=0$ and $2\leq k\leq\alpha$.	
	\begin{equation}
		\begin{aligned}
			&\mathbb{E}_{\cdot|\bZ} \left(\bv^{\top}V_{n,h}\left(\be\right)\bv\right)\\
			=&\frac{\left(\bv^{\top}\bz\right)^2}{h^2}\mathbb{E}_{\cdot|\bZ} \brackets{2\I\left(\zeta+\epsilon<0\right)-1}H^{\prime\prime}\left(\frac{\bZ^{\top}\Delta\left(\be\right)+\zeta}{h}\right)\\
			=&\frac{\left(\bv^{\top}\bZ\right)^2}{h} \int_{-1}^1 \left(2F\left(\bZ^{\top}\Delta\left(\be\right)-\xi h\mid\bZ\right)-1\right)\rho\left(\xi h-\bZ^{\top}\Delta\left(\be\right)\mid \bZ\right)H^{\prime\prime}\left(\xi\right)\diff \xi\\
			=&-2\left(\bv^{\top}\bZ\right)^2F^{\prime}\left(0\mid \bZ\right)\rho\left(0\mid \bZ\right)\int_{-1}^1\xi H^{\prime\prime}\left(\xi\right)\diff \xi + \frac{\left(\bv^{\top}\bZ\right)^2}{h} \int_{-1}^1\sum_{k=2}^{2\alpha+1}M_k\left(\bZ\right)\left(\xi h-\bZ^{\top}\Delta\left(\be\right)\right)^kH^{\prime\prime}\left(\xi\right)\diff \xi\\
			=&2\left(\bv^{\top}\bZ\right)^2F^{\prime}\left(0\mid \bZ\right)\rho\left(0\mid \bZ\right) + \left(\bv^{\top}\bZ\right)^2O\left(h^{\alpha}+(\bZ^{\top}\bu)\delta_{m,0}\right),
		\end{aligned}
	\end{equation} 
	where $\bu=\Delta\left(\be\right)/\norm{\Delta\left(\be\right)}_2$, and the last inequality follows from that, for $k\geq 2$, 
	\begin{align*}
		&\quad\frac{\left(\bv^{\top}\bZ\right)^2}{h} \int_{-1}^1\left(\xi h-\bZ^{\top}\Delta\left(\be\right)\right)^kH^{\prime\prime}\left(\xi\right)\diff \xi\\
		&= \left(\bv^{\top}\bZ\right)^2 \sum_{k'=0}^{k}\binom{k}{k'}h^{k'-1}\left(-\bZ^{\top}\Delta\left(\be\right)\right)^{k-k'}\int_{-1}^1\xi^{k'}H^{\prime\prime}\left(\xi\right)\diff \xi\\
		&=\left(\bv^{\top}\bZ\right)^2\left[(-k)\left(-\bZ^{\top}\Delta\left(\be\right)\right)^{k-1}+\sum_{k'=\alpha+1}^{k}\binom{k}{k'}h^{k'-1}\left(-\bZ^{\top}\Delta\left(\be\right)\right)^{k-k'}\int_{-1}^1\xi^{k'}H^{\prime\prime}\left(\xi\right)\diff \xi\right]\\
		&\lesssim \left(\bv^{\top}\bZ\right)^2\left[h^{\alpha}+\abs{\bZ^{\top}\Delta\left(\be\right)}^{k-1}\right].
	\end{align*} 
	Note that the constant hidden in $O(\cdot)$ does not depend on $\bv$ or $\be$. By Assumption \ref{A5} and $V=2\mathbb{E}\left[\rho\left( 0\mid \bZ\right)F^{\prime}\left( 0 \mid \bZ\right)\bZ\bZ^{\top}\right]$, we obtain
	\begin{equation}
		\sup_{\bv \in \mathbb{S}^{p-1}}\sup_{\be: \norm{\be-\be^*}_2 \leq \delta_{m,0} }\abs{\E \paren{\bv^{\top}\brackets{V_{n,h}\left(\be\right)-V}\bv}} = O\left(\delta_{m,0} + h^{\alpha}\right),
		\label{eq:EV}
	\end{equation}
	which completes the proof for \eqref{eq:1stepV} together with \eqref{eq:V-var}.

	
	Finally, \eqref{eq:beta1-beta*=V-1U}, \eqref{eq:1stepU}, and \eqref{eq:1stepV} directly yield \eqref{eq:1stepbetax} (i.e., \eqref{eq:1stepbeta}), given the assumption that $\Lambda_{\min}\left(V\right)>c_1^{-1}$ for some $c_1>0$.
	
\end{proof}

\noindent \textbf{Proof of Theorem \ref{thm:tstep-ld}}

\begin{proof}
	Without loss of generality, we assume $\lambda_h=1$. Then we have \[h_t=\max\left\{(p/n)^{1/(2\alpha+1)}, (p/m)^{2^t/(3\alpha)}\right\} \geq (p/n)^{1/(2\alpha+1)},\] 
	which implies the assumption $\frac{p\log n}{nh^3_{t}}=o(1)$ holds for any $t$, since $\frac{p\log n}{nh^3_{t}}\leq (p/n)^{\frac{2\alpha-2}{2\alpha+1}}\log n =O\big(n^{-(1-c_2)\frac{2\alpha-2}{2\alpha+1}}\log n\big) \to 0$ for $p=O(m^{c_2})=O(n^{c_2})$ and $\alpha \geq 2$. 
	Moreover, for any $t$,  $h^{\alpha}_t=\max\left\{(p/m)^{2^t/3}, (p/n)^{\alpha/(2\alpha+1)}\right\}$ and $\sqrt{p/nh_t} \leq (p/n)^{\alpha/(2\alpha+1)}$.
	
	We first show that, for any $t$,
	\begin{equation}
		\label{eq:tstep-ld-weaker}
		\left\| \hbe^{\,\left(t\right)}-\be^{*}\right\|_2=O_{\mathbb{P}}\left((p/n)^{\alpha/(2\alpha+1)}\sqrt{\log n}+(p/m)^{2^t/3}\right).
	\end{equation}
	Recall that by \eqref{eq:1stepbeta}, if $\norm{\hbe^{(t-1)}-\be^*}_2=O_{\Prob}\left(\delta_{m, t-1}\right)$ and $h_t=o(1)$,  we have that 
	\begin{equation}
		\label{eq:1stepbeta-t}
		\left\| \hbe^{\,\left( t\right)}-\be^{*}\right\|_2=O_{\mathbb{P}}\Bigg(\delta_{m,t-1}^2+h_t^{\alpha}+ \sqrt{\frac{p}{nh_t}}+\delta_{m,t-1}\sqrt{\frac{p\log n}{nh_t^3}}\Bigg),
	\end{equation}
	
	When $t=1$, $\delta_{m,0}=(p/m)^{1/3} \leq h_1=(p/m)^{\frac{2}{3\alpha}}$, which implies $\delta_{m,0}\sqrt{\frac{p\log n}{nh_1^3}}=O\left(\sqrt{\frac{p\log n}{nh_1}}\right)$. Then \eqref{eq:tstep-ld-weaker} holds since 
	\[\left\| \hbe^{\,\left(1\right)}-\be^{*}\right\|_2=O_{\mathbb{P}}\left(\sqrt{\frac{p\log n}{nh_1}}+\left(\frac{p}{m}\right)^{2/3}+h_1^\alpha\right)=O_{\mathbb{P}}\left(\left(\frac{p}{n}\right)^{\frac{\alpha}{2\alpha+1}}\sqrt{\log n}+\left(\frac{p}{m}\right)^{2/3}\right).\]
	Assume \eqref{eq:tstep-ld-weaker} holds for $t-1$, i.e.,
	\[\left\| \hbe^{\,\left(t-1\right)}-\be^{*}\right\|_2=O_{\mathbb{P}}\left((p/n)^{\alpha/(2\alpha+1)}\sqrt{\log n}+(p/m)^{2^{t-1}/3}\right).\]
	Then $\delta_{m,t-1}=(p/n)^{\alpha/(2\alpha+1)}\sqrt{\log n}+(p/m)^{2^{t-1}/3}$. Since $(p/n)^{\alpha/(2\alpha+1)}\sqrt{\log n} \ll (p/n)^{\frac{1}{2\alpha+1}} \leq h_t$ and $(p/m)^{2^{t-1}/3} \leq (p/m)^{\frac{2^{t}}{3\alpha}} \leq h_t$, it holds that $\delta_{m,t-1}=O\left(h_t\right)$.
	Then
	\[\left\| \hbe^{\,\left(t\right)}-\be^{*}\right\|_2=O_{\mathbb{P}}\left(\sqrt{\frac{p\log n}{nh_{t}}}+\delta_{m,t-1}^2+h_t^\alpha\right)=O_{\mathbb{P}}\left(\left(\frac{p}{n}\right)^{\frac{\alpha}{2\alpha+1}}\sqrt{\log n}+(p/m)^{2^{t}/3}\right),\]
	since $h^{\alpha}_t=\max\left\{(p/m)^{2^t/3}, (p/n)^{\alpha/(2\alpha+1)}\right\}$ and $\sqrt{p/nh_t} \leq (p/n)^{\alpha/(2\alpha+1)}$. Therefore, we have proved that \eqref{eq:tstep-ld-weaker} holds for all $t$ by induction.
	
	To see \eqref{eq:tstepbeta}, note that plugging $\delta_{m,t-1}=(p/n)^{\alpha/(2\alpha+1)}\sqrt{\log n}+(p/m)^{2^{t-1}/3}$ into \eqref{eq:1stepbeta-t} yields that
	\begin{equation}
		\label{eq:tstep-bound-detailed}
		\left\| \hbe^{\,\left( t\right)}-\be^{*}\right\|_2=O_{\mathbb{P}}\Bigg(\Big(\frac{p}{n}\Big)^{\frac{2\alpha}{2\alpha+1}}\log n+\Big(\frac{p}{m}\Big)^{\frac{2^{t}}{3}}+h_t^{\alpha}+ \sqrt{\frac{p}{nh_t}}+\left(\Big(\frac{p}{n}\Big)^{\frac{\alpha}{2\alpha+1}}\sqrt{\log n}+\Big(\frac{p}{m}\Big)^{\frac{2^{t-1}}{3}}\right)\sqrt{\frac{p\log n}{nh_t^3}}\Bigg).
	\end{equation}
	Since $\alpha>1$,
	\[\Big(\frac{p}{n}\Big)^{\frac{\alpha}{2\alpha+1}}\sqrt{\log n} \cdot \sqrt{\frac{p\log n}{nh_t^3}} = \Big(\frac{p}{n}\Big)^{\frac{\alpha}{2\alpha+1}} (\log n) \cdot \sqrt{\frac{p}{nh_t^3}} \leq \Big(\frac{p}{n}\Big)^{\frac{\alpha}{2\alpha+1}}(\log n) \cdot (p/n)^{\frac{\alpha-1}{2\alpha+1}} \lesssim \Big(\frac{p}{n}\Big)^{\frac{\alpha}{2\alpha+1}}.\]
	Therefore, the rate in \eqref{eq:tstep-bound-detailed} is upper bounded by \[O\left(\Big(\frac{p}{n}\Big)^{
		\frac{\alpha}{2\alpha+1}}+\Big(\frac{p}{m}\Big)^{\frac{2^{t}}{3}}+\Big(\frac{p}{m}\Big)^{\frac{2^{t-1}}{3}}\Big(\frac{p}{n}\Big)^{\frac{\alpha-1}{2\alpha+1}}\sqrt{\log n}\right),\]
	which completes the proof.
\end{proof}

\subsection{Theoretical Results for the Inference of \mSMSE}\label{sec:pf-inference}

To show the asymptotic normality, we first prove an important Lemma about $U_{n,h}\left(\be\right)$ defined by $\eqref{eq:def:Unh}$.

\begin{lemma} \label{lem:CentralLimit}
	Under Assumptions \ref{A1}--\ref{A5}, if $h=o\left(1\right)$, $\norm{\be-\be^*}_2 \leq \delta$, $\delta=o\left(\min\{h^{\alpha/2}, h/\sqrt{p\log n}\}\right)$ then 
	\begin{equation}\label{eq:asym-EU}
		\E \left[\bv^{\top}U_{n,h}\left(\be\right)\right] = \bv^{\top}Uh^{\alpha}+o\left(h^{\alpha}\right),
	\end{equation}
	and
	\begin{equation}\label{eq:asym-U}
		\left(1-\E\right)\sqrt{nh}\left[\bv^{\top}U_{n,h}\left(\be\right)\right] / \sqrt{\bv^{\top}V_{s}\bv} \stackrel{d}{\longrightarrow} \mathcal{N}\left(0, 1\right),
	\end{equation}
	for any $\bv \in \R^p \setminus\{\bm{0}\}$.
\end{lemma}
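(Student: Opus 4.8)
The plan is to split $\sqrt{nh}\,U_{n,h}(\be)$ into its (conditional) mean and its centered part, establish \eqref{eq:asym-EU} and \eqref{eq:asym-U} separately, and recycle three estimates already produced in the proof of Proposition~\ref{thm:1step-ld}: the exact conditional Taylor expansion \eqref{eq:EvU-computation} together with its integral evaluations, the limiting-variance computation \eqref{eq:U-var}, and the stochastic-equicontinuity bound \eqref{eq:ld-U-step1}. The bias statement is essentially a deterministic Taylor calculation already carried out, so the substantive new content is the central limit theorem with a \emph{moving}, data-dependent center $\be$.

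For \eqref{eq:asym-EU} I would revisit the deterministic map $\be\mapsto\E[U_{n,h}(\be)]$. Projecting onto an arbitrary $\bv$ and conditioning on $\bZ$, expansion \eqref{eq:EvU-computation} writes $\E_{\cdot|\bZ}[\bv^{\top}U_{n,h}(\be)]$ as $(\bZ^{\top}\bv)\sum_k M_k(\bZ)\int_{-1}^1(\cdots)\diff\xi$. The integral evaluations in that proof show the $k=1$ term vanishes, the terms $2\le k\le\alpha-1$ are $O(\delta^k)$, the $k=\alpha$ term equals $(\alpha-1)(-\bZ^{\top}\Delta(\be))^{\alpha}-\pi_U h^{\alpha}$, and the tail $k\ge\alpha+1$ is $O(h^{\alpha+1}+\delta^{\alpha+1})$. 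Only the piece $-\pi_U h^{\alpha}M_\alpha(\bZ)(\bZ^{\top}\bv)$ survives at order $h^{\alpha}$; taking expectation over $\bZ$ and using $-\pi_U\E[M_\alpha(\bZ)\bZ]=U$ (the identity $-(-1)^{-k}=(-1)^{k+1}$ reconciles $M_\alpha$ with the definition \eqref{eq:def:U}) yields $\E[U_{n,h}(\be)]=Uh^{\alpha}+O(\delta^2+h^{\alpha+1})$ uniformly on $\{\|\be-\be^*\|_2\le\delta\}$. Evaluating at the random $\be$ and invoking $\delta=o(h^{\alpha/2})$ turns the remainder into $o_{\Prob}(h^{\alpha})$, giving \eqref{eq:asym-EU}.

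For \eqref{eq:asym-U} I would first prove the CLT at the fixed point $\be^*$ and then transfer to $\be$. At $\be^*$ the $H''$-term drops out, so $U_{h,i}(\be^*)=\tfrac{y_i}{h}H'(\zeta_i/h)\bz_i$ with $\zeta_i=x_i+\bz_i^{\top}\be^*$, and $\{\sqrt{h}\,\bv^{\top}(1-\E)U_{h,i}(\be^*)\}_i$ is an i.i.d.\ triangular array. Its per-term variance converges to $\bv^{\top}V_{s}\bv$ by \eqref{eq:U-var}; the substitution $\xi=\zeta/h$ and boundedness of $\rho(\cdot\mid\bZ)$ near $0$ bound the third absolute moment by $O(h^{-1/2})$, so the Lyapunov ratio is $O((nh)^{-1/2})=o(1)$ (using $nh\to\infty$, which holds in the regime where this lemma is applied). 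The Cram\'er--Wold device then gives $\sqrt{nh}(1-\E)U_{n,h}(\be^*)\stackrel{d}{\longrightarrow}\mathcal N(0,V_{s})$. To pass to the moving center, I would control $\sqrt{nh}(1-\E)\big[U_{n,h}(\be)-U_{n,h}(\be^*)\big]$ on the event $\{\|\be-\be^*\|_2\le M\delta\}$ (of probability arbitrarily close to one) by the uniform bound \eqref{eq:ld-U-step1}, obtaining $O_{\Prob}\big(\sqrt{nh}\cdot\delta\sqrt{\log n/(nh^3)}\big)=O_{\Prob}\big(\delta\sqrt{\log n}/h\big)$, which is $o_{\Prob}(1)$ under $\delta=o(h/\sqrt{\log n})$. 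A final application of Slutsky's theorem combines the two pieces.

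The main obstacle is this transfer step for the CLT. Because $\be$ is the output of a previous stage and hence data-dependent, one cannot condition on it and reduce to the fixed-$\be^*$ case; instead the fluctuation $U_{n,h}(\be)-U_{n,h}(\be^*)$ must be dominated \emph{uniformly} over an entire $\delta$-ball, which is exactly where the covering-plus-Bernstein argument underlying \eqref{eq:ld-U-step1} — and the $\sqrt{\log n}$ it introduces — becomes essential. Matching the scale of this uniform term against the $\sqrt{nh}$ normalization is what activates the $h/\sqrt{\log n}$ branch of the hypothesis, while the bias remainder $O(\delta^2)$ activates the $h^{\alpha/2}$ branch. A secondary point deserving care is verifying the triangular-array CLT honestly, i.e.\ confirming convergence of the full limiting covariance to $V_{s}$ and the Lyapunov condition, rather than merely matching a single variance.
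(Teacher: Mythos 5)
Your proposal is correct and follows essentially the same route as the paper's proof: the bias statement via the conditional Taylor expansion \eqref{eq:EvU-computation} with the $\delta=o(h^{\alpha/2})$ branch absorbing the $O(\delta^2)$ remainder, the CLT at the fixed center $\be^*$ using the variance computation \eqref{eq:U-var}, and the transfer to the data-dependent $\be$ via the uniform bound \eqref{eq:ld-U-step1}, whose $\sqrt{nh}$-rescaled size $O_{\Prob}(\delta\sqrt{\log n}/h)$ is killed by the $\delta=o(h/\sqrt{\log n})$ branch exactly as in the paper. Your explicit Lyapunov verification for the triangular array (third moment $O(h^{-1/2})$, ratio $O((nh)^{-1/2})$) is in fact slightly more careful than the paper, which simply invokes the CLT and Slutsky's theorem at this point.
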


\begin{proof}[Proof of Lemma \ref{lem:CentralLimit}]
	
	Recall that
	\[M_\alpha\left(\bZ\right)=\sum_{k=1}^\alpha\frac{2(-1)^k}{\left(\alpha-k\right)!k!}F^{\,\left(k\right)}\left(0\mid\bZ\right)\rho^{\,\left(\alpha-k\right)}\left(0\mid\bZ\right),\]
	and
	\[U:=\pi_U\mathbb{E}\left(\sum_{k=1}^{\alpha}\frac{2(-1)^{k+1}}{k!\left(\alpha-k\right)!}F^{\,\left(k\right)}\left(0\mid\bZ\right)\rho^{\,\left(\alpha-k\right)}\left(0\mid\bZ\right)\bZ\right),\]
	where $\pi_U$ is defined in Assumption \ref{A1}. For any $\bv \in \mathbb{S}^{p-1}$,
	the computation in \eqref{eq:EvU-computation} yields that 
	\begin{align*}
		\E \left[\bv^{\top}U_{n,h}\left(\be\right)\right]&=-\E\brackets{\paren{\bZ^{\top}\bv}\pi_UM_{\alpha}\left( \bZ\right)h^{\alpha}}+O\left(\delta^2+h^{\alpha+1}\right)\\
		&=\bv^{\top}U\cdot h^{\alpha}+o\left(h^{\alpha}\right),
	\end{align*}
	where $o(\cdot)$ hides a constant that does not depend on $\be$. This completes the proof of \eqref{eq:asym-EU}.
	
	To show \eqref{eq:asym-U}, further recall that in \hyperlink{step2}{\textbf{Step 2}} of the proof of \eqref{eq:1stepU}, we show that 
	\[\E_{\cdot\mid \bZ}\brackets{\bv^{\top}U_{h,i}\left(\be^*\right)}^2=\frac{\paren{\bv^{\top}\bZ}^2}{h}\int_{-1}^1\brackets{H^{\prime}\left(\xi\right)}^2 \rho\left(0\mid \bZ\right)\diff \xi + O\left((\bv^{\top}\bZ)^2\right).\]
	Since $V_{s}:=\pi_V \E  \rho\left(0\mid \bZ\right) \bZ \bZ^{\top} $, $\pi_V:=\int_{-1}^1\brackets{H^{\prime}\left(\xi\right)}^2 \diff \xi$, and $\E(\bv^{\top}\bZ)^2 < \infty$,  it holds that \[\E\brackets{\bv^{\top}U_{h,i}\left(\be^*\right)}^2=\frac{1}{h}\bv^{\top}V_{s}\bv+O(1).\]
	Therefore,
	\begin{equation}
		\label{eq:U-var}
		\begin{aligned}
			&\quad\mathrm{var}\brackets{\sqrt{h}\bv^{\top}U_{h,i}\left(\be^*\right)}\\
			&=h\braces{\E\brackets{\bv^{\top}U_{h,i}\left(\be^*\right)}^2-\paren{\E \brackets{\bv^{\top}U_{h,i}\left(\be^*\right)}}^2}\\
			&=\bv^{\top}V_{s}\bv+o\left(1\right),
		\end{aligned}
	\end{equation}
	
	By CLT and Slusky's Theorem,
	\[\sqrt{nh}\paren{1-\E}\brackets{\bv^{\top}U_{n,h}\left(\be^*\right)} / \sqrt{\bv^{\top}V_{s}\bv} \stackrel{d}{\longrightarrow} \mathcal{N}\left(0, 1 \right).\]
	Furthermore, in \hyperlink{step1}{\textbf{Step 1}}, we show that
	\[\sup_{\be:\norm{\be-\be^*}_2 \leq \delta}\abs{\left(1-\E\right)\left[\bv^{\top}U_{n,h}\left(\be\right)-\bv^{\top}U_{n,h}\left(\be^*\right) \right]}=O_{\Prob}\left(\delta\sqrt{\frac{p\log n}{nh^3}}\right),\]
	which yields \eqref{eq:asym-U} if $\delta=o\left(h/\sqrt{p\log n}\right)$.
	
\end{proof}

\noindent \textbf{Proof of Theorem \ref{thm:normality}}

We now give the proof for Theorem \ref{thm:normality} using Lemma \ref{lem:CentralLimit}.

\begin{proof}

	By Theorem \ref{thm:tstep-ld}, when 
	$T \geq  \log_2\left(\frac{3\alpha}{2\alpha+1}\cdot\frac{\log (n/p)}{\log (m/p)}\right)$,
	it holds that $h_{T}=
	\left(p/n\right)^{\frac{1}{2\alpha+1}}$ and $(p/m)^{2^{T}/3} \lesssim (p/n)^{\alpha/(2\alpha+1)}$. By taking $h_{T+1}=(\lambda_h / n)^{1/(2\alpha+1)}$, we have \[\norm{\hbe^{\left(T\right)}-\be^*}_2 = O_{\Prob}\left( (p/n)^{\frac{\alpha}{2\alpha+1}}\right)=o_{\Prob}\left(\max\{h_{T+1}^{\alpha/2}, h_{T+1}/\sqrt{p\log n}\}\right),\] where we use the assumption  $p=o\left(n^{\frac{2(\alpha-1)}{4\alpha+1}}(\log n)^{-\frac{2\alpha+1}{4\alpha+1}}\right)$. Hence, the assumptions of Lemma \ref{lem:CentralLimit} hold for $\hbe^{(T)}$ and $h_{T+1}$. 
	Since $\sqrt{nh_{T+1}}=\sqrt{\lambda_h}h_{T+1}^{-\alpha}$, we obtain by Lemma \ref{lem:CentralLimit} that
	%
	\[\frac{\sqrt{nh_{T+1}}\cdot \bm{\vartheta}^{\top}V^{-1} U_{n,h_{T+1}}\left(\hbe^{\left(T\right)}\right)-\sqrt{\lambda_h}\bm{\vartheta}^{\top}V^{-1}U}{\sqrt{\bm{\vartheta}^{\top}V^{-1}V_{s}V\bm{\vartheta}}} \stackrel{d}{\longrightarrow}\mathcal{N}(0, 1).\]
	It is easy to verify that $p\log n/(nh_{T+1}^3)=o(1)$ and $h_{T+1}=O(\delta_{m,T})$. By \eqref{eq:1stepV} and Assumption \ref{A4}, $\norm{V^{-1}_{n,h_{T+1}}\left(\hbe^{\left(T\right)}\right)-V^{-1}}_2=o_{\mathbb{P}}\left(1\right)$, which yields
	\[\frac{\sqrt{nh_{T+1}}\cdot \bm{\vartheta}^{\top}V_{n,h_{T+1}}^{-1}\left(\hbe^{\left(T\right)}\right) U_{n,h_{T+1}}\left(\hbe^{\left(T\right)}\right)-\sqrt{\lambda_h}\bm{\vartheta}^{\top}V^{-1}U}{\sqrt{\bm{\vartheta}^{\top}V^{-1}V_{s}V^{-1}\bm{\vartheta}}}\stackrel{d}{\longrightarrow} \mathcal{N}\left(0, 1 \right).\]
	Plugging in $h_{T+1}=
	\left(\frac{\lambda_h}{n}\right)^{\frac{1}{2\alpha+1}}$ and $\hbe^{\left(T+1\right)}-\be^*= V_{n,h_{T+1}}^{-1}\left(\hbe^{\left(T\right)}\right) U_{n,h_{T+1}}\left(\hbe^{\left(T\right)}\right)$ leads to $\eqref{eq:asym-mSMSE}$. Using the asymptotic bias and variance, we have
	\begin{align*}
		&\quad \E\left[\left(\bm{\vartheta}^{\top}\hbe^{\left(T+1\right)}-\bm{\vartheta}^{\top}\be^*\right)^2\right]\\
		&\asymp n^{-\frac{2\alpha}{2\alpha+1}}\brackets{\lambda_h^{-\frac{1}{2\alpha+1}}\bm{\vartheta}^{\top}V^{-1}V_{s}V^{-1}\bm{\vartheta}+\lambda_h^{\frac{2\alpha}{2\alpha+1}}U^{\top}V^{-1}\bm{\vartheta}\bm{\vartheta}^{\top}V^{-1}U},
	\end{align*}
	by minimizing which it is straightforward to obtain the optimal $\lambda_h^*$ given in \eqref{eq:lambda_opt}.
\end{proof}

\noindent \textbf{Estimators for $V$, $U$ and $V_s$}

Now we formally define the estimators for $V$, $U$ and $V_s$. Proposition \ref{thm:1step-ld-supp} in Section \ref{sec:pf-mSMSE} has already implies that, when 
$T\geq \log_2\left(\frac{3\alpha}{2\alpha+1}\cdot\frac{\log (n/p)}{\log (m/p)}\right)$, it holds that $V_{n,h_T}\left(\hbe^{\left(T\right)}\right)\stackrel{p}{\longrightarrow}V$, where $V_{n,h}(\be)$ is defined in \eqref{eq:def:Vnh}, so we can use $\widehat{V}:=V_{n,h_T}\left(\hbe^{\left(T\right)}\right)$ to estimate $V$. It remains to provide estimators for $U$ and $V_{s}$. 

\begin{thm}\label{thm:estimators-WVa}
	Assume assumptions in Theorem \ref{thm:tstep-ld} hold. Let $h_\kappa=(p/n)^{\frac{\kappa}{2\alpha+1}}$ for some $0<\kappa<1$. Define
	\begin{equation}\label{eq:est-U}
		\widehat{U}:=\frac{1}{nh_{\kappa}^{\alpha+1}}\sum_{i=1}^ny_iH^{\prime}\left( \frac{x_i+\bz_{i}^{\top}\hbe^{\left(T\right)}}{h_{\kappa}}\right) \bz_i,
	\end{equation}
	\begin{equation}\label{eq:est-Va}
		\widehat{V_{s}}:=\frac{1}{nh_T}\sum_{i=1}^n\brackets{H^{\prime}\left( \frac{x_i+\bz_i^{\top}\hbe^{\left( T\right)}}{h_T}\right)}^2\bz_i\bz_i^{\top} .
	\end{equation} When 
	$T\geq \log_2\left(\frac{3\alpha}{2\alpha+1}\cdot\frac{\log (n/p)}{\log (m/p)}\right)$,
	we have
	\begin{equation}
		\norm{\widehat{U}- U}_2 =o_{\Prob}(1), \quad \norm{\widehat{V_{s}} -V_{s}}_2=o_{\Prob}(1).
	\end{equation}
\end{thm}
\begin{proof}[Proof]
	When $T \geq  \log_2\left(\frac{3\alpha}{2\alpha+1}\cdot\frac{\log (n/p)}{\log (m/p)}\right)$, we have
	\[(p/n)^{\frac{\kappa}{2\alpha+1}}=h_\kappa\gg h_{T}=\left(p/n\right)^{\frac{1}{2\alpha+1}},\]
	and
	\[\norm{\hbe^{\left(T\right)}-\be^*}_2=O_{\Prob}(\delta_{m,T}), \quad \delta_{m,T}= (p/n)^{\frac{\alpha}{2\alpha+1}}=o\paren{h_\kappa^{\alpha}}.\]
	It is easy to verify that $p\log n/(nh_\kappa^3)=o(1)$.
	By the proof of Equation \eqref{eq:1stepU}, we have that 
	\[\norm{U_{n, h_\kappa}\left(\hbe^{\left(T\right)}\right) - U h_\kappa^{\alpha}}_2 = O_{\Prob}\left((p/n)^{\alpha/(2\alpha+1)}\right).\]
	Also, it holds that $\norm{V_{n,h_\kappa}\left(\hbe^{\left(T\right)}\right)-V}_2=o_{\Prob}(1) $ by \eqref{eq:1stepV}. Note that \[\widehat{U}=\left(h_\kappa\right)^{-\alpha}\brackets{U_{n,h_\kappa}\left(\hbe^{\left(T\right)}\right)-V_{n,h_\kappa}\left(\hbe^{\left(T\right)}\right)\paren{\hbe^{\left(T\right)}-\be^*}},\]
	and $\left(h_\kappa\right)^{-\alpha}(p/n)^{\alpha/(2\alpha+1)}=o(1)$,
	which yields 	$\norm{\widehat{U}-U}_2=o_{\Prob}(1)$.
	
	To prove $\norm{\widehat{V_{s}}-V_{s}}_2=o_{\Prob}(1)$, recall that \[\widehat{V_{s}}:=\frac{1}{nh_T}\sum_{i=1}^n\brackets{H^{\prime}\left(\frac{x_i+\bz_i^{\top}\hbe^{\left(T\right)}}{h_T}\right)}^2\bz_i\bz_i^{\top}.\]
	For any $\bv \in \mathbb{S}^{p}$,
	\begin{align*}
		\E \left[\bv^{\top}\widehat{V_{s}}\bv\right]&=\E\left(\E_{\cdot\mid\bZ} \bv^{\top}\widehat{V_{s}}\bv\right)\\
		&=\E\brackets{ \left(\bZ^{\top}\bv\right)^2\int_{-1}^1\brackets{H^{\prime}\left(\xi\right)}^2\rho\left(\xi h_T-\bZ^{\top}\left(\hbe^{\left(T\right)}-\be^*\right)\mid\bZ\right)\diff\xi}\\
		&=\E\left[\left(\bZ^{\top}\bv\right)^2\pi_V\rho\left(0\mid \bZ\right) +  O\left(h_T+\delta_{m,T}(\bZ^{\top}\bu)\right)\right]\\
		&=\bv^{\top}V_{s}\bv + o\left(1\right),
	\end{align*}
	where $ \bu=(\hbe^{\left(T\right)}-\be^*)/\norm{\hbe^{\left(T\right)}-\be^*}_2 $ and hence $\E[(\bZ^{\top}\bu)^2] < \infty$.
	Also, since $H^{\prime}\left(x\right)$ is bounded, $\mathrm{var} \left(\bv^{\top}\widehat{V_{s}}\bv\right)=O\left(\frac{1}{nh_T^2}\right)=o\left(1\right)$. Then $\norm{\widehat{V_{s}}-V_{s}}_2=o_{\Prob}(1)$ is proved by Chebyshev's inequality.
	
\end{proof}

\subsection{Proof of the Results for \AvgSMSE}
\label{sec:pf-AvgSMSE}

\noindent \textbf{Proof of Theorem \ref{thm:asym-DC}}

\begin{proof}
	Since $\hbe_{\rm{SMSE}, \ell}$ is the minimizer of $F_{h,\ell}\left(\be\right)=\sum_{i \in \mathcal{H}_\ell}\left(-y_i\right)H\left(\frac{x_i+\bz_i^{\top}\be}{h}\right)$, we have $\nabla_{\be} F_{h,\ell}\left(\hbe_{\rm{SMSE},\ell}\right)=0$. By Taylor's expansion of $\nabla_{\be} F_{h,\ell}$  at $\be^*$, we have
	\[0=\nabla_{\be} F_{h,\ell}\left(\be^*\right)+\nabla_{\be}^2 F_{h,\ell}\left(\breve\be_{\ell}\right)\left(\hbe_{\rm{SMSE},\ell}-\be^*\right),\]
	where $\breve\be_{\ell}$ is between $\hbe_{\rm{SMSE},\ell}$ and $\be^*$.

	Define \[U_{m,\ell,h}\left(\be\right):=\frac{1}{m}\sum_{i \in \mathcal{H}_\ell}U_{h, i}\left(\be\right),\quad V_{m,\ell,h}\left(\be\right):=\frac{1}{m}\sum_{i \in \mathcal{H}_\ell}V_{h, i}\left(\be\right),\] where 
	$U_{h, i}\left(\be\right)$ and $V_{h, i}\left(\be\right)$ are defined in \eqref{eq:def:Uhi} and \eqref{eq:def:Vhi}. 
	By definition, $\nabla_{\be}^2 F_{h,\ell}\left(\breve\be_{\ell}\right)=V_{m,\ell,h}\left(\breve\be_{\ell}\right)$ and $\nabla_{\be} F_{h,\ell}\left(\be^*\right)=-U_{m,\ell,h}\left(\be^*\right)$, which yields \[\hbe_{\rm{SMSE}, \ell}-\be^*=V^{-1}_{m,\ell,h}\left(\breve\be_{\ell}\right)U_{m,\ell,h}\left(\be^*\right).\] 
	Following the proof of Proposition \ref{thm:1step-ld}, if $p\log m/(mh^3)=o(1)$, we have that  for any sufficiently large $\gamma$,  there exists a constant $C$ such that
	\[\sup_{\ell}\sup_{\be:\norm{\be-\be^*}\leq \delta}\norm{V-V_{m,\ell,h}\left(\be\right)}_2\leq C\left(\sqrt{\frac{p\log m}{mh^3}}+h^{\alpha}+\delta\right),\]
	with probability at least $1-Lm^{-\gamma/2}-2L\left(5m^{-\gamma}\right)^{p}$. The assumptions $L=o\left(m^{2(\alpha-1)/3}/(p\log m)^{(2\alpha+1)/3}\right)$ and $h=(\lambda_h/n)^{1/(2\alpha+1)}$ ensure that $p\log m/(mh^3)=o(1)$ and $L(m^{-\gamma/2}-2\left(5m^{-\gamma}\right)^{p})=o(1)$ for sufficiently large $\gamma$. Moreover, Theorem 1 in \cite{horowitz1992smoothed} showed that $\norm{\hbe_{\rm{SMSE}, \ell}-\be^{*}}_2=o(1)$ almost surely for all $\ell$, and thus 
	there exists a uniform high-probability bound for $V^{-1}-V^{-1}_{m,\ell,h}\big(\breve\be_{\ell}\big)$ over all machines:
	\[\sup_{\ell}\norm{V^{-1}-V^{-1}_{m,\ell,h}\left(\breve\be_{\ell}\right)}_2=o_{\mathbb{P}}\left(1\right),\] 
	which implies that
	\[\hbe_{\rm{{\AvgSMSE}}}-\be^*=\frac{1}{L}\sum_{\ell=1}^{L}\left(\hbe_{\rm{SMSE}, \ell}-\be^*\right) = V^{-1}U_{n,h}\left(\be^*\right) + U_{n,h}\left(\be^*\right) o_{\mathbb{P}}\left(1\right),\]
	using the facts that $U_{n,h}\left(\be\right)=\frac{1}{n}\sum_{i=1}^n U_{h, i}\left(\be\right)$ and $n=mL$.

	By Lemma \ref{lem:CentralLimit}, for any $\bm{\vartheta} \in \mathbb{S}^{p-1} \setminus \{0\}$, 
	\[\E \big[\bm{\vartheta}^{\top}V^{-1} U_{n,h}\left(\be^*\right)\big] = \bm{\vartheta}^{\top} V^{-1}Uh^{\alpha}+o\left(h^\alpha\right),\]
	and
	\[\sqrt{mLh}\cdot(1-\E)\left[\bm{\vartheta}^{\top}V^{-1}U_{n,h}\left(\be^*\right)\right] / \sqrt{\bm{\vartheta}^{\top}V^{-1}V_{s}V^{-1}\bm{\vartheta}}\stackrel{d}{\longrightarrow} \mathcal{N}\left(0, 1 \right).\]

	When $h = \left(\frac{\lambda_h}{n}\right)^{\frac{1}{2\alpha+1}}$, we have $\sqrt{\lambda_h}h^{-\alpha} = \sqrt{mLh}$, and thus
	\begin{align*}
		& \quad \frac{\sqrt{mLh}\bm{\vartheta}^{\top}\left(\hbe_{\rm{{\AvgSMSE}}}-\be^*\right) - \sqrt{\lambda_h}\bm{\vartheta}^{\top}V^{-1}U}{\sqrt{\bm{\vartheta}^{\top}V^{-1}V_{s}V^{-1}\bm{\vartheta}}} \\
		& = \sqrt{mLh}(1-\E)\left[\bm{\vartheta}^{\top}V^{-1} U_{n,h}\left(\be^*\right)\right] /\sqrt{\bm{\vartheta}^{\top}V^{-1}V_{s}V^{-1}\bm{\vartheta}} + o(1)\\
		&\quad + o_{\Prob}\left(1\right)\sqrt{mLh}\left(\bm{\vartheta}^{\top}U_{n,h}\left(\be^*\right)-\E \bm{\vartheta}^{\top}U_{n,h}\left(\be^*\right)+\E \bm{\vartheta}^{\top}U_{n,h}\left(\be^*\right)\right) / \sqrt{\bm{\vartheta}^{\top}V^{-1}V_{s}V^{-1}\bm{\vartheta}}\\
		& \stackrel{d}{\longrightarrow} \mathcal{N}\left(0, 1\right),
	\end{align*}
	which proves \eqref{eq:asym-DC}. Furthermore,
	if $h \gtrsim \left(\frac{1}{mL}\right)^{\frac{1}{2\alpha+1}}$, we have  $h^{-\alpha} \lesssim \sqrt{mLh}$, and thus 
	\[\bm{\vartheta}^{\top}\left(\hbe_{\rm{{\AvgSMSE}}}-\be^*\right)-  \bm{\vartheta}^{\top}V^{-1} U = o_{\mathbb{P}}\left(h^{\alpha}\right).\]
	If $h \lesssim \left(\frac{1}{mL}\right)^{\frac{1}{2\alpha+1}}$ but still satisfies $\frac{p\log m}{mh^3}=o\left(1\right)$, we have $h^{-\alpha} \gtrsim \sqrt{mLh}$, and thus 
	\[\sqrt{mLh} \E \left[\bm{\vartheta}^{\top} U_{n,h}\left(\be^*\right)\right]=o_{\mathbb{P}}\left(1\right),\] which yields
	\[\sqrt{mLh}\bm{\vartheta}^{\top}\left(\hbe_{\rm{{\AvgSMSE}}}-\be^*\right) /\sqrt{\bm{\vartheta}^{\top}V^{-1}V_{s}V^{-1}\bm{\vartheta}} \stackrel{d}{\longrightarrow} \mathcal{N}\left(0, 1\right).\]

\end{proof}

\subsection{Proof of the Results for Data Heterogeneity}

\label{sec:pf-weighted}
\noindent \textbf{Proof of Theorem \ref{thm:asym-DC-weighted}}

\begin{proof} \label{pf:thm:asym-DC-weighted}
	
	In the proof of Theorem \ref{thm:asym-DC}, we show that $\hbe_{\rm{{SMSE},\ell}} - \be^*=V^{-1}_{m_{\ell},\ell,h}\big(\breve{\be}_{\ell}\big)U_{m_{\ell},\ell, h}(\be^*)$ for any $\ell$, where $\breve{\be}_{\ell}$ is between $\be^*$ and $\hbe_{\rm{{SMSE},\ell}}$. The proof of \eqref{eq:1stepV} shows that, if $p\log m_{\ell}/(m_{\ell}h^3)=o(1)$, then 
	for any sufficiently large $\gamma$,  there exists a constant $C_{\ell}$ such that
	\[\sup_{\be:\norm{\be-\be^*}\leq \delta}\norm{V_{\ell}-V_{m,\ell,h}\left(\be\right)}_2\leq C_{\ell}\left(\sqrt{\frac{p\log m_{\ell}}{m_{\ell}h^3}}+h^{\alpha}+\delta\right),\]
	with probability at least $1-m_{\ell}^{-\gamma/2}-2\left(5m_{\ell}^{-\gamma}\right)^{p}$. The dependence of $C_{\ell}$ on $\ell$ is due to the distributions $\rho_{\ell}$ and $F_{\ell}$ and their derivatives, which can be uniformly upper bounded over all $\ell$ by Assumptions \ref{A2'} and \ref{A3'}. Moreover, the constant $\gamma$ does not depend on $\ell$. Therefore,
	if $h=n^{-1/(2\alpha+1)}$ and $\min_{\ell}m_{\ell} \gtrsim pn^{3/(2\alpha+1)}\log n$, which satisfies  $p\log \min{m_{\ell}}/(\min{m_{\ell}}h^3)=o(1)$, then 
	for any sufficiently large $\gamma$,  there exists a constant $C$ (hidden in $o(1)$) such that
	\[\sup_{\ell}\norm{V^{-1}_{\ell}-V^{-1}_{m_{\ell},\ell,h}\big(\breve{\be}_{\ell}\big)}_2= o(1),\] 
	with probability at least $1-\sum_{\ell=1}^Lm_{\ell}^{-\gamma/2}-2\sum_{\ell=1}^L\left(5m_{\ell}^{-\gamma}\right)^{p}$. The condition $\min_{\ell}m_{\ell} \gtrsim pn^{3/(2\alpha+1)}\log n$ also ensures that $\sum_{\ell=1}^Lm_{\ell}^{-\gamma/2}+2\sum_{\ell=1}^L\left(5m_{\ell}^{-\gamma}\right)^{p}=o(1)$ for sufficiently large $\gamma$. Therefore, we obtain that
	\begin{equation}
		\hbe_{\rm{{\wAvgSMSE}}} - \be^* = \sum_{\ell=1}^L W_{\ell}V_{\ell}^{-1}U_{m_{\ell},\ell, h}(\be^*) + o_{\Prob}(1)\sum_{\ell=1}^L W_{\ell}U_{m_{\ell},\ell, h}(\be^*).
	\end{equation}
	
	By  \hyperlink{step3}{Step 3} in the proof of \eqref{eq:1stepU} and the uniformness in Assumptions \ref{A2'} and \ref{A3'}, for any $\bv$ satisfying $\norm{\bv}_2=1$, it holds that
	\begin{equation}
		\E\left[\sum_{\ell=1}^L \bv^{\top}W_{\ell}V_{\ell}^{-1}U_{m_{\ell},\ell, h}(\be^*)\right] = \sum_{\ell=1}^L \bv^{\top}W_{\ell}V_{\ell}^{-1}U_{\ell}h^{\alpha}+o\left(h^{\alpha}\right)=\bv^{\top}B_{\rm{{\wAvgSMSE}}}+o\left(h^{\alpha}\right),
	\end{equation}
	where $B_{\rm{{\wAvgSMSE}}}:=\sum_{\ell=1}^L W_{\ell}V_{\ell}^{-1}U_{\ell}h^{\alpha}$.
	
	Define $\Phi_i:=(1-\E)m^{-1}_{\ell}\bv^{\top}W_{\ell}V_{\ell}^{-1}U_{h, i}(\be^*)$ for $i \in \mathcal{H}_{\ell}$. (Notice that the definitions of $\Phi_i$ may be different in the proof of different
	theorems.) Then
	\begin{equation}
		(1-\E)\left[\sum_{\ell=1}^L \bv^{\top}W_{\ell}V_{\ell}^{-1}U_{m_{\ell},\ell, h}(\be^*) \right]  = 	(1-\E)\left[\sum_{\ell=1}^L \sum_{i \in \mathcal{H}_\ell} m^{-1}_{\ell}\bv^{\top}W_{\ell}V_{\ell}^{-1}U_{h, i}(\be^*) \right]=\sum_{i=1}^n\Phi_i.
	\end{equation}
	Recall that by \eqref{eq:U-var},  
	\[s_n^2 := \sum_{i=1}^n \mathrm{var}\left[\Phi_i\right] = h^{-1}\sum_{\ell=1}^Lm_{\ell}^{-1}\bv^{\top}W_{\ell}V_{\ell}^{-1}\left[ V_{s,\ell}+o(1)\right]V_{\ell}^{-1}W_{\ell}^{\top}\bv.\]
	Similar to the proof of \eqref{eq:U-var},  we have that
	\[\E\left[\Phi_i^4\right]\lesssim \frac{\norm{W_\ell}_2^4}{m_{\ell}^4h^3}, \quad \quad \mathrm{when \,\,} i \in \mathcal{H}_{\ell}.\]
	
	By Assumptions  \ref{A4'} and \ref{assumption:weightrestriction}, the Lindeberg's condition can be verified as follows:
	\[\frac{1}{s_n^2} \sum_{i=1}^n\E\left[\left(\Phi_i\right)^2\I\left(\abs{\Phi_i}>\varepsilon s_n\right)\right]\leq \sum_{i=1}^n\frac{\E\left[\Phi_i^4\right]}{\varepsilon^2s_n^4}\lesssim \frac{\sum_{\ell=1}^L \left[\norm{W_{\ell}}_2^4/m^3_{\ell}\right]}{h\sum_{\ell=1}^L \left[ \norm{W_{\ell}}_2^2/m_{\ell}\right]} \asymp \frac{1}{nh}\to 0.\]
	Therefore, since $s_n^2 \to \bv^{\top}\Sigma_{\rm{{\wAvgSMSE}}}\bv$, we have
	\[	\left(\bv^{\top}\Sigma_{\rm{{\wAvgSMSE}}}\bv\right)^{-1/2}\sum_{i=1}^n \Phi_i \stackrel{d}{\longrightarrow} \mathcal{N}\left(0, 1\right),\]
	where $\Sigma_{\rm{{\wAvgSMSE}}}:=h^{-1}\sum_{\ell=1}^Lm_{\ell}^{-1}W_{\ell}V_{\ell}^{-1}V_{s,\ell}V_{\ell}^{-1}W_{\ell}^{\top}$.
	Now we do the following decomposition.
	\begin{equation}
		\label{eq:asym-weighted-decomp}
		\begin{aligned}
			& \quad \left(\bv^{\top}\Sigma_{\rm{{\wAvgSMSE}}}\bv\right)^{-1/2}\bv^{\top}\left(\hbe_{\rm{{\wAvgSMSE}}} - \be^*-B_{\rm{{\wAvgSMSE}}}\right)\\
			&= \left(\bv^{\top}\Sigma_{\rm{{\wAvgSMSE}}}\bv\right)^{-1/2}\sum_{i=1}^n \Phi_i  +	\left(\bv^{\top}\Sigma_{\rm{{\wAvgSMSE}}}\bv\right)^{-1/2}o(h^{\alpha})\\
			&\quad+o_{\Prob}(1)\left(\bv^{\top}\Sigma_{\rm{{\wAvgSMSE}}}\bv\right)^{-1/2}\sum_{\ell=1}^L \bv^{\top}W_{\ell}U_{m_{\ell},\ell, h_{\ell}}(\be^*).
		\end{aligned}
	\end{equation}
	Assumptions \ref{A4'} and \ref{assumption:weightrestriction} ensures that 
	\[\left(\bv^{\top}\Sigma_{\rm{{\wAvgSMSE}}}\bv\right)^{1/2} \asymp 1/\sqrt{nh} \asymp h^{\alpha},\] with $h = n^{-1/(2\alpha+1)}$, which implies that the second term on the RHS of \eqref{eq:asym-weighted-decomp} is $o(1)$. Meanwhile, it is straightforward to show that $\left(\bv^{\top}\Sigma_{\rm{{\wAvgSMSE}}}\bv\right)^{-1/2}\sum_{\ell=1}^L W_{\ell}U_{m_{\ell},\ell, h_{\ell}}(\be^*)$ is bounded in probability by letting $V_{\ell}=I_{p\times p}$ in the previous analysis. Hence,
	\[\left(\bv^{\top}\Sigma_{\rm{{\wAvgSMSE}}}\bv\right)^{-1/2}\bv^{\top}\left(\hbe_{\rm{{\wAvgSMSE}}} - \be^*-B_{\rm{{\wAvgSMSE}}}\right) = \left(\bv^{\top}\Sigma_{\rm{{\wAvgSMSE}}}\bv\right)^{-1/2}\sum_{i=1}^n \Phi_i  + o_{\Prob}(1),\]
	which converges to $\mathcal{N}(0, 1)$ in distribution. Then rewrite
	\[\Sigma_{\rm{{\wAvgSMSE}}}=n^{-2\alpha/(2\alpha+1)}\sum_{\ell=1}^L\frac{n}{m_{\ell}}W_\ell V_{\ell}^{-1}V_{s,\ell}V_{\ell}^{-1}W_\ell^{\top}, \quad B_{\rm{{\wAvgSMSE}}}=n^{-\alpha/(2\alpha+1)}\sum_{\ell=1}^L W_{\ell}V_{\ell}^{-1}U_{\ell},\] 
	which completes the proof of Theorem \ref{thm:asym-DC-weighted}.
	
	Additionally, we show that, for any vector $\bv$,   $\bv^{\top}\Sigma_{\rm{{\wAvgSMSE}}}\bv$
	is minimized at  
	\[
	W^{*,\rm{{\wAvgSMSE}} }_{\ell}=\left(\sum^L_{\ell=1}m_{\ell}V_{\ell}V^{-1}_{s,\ell}V_{\ell}\right)^{-1} m_{\ell}V_{\ell}V^{-1}_{s,\ell}V_{\ell}.
	\] 
	We are to solve the following optimization problem:
	\[\min_{W_\ell} \sum_{\ell=1}^L\frac{n}{m_{\ell}}\bv^{\top}W_\ell V_{\ell}^{-1}V_{s,\ell}V_{\ell}^{-1}W_\ell^{\top}\bv, \, \mathrm{s.t.} \sum_{\ell=1}^L W_\ell = I_{p \times p}.\]
	The Lagrangian is
	\[ \sum_{\ell=1}^L\frac{n}{m_{\ell}}\bv^{\top}W_\ell V_{\ell}^{-1}V_{s,\ell}V_{\ell}^{-1}W_\ell^{\top}\bv+\left\langle \Lambda, \sum_{\ell=1}^L W_\ell - I_{p \times p}\right\rangle.\]
	By taking derivative w.r.t $W_{\ell}$ and letting the derivative be zero, we obtain $2\frac{n}{m_{\ell}}\bv\bv^{\top }W_\ell V_{\ell}^{-1}V_{s,\ell}V_{\ell}^{-1}+\Lambda=\bm 0$ or $2n\bv\bv^{\top}W_{\ell} =-\Lambda m_{\ell}V_{\ell}V^{-1}_{s,\ell}V_{\ell}$. By the constraint that $\sum_{\ell=1}^L W_\ell = I_{p \times p}$, we obtain that a sufficient condition for $\{W_{\ell}^*\}$ to be a minimizer is \[\bv\bv^{\top}W_{\ell}^* =\bv\bv^{\top}\big(\sum_{\ell=1}^Lm_{\ell}V_{\ell}V^{-1}_{s,\ell}V_{\ell}\big)^{-1} m_{\ell}V_{\ell}V^{-1}_{s,\ell}V_{\ell},\]
	and it is clear that the
	weight  $\left\{W^{*,\rm{{\wAvgSMSE}} }_{\ell}\right\}$ defined above satisfies this condition for all $\bv$.
	

\end{proof}

\noindent \textbf{Proof of Theorem \ref{thm:asym-mSMSE-weighted}}

\begin{proof}
	
	For {\wmSMSE}, we have
	\[\hbe_{\rm{{\wmSMSE}}}^{(t)}-\be^*=\left(\sum_{\ell=1}^L\sum_{i \in \mathcal{H}_{\ell}} m^{-1}_{\ell}W_{\ell}V_{h_{t},i}\left(\hbe^{(t-1)}\right)\right)^{-1}\left(\sum_{\ell=1}^L\sum_{i \in \mathcal{H}_{\ell}} m^{-1}_{\ell}W_{\ell}U_{h_{t},i}\left(\hbe^{(t-1)}\right)\right).\]
	By Assumption \ref{assumption:weightrestriction}, $W_{\ell}/m_{\ell}\asymp 1/n$, and hence, analogous the proof of Proposition \ref{thm:1step-ld}, we can show that, if $\norm{\hbe_{\rm{{\wmSMSE}}}^{(t-1)}-\be^*}_2=O_{\Prob}(\delta_{t-1})$, then
	\begin{equation}
		\label{eq:1step-weighted}
		\norm{\hbe_{\rm{{\wmSMSE}}}^{(t)}-\be^*}_2=O_{\Prob}\left(\sqrt{\frac{p}{nh_{t}}}+\delta_{t-1}\sqrt{\frac{p\log n}{nh_{t}^3}}+\delta_{t-1}^2+h_{t}^{\alpha}\right).
	\end{equation}
	
	Using the same analysis as in the proof of Theorem \ref{thm:tstep-ld}, by taking $h_t = \max\big\{(p/n)^{1/(2\alpha+1)},\delta_{0}^{2^{t}/\alpha}\big\}$ for $t=1,\dots, T$ and $T>\log_2\left(\frac{\alpha}{2\alpha+1}\frac{\log(n/p)}{\log(1/\delta_0)}\right)$, we have $h_T= (p/n)^{1/(2\alpha+1)}$ and  $\delta_{T}=(p/n)^{\alpha/(2\alpha+1)}$, which satisfies $\delta_{T} = o\left(\max\left\{h_{T+1}^{\alpha/2}, h_{T+1}/\sqrt{p\log n}\right\}\right)$ with $h_{T+1}=n^{-1/(2\alpha+1)}$ and $p=o\left(n^{\frac{2(\alpha-1)}{4\alpha+1}}(\log n)^{-\frac{2\alpha+1}{4\alpha+1}}\right)$. Therefore, similar to the proof of \eqref{eq:asym-EU}, for any $\bv \in \mathbb{S}^{p-1}$, we have
	\[\sup_{\norm{\be-\be^*}_2\leq \delta_{T}}\E \left(\sum_{\ell=1}^L\sum_{i \in \mathcal{H}_{\ell}} m^{-1}_{\ell}\bv^{\top}W_{\ell}U_{h_{T+1},i}\left(\be\right)\right)=\bv^{\top}\ov{U}_Wh_{T+1}^{\alpha}+o\left(h_{T+1}^{\alpha}\right),\]
	where $\ov{U}_{W}:=\sum_{\ell=1}^L W_{\ell}U_{\ell}$. By \hyperlink{step1}{\textbf{Step 1}} in the proof of \eqref{eq:1stepU} and Assumption \ref{assumption:weightrestriction}, it holds that
	\begin{align*}
		&\quad\sup_{\norm{\be-\be^*}_2\leq \delta_{T}}(1-\E) \left(\sum_{\ell=1}^L\sum_{i \in \mathcal{H}_{\ell}} m^{-1}_{\ell}\bv^{\top}W_{\ell}\left[U_{h_{T+1},i}\left(\be\right)-U_{h_{T+1},i}\left(\be^*\right)\right]\right)\\
		&= O_{\Prob}\left(\delta_{T}\sqrt{\frac{p\log n}{nh_{T+1}^3}}\right)=o_{\Prob}\left(\frac{1}{\sqrt{nh_{T+1}}}\right).
	\end{align*}
	
	Define $\Phi_i:=(1-\E)\left(m^{-1}_{\ell}\bv^{\top}W_{\ell}U_{h,i}(\be^*)\right)$ for $i \in \mathcal{H}_{\ell}$. Then
	\[s_n^2 := \sum_{i=1}^n \mathrm{var}\left[\Phi_i\right] = h^{-1}\sum_{\ell=1}^Lm_{\ell}^{-1}\bv^{\top}W_{\ell}\left[ V_{s,\ell}+o(1)\right]W_{\ell}^{\top}\bv.\]
	Similar to the proof of Theorem \ref{thm:asym-DC-weighted}, we have $\left(h^{-1}\sum_{\ell=1}^Lm_{\ell}^{-1}\bv^{\top}W_{\ell}V_{s,\ell}W_{\ell}^{\top}\bv\right)^{-1/2}\sum_{i=1}^n\Phi_i \stackrel{d}{\longrightarrow} \mathcal{N}\left(0, 1\right)$ and $h_{T+1}^{-1}\sum_{\ell=1}^Lm_{\ell}^{-1}\bv^{\top}W_{\ell}V_{s,\ell}W_{\ell}^{\top}\bv \asymp 1/\sqrt{nh_{T+1}}=h_{T+1}^{\alpha}$, and thus
	\begin{align*}
		&\quad\left(h_{T+1}^{-1}\sum_{\ell=1}^Lm_{\ell}^{-1}\bv^{\top}W_{\ell}V_{s,\ell}W_{\ell}^{\top}\bv\right)^{-1/2}\left(\sum_{\ell=1}^L\sum_{i \in \mathcal{H}_{\ell}} m^{-1}_{\ell}\bv^{\top}W_{\ell}U_{h_{T+1},i}\left(\hbe^{(T)}\right)-\bv^{\top}\ov{U}_Wh_{T+1}^{\alpha}\right)\\
		&=\left(h_{T+1}^{-1}\sum_{\ell=1}^Lm_{\ell}^{-1}\bv^{\top}W_{\ell}V_{s,\ell}W_{\ell}^{\top}\bv\right)^{-1/2}\left(\sum_{\ell=1}^L\sum_{i \in \mathcal{H}_{\ell}}(1-\E) m^{-1}_{\ell}\bv^{\top}W_{\ell}U_{h_{T+1},i}\left(\be^*\right)\right)\\
		&\quad+\left(h_{T+1}^{-1}\sum_{\ell=1}^Lm_{\ell}^{-1}\bv^{\top}W_{\ell}V_{s,\ell}W_{\ell}^{\top}\bv\right)^{-1/2}\left(\sum_{\ell=1}^L\sum_{i \in \mathcal{H}_{\ell}}(1-\E) m^{-1}_{\ell}\bv^{\top}W_{\ell}\left[U_{h_{T+1},i}\left(\hbe^{(T)}\right)-U_{h_{T+1},i}\left(\be^*\right)\right]\right)\\
		&\quad+\left(h_{T+1}^{-1}\sum_{\ell=1}^Lm_{\ell}^{-1}\bv^{\top}W_{\ell}V_{s,\ell}W_{\ell}^{\top}\bv\right)^{-1/2}\E \left(\sum_{\ell=1}^L\sum_{i \in \mathcal{H}_{\ell}} m^{-1}_{\ell}\bv^{\top}W_{\ell}U_{h_{T+1},i}\left(\hbe^{(T)}\right)\right)-\bv^{\top}\ov{U}_Wh_{T+1}^{\alpha}\\
		&=\left(h_{T+1}^{-1}\sum_{\ell=1}^Lm_{\ell}^{-1}\bv^{\top}W_{\ell}V_{s,\ell}W_{\ell}^{\top}\bv\right)^{-1/2}\left(\sum_{\ell=1}^L\sum_{i \in \mathcal{H}_{\ell}}(1-\E) m^{-1}_{\ell}\bv^{\top}W_{\ell}U_{h_{T+1},i}\left(\be^*\right)\right)+o_{\Prob}(1)\stackrel{d}{\longrightarrow} \mathcal{N}\left(0, 1\right).
	\end{align*}
	
	Note that $h_{T+1}^{-1}\sum_{\ell=1}^Lm_{\ell}^{-1}W_{\ell}V_{s,\ell}W_{\ell}^{\top}=n^{-\frac{2\alpha}{2\alpha+1}}\sum_{\ell=1}^L\frac{n}{m_{\ell}}W_{\ell}V_{s,\ell}W_{\ell}^{\top}$, $n^{\alpha/(2\alpha+1)}=h_{T+1}^{\alpha}$, and Assumption \ref{assumption:weightrestriction} ensures that $n\sum_{\ell=1}^Lm_{\ell}^{-1}W_{\ell}V_{s,\ell}W_{\ell}^{\top}$ is a finite matrix. Then we rewrite the above result as
	\[\frac{n^{\alpha/(2\alpha+1)}\sum_{\ell=1}^L\sum_{i \in \mathcal{H}_{\ell}} m^{-1}_{\ell}\bv^{\top}W_{\ell}U_{h_{T+1},i}\left(\hbe^{(T)}\right) -\bv^{\top}\ov{U}_W}{\sqrt{n\sum_{\ell=1}^Lm_{\ell}^{-1}\bv^{\top}W_{\ell}V_{s,\ell}W_{\ell}^{\top}\bv}} \stackrel{d}{\longrightarrow}\mathcal{N}\left(0, 1\right).\]
	
	Combining with
	\[\hbe_{\rm{{\wmSMSE}}}^{(T+1)}-\be^*=\left(\sum_{\ell=1}^L\sum_{i \in \mathcal{H}_{\ell}} m^{-1}_{\ell}W_{\ell}V_{h_{T+1},i}\left(\hbe^{(T)}\right)\right)^{-1}\left(\sum_{\ell=1}^L\sum_{i \in \mathcal{H}_{\ell}} m^{-1}_{\ell}W_{\ell}U_{h_{T+1},i}\left(\hbe^{(T)}\right)\right),\]
	and  
	\[\norm{\sum_{\ell=1}^L\sum_{i \in \mathcal{H}_{\ell}} m^{-1}_{\ell}W_{\ell}V_{h_{T+1},i}\left(\hbe^{(T)}\right)-\ov{V}_W}_2=O_{\Prob}\left(\sqrt{\frac{p\log n}{nh_{T+1}^3}}+\delta_{T}+h_{T+1}^{\alpha}\right),\]  by Slutsky's Theorem, \eqref{eq:asym-mSMSE-weighted} is true.
	
\end{proof}

\subsubsection{Proof for Theorem \ref{thm:tstep-cond-shift}}

We first restate Theorem \ref{thm:tstep-cond-shift} to explicitly show the dependence of the convergence rate on the constant $\omega \in (0, 1)$.

\begin{thm}[Restatement of Theorem \ref{thm:tstep-cond-shift}]\label{thm:tstep-cond-shift-1}
	Assume the assumptions in Theorem \ref{thm:tstep-ld} hold. Further assume that $\varepsilon \leq\varepsilon_0$ for some constant $\varepsilon_0<1$. By choosing $h_0=\big(\frac{p\log L}{\omega m}\big)^{\frac{1}{2\alpha+1}}$ and $h_t=\max\left\{\delta^{2^t/\alpha}_{m ,0}, \big(\frac{p}{(1-\omega)n}\big)^{\frac{1}{2\alpha+1}}\right\}$ for $t=1,2,\dots,T$, we have that
	\begin{equation}\label{eq:tstepbeta-hetero-1}
		\norm{\hbe_1^{(t)}-\be_1^*}_2=O_{\Prob}\left(\Big(\frac{p}{(1-\varepsilon)(1-\omega)n}\Big)^{
			\frac{\alpha}{2\alpha+1}}+\delta_{m ,0}^{2^{t}}+\delta_{m ,0}^{2^{t-1}}\Big(\frac{p}{(1-\varepsilon)(1-\omega)n}\Big)^{\frac{\alpha-1}{2\alpha+1}}\sqrt{\log n}+\varepsilon\delta_{m ,0}^2\right),
	\end{equation}  
	where $\delta_{m ,0}=(\frac{p\log L}{\omega m})^{\frac{\alpha}{2\alpha+1}}$.
\end{thm} 

\begin{proof}
	Throughout the whole proof, let $\abs{\calS}$ denote the cardinality for any set $\calS$. For $\ell=1,\dots, L$, denote $\calH_{\ell}^{(0)}$ to be an arbitrary subset of $\calH_{\ell}$ with size $\abs{\calH_{\ell}^{(0)}} = \omega \abs{\calH_{\ell}}=\omega m$, and let $\hbe_{\ell,\mathrm{SMSE}}^{(0)}=\argmin_{\be}\frac{1}{\omega m}\sum_{\ell\in\mathcal{H}_{\ell}^{(0)}}(-y_i)H\left(\frac{x_i+\bz_{i}^{\top}\be}{h_0}\right)$, i.e., the SMSE computed using data in $\calH_{\ell}^{(0)}$ with bandwidth $h_0=(p\log L/\omega m)^{1/(2\alpha+1)}$. For a constant $C_0$, define
	\[\mathcal{A}_{C_0}:=\left\{\ell:\norm{\hbe_{\ell,\mathrm{SMSE}}^{(0)}-\hbe_{1,\mathrm{SMSE}}^{(0)}}_2\leq C_0\delta_{m ,0}\right\}, \mathcal{A}^*:=\left\{\ell:\be_{\ell}^{*}=\be_{1}^{*}\right\},\]
	where $\delta_{m ,0}=(p\log L/\omega m)^{\alpha/(2\alpha+1)}$.
	We first show the following lemma:
	\begin{lemma}\label{lem:hetero-1}
		Under the assumptions in Theorem \ref{thm:tstep-cond-shift}, there exist constants $C$ and $C_0$ such that the following event holds with probability approaching one:
		\[E_0:=\left\{\sup_{\ell}\norm{\hbe_{\ell,\mathrm{SMSE}}^{(0)}-\be^*_{\ell}}_2\leq C\delta_{m ,0}, \quad \mathcal{A}^* \subset \mathcal{A}_{C_0}, \quad \sup_{\ell\in\mathcal{A}_{C_0}\setminus\mathcal{A}^*} \norm{\be^*_\ell-\be^*_1}_2 \leq C\delta_{m ,0}\right\}.\]
	\end{lemma}
	\begin{proof}[Proof for Lemma \ref{lem:hetero-1}]
		We first modify the definitions in the proof of Theorem \ref{thm:asym-DC} and Proposition \ref{thm:1step-ld}. For $i\in \calH_{\ell}$, define
		\[U_{h,i,\ell}\left(\be\right):=\frac{1}{h^2} \left(-y_i\right) H^{\prime\prime}\left(\frac{x_i+\bz_i^{\top}\be}{h}\right)\bz_i\bz_i^{\top}\left(\be-\be_{\ell}^*\right)-\frac{1}{h}\left(-y_i\right) H^{\prime}\left(\frac{x_i+\bz_i^{\top}\be}{h}\right)\bz_i,\]
		and 
		\[
		V_{h,i}\left(\be\right):=\frac{1}{h^2} \left(-y_i\right) H^{\prime\prime}\left(\frac{x_i+\bz_i^{\top}\be}{h}\right)\bz_i\bz_i^{\top}.
		\]
		Define \[U_{m,\ell,h}\left(\be\right)=\frac{1}{\omega m}\sum_{i \in \mathcal{H}_\ell^{(0)}}U_{h, i,\ell}\left(\be\right),\quad V_{m,\ell,h}\left(\be\right):=\frac{1}{\omega m}\sum_{i \in \mathcal{H}_\ell^{(0)}}V_{h, i}\left(\be\right).\]
		We will show that, for any sufficiently large $\gamma$, there exists a constant $C$, such that
		\begin{equation}\label{eq:sup-U*}
			\sup_{\ell}\norm{(1-\E)U_{m,\ell,h_0}(\be_{\ell}^*)}_2=O_{\Prob}\left(\sqrt{\frac{p\log L}{\omega mh_0}}\right),
		\end{equation}
		\begin{equation}\label{eq:sup-phi-V}
			\sup_{\ell}\sup_{\be: \norm{\be-\be_{\ell}^*}_2 \leq \delta}  \norm{V_{m,\ell,h_0}\left(\be\right)-V}_2 = O_{\Prob}\left(\sqrt{\frac{p\log m}{\omega mh_0^3}}+h_0^{\alpha}+\delta\right).
		\end{equation}
	
	To show \eqref{eq:sup-U*}, using the same vectors $\bv_1,...,\bv_{5^p} \in \mathbb{R}^p$ as in the proof of Proposition \ref{thm:1step-ld}, it suffices to show that, for any sufficiently large $\gamma$, there exists a constant $C^*$, such that
	\begin{equation}
		\sup_{\ell}\sup_{{j_v} \in [5^p]} \abs{\bv_{j_v}^{\top}(1-\E)U_{m,\ell,h_0}(\be_{\ell}^*)}\leq C^*\sqrt{\frac{p\log L}{\omega mh_0}}.
	\end{equation}
	with probability at least $1-2L(5L^{-\gamma})^{p}$. 
	
	Let $\bv$ be any fixed vector in $\mathbb{S}^{p-1}$. Note that  \[(1-\E)\bv^{\top}U_{m,\ell,h_0}\left(\be_{\ell}^*\right) =\frac{1}{\omega m}\sum_{i \in \mathcal{H}_{\ell}^{(0)}}(1-\E)\brackets{\bv^{\top}U_{h_0,i,\ell}\left(\be_{\ell}^*\right)}=\frac{1}{\omega m}\sum_{i \in \mathcal{H}_{\ell}^{(0)}}(1-\E)\frac{(\bv^{\top}\bz_i)y_i}{h}H^{\prime}\left(\frac{x_i+\bz_i^{\top}\be_{\ell}^*}{h}\right),\] and $(1-\E)\brackets{\bv^{\top}U_{h,i}\left(\be_{\ell}^*\right)}$ are i.i.d. among different $i \in \mathcal{H}_{\ell}^{(0)}$. By the assumptions, the density function of $\zeta_{\ell}=X+\bZ^{\top}\be_{\ell}^*$, denoted by $\rho(\cdot\mid\bZ)$, is the same for all $\ell$ and is bounded uniformly for all $\bZ$. Therefore,
	\begin{equation}\label{eq:hetero-EUbeta*2}
		\begin{aligned}
			&\E_{\cdot\mid \bZ}\brackets{\bv^{\top}U_{h_0,i,\ell}\left(\be_{\ell}^*\right)}^2\\
			=&\frac{\paren{\bv^{\top}\bZ}^2}{h_0^2}\E_{\cdot\mid \bZ} \brackets{H^{\prime}\left(\frac{X+\bZ^{\top}\be_{\ell}^*}{h_0}\right)}^2\\
			=& \int_{-1}^1\frac{\paren{\bv^{\top}\bZ}^2}{h_0}\brackets{H^{\prime}\left(\xi\right)}^2\rho\left(\xi h_0\mid \bZ\right) \diff \xi \leq C_{\rho, H}\left(\frac{(\bv^{\top}\bZ)^2}{h_0}\right),
		\end{aligned}
	\end{equation} 
	where $C_{\rho, H}$ is a constant not depending on $\ell$.
	Let $\wphi^{U, *}_{i, \bv}=h(1-\E)\brackets{\bv^{\top}U_{h_0,i,\ell}\left(\be_{\ell}^*\right)}$. By \eqref{eq:phiUi-bound}, it holds that
	\[ 
	\Prob\left(\sum_{i \in \mathcal{
			H}_{\ell}^{(0)}} \wphi^{U, *}_{i, \bv}  > b\right) \leq \exp\left(-tb + t^2\sum_{i \in \calH_{\ell} ^{(0)}}\E\left[\left(\wphi^{U, *}_{i, \bv}\right)^2\exp\left(t\abs{\wphi^{U, *}_{i, \bv}}\right)\right]\right).
	\]
	Combining with \eqref{eq:hetero-EUbeta*2} and Assumption \ref{A5}, there exists a constant $C_{\rho, H, \eta}$ that does not depend on $\ell$, such that
	\[\Prob\left(\sum_{i\in \calH_{\ell}^{(0)}} \wphi^{U, *}_{i,\bv}  > b\right) \leq \exp\left(-tb + \omega mt^2C_{\rho, H, \eta}^2h_0\right),\]
	for sufficiently small $t$. Letting $b=C_{\rho, H, \eta}\sqrt{\gamma \omega mph_0\log L}$ and $t=\sqrt{\frac{\gamma p\log L}{4C_{\rho, H, \eta}^2\omega mh_0}}$ leads to
	\[\Prob\left(\sum_{i \in \calH_{\ell}^{(0)}} \wphi^{U, *}_{i,\bv} > C_{\rho, H, \eta} \sqrt{\gamma mh_0p \log L}\right) \leq \exp\left[-\frac{1}{4}\gamma p\log L\right],\]
	which yields
	\[\Prob\left(\frac{1}{\omega m}\abs{\sum_{i \in \calH_{\ell}^{(0)}} (1-\E)\brackets{\bv^{\top}U_{h_0,i,\ell}\left(\be_{\ell}^*\right)}} >  C_{\rho, H, \eta} \sqrt{\frac{\gamma p\log L}{\omega mh_0} }\right) \leq 2L^{-\gamma p/4}.\]
	The above inequality is true for all $\bv$ in $\mathbb{S}^{p-1}$. Therefore, with probability at least $1-2L(5L^{-\gamma/4})^p$,
	\[\sup_{\ell}\sup_{j_v \in [5^p]}\abs{(1-\E)\bv_{j_v}^{\top}U_{m,\ell,h_0}\left(\be^*\right)}  \leq C^* \sqrt{\frac{p \log L}{\omega mh_0}},\]
	where $C^*=C_{\rho,H, \eta}\sqrt{\gamma}$. Since $\gamma$ can be arbitrarily large, this completes the proof of \eqref{eq:sup-U*}.

	To show \eqref{eq:sup-phi-V}, we use the same set of $\{\bv_{j_v}\}_{j_v\in[5^p]}$ as above.  By the proof of Lemma 3 in \cite{cai2010optimal}, 
	$\norm{A}_2\leq 4\sup_{{j_v} \in [5^p]} \abs{\bv^{\top}_{j_v}A\bv_{j_v}},$
	for any symmetric $A \in \mathbb{R}^{p \times p}$. 
	Therefore, it suffices to bound $\sup_{{j_v} \in [5^p]} \abs{\bv^{\top}_{j_v}\brackets{V_{m,\ell,h_0}\left(\be\right)-V}\bv_{j_v}}$. Using the same strategy as in \hyperlink{step1}{\textbf{Step 1}} of the proof of \eqref{eq:1stepU}, we can find a set $\{\be_{j_\beta}\}_{j_{\beta} \in [(\omega m)^{\gamma p}]}$ satisfying that, for all $\be$ in the ball $\braces{\be: \norm{\be-\be^*}_2\leq \delta}$, there exists $j_{\beta} \in [(\omega m)^{\gamma p}]$ such that $\norm{\be-\be_{j_{\beta}}}_\infty \leq \frac{2\delta}{(\omega m)^{\gamma}}$. By the Lipschitz property of $H''(x)$, we have
	\begin{align*}
		&\quad\sup_{{j_v} \in [5^p]}\sup_{\be: \norm{\be-\be^*}_2 \leq \delta }\inf_{j_{\beta}\in [(\omega m)^{\gamma p}]}\abs{\bv_{j_v}^{\top}V_{m,\ell,h_0}\left(\be\right)\bv_{j_v}-\bv_{j_v}^{\top}V_{m,\ell,h_0}\left(\be_{j_\beta}\right)\bv_{j_v}} \\
		&\leq \sup_{{j_v} \in [5^p]}\sup_{\be: \norm{\be-\be^*}_2 \leq \delta }\inf_{j_{\beta}\in [(\omega m)^{\gamma p}]}\frac{1}{m}\sum_{i\in \calH_{\ell}^{(0)}} \frac{\paren{\bz_i^{\top}\bv_{j_v}}^2}{h_0^3}\abs{\bz_i^{\top}\paren{\be-\be_{j_\beta}}} \\
		&= O\left(\sum_{i\in \calH_{\ell}^{(0)}}\frac{\delta\norm{\bz_i}_2^3}{(\omega m)^{\gamma+1}h_0^3}\right).
	\end{align*}
	
	Analogous to the \hyperlink{step1}{\textbf{Step 1}} of the proof of \eqref{eq:1stepU}, we can show that $\frac{1}{(\omega m)^{\gamma+1}}\sum\limits_{i \in \calH_{\ell}^{(0)}}\norm{\bz_i}_2^3 \leq \frac{p\sup_{i, j}\E |z_{i, j}|^3}{(\omega m)^{(\gamma-1)/2}}$, with probability and least $1-(\omega m)^{-\gamma/2}$. By taking $\gamma>0$ large enough such that 
	\[\frac{p^{3/2}\delta}{(\omega m)^{(\gamma-1)/2}h_0^3} = o\left(\sqrt{\frac{p\log m}{\omega m h_0^3}}\right),\] 
	we obtain
	\begin{equation}
		\begin{aligned}
			&\quad\sup_{{j_v} \in [5^p]}\sup_{\be: \norm{\be-\be^*}_2 \leq \delta}\abs{\bv^{\top}_{j_v}\brackets{V_{m,\ell,h_0}\left(\be\right)-V}\bv_{j_v}} -\sup_{{j_v} \in [5^p]}\sup_{j_{\beta}\in [m^{\gamma p}]}\abs{\bv^{\top}_{j_v}\brackets{V_{m,\ell,h_0}\left(\be_{j_\beta}\right)-V}\bv_{j_v}}\\
			&=o\paren{\sqrt{\frac{p\log m}{\omega mh_0^3}}}.
		\end{aligned}
	\end{equation}
	
	For any fixed $\be$, let
	\[\phi_{i,\bv}^{V}(\be):=h_0^2(1-\E)\bv^{\top}V_{h_0,i}(\be)\bv=(1-\E)(\bv^{\top}\bz_i)^2 \left(-y_i\right) H^{\prime\prime}\left(\frac{x_i+\bz_i^{\top}\be}{h_0}\right).\]
	By repeating the procedure in the proof for \eqref{eq:sup-U*}, we obtain
	\[\Prob\left(\sum_{i \in \calH_{\ell}^{(0)}}\phi_{i,\bv}^{V}(\be) > b\right) \leq \exp\left(-tb+t^2C_{\rho,H, \eta}^2\omega mh_0\right),\]
	for some absolute constant $C_{\rho,H, \eta}$ and sufficiently small $t>0$. Let $b=C_{\rho,H, \eta}\sqrt{8\gamma \omega mh_0p\log m}$ and $t=b/(2C_{\rho, H, \eta}^2\omega mh_0)$, where $\gamma$ is an arbitrary positive constant to be determined, it holds that
	\[\Prob\left(\phi_{i,\bv}^{V}(\be) > C_{\rho,H, \eta}\sqrt{8\gamma \omega mh_0p\log m}\right) \leq m^{-2\gamma p},\]
	and hence
	\[\Prob\left(\abs{(1-\E)\paren{\bv^{\top}\brackets{V_{m,\ell,h_0}\left(\be\right)-V}\bv}} > C_{\rho, H, \eta}\sqrt{\frac{8\gamma p\log m}{\omega  m h_0^3}}\right) \leq 2(\omega m)^{-2\gamma p}.\]
	The above inequality is true for all $\bv_{j_v}$ and $\be_{j_{\beta}}$, and thus
	\[\Prob\left(\sup_{{j_v} \in [5^p]}\sup_{j_{\beta}\in [(\omega m)^{\gamma p}]}\abs{(1-\E)\paren{\bv_{j_v}^{\top}\brackets{V_{m,\ell,h_0}\left(\be_{j_{\beta}}\right)-V}\bv_{j_v}}} > C_{\rho, H, \eta}\sqrt{\frac{8\gamma p\log m}{\omega mh_0^3}}\right) \leq 2(5(\omega m)^{-\gamma})^p.\]
	Then we obtain, for any sufficiently large $\gamma>0$, there exists a constant $C_V$, such that
	\[
	\sup_{\ell}\sup_{\bv \in \mathbb{S}^{p-1}}\sup_{\be: \norm{\be-\be^*}_2 \leq \delta}\abs{(1-\E)\paren{\bv^{\top}\brackets{V_{m,\ell,h_0}\left(\be\right)-V}\bv}} \leq C_V\sqrt{\frac{p\log m}{\omega mh_0^3}},
	\]
	with probability at least $1-L(\omega m)^{-\gamma/2}-2L(5(\omega m)^{-\gamma})^p$. By the assumption that $m>n^{c_3}$ for some $0<c_3<1$, there exists a constant $\gamma$ such that $L(\omega m)^{-\gamma/2}+2L(5(\omega m)^{-\gamma})^p=o(1)$. Therefore, it holds that 
	\[\sup_{\ell}\sup_{\bv \in \mathbb{S}^{p-1}}\sup_{\be: \norm{\be-\be^*}_2 \leq \delta}\abs{(1-\E)\paren{\bv^{\top}\brackets{V_{m,\ell,h_0}\left(\be\right)-V}\bv}} = O_{\Prob}\left(\sqrt{\frac{p\log m}{\omega mh_0^3}}\right).\]
	
	By the proof of \eqref{eq:1stepV}, we have
	\begin{equation}
		\sup_{\bv \in \mathbb{S}^{p-1}}\sup_{\be: \norm{\be-\be^*}_2 \leq \delta }\abs{\E \paren{\bv^{\top}\brackets{V_{m,\ell,h}\left(\be\right)-V}\bv}} = O\left(\delta + h^{\alpha}\right),
	\end{equation}
	where the constant hidden in $O(\cdot)$ is the same for all $\ell$. This completes the proof for \eqref{eq:sup-phi-V}.
	
	By \hyperlink{step3}{\textbf{Step 3}} in the proof of \eqref{eq:1stepU}, we have
	$\sup_{\ell}\norm{\E [U_{m,\ell,h_0}\left(\be_{\ell}^*\right)]}_2 = O(h_0^{\alpha})$. 
	Then the proof of Theorem \ref{thm:asym-DC} indicates that Equations \eqref{eq:sup-U*} and \eqref{eq:sup-phi-V} imply
	\begin{equation}\label{eq:hetero-init}
		\sup_{\ell}\norm{\hbe_{\ell,\mathrm{SMSE}}^{(0)}-\be^*_{\ell}}_2 \leq C'\left(\sqrt{\frac{p\log L}{\omega mh_0}}+h_0^\alpha\right)=2C'\left(\frac{p\log L}{\omega m}\right)^{\alpha/(2\alpha+1)}=2C'\delta_{m ,0},
	\end{equation} 
	for some absolute constant $C'$, with probability approaching one. Furthermore, the definition of $\mathcal{A}_{C_0}$  with $C_0>4C'$ ensures that $\mathcal{A}^* \subset \mathcal{A}$ if \eqref{eq:hetero-init} holds. Moreover, for any $\ell\in\mathcal{A}\setminus\mathcal{A}^*$, we have that $\norm{\be^*_\ell-\be^*_1}_2 \leq 2\sup_{\ell}\norm{\hbe_{\ell,\mathrm{SMSE}}^{(0)}-\be^*_{\ell}}_2+\sup_{\ell}\norm{\hbe_{\ell,\mathrm{SMSE}}^{(0)}-\hbe_{1,\mathrm{SMSE}}^{(0)}}_2\leq (C_0+4C')\delta_{m ,0}$  if \eqref{eq:hetero-init} holds. Therefore, the event $E_0$ holds with probability approaching one.
\end{proof}




Let $\hbe_{1}^{(0)}=\hbe_{1,\mathrm{SMSE}}^{(0)}$. 
Without loss of generality, lat $C=C_0=1$ in $E_0$. For simplicity, denote $\mathcal{A}=\mathcal{A}_{C_0}$. Then under $E_0$, it holds that
\begin{equation}\label{eq:beta-diff-1}
	\sup_{\ell \in \mathcal{A}\setminus \mathcal{A}^*}\norm{\hbe_{1}^{(0)}-\be^*_{\ell}}_2 \leq \sup_{\ell \in \mathcal{A}\setminus \mathcal{A}^*}\norm{\hbe_{1,\mathrm{SMSE}}^{(0)}-\hbe_{\ell,\mathrm{SMSE}}^{(0)}}_2+\sup_{\ell}\norm{\hbe_{\ell,\mathrm{SMSE}}^{(0)}-\be^*_{\ell}}_2 \leq 2\delta_{m ,0},
\end{equation}
and
\begin{equation}\label{eq:beta-diff-2}
	\sup_{\ell \in \mathcal{A}^* 
	}\norm{\hbe_{1}^{(0)}-\be^*_{\ell}}_2 = \norm{\hbe_{1,\mathrm{SMSE}}^{(0)}-\be^*_{1}}_2 \leq \delta_{m ,0}.
\end{equation}

Define $\widetilde{\mathcal{H}}_\ell=\mathcal{H}_\ell \setminus \mathcal{H}_\ell^{(0)}$. Recall that the algorithm is
\[\hbe_1^{(t+1)}=\hbe_1^{(t)}-\left[\frac{1}{(1-\omega)m|\mathcal{A}|}\sum_{\ell \in \mathcal{A}}\sum_{i \in \widetilde{\mathcal{H}}_\ell}V_{h, i}\left(\hbe_1^{(t)}\right)\right]^{-1}\left[\frac{1}{|\mathcal{A}|(1-\omega)mh}\sum_{\ell \in \mathcal{A}}\sum_{i\in\widetilde{\mathcal{H}}_{\ell}}\left(-y_i\right) H^{\prime}\left(\frac{x_i+\bz_{i}^{\top}\hbe_{1}^{(t)}}{h}\right) \bz_{i}\right],\]
which implies that 	
\[\hbe_1^{(t+1)}-\be_1^*=\Big[\widetilde{V}_{\mathcal{A}}\big(\hbe_1^{(t)}\big)\Big]^{-1}\left[\frac{1}{|\mathcal{A}|}\sum_{\ell\in\mathcal{A}}\widetilde{U}_{m,\ell,h}\big(\hbe_1^{(t)}\big)\right],\]
where $|\mathcal{A}|$ denote the cardinality of $\mathcal{A}$,  
\[\widetilde{V}_{\mathcal{A}}(\be):=\frac{1}{(1-\omega)m|\mathcal{A}|}\sum_{\ell \in \mathcal{A}}\sum_{i \in \widetilde{\mathcal{H}}_\ell}V_{h, i}\left(\be\right),\] 
and
\begin{align*}
	\widetilde{U}_{m,\ell,h}(\be)&:=\frac{1}{(1-\omega)m}\sum_{i \in \widetilde{\mathcal{H}}_\ell}V_{h, i}\left(\be\right)\left(\be-\be_1^*\right)-\frac{1}{(1-\omega)mh}\sum_{i\in\widetilde{\mathcal{H}}_\ell}\left(-y_i\right) H^{\prime}\left(\frac{x_i+\bz_{i}^{\top}\be}{h}\right) \bz_{i}\\
	&=\frac{1}{(1-\omega)mh^2}\sum_{i \in \widetilde{\mathcal{H}}_{\ell}} \left(-y_i\right) H^{\prime\prime}\left(\frac{x_i+\bz_i^{\top}\be}{h}\right)\bz_i\bz_i^{\top}\left(\be-\be_1^*\right)\\
	&\quad -\frac{1}{(1-\omega)mh}\sum_{i\in\widetilde{\mathcal{H}}_\ell}\left(-y_i\right) H^{\prime}\left(\frac{x_i+\bz_{i}^{\top}\be}{h}\right) \bz_{i}.
\end{align*} 

Let $\delta$ be a quantity that satisfies $\delta=o(\delta_{m ,0})$. Under $E_0$, it holds that $\norm{\be^*_1-\be^*_\ell}_2 \leq \delta_{m ,0}$ for all $\ell \in \mathcal{A}\setminus\mathcal{A^*}$. Also, the data points in $\big\{\widetilde{\mathcal{H}}_{\ell}\big\}_{\ell=1}^L$ are independent to $E_1$. Following the proof for \eqref{eq:sup-phi-V}, it is straightforward to show that, when $h=o(1)$ and $p\log n/(nh^3)=o(1)$, with probability tending to one,
\[\sup_{\norm{\be-\be^*_1}_2 \leq \delta}\norm{\widetilde{V}_{\mathcal{A}}(\be)-V}_2 \lesssim \sqrt{\frac{p\log n}{(1-\varepsilon)(1-\omega)nh^3}}+h^{\alpha}+\delta+\varepsilon\delta_{m ,0},\]
using the facts that $|\mathcal{A}|\geq (1-\varepsilon)L$, $|\mathcal{A}\setminus\mathcal{A}^*|\leq \varepsilon L$.

For the numerator, we have 
\begin{align*}
	\frac{1}{|\mathcal{A}|}\sum_{\ell \in \mathcal{A}}\widetilde{U}_{m,\ell,h}\big(\hbe_1^{(t)}\big)&=\frac{1}{|\mathcal{A}|}\sum_{\ell \in \mathcal{A}} (1-\E)\widetilde{U}_{m,\ell,h}(\be_{1}^*) + \frac{1}{|\mathcal{A}|}\sum_{\ell\in\mathcal{A}}(1-\E)\left[\widetilde{U}_{m,\ell,h}\big(\hbe_{1}^{(t)}\big)-\widetilde{U}_{m,\ell,h}(\be_{1}^*)\right]\\
	&\quad+\frac{1}{|\mathcal{A}|}\sum_{\ell\in\mathcal{A}}\E\left[\widetilde{U}_{m,\ell,h}\big(\hbe_{1}^{(t)}\big)\right].
\end{align*}


Note that by the uniformly boundedness of $\rho(\cdot\mid\bZ)$, on each machine $\ell$,  Equations \eqref{eq:EH2}, \eqref{eq:EdH2}, and \eqref{eq:EH'2} hold for all $\be$ and $\be_1^*$ such that $\norm{\be-\be_1^*}_2\leq \delta$ and $\norm{\be^*_1-\be^*_{\ell}}_2\leq \delta_{m ,0}$. Moreover, the constants hidden in the big O notations are uniform for all $\ell$, $\be$ and $\be_1^*$. Therefore, following the \hyperlink{step1}{\textbf{Step 1}} and \hyperlink{step2}{\textbf{Step 2}} in the proof of Proposition \ref{thm:1step-ld}, we have that 
\[\sup_{\be: \norm{\be-\be_1^*}_2 \leq \delta} \norm{\frac{1}{\abs{\mathcal{A}}}\sum_{\ell\in\mathcal{A}}(1-\E)\left[\widetilde{U}_{m,\ell,h}(\be)-\widetilde{U}_{m,\ell,h}(\be_{1}^*)\right]}_2=O_{\Prob}\left(\delta\sqrt{\frac{p\log n}{(1-\varepsilon)(1-\omega) nh^3}}\right),\]
and
\[\norm{\frac{1}{\abs{\mathcal{A}}}\sum_{\ell\in\mathcal{A}} (1-\E)\widetilde{U}_{m,\ell,h}(\be_{1}^*)}_2 =O_{\Prob}\left(\sqrt{\frac{p}{(1-\varepsilon)(1-\omega)nh}}\right).\]
By \hyperlink{step3}{\textbf{Step 3}} in the proof of Proposition \ref{thm:1step-ld}, we have
\[\sup_{\be: \norm{\be-\be_1^*}_2 \leq \delta}\frac{1}{|\mathcal{A}|}\sum_{\ell\in\mathcal{A}}\norm{\E\left[\widetilde{U}_{m,\ell,h}\big(\be\big)\right]}_2=O\left(h^{\alpha}+\delta^2+\varepsilon\delta_{m ,0}^2\right),\]
where again we use the facts that $|\mathcal{A}\setminus\mathcal{A}^*|\leq \varepsilon L$ and $\norm{\be^*_1-\be^*_\ell}_2 \leq \delta_{m ,0}$ for all $\ell \in \mathcal{A}\setminus\mathcal{A^*}$ under $E_0$.
The above bounds imply that
\begin{align*}
	&\quad\sup_{\be:\norm{\be_1-\be_1^*}_2\leq \delta}\norm{\frac{1}{|\mathcal{A}|}\sum_{\ell \in \mathcal{A}}\widetilde{U}_{m,\ell,h}\big(\be\big)}_2 \\ &=O_{\Prob}\left(\sqrt{\frac{p}{(1-\varepsilon)(1-\omega)nh}}+\delta\sqrt{\frac{p\log n}{(1-\varepsilon)(1-\omega)nh^3}}+\delta^2+h^\alpha+\varepsilon \delta^2_{m ,0}\right),
\end{align*}
where $\delta_{m ,0}=\left(\frac{p\log L}{\omega m}\right)^{\alpha/(2\alpha+1)}$. 
Adding that $\sup_{\norm{\be-\be^*_1}_2 \leq \delta}\norm{\widetilde{V}_{\mathcal{A}}(\be)-V}_2=o_{\Prob}(1)$, we obtain that
\begin{equation}\label{eq:1step-hetero}
	\norm{\hbe_1^{(t+1)}-\be_1^*}_2=O_{\Prob}\left(\sqrt{\frac{p}{(1-\varepsilon)(1-\omega)nh_{t+1}}}+\delta_{m ,t}\sqrt{\frac{p\log n}{(1-\varepsilon)(1-\omega)nh_{t+1}^3}}+\delta_{m, t}^2+h_{t+1}^\alpha+\varepsilon \delta^2_{m ,0}\right),
\end{equation}
if $\norm{\hbe_1^{(t)}-\be_1}_2=O_{\Prob}\left(\delta_{m, t}\right)$. Since $\omega$ is a constant, and $\varepsilon \leq \varepsilon_0$ for some constant $\varepsilon_0<1$, we omit the factor $(1-\varepsilon)(1-\omega)$ on $n$ in the sequel. 

Now we choose \[h_t=\max\left\{\delta^{2^t/\alpha}_{m ,0}, (p/n)^{1/(2\alpha+1)}\right\} \geq (p/n)^{1/(2\alpha+1)},\] 
which implies the $\frac{p\log n}{nh^3_{t}}=o(1)$ holds for any $t$, since \[\frac{p\log n}{nh^3_{t}}\leq (p/n)^{\frac{2\alpha-2}{2\alpha+1}}\log n =O\big(n^{-(1-c_2)\frac{2\alpha-2}{2\alpha+1}}\log n\big) \to 0,\] for $p=O(m^{c_2})=O(n^{c_2})$ and $\alpha \geq 2$. 
Moreover, for any $t$,  $h^{\alpha}_t=\max\left\{\delta_{m ,0}^{2^t}, (p/n)^{\alpha/(2\alpha+1)}\right\}$ and $\sqrt{p/nh_t} \leq (p/n)^{\alpha/(2\alpha+1)}$.

We now show that, for any $t$,
\begin{equation}
	\label{eq:tstep-ld-weaker-hetero}
	\left\| \hbe^{\,\left(t\right)}-\be^{*}\right\|_2=O_{\mathbb{P}}\left((p/n)^{\alpha/(2\alpha+1)}\sqrt{\log n}+\delta_{m ,0}^{2^t}+\varepsilon\delta_{m ,0}^{2}\right).
\end{equation}

When $t=1$, $\delta_{m,0} \leq h_1=\delta_{m,0}^{\frac{2}{\alpha}}$. Then \eqref{eq:tstep-ld-weaker-hetero} holds since $\delta_{m ,0}\sqrt{\frac{p\log n}{nh_{1}^3}}=O\left(\sqrt{\frac{p\log n}{nh_{1}}}\right)$ and
\[\left\| \hbe^{\,\left(1\right)}-\be^{*}\right\|_2=O_{\mathbb{P}}\left(\sqrt{\frac{p\log n}{nh_1}}+\delta_{m,0}^{2}+h_1^\alpha+\varepsilon\delta_{m ,0}^2\right)=O_{\mathbb{P}}\left(\left(\frac{p}{n}\right)^{\frac{\alpha}{2\alpha+1}}\sqrt{\log n}+\delta_{m,0}^{2}+\varepsilon\delta_{m,0}^{2}\right).\]
Assume \eqref{eq:tstep-ld-weaker-hetero} holds for $t-1$, i.e.,
\[\left\| \hbe^{\,\left(t-1\right)}-\be^{*}\right\|_2=O_{\mathbb{P}}\left((p/n)^{\alpha/(2\alpha+1)}\sqrt{\log n}+\delta_{m ,0}^{2^{t-1}}+\varepsilon\delta_{m ,0}^2\right).\]
Then $\delta_{m,t-1}=(p/n)^{\alpha/(2\alpha+1)}\sqrt{\log n}+\delta_{m ,0}^{2^{t-1}}$. Since $(p/n)^{\alpha/(2\alpha+1)}\sqrt{\log n} \ll (p/n)^{\frac{1}{2\alpha+1}} \leq h_t$ and $\delta_{m ,0}^{2^{t-1}} \leq \delta_{m ,0}^{2^{t}/\alpha} \leq h_t$, by \eqref{eq:1step-hetero},
\[\left\| \hbe^{\,\left(t\right)}-\be^{*}\right\|_2=O_{\mathbb{P}}\left(\left(\frac{p}{n}\right)^{\frac{\alpha}{2\alpha+1}}\sqrt{\log n}+\delta_{m ,0}^{2^{t}}+\varepsilon\delta_{m ,0}^2\right),\]
where we use $h^{\alpha}_t=\max\left\{\delta_{m ,0}^{2^{t}}, (p/n)^{\alpha/(2\alpha+1)}\right\}$ and $\sqrt{p/nh_t} \leq (p/n)^{\alpha/(2\alpha+1)}$. Therefore, we  prove that \eqref{eq:tstep-ld-weaker-hetero} holds for all $t$ by induction.

Plugging $\delta_{m,t-1}=(p/n)^{\alpha/(2\alpha+1)}\sqrt{\log n}+\delta_{m ,0}^{2^{t-1}}+\varepsilon\delta_{m ,0}^2$ into \eqref{eq:1step-hetero} yields that
\begin{align*}
	&\quad \left\| \hbe^{\,\left( t\right)}-\be^{*}\right\|_2\\
	&=O_{\mathbb{P}}\Bigg(\Big(\frac{p}{n}\Big)^{\frac{2\alpha}{2\alpha+1}}\log n+\delta_{m ,0}^{2^{t}}+h_t^{\alpha}+ \sqrt{\frac{p}{nh_t}}+\left(\Big(\frac{p}{n}\Big)^{\frac{\alpha}{2\alpha+1}}\sqrt{\log n}+\delta_{m ,0}^{2^{t-1}}\right)\sqrt{\frac{p\log n}{nh_t^3}}+\varepsilon\delta_{m ,0}^2\Bigg).
\end{align*}
Since $\alpha>1$,
\[\Big(\frac{p}{n}\Big)^{\frac{\alpha}{2\alpha+1}}\sqrt{\log n} \cdot \sqrt{\frac{p\log n}{nh_t^3}} = \Big(\frac{p}{n}\Big)^{\frac{\alpha}{2\alpha+1}} (\log n) \cdot \sqrt{\frac{p}{nh_t^3}} \leq \Big(\frac{p}{n}\Big)^{\frac{\alpha}{2\alpha+1}}(\log n) \cdot (p/n)^{\frac{\alpha-1}{2\alpha+1}} \lesssim \Big(\frac{p}{n}\Big)^{\frac{\alpha}{2\alpha+1}}.\]
Therefore, the rate in \eqref{eq:tstep-bound-detailed} is upper bounded by \[O\left(\Big(\frac{p}{n}\Big)^{
	\frac{\alpha}{2\alpha+1}}+\delta_{m ,0}^{2^{t}}+\delta_{m ,0}^{2^{t-1}}\Big(\frac{p}{n}\Big)^{\frac{\alpha-1}{2\alpha+1}}\sqrt{\log n}+\varepsilon\delta_{m ,0}^2\right),\]
which leads to that
\[\norm{\hbe_1^{(T)}-\be^*_1}_2 = O_{\Prob}\left((p/n)^{\alpha/(2\alpha+1)}+\varepsilon \delta^2_{m ,0}\right),\]
for sufficiently large $T$.

\end{proof}

\subsection{Proof of the Results for the High-dimensional {\mSMSE}}
\label{sec:pf-hd}

Before starting the proof, we first formalize our notation. Define 

\begin{equation}
V_{n}(\be)=\frac{1}{nh^2}\sum_{i=1}^{n} \left(-y_i\right) H^{\prime\prime}\left(\frac{x_i+\bz_i^{\top}\be}{h}\right)\bz_i\bz_i^{\top}.
\end{equation}

\begin{equation}
V_{m,\ell}(\be)=\frac{1}{mh^2}\sum_{i\in \mathcal{H}_{\ell}} \left(-y_i\right) H^{\prime\prime}\left(\frac{x_i+\bz_i^{\top}\be}{h}\right)\bz_i\bz_i^{\top}.
\end{equation}

\begin{equation}
U_{n}(\be)=\frac{1}{nh}\sum_{i=1}^{n}\left(-y_i\right) H^{\prime}\left(\frac{x_i+\bz_{i}^{\top}\be}{h}\right) \bz_{i}.
\end{equation}

As claimed before, we will not compute $V_{n}(\be)$ in the algorithm, but it is an important intermediate quantity in the theoretical analysis. Without loss of generality, we only consider $V_{m,1}(\be)$ in the sequel. We first give the convergence property of $V_{n}\left(\hbe^{(0)}\right)$, $V_{m,1}\left(\hbe^{(0)}\right)$ and $U_{n}\left(\hbe^{(0)}\right)$, which is crucial for deriving the convergence rate of $\hbe^{\left( 1\right)}$. Note that we omit the dependence of these quantities on the bandwidth $h$ in the notation.

\begin{lemma}\label{lem:D-hd}
Assume Assumptions \ref{A1}--\ref{A4}, \ref{B1}--\ref{B3} hold. Further assume that $\frac{\log m}{mh^3}=o\left( 1\right)$, $\sqrt{s}\delta_{m,0}=O\left( h^{3/2}\right)$ ($\delta_{m,0}$ is defined in Assumption \ref{B1}) and $h=o\left(1\right)$, we have the following results:
\begin{equation}
	\bigg\|(1-\E)\Big[V_{n}\big(\hbe^{(0)}\big)\Big]\bigg\|_{\max} = O_{\mathbb{P}}\left( \sqrt{\frac{\log p}{nh^3}}\right),
	\label{eq:Dn-hd}
\end{equation}	
\begin{equation}
	\sup_{\norm{\bv_1}_2=\norm{\bv_2}_2=1}\bv_1^{\top}\bigg(\E\Big[ V_{n}\big(\hbe^{(0)}\big)\Big]-V\bigg)\bv_2 =O_{\Prob}\left(\sqrt{s}\delta_{m,0}+h^{\alpha}\right),
	\label{eq:EDn-hd}
\end{equation}
\begin{equation}
	\bigg\|(1-\E)\Big[V_{m,1}\big(\hbe^{(0)}\big)\Big]\bigg\|_{\max} = O_{\mathbb{P}}\left( \sqrt{\frac{\log p}{mh^3}}\right),
	\label{eq:Dm-hd}
\end{equation}
\begin{equation}
	\sup_{\norm{\bv_1}_2=\norm{\bv_2}_2=1}\bv_1^{\top}\bigg(\E\Big[ V_{m,1}\big(\hbe^{(0)}\big)\Big]-V\bigg)\bv_2 =O_{\Prob}\left(\sqrt{s}\delta_{m,0}+h^{\alpha}\right).
	\label{eq:EDm-hd}
\end{equation}
Additionally, define \[\Psi_n\big(\hbe^{(0)}\big):=U_{n}\big(\hbe^{(0)}\big)-V_{n}\big(\hbe^{(0)}\big)\big(\hbe^{\left( 0\right)}-\be^*\big),\] and then we have
\begin{equation}
	\left\|(1-\E)\Psi_n\big(\hbe^{(0)}\big)\right\|_\infty = O_{\mathbb{P}}\left( \sqrt{\frac{\log p}{nh}}\right),
	\label{eq:z-hd}
\end{equation}
and
\begin{equation}
	\sup_{\norm{\bv}_2=1}\bigg|\bv^{\top}\E \left[\Psi_n\big(\hbe^{(0)}\big)\right]\bigg| = O_{\mathbb{P}}\left( s\delta^2_{m,0}+h^{\alpha}\right).
	\label{eq:Ez-hd}
\end{equation}

\end{lemma}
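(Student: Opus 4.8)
The plan is to treat the six estimates as the high-dimensional counterparts of the three Steps in the proof of Proposition \ref{thm:1step-ld}, organized into a stochastic fluctuation part (the entrywise/coordinatewise concentration bounds \eqref{eq:Dn-hd}, \eqref{eq:Dm-hd}, \eqref{eq:z-hd}) and a deterministic bias part (the sup-over-sphere bounds \eqref{eq:EDn-hd}, \eqref{eq:EDm-hd}, \eqref{eq:Ez-hd}). I would reuse verbatim the Taylor-expansion and change-of-variables computations already carried out in \eqref{eq:EvU-computation}--\eqref{eq:Taylor} and \eqref{eq:EdH2}, \eqref{eq:EV2}, replacing two devices from the $p$-fixed argument: the $5^p$-covering of the sphere is replaced by a plain union bound over the $p^2$ matrix entries (or $p$ vector coordinates), and the $n^{\gamma p}$ grid over the parameter ball is replaced by a net adapted to the $\ell_1$ geometry supplied by Assumption \ref{B1}. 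Throughout I would use $\abs{\bz_i^\top(\be-\be^*)}\le \overline{B}\norm{\be-\be^*}_1\le \overline{B}\sqrt{s}\delta_{m,0}$ as the key quantitative input, in place of the $\ell_2$ control available in low dimensions.

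For the concentration bounds I would write $(1-\E)V_{n}(\be)=(1-\E)V_{n}(\be^*)+(1-\E)[V_{n}(\be)-V_{n}(\be^*)]$. The term at $\be^*$ is, entrywise, an i.i.d.\ average with range $O(h^{-2})$ and variance $O(h^{-3})$ by \eqref{eq:EV2}; Bernstein plus a union bound over $p^2$ entries yields the claimed $O_{\Prob}(\sqrt{\log p/(nh^3)})$. For the increment the crucial point is a \emph{refined} variance of order $s\delta_{m,0}^2/h^5$, obtained from the same conditional computation as \eqref{eq:EdH2} together with $\abs{\bz_i^\top(\be-\be^*)}\le\overline{B}\sqrt s\delta_{m,0}$; Bernstein then produces a contribution of order $\tfrac{\sqrt s\delta_{m,0}}{h}\sqrt{\log p/(nh^3)}$, which is $o\!\big(\sqrt{\log p/(nh^3)}\big)$ precisely because $\sqrt s\delta_{m,0}=O(h^{3/2})$ forces $\sqrt s\delta_{m,0}/h=O(h^{1/2})=o(1)$. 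The identical template with $H^{\prime\prime}\bz\bz^\top$ replaced by the gradient summand $H^{\prime}\bz$ (range $O(h^{-1})$, variance $O(h^{-1})$) gives \eqref{eq:z-hd} at rate $\sqrt{\log p/(nh)}$, and \eqref{eq:Dm-hd} follows word-for-word with $n$ replaced by $m$, the hypothesis $\log m/(mh^3)=o(1)$ guaranteeing the single-machine concentration.

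For the bias bounds I would exploit that these quantities are nonrandom once $\be$ is fixed. Bounding $\bv_1^\top(\E[V_{n}(\be)]-V)\bv_2$ reduces, via the conditional-expectation identity \eqref{eq:EvU-computation} and the Taylor expansion \eqref{eq:Taylor}, to an integrand depending on $\be$ only through $\bZ^\top(\be-\be^*)$; collecting the $h^\alpha$ contribution of the $\alpha$-order kernel together with the remainder controlled by $\abs{\bZ^\top(\be-\be^*)}\le\overline{B}\sqrt s\delta_{m,0}$ gives $O(\sqrt s\delta_{m,0}+h^\alpha)$, and uniformity in $\bv_1,\bv_2$ on the sphere follows from Cauchy--Schwarz and $\E(\bv^\top\bZ)^2\le\Lambda_{\max}(\E\bZ\bZ^\top)$, bounded under Assumptions \ref{A4}--\ref{A5}. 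The sharper rate in \eqref{eq:Ez-hd} follows by observing that $\Psi_n(\be)=-U_{n,h}(\be)$ in the notation of \eqref{eq:def:Unh}, so its expectation is exactly \eqref{eq:EvU}, where the terms linear in $\Delta(\be)=\be-\be^*$ cancel and only the quadratic piece, of size $(\bZ^\top\Delta)^2=O(s\delta_{m,0}^2)$, and the kernel bias $h^\alpha$ survive.

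The main obstacle is making the increment estimate hold \emph{uniformly} over the data-dependent neighborhood $\{\be:\norm{\be-\be^*}_2\le\delta_{m,0},\ \norm{\be-\be^*}_1\le\sqrt s\delta_{m,0}\}$ of Assumption \ref{B1}, since the $n^{\gamma p}$ grid used when $p$ is fixed has super-polynomially many points once $p$ grows. I would resolve this by constructing the $\varepsilon$-net for the $\ell_1$ ball rather than the $\ell_\infty$ box, so that its log-cardinality is of polynomial order and the per-point Bernstein bound survives the union bound; the Lipschitz continuity of $H^\prime$ and $H^{\prime\prime}$ from Assumption \ref{A1} then controls the discretization error, which is pushed to lower order by taking a sufficiently fine (still polynomial) grid. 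With this uniformity in hand, specializing each bound at $\be=\hbe^{(0)}$ is immediate, and the remaining arguments are direct transcriptions of the corresponding steps in the proof of Proposition \ref{thm:1step-ld}.
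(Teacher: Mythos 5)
Your overall architecture matches the paper's proof in Section \ref{sec:pf-hd}: the same split of each stochastic bound into the fluctuation at $\be^*$ plus a uniform increment over $\Theta=\braces{\be:\norm{\be-\be^*}_2\le\delta_{m,0},\ \norm{\be-\be^*}_1\le\sqrt{s}\,\delta_{m,0}}$, the same deterministic treatment of the bias terms via the Taylor computations of \eqref{eq:Taylor} and \eqref{eq:EvU-computation} with $\abs{\bZ^{\top}(\be-\be^*)}\le\overline{B}\sqrt{s}\,\delta_{m,0}$ as the key input, the identification $\Psi_n(\be)=-U_{n,h}(\be)$ with cancellation of the linear terms for \eqref{eq:Ez-hd}, Bernstein plus a union bound over $p^2$ entries (or $p$ coordinates) for the terms at $\be^*$, and the replacement $n\mapsto m$ for \eqref{eq:Dm-hd}--\eqref{eq:EDm-hd}. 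Those portions of your proposal are correct.

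The genuine gap is your uniformity device for the increment processes. A single $\varepsilon$-net over the $\ell_1$ ball with per-point Bernstein cannot deliver the claimed rates when $p=O(n^{\nu})$ with $\nu$ arbitrary. Since $\phi^V_{ij_1j_2}$ is Lipschitz in the scalar $\bz_i^{\top}(\be-\be^*)$ with constant $\asymp h^{-3}$, and the only way to control $\abs{\bz_i^{\top}(\be-\be')}$ uniformly in $i$ is through $\overline{B}\norm{\be-\be'}_1$ (Cauchy--Schwarz costs $\sqrt{p}$), making the discretization error $o\big(\sqrt{\log p/(nh^3)}\big)$ forces an $\ell_1$-net at a polynomially small scale $\varepsilon\lesssim h^{3/2}\sqrt{\log p/n}$. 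At such scales the volumetric lower bound gives $\log N\gtrsim p\log(R/\varepsilon)\asymp p\log n$ for the $\ell_1$ ball of radius $R=\sqrt{s}\,\delta_{m,0}$ — not polynomial log-cardinality — and the union bound then inflates the per-point Bernstein deviation $\sqrt{s}\,\delta_{m,0}\sqrt{\log p/(nh^5)}$ by a factor $\asymp\sqrt{p h}$, which diverges as soon as $p\gg h^{-1}$, precisely the regime the lemma must cover. (Maurey-type sparsified nets do have small log-cardinality, but they approximate only in $\ell_2$, so the discretization error again picks up $\sqrt{p}$.) The paper avoids nets entirely: Rademacher symmetrization bounds $\E\,\Phi^V_{n,j_1,j_2}$, Talagrand's contraction lemma strips the Lipschitz kernel so that only $\sup_{\be\in\Theta}\abs{\sum_i\sigma_i\bz_i^{\top}(\be-\be^*)}\le\sqrt{s}\,\delta_{m,0}\norm{\sum_i\sigma_i\bz_i}_{\infty}$ remains — H\"older against the $\ell_1$ constraint is the correct way to exploit the $\ell_1$ geometry — Hoeffding gives $\E\norm{\sum_i\sigma_i\bz_i}_{\infty}=O(\sqrt{n\log p})$, and Bousquet's concentration inequality (Lemma \ref{lem:Bousquet}, applied over rational $\be$ and extended by continuity of $\phi^V_{ij_1j_2}$) upgrades the in-expectation bound to a high-probability one before the union over entries. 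Your per-point range and refined-variance computations are exactly the ingredients Bousquet's inequality needs, so the repair is to substitute this symmetrization--contraction--concentration chain for the net; the rest of your argument then goes through as written.
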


\begin{proof} \label{pf:thm:D-hd}

\noindent \textbf{Proof of \eqref{eq:Dn-hd}:}

Recall our definitions 
\[V_{n}\left(\hbe^{(0)}\right)=\frac{1}{nh^2}\sum_{i=1}^{n} \left(-y_i\right) H^{\prime\prime}\left(\frac{x_i+\bz_i^{\top}\hbe^{\left(0\right)}}{h}\right)\bz_i\bz_i^{\top},\]
and
\[V=-2\mathbb{E}\left[\rho\left(0\mid \bZ\right)F^{\prime}\left(0 \mid \bZ\right)\bZ\bZ^{\top}\right].\]
By Assumption \ref{B1}, there exists constant $C_{\ell_1}, C_{\ell_2}$ such that $\Prob\left(\hbe^{\left(0\right)} \in \Theta\right) \to 1$,
where
\[\Theta:=\braces{\be:\norm{\be-\be^*}_2 \leq C_{\ell_1}\delta_{m,0}, \norm{\be-\be^*}_1  \leq C_{\ell_2}\sqrt{s}\delta_{m,0}}.\]
Without loss of generality, we assume $C_{\ell_1}=C_{\ell_2}=1$ and $\hbe^{\left(0\right)}\in \Theta$ in the following proof.

For each $\left(j_1,j_2\right)\in \{1,..,p\} \times \{1,..,p\}$, define \[V_{n,j_1,j_2}\left(\be\right):=\frac{1}{nh^2}\sum_{i=1}^{n}\left(-y_i\right) H^{\prime\prime}\left(\frac{x_i+\bz_{i}^{\top}\be}{h}\right)z_{i,j_1}z_{i,j_2},\]
\[V_{j_1,j_2}:=-2\mathbb{E}\left[\rho\left(0\mid \bZ\right)F^{\prime}\left(0 \mid \bZ\right)Z_{j_1}Z_{j_2}\right],\]
\[\phi^V_{ij_1j_2}\left(\be\right):=\frac{-y_i}{h^2} H^{\prime\prime}\left(\frac{x_i+\bz_{i}^{\top}\be}{h}\right)z_{i,j_1}z_{i,j_2}+\frac{y_i}{h^2} H^{\prime\prime}\left(\frac{x_i+\bz_{i}^{\top}\be^*}{h}\right)z_{i,j_1}z_{i,j_2},\]
and 
\[\Phi^V_{n,j_1,j_2}:=\sup_{\be \in \Theta}\abs{\paren{1-\E}\brackets{V_{n,j_1,j_2}\left(\be\right)-V_{n,j_1,j_2}\left(\be^*\right)}}=\sup_{\be \in \Theta}\abs{\paren{1-\E}\frac{1}{n}\sum_{i=1}^n \phi^V_{ij_1j_2}\left(\be\right)}.\]
Since
\begin{equation}
	\begin{aligned}
		&\quad\left\|(1-\E)\left[V_{n}\left(\hbe^{(0)}\right)\right]\right\|_{\max}\\
		&= \sup_{j_1,j_2}\abs{(1-\E)\left[V_{n,j_1,j_2}\left(\hbe^{\left(0\right)}\right)\right]}\\
		&\leq\sup_{j_1,j_2}\Phi^V_{n,j_1,j_2} + \sup_{j_1,j_2}\abs{\left(1-\E\right)V_{n,j_1,j_2}\left(\be^*\right)},
	\end{aligned}
	\label{eq:breakVnh-V}
\end{equation}
we break the proof of \eqref{eq:Dn-hd} into two steps, separately controlling the two terms in \eqref{eq:breakVnh-V}. 

\textbf{Step 1}:
\begin{equation}
	\sup_{j_1,j_2}\Phi^V_{n,j_1,j_2}=O_{\mathbb{P}}\left(\delta_{m,0}\sqrt{\frac{s\log p}{nh^6}}\right)\xlongequal{\sqrt{s}\delta_{m,0}=O\left(h^{3/2}\right)}O_{\Prob}\left(\sqrt{\frac{\log p}{nh^3}}\right).
	\label{eq:supPhi}
\end{equation}
The proof in this step is analogous to the proof of Lemma B.1 in \cite{luo2022distributed}.
Since $\abs{z_{i,j}}$ is upper bounded by $\overline{B}$, for any $i$ and $\be \in \Theta$, we have $\bz_i^{\top}\paren{\be-\be^*}\leq \overline{B}\norm{\be-\be^*}_1\leq \overline{B}\sqrt{s}\delta_{m,0}$.
Since $H^{\prime\prime}\left(x\right)$ is Lipschitz, we have
\begin{equation}
	\overline{\phi}:=\sup_{i,j_1,j_2}\sup_{\be \in \Theta} \abs{\phi^V_{ij_1j_2}\left(\be\right)}=O\left(\frac{\sqrt{s}\delta_{m,0}}{h^3}\right).
	\label{eq:phiabs}
\end{equation} 
Since $\rho\left(\cdot \mid \bZ\right)$ is bounded, we also have
\begin{equation}
	\begin{aligned}
		&\quad \sup_{\be \in \Theta}\E_{.|\bZ} \left[H^{\prime\prime}\left(\frac{X+\bZ^{\top}\be}{h}\right)-H^{\prime\prime}\left(\frac{X+\bZ^{\top}\be^*}{h}\right)\right]^2\\
		&=\sup_{\be \in \Theta} h\int_{-1}^1 \left[H^{\prime\prime }\left(\xi+\frac{\bZ^{\top}\paren{\be-\be^*}}{h}\right)-H^{\prime\prime }\left(\xi\right)\right]^2\rho\left(h\xi\mid\bZ\right)\diff \xi\\
		&=O\left(s\delta_{m,0}^2/h\right),
	\end{aligned}
	\label{eq:EH22-hd}
\end{equation}
which implies that
\begin{equation}
	\sup_{i,j_1,j_2}\sup_{\be \in \Theta}\E \left[\abs{\phi^V_{ij_1j_2}\left(\be\right)}^2 \right]= O\left(s\delta_{m,0}^2/h^5\right).
	\label{eq:Ephi2}
\end{equation}  
Define $\sigma_1,...,\sigma_{n}$ to be independent Rademacher variables, i.e., binary variables that are uniformly distributed on $\braces{-1,+1}$. By Rademacher symmetrization,
\[\E\Phi^V_{n,j_1,j_2}\leq 2\E\sup_{\be \in \Theta}\abs{\frac{1}{n}\sum_{i=1}^{n}\sigma_i\phi^V_{ij_1j_2}\left(\be\right)}.\]
Further, as
\[\phi^V_{ij_1j_2}\left(\be\right):=\frac{-y_i}{h^2} H^{\prime\prime}\left(\frac{x_i+\bz_{i}^{\top}\be^*+\bz_i^{\top}\left(\be-\be^*\right)}{h}\right)z_{i,j_1}z_{i,j_2}+\frac{y_i}{h^2} H^{\prime\prime}\left(\frac{x_i+\bz_{i}^{\top}\be^*}{h}\right)z_{i,j_1}z_{i,j_2},\]
we can be view $\phi^V_{ij_1j_2}\left(\be\right)$ as a function of $\bz_i^{\top}\left(\be-\be^*\right)$ with Lipschitz constant $\asymp\overline{B}^2/h^3$. By Talagrand’s Lemma,
\[\E_\sigma \sup_{\be \in \Theta}\abs{\frac{1}{n}\sum_{i=1}^{n}\sigma_i\phi^V_{ij_1j_2}\left(\be\right)}\lesssim \frac{1}{h^3}\E_\sigma  \sup_{\be \in \Theta}\abs{\frac{1}{n}\sum_{i=1}^{n}\sigma_i\bz_i^{\top}\paren{\be-\be^*}}\lesssim \left(\frac{\sqrt{s}\delta_{m,0}}{nh^3}\right)\E_\sigma \norm{\sum_{i=1}^{n}\sigma_i\bz_i}_{\infty}.\]
Since $\sigma_iz_{i,j} \in \brackets{-\overline{B},+\overline{B}}$ for all $j \in \braces{1,...,p}$, by Hoeffding's inequality,
\[\mathbb{P}\paren{\abs{\sum_{i=1}^{n}\sigma_iz_{i,j}}\geq \sqrt{4\overline{B}^2n\log \max\braces{n,p}}} \leq 2\exp\paren{-\frac{4n\overline{B}^2\log \max\braces{n,p}}{2n\overline{B}^2}}=\frac{2}{\max\braces{n,p}^{2}},\]
which implies that with probability at least $1-\frac{2}{\max\braces{n,p}}$,
\[\norm{\sum_{i=1}^{n}\sigma_i\bz_i}_{\infty} \leq 2\overline{B}\sqrt{n\log \max\braces{n,p}}.\]
Assumption \ref{B2} supposes that $\log p = O\left(\log n\right)$, and thus we obtain
\[\E_\sigma \norm{\sum_{i=1}^{n}\sigma_i\bz_i}_{\infty} \leq 2\overline{B} \sqrt{n\log \max\braces{n,p}}\left(1-\frac{2}{\max\braces{n,p}}\right)+\frac{2\overline{B}}{\max\braces{n,p}}=O\left(\sqrt{n\log p}\right),\]
and hence
\begin{equation}
	\E\Phi^V_{n,j_1,j_2}=O\left(\delta_{m,0}\sqrt{\frac{s\log p}{nh^6}}\right).
	\label{eq:EPhi}
\end{equation}
To show \eqref{eq:supPhi}, we use Theorem 7.3 in \cite{bousquet2003concentration}, which is restated as the following Lemma \ref{lem:Bousquet}.
\begin{lemma}[\cite{bousquet2003concentration}]
	Assume $\{\bz_i\}_{i=1}^n$ are identically distributed random variables. Let $\mathcal{F}$ be a set of countable real-value functions such that all functions $f \in \mathcal{F}$ are measurable, square-integrable and satisfy $\E f\left(\bz_i\right)=0$. Assume $\sup_{f,\bz} f\left(\bz\right) \leq 1$. Define 
	\[\Upsilon:=\sup_{f \in \mathcal{F}}\sum_{i=1}^n f\left(\bz_i\right).\]
	If $\sum_{i=1}^n \sup_{f \in \mathcal{F}} \E f^2\left(\bz_i\right) \leq n\sigma^2$, then for all $x>0$, we have
	\[\mathbb{P}\paren{\Upsilon>\E \Upsilon + \sqrt{2x\paren{n\sigma^2+2\E \Upsilon}}+\frac{x}{3}}< e^{-x}.\]
	
	\label{lem:Bousquet}
\end{lemma}
Note that Lemma \ref{lem:Bousquet} requires $\mathcal{F}$ to be countable. We first apply Lemma \ref{lem:Bousquet} to prove  \eqref{eq:supPhi} on rational $\be$, i.e.,
\[\sup_{j_1, j_2}\sup_{\be \in \Theta \cap \mathbb{Q}^p}\abs{\paren{1-\E}\frac{1}{n}\sum_{i=1}^n \phi^V_{ij_1j_2}\left(\be\right)}=O_{\mathbb{P}}\left(\delta_{m,0}\sqrt{\frac{s\log n}{nh^6}}\right).\]
Fix $j_1, j_2$ and take 
\[\mathcal{F}:=\braces{f_{\be}\left(\bz_i\right)=\frac{\paren{1-\E} \phi^V_{ij_1j_2}\left(\be\right)}{2\overline{\phi}}:\be \in \Theta \cap \mathbb{Q}^p}.\]

By \eqref{eq:phiabs}, \eqref{eq:Ephi2} and \eqref{eq:EPhi}, we have
$f\left(\bz_i\right)\leq 1$, 
\[\sum_{i=1}^n \sup_{f_{\be} \in \mathcal{F}} \E f_{\be}^2\left(\bz_i\right) =O\left(\frac{ns\delta_{m,0}^2}{\overline{\phi}^2h^5}\right),\] 
and 
\[\E \Upsilon=O\left(\frac{\delta_{m,0}}{\overline{\phi}h^3}\sqrt{ns\log p}\right).\]

By Lemma \ref{lem:Bousquet}, for all $x>0$, with probability $1-e^{-x}$, 
\[ \frac{1}{n}\sup_{\be \in \Theta\cap\mathbb{Q}^p}\paren{1-\E}\sum_{i=1}^n \phi^V_{ij_1j_2}\left(\be\right) =O\left(\frac{\delta_{m,0}}{h^3}\sqrt{\frac{s\log p}{n}}+\sqrt{2x\frac{s\delta_{m,0}^2}{nh^5}+4x\frac{\overline{\phi}\delta_{m,0}\sqrt{s\log p}}{n^{3/2}h^3}}+\frac{\overline{\phi}x}{3n}\right).\]
By taking $x=3\log \max\{n,p\}$, plugging \eqref{eq:phiabs} in and using that $\log p=O\left(\log n\right)$ again, the above bound can be written as
\[O\brackets{\delta_{m,0}\paren{\sqrt{\frac{s\log p}{nh^6}}+\sqrt{\frac{s\log p}{nh^5}+\frac{s\log p}{n^{3/2}h^6}}+\frac{\sqrt{s}\log p}{nh^3}}}=O\left(\delta_{m,0}\sqrt{\frac{s\log p}{nh^6}}\right).\]
For the same reason, with probability $1-1/\max\{n,p\}^3$,
\[\frac{1}{n}\sup_{\be \in \Theta\cap\mathbb{Q}^p }\paren{1-\E}\sum_{i=1}^n \brackets{-\phi^V_{ij_1j_2}\left(\be\right)} = O\left(\delta_{m,0}\sqrt{\frac{s\log p}{nh^6}}\right).\]
Therefore, with probability $1-2/\max\{n,p\}^3$, 
\[\sup_{\be \in \Theta \cap \mathbb{Q}^p}\abs{\paren{1-\E}\frac{1}{n}\sum_{i=1}^n \phi^V_{ij_1j_2}\left(\be\right)}=O\left(\delta_{m,0}\sqrt{\frac{s\log p}{nh^6}}\right).\]
By the continuity of $\phi^V_{ij_1j_2}\left(\be\right)$,
\[\sup_{\be \in \Theta }\abs{\paren{1-\E}\frac{1}{n}\sum_{i=1}^n \phi^V_{ij_1j_2}\left(\be\right)}=O\left(\delta_{m,0}\sqrt{\frac{s\log p}{nh^6}}\right),\]
with the same probability. This is true for any $j_1, j_2$,
hence 
\[\sup_{j_1,j_2}\Phi^V_{n,j_1,j_2} = O\left(\delta_{m,0}\sqrt{\frac{s\log p}{nh^6}}\right)\xlongequal{\sqrt{s}\delta_{m,0}=O\left(h^{3/2}\right)}O\left(\sqrt{\frac{\log p}{nh^3}}\right),\]
with probability at least $1-2/\max\{n,p\}$, which completes the proof of \eqref{eq:supPhi}.

\textbf{Step 2}:
\begin{equation}
	\sup_{j_1,j_2}\left|\left(1-\E\right)V_{n,j_1,j_2}\left(\be^*\right)\right|=O_{\mathbb{P}}\left(\sqrt{\frac{\log p}{nh^3}}\right).
	\label{eq:sup1-EVbeta*}
\end{equation}
Recall that 
\[V_{n,j_1,j_2}\left(\be^*\right):=\frac{1}{nh^2}\sum_{i=1}^{n}\left(-y_i\right) H^{\prime\prime}\left(\frac{x_i+\bz_{i}^{\top}\be^*}{h}\right)z_{i,j_1}z_{i,j_2}.\]
We have
\[\sup_{i} \abs{\frac{-y_i}{h^2} H^{\prime\prime}\left(\frac{x_i+\bz_{i}^{\top}\be^*}{h}\right)z_{i,j_1}z_{i,j_2}} = O\left(1/h^2\right),\]
and
\[\sup_{i} \E \abs{\frac{-y_i}{h^2} H^{\prime\prime}\left(\frac{x_i+\bz_{i}^{\top}\be^*}{h}\right)z_{i,j_1}z_{i,j_2}}^2 = O\left(1/h^3\right),\]
as $H^{\prime\prime}\left(x\right)$, $\abs{z_{i,j}}$ is bounded and 
\begin{equation}
	\E_{\cdot\mid\bZ}\left[H^{\prime\prime}\left(\frac{X+\bZ^{\top}\be^*}{h}\right)\right]^2 = h \int_{-1}^1\left[H^{\prime\prime}\left(\xi\right)\right]^2\rho\left(\xi h \mid \bZ\right) \diff\xi =O\left(h\right).
	\label{eq:EH22*-hd}
\end{equation}
By Bernstein's inequality, there exists a constant $C>0$,
\[\Prob\paren{\abs{\paren{1-\E}V_{n,j_1,j_2}\left(\be^*\right)}\geq \sqrt{\frac{C_2\log \max\{n,p\}}{nh^3}}} \leq 2\exp\paren{\frac{-\frac{C}{2}\log \max\{n,p\}}{1+\frac{1}{3}\sqrt{C\log \max\{n,p\}/ \left(nh\right)}}}.\]
Our assumptions $\log m/mh^3=o(1)$, $m>n^{c}$ and $p=O(n^{\gamma})$ ensure that $\log \max\{n,p\}/ \left(nh\right)=o(1)$, and hence we can take a sufficiently large $C$ to make
\[2\exp\paren{\frac{-\frac{C}{2}\log \max\{n,p\}}{1+\frac{1}{3}\sqrt{C\log \max\{n,p\}/ \left(nh\right)}}} \leq \frac{2}{\max\{n,p\}^3}.\] 
This implies that
$$\Prob\paren{\sup_{j_1,j_2}\abs{\paren{1-\E}V_{n,j_1,j_2}\left(\be^*\right)}\leq  \sqrt{\frac{C\log \max\{n,p\}}{nh^3}}} \geq 1-\frac{2}{\max\{n,p\}},$$
which proves \eqref{eq:sup1-EVbeta*}. Together with \eqref{eq:breakVnh-V} and \eqref{eq:supPhi}, we conclude the proof of \eqref{eq:Dn-hd}.

\textbf{Proof of \eqref{eq:EDn-hd}:}


Recall that, by Equation \eqref{eq:Taylor}, for almost every $\bZ$,
\begin{equation}
	\begin{aligned}
		&\quad \left(2F\left(-t \mid \bZ\right)-1\right)\rho\left(t \mid \bZ\right)\\
		& =2F^{\,\left(1\right)}\left(0\mid\bZ\right)\rho\left(0 \mid \bZ\right)t+\sum_{k=2}^{2\alpha+1}M_k\left(\bZ\right)t^k,
	\end{aligned}
\end{equation}
where $M_{k}\left(\bZ\right)$ is a constant depending on $\bZ$, $t^{\prime}$ and $t^{\prime\prime}$. Since  $\rho^{\left(k\right)}\left(\cdot\mid \bZ\right)$ and  $F^{\left(k\right)}\left(\cdot\mid \bZ\right)$ are bounded around 0 for all $k$, we know there exists a constant $M$ such that $\sup_{k}\abs{M_k\left(\bZ\right)} \leq M$ for all $\bZ,t^{\prime},t^{\prime\prime}$. In the following computation, we let $t=\xi h-\bZ^{\top}\Delta(\be)$, where $\Delta\left(\be\right):=\be-\be^*$.

Recall that when $x>1$ or $x<-1$,  $H^{\prime}\left(x\right)=H^{\prime\prime}\left(x\right)=0$. The kernel $H^{\prime}\left(x\right)$ is bounded, $\int_{-1}^1 H^{\prime}\left(x\right)\diff x=1$, and $\int_{-1}^1 x^kH^{\prime}\left(x\right)\diff x=0$ for all $1\leq k\leq\alpha-1$. Moreover, $\int_{-1}^1 xH^{\prime\prime}\left(x\right)\diff x=-1$ and $\int_{-1}^1 x^kH^{\prime\prime}\left(x\right)\diff x=0$ for $k=0$ and $2\leq k\leq\alpha$. Also, recall that $\zeta=X+\bZ^{\top}\be^*$ and $-y=-\mathrm{sign}\left(y^*\right)=-\mathrm{sign}\left(\bZ+\epsilon\right)=2\I\left(\bZ+\epsilon<0\right)-1$.

For all $\bv_1, \bv_2 $ that satisfies $\norm{\bv_1}_2=\norm{\bv_2}_2=1$ and $\be \in \Theta$, 
\begin{equation}
	\begin{aligned}
		&\mathbb{E}_{\cdot|\bZ} \left[\frac{\left(\bv_1^{\top}\bZ\right)\left(\bv_2^{\top}\bZ\right)}{h^2}\left(-Y\right) H^{\prime\prime}\left(\frac{{X+\bZ}^{\top}\be}{h}\right)\right]\\
		=&\frac{\left(\bv_1^{\top}\bZ\right)\left(\bv_2^{\top}\bZ\right)}{h^2}\mathbb{E}_{\cdot\mid\bZ} \left[2\I\left(\bZ+\epsilon<0\right)-1\right]H^{\prime\prime}\left(\frac{\bZ^{\top}\Delta\left(\be\right)+\zeta}{h}\right)\\
		=&\frac{\left(\bv_1^{\top}\bZ\right)\left(\bv_2^{\top}\bZ\right)}{h} \int_{-1}^1 \brackets{2F\left(\bZ^{\top}\Delta\left(\be\right)-\xi h\mid\bZ\right)-1}\rho\left(\xi h-\bZ^{\top}\Delta\left(\be\right)\mid\bZ\right)H^{\prime\prime}\left(\xi\right)\diff \xi\\
		=&\,\frac{\left(\bv_1^{\top}\bZ\right)\left(\bv_2^{\top}\bZ\right)}{h}2F^{\,\left(1\right)}\left(0\mid\bZ\right)\rho\left(0 \mid \bZ\right)\int_{-1}^1\left(\xi h-\bZ^{\top}\Delta\left(\be\right)\right)H^{\prime\prime}\left(\xi\right)\diff \xi\\ 
		&+ \frac{\left(\bv_1^{\top}\bZ\right)\left(\bv_2^{\top}\bZ\right)}{h}\cdot  \sum_{k=2}^{2\alpha+1} M_k\left(\bZ\right) \int_{-1}^{1} \left(\xi h-\bZ^{\top}\Delta\left(\be\right)\right)^{k} H^{\prime\prime}\left(\xi\right) \diff \xi\\
	\end{aligned}
	\label{eq:EV-hd}
\end{equation}

For $1\leq k \leq \alpha$,
\[\int_{-1}^{1} \left(\xi h-\bZ^{\top}\Delta\left(\be\right)\right)^{k} H^{\prime\prime}\left(\xi\right) \diff \xi=\sum_{k^{\prime}=0}^k h^{k^{\prime}} \left(\bZ^{\top}\Delta\left(\be\right)\right)^{k-k^{\prime}}\int_{-1}^{1}  \xi^{k^{\prime}} H^{\prime\prime}\left(\xi\right) \diff \xi=h\left(\bZ^{\top}\Delta\left(\be\right)\right)^{k-1}.\]

For $\alpha+1\leq k \leq 2\alpha+1$, since $H^{\prime\prime}\left(x\right)$ is bounded,
\begin{align*}
	\abs{\int_{-1}^{1} \left(\xi h-\bZ^{\top}\Delta\left(\be\right)\right)^{k} H^{\prime\prime}\left(\xi\right) \diff \xi}&\leq \int_{-1}^{1}2^{k-1}\left(\abs{\xi h}^k+\abs{\bZ^{\top}\Delta\left(\be\right)}^k\right)\abs{ H^{\prime\prime}\left(\xi\right)} \diff \xi\\
	&\leq 2^{2\alpha+1}\sup_x \abs{H^{\prime\prime}\left(x\right)}\brackets{h^{\alpha+1}+\abs{\bZ^{\top}\Delta\left(\be\right)}^k}\\
	&\leq2^{2\alpha+2}\left(1+\overline{B}^k\right)\sup_x \abs{H^{\prime\prime}\left(x\right)}h^{\alpha+1} .
\end{align*}
The last inequality holds because $\abs{Z_{j}}\leq \overline{B}$, $\bZ^{\top}\Delta\left(\be\right) \leq \overline{B}\sqrt{s}\delta_{m,0} $, $\sqrt{s}\delta_{m,0}=O\left(h^{3/2}\right)$ and $h=o\left(1\right)$. Hence,
\begin{align*}
	&\quad \mathbb{E}_{\cdot|\bZ} \left[\frac{\left(\bv_1^{\top}\bZ\right)\left(\bv_2^{\top}\bZ\right)}{h}\left(-y\right) H^{\prime\prime}\left(\frac{X+{\bZ}^{\top}\be}{h}\right)\right]-\frac{\left(\bv_1^{\top}\bZ\right)\left(\bv_2^{\top}\bZ\right)}{h}2F^{\,\left(1\right)}\left(0\mid\bZ\right)\rho\left(0 \mid \bZ\right)\\
	&\leq \abs{\frac{\left(\bv_1^{\top}\bZ\right)\left(\bv_2^{\top}\bZ\right)}{h}\sum_{k=2}^{\alpha} M_k\left(\bZ\right)h\left(\bZ^{\top}\Delta\left(\be\right)\right)^{k-1}}\\
	&+\abs{\frac{\left(\bv_1^{\top}\bZ\right)\left(\bv_2^{\top}\bZ\right)}{h}2^{2\alpha+2}\sup_x \abs{H^{\prime\prime}\left(x\right)}\sum_{k=\alpha+1}^{2\alpha+1}\left(1+\overline{B}^k\right) M_k\left(\bZ\right)h^{\alpha+1}}\\
	&\leq  C\left(\bv_1^{\top}\bZ\right)\left(\bv_2^{\top}\bZ\right)\left(\sqrt{s}\delta_{m ,0}+h^{\alpha}\right), 
\end{align*}
where $C$ 
is a constant not depending on $\be$ and $\bZ$. Therefore, by the assumption that $\bZ$ has finite second moment and Cauchy-Schwarz inequality, we obtain that
\begin{equation}
	\begin{aligned}
		&\quad\sup_{\be\in\Theta} \sup_{\norm{\bv_1}_2=\norm{\bv_2}_2=1}\bv_1^{\top}\left(\E\left[ V_{n}\left(\be\right)\right]-V\right)\bv_2 \\
		&=\sup_{\be\in\Theta} \sup_{\norm{\bv_1}_2=\norm{\bv_2}_2=1}\mathbb{E}\brackets{  \frac{\left(\bv_1^{\top}\bZ\right)\left(\bv_2^{\top}\bZ\right)}{h}\left(-y\right) H^{\prime\prime}\left(\frac{{\bZ}^{\top}\be}{h}\right)-\frac{\left(\bv_1^{\top}\bZ\right)\left(\bv_2^{\top}\bZ\right)}{h}2F^{\,\left(1\right)}\left(0\mid\bZ\right)\rho\left(0 \mid \bZ\right)}\\
		& \lesssim \sqrt{s}\delta_{m,0}+h^\alpha,
	\end{aligned}
\end{equation}
which completes the proof of $\eqref{eq:EDn-hd}$.

\textbf{Proof of  \eqref{eq:Dm-hd} and \eqref{eq:EDm-hd}:}

Equation \eqref{eq:Dm-hd} and \eqref{eq:EDm-hd} can be shown in the same way as above by replacing all the $n$ with $m$.

\textbf{Proof of \eqref{eq:z-hd}:}

The proof of \eqref{eq:z-hd} is analogous to that of $\eqref{eq:Dn-hd}$. We will omit some details since they are the same. For each $j \in \{1,...,p\}$, define
\begin{align*}
	U_{n,h,j}\left(\be\right)&:=\frac{1}{nh^2}\sum_{i=1}^{n} \left(-y_i\right) H^{\prime\prime}\left(\frac{x_i+\bz_{i}^{\top}\be}{h}\right)\bz_{i}^{\top}\left(\be-\be^*\right) z_{i,j}\\
	&-\frac{1}{nh}\sum_{i=1}^{n}\left(-y_i\right) H^{\prime}\left(\frac{x_i+\bz_{i}^{\top}\be}{h}\right) z_{i,j}.		
\end{align*}
Then we have
\begin{align*}
	&\quad \left \|(1-\E)\Psi_n\big(\hbe^{(0)}\big) \right\|_{\infty}\\
	&= \left\|(1-\E)\bigg[\frac{1}{nh^2}\sum_{i=1}^{n} \left(-y_i\right) H^{\prime\prime}\Big(\frac{x_i+\bz_{i}^{\top}\hbe^{\,\left(0\right)}}{h}\Big)\bz_{i}^{\top}\left(\hbe^{\,\left(0\right)}-\be^*\right) \bz_{i}
	-\frac{1}{nh}\sum_{i=1}^{n}\left(-y_i\right) H^{\prime}\Big(\frac{x_i+\bz_{i}^{\top}\be^{\,\left(0\right)}}{h}\Big) \bz_{i} \bigg]\right\|_{\infty}\\
	&\leq\sup_{j}\sup_{\be \in \Theta}\left|\paren{1-\E}\paren{ U_{n,h,j}\left(\be\right)-U_{n,h,j}\left(\be^{*}\right)}\right| + \sup_{j} \abs{\paren{1-\E} U_{n,h,j}\left(\be^{*}\right)}.
\end{align*}

Define
\[\phi^U_{i,j}\left(\be\right):=\left|\frac{-y_i}{h^2}H^{\prime\prime}\left(\frac{x_i+\bz_{i}^{\top}\be}{h}\right)\bz_{i}^{\top}\left(\be-\be^*\right) z_{i,j}-\frac{-y_i}{h} \brackets{H^{\prime}\left(\frac{x_i+\bz_{i}^{\top}\be}{h}\right)-H^{\prime}\left(\frac{x_i+\bz_{i}^{\top}\be^*}{h}\right)}z_{i,j}\right|.\]

\[\Phi^U_{n,h,j}:=\sup_{\be \in \Theta}\left|\paren{1-\E}\paren{ U_{n,h,j}\left(\be\right)-U_{n,h,j}\left(\be^{*}\right)}\right|=\sup_{\be \in \Theta}\abs{\paren{1-\E}\frac{1}{n}\sum_{i=1}^n\phi^U_{i,j}\left(\be\right)}.\]

Similar to the analysis of $\phi^V_{i,j}$, we have
\[\sup_{i,j}\sup_{\be \in \Theta}\abs{\phi^U_{i,j}\left(\be\right)}=O\left(\frac{\sqrt{s}\delta_{m,0}}{h^2}\right),\]
and 
\[\sup_{i,j}\sup_{\be \in \Theta}\E\brackets{\phi^U_{i,j}\left(\be\right)}^2=O\left(\frac{s\delta_{m,0}^2}{h^3}\right).\]

By Rademacher symmetrization, Talagrand's concentration principle and Hoeffding's inequality, 
\[ \E \Phi^U_{n,h,j} \leq 2\E \sup_{\be \in \Theta}\abs{\frac{1}{n}\sum_{i=1}^n\sigma_i\phi^U_{i,j}\left(\be\right)} \lesssim  \paren{\frac{\sqrt{s}\delta_{m,0}}{nh^2}}\cdot \E \paren{\E_\sigma \norm{\sum_{i=1}^n\sigma_i\bz_i^{\top}}_\infty}\lesssim\delta_{m,0}\sqrt{\frac{s\log p}{nh^4}}.\]
(Details are the same as the proof of \eqref{eq:EPhi}, while the only difference is that $\phi^U_{ij}\left(\be\right)$ is a Lipschitz function of $\bz_i^{\top}\paren{\be-\be^*}$ with Lipschitz constant $\asymp 1/h^2$, instead of $1/h^3$ for $\phi^V_{ij}\left(\be\right)$.)

Using Lemma \ref{lem:Bousquet} again,
we can show that
\[\sup_j \Phi^{U}_{n,h,j} = O_{\Prob}\left(\delta_{m,0}\sqrt{\frac{s\log p}{nh^4}}\right)\xlongequal{\sqrt{s}\delta_{m,0}=O\left(h^{3/2}\right)}O\left(\sqrt{\frac{\log p}{nh}}\right).\]

Similar to the proof of \eqref{eq:sup1-EVbeta*},
we have
\[\sup_{\be \in \Theta}\sup_{i,j} \abs{\frac{-y_i}{h} H^{\prime}\left(\frac{x_i+\bz_{i}^{\top}\be^*}{h}\right) x_{i,j}}=O\left(1/h\right),\]
and
\[\sup_{\be \in \Theta}\sup_{i,j} \E \abs{\frac{-y_i}{h} H^{\prime}\left(\frac{x_i+\bz_{i}^{\top}\be^*}{h}\right) x_{i,j}}^2=O\left(1/h\right).\]
By Bernstein's inequality,
\[
\sup_j \abs{\paren{1-\E}U_{n,h,j}\left(\be^*\right)} = O_{\Prob}\left(\sqrt{\frac{\log p}{nh}}\right).
\]

\textbf{Proof of \eqref{eq:Ez-hd}:}

Recall equation \eqref{eq:Taylor} and $\pi_U=\int_{-1}^1x^{\alpha}H^{\prime}\left(x\right)\diff x \neq 0$. For any $\bv \in \R^{p}$ that satisfies $\norm{\bv}_2=1$ and $\be \in \Theta$, we have
\begin{equation}
	\begin{aligned}
		&\mathbb{E}_{\cdot|\bZ} \bv^{\top}\Psi_{n}\left(\be\right) \\
		=&(\bv^{\top}\bZ) \cdot \mathbb{E}_{\cdot|\bZ} \left[ \frac{\bZ^{\top}\Delta\left(\be\right)}{h^2} [2\I\left(\zeta+\epsilon<0\right)-1] H^{\prime\prime}\left(\frac{X+\bZ^{\top}\be}{h}\right)-\frac{1}{h}[2\I\left(\zeta+\epsilon<0\right)-1]H^{\prime}\left(\frac{X+\bZ^{\top}\be}{h}\right) \right]\\
		=&(\bv^{\top}\bZ) \int_{-1}^{1} \brackets{2F\left(\bZ^{\top}\Delta\left(\be\right)-\xi h \mid \bZ\right)-1}\rho\left(\xi h-\bZ^{\top}\Delta\left(\be\right) \mid \bZ\right)\\
		&\quad\cdot\left( \frac{\bZ^{\top}\Delta\left(\be\right)}{h}H^{\prime\prime}\left(\xi\right)-H^{\prime}\left(\xi\right)\right)  \diff \xi\\
		=&\,(\bv^{\top}\bZ)\sum_{k=1}^{2\alpha+1} M_k\left(\bZ\right)\int_{-1}^{1} \left(\xi h-\bZ^{\top}\Delta\left(\be\right)\right)^{k}\left( \frac{\bZ^{\top}\Delta\left(\be\right)}{h}H^{\prime\prime}\left(\xi\right)-H^{\prime}\left(\xi\right)\right) \diff \xi 
	\end{aligned}\label{EUnhj}
\end{equation}
For $1\leq k\leq\alpha-1$,
\begin{equation*}
	\begin{aligned}
		&\sup_{\be\in \Theta}\abs{\int_{-1}^{1} \left(\xi h-\bZ^{\top}\Delta\left(\be\right)\right)^k\left( \frac{\bZ^{\top}\Delta\left(\be\right)}{h}H^{\prime\prime}\left(\xi\right)-H^{\prime}\left(\xi\right)\right) \diff \xi}\\
		=&\sup_{\be\in \Theta}\abs{\sum_{k^{\prime}=0}^{k} \binom{k}{k^{\prime}}h^{k^{\prime}}\left(-\bZ^{\top}\Delta\left(\be\right)\right)^{k-k^{\prime}}\left[\left(\bZ^{\top}\Delta\left(\be\right)/h\right) \int_{-1}^1\xi^{k^{\prime}}H^{\prime\prime}\left(\xi\right)\diff\xi- \int_{-1}^1\xi^{k^{\prime}}H^{\prime}\left(\xi\right)\diff\xi \right]}\\ =&\,\left(k-1\right)\abs{-\bZ^{\top}\Delta\left(\be\right)}^k = O \left(s\delta_{m,0}^2\right).\\
	\end{aligned}
\end{equation*}
For $k=\alpha$,
\begin{equation*}
	\begin{aligned}
		&\sup_{\be\in \Theta}\abs{\int_{-1}^{1} \left(\xi h-\bZ^{\top}\Delta\left(\be\right)\right)^\alpha\left( \frac{\bZ^{\top}\Delta\left(\be\right)}{h}H^{\prime\prime}\left(\xi\right)-H^{\prime}\left(\xi\right)\right) \diff \xi}\\
		=&\sup_{\be\in \Theta}\abs{\sum_{k=0}^{\alpha} \binom{\alpha}{k}h^k\left(-\bZ^{\top}\Delta\left(\be\right)\right)^{\alpha-k}\left[\left(\bZ^{\top}\Delta\left(\be\right)/h\right) \int_{-1}^1\xi^kH^{\prime\prime}\left(\xi\right)\diff\xi- \int_{-1}^1\xi^kH^{\prime}\left(\xi\right)\diff\xi \right]}\\ 
		\leq&\,\left(\alpha-1\right)\abs{\bZ^{\top}\Delta\left(\be\right)}^\alpha+\abs{\pi_Uh^{\alpha}}=O\brackets {h^{\alpha}+\paren{\sqrt{s}\delta_{m,0}}^{\alpha}}.\\			
	\end{aligned}
\end{equation*}
For $\alpha+1 \leq k\leq 2\alpha+1$,
\[\sup_{\be\in \Theta}\abs{\int_{-1}^{1} \left(\xi h-\bZ^{\top}\Delta\left(\be\right)\right)^{k}\left( \frac{\bZ^{\top}\Delta\left(\be\right)}{h}H^{\prime\prime}\left(\xi\right)-H^{\prime}\left(\xi\right)\right) \diff \xi} 
= O\brackets {h^{\alpha+1}+\left(\sqrt{s}\delta_{m,0}\right)^{\alpha+1}}.\]

Therefore, by the assumption that $\bZ$ has finite second moment and Cauchy-Schwarz inequality, we obtain that
\[\E\left[\bv^{\top} \Psi_n\left(\be\right)\right] \lesssim \mathbb{E}\left[(\bv^{\top}\bZ)\left(s\delta_{m,0}^2+h^{\alpha}\right)\right]\lesssim s\delta_{m,0}^2+h^{\alpha},\] 
which completes the proof of \eqref{eq:Ez-hd}.

\end{proof}

\noindent \textbf{Proof of Theorem \ref{thm:1step-hd}}

Now we are ready to prove the 1-step error for $\hbe^{(1)}$. 

\begin{proof}\label{pf:thm:1step-hd}
For simplicity, we replace $V_{m,1}\left(\hbe^{(0)}\right)$, $V_{n}\left(\hbe^{(0)}\right)$, $U_{n}\left(\hbe^{(0)}\right)$, and $\lambda^{\left(1\right)}_{n}$ by $V_{m,1}$, $V_{n}$, $U_{n}$, and $\lambda_{n}$, respectively. Then, by Algorithm \ref{alg:hd}, 
\[\hbe^{(1)}=\argmin_{\be\in \R^{p}} \left\lbrace \left\|\be\right\|_1 :   \left\|V_{m,1}\be -\paren{V_{m,1} \hbe^{\,\left( 0\right)}- U_{n}}\right\|_{\infty} \leq \lambda_{n}\right\rbrace.\]
Hence, 
\begin{equation}
	\left\|V_{m,1}\hbe^{(1)}-\paren{V_{m,1}\hbe^{\left(0\right)}-U_{n}} \right\|_\infty \leq \lambda_{n}.
	\label{eq:beta1leqlambda}
\end{equation}

Using Lemma \ref{lem:D-hd}, with probability tending to one, we have
\begin{equation}
	\begin{aligned}
		&\quad \left\|(1-\E)\left[V_{m,1}\be^{*}-\paren{V_{m,1}\hbe^{\left(0\right)}-U_{n}} \right]\right\|_\infty\\
		& \leq  \left\|(1-\E)\left[U_{n} - V_n\paren{\hbe^{\left(0\right)}-\be^*}\right]\right\|_\infty + \left\| (1-\E) \left(V_{m,1}-V_n\right)\left(\hbe^{\,\left(0\right)}-\be^*\right) \right\|_\infty\\
		&\leq\left\|(1-\E)\left[U_{n} - V_n\paren{\hbe^{\left(0\right)}-\be^*}\right]\right\|_\infty + \norm{ (1-\E)(V_{m,1}-V_n)}_{\max}\norm{\hbe^{\,\left(0\right)}-\be^*}_{1}\\
		&\leq C_{\lambda}\left(\sqrt{\frac{\log p}{nh}}+\sqrt{\frac{s\log p}{mh^3}}\delta_{m,0}\right).
	\end{aligned}
	\label{eq:beta*leqlambda-var}
\end{equation}
and
\begin{equation}
	\begin{aligned}
		&\quad \left\|\E\left[V_{m,1}\be^{*}-\paren{V_{m,1}\hbe^{\left(0\right)}-U_{n}} \right]\right\|_2\\
		& \leq  \left\|\E\left[U_{n} - V_n\paren{\hbe^{\left(0\right)}-\be^*}\right]\right\|_2 + \left\| \E \left(V_{m,1}-V_n\right)\left(\hbe^{\,\left(0\right)}-\be^*\right) \right\|_2\\
		&\leq C_{\lambda}\left(s\delta_{m ,0}^2+h^{\alpha}\right).
	\end{aligned}
	\label{eq:beta*leqlambda-bias}
\end{equation}
for some large enough constant $C_{\lambda}$. By letting $\lambda_{n}=C_{\lambda}\left[\sqrt{\frac{\log p}{nh}}+\sqrt{\frac{s\log p}{mh^3}}\delta_{m,0}+s\delta_{m ,0}^2+h^{\alpha}\right]$, Equations \eqref{eq:beta*leqlambda-var} and \eqref{eq:beta*leqlambda-bias} implies that 	
\begin{equation}
	\label{eq:beta*leqlambda}
	\left\|V_{m,1}\be^{*}-\paren{V_{m,1}\hbe^{\left(0\right)}-U_{n}} \right\|_\infty \leq \lambda_{n}.
\end{equation}

Combining it with \eqref{eq:beta1leqlambda}, we obtain that, with probability tending to 1,
\begin{equation}
	\left\|V_{m,1}\left(\be^*-\hbe^{\left(1\right)}\right)\right\|_\infty \leq 2\lambda_{n}.
	\label{eq:Dbeta}
\end{equation} 
Moreover, by the optimality of $\hbe^{\left(1\right)}$, it holds that
$\left\|\hbe^{\left(1\right)}\right\|_1 \leq \left\|\be^{*}\right\|_1$, which implies
\[\left\|\hbe_{S}^{\,\left(1\right)}\right\|_1 +\left\|\hbe_{S^c}^{\,\left(1\right)}\right\|_1  =\left\|\hbe^{\left(1\right)}\right\|_1 \leq \left\|\be^*\right\|_1 = \left\|\be^*_{ S}\right\|_1. \]
Therefore,
\[\left\|\left(\be^*-\hbe^{\left(1\right)}\right)_{S^c}\right\|_1= \left\|\hbe_{S^c}^{\,\left(1\right)}\right\|_1 \leq  \left\|\be^*_{ S}-\hbe_{S}^{\,\left(1\right)} \right\|_1=\left\|\left(\be^*-\hbe^{\left(1\right)}\right)_S\right\|_1,\] 
and hence, \[\left\|\be^*-\hbe^{\left(1\right)}\right\|_1\leq 2\left\|\left(\be^*-\hbe^{\left(1\right)}\right)_S\right\|_1 \leq 2\sqrt{s}\left\|\left(\be^*-\hbe^{\left(1\right)}\right)_S\right\|_2\leq 2\sqrt{s}\left\|\be^*-\hbe^{\left(1\right)}\right\|_2.\]

Let $\bm{\delta}:=\hbe^{\left(1\right)}-\be^{*}$. So far we have shown that, with probability tending to one, $\|\bm{\delta}\|_1 \leq2 \sqrt{s} \left\|\bm{\delta}\right\|_2$ and $\|V_{m,1}\bm{\delta}\|_{\infty}\leq 2\lambda_{m,0}$. Therefore,
\begin{equation}
	\begin{aligned}
		\bm{\delta}^{\top}V_{m,1}\bm{\delta}&=\bm{\delta}^{\top}V\bm{\delta}+\bm{\delta}^{\top}\left(V_{m,1}-\E\left[V_{m,1}\right]\right)\bm{\delta}+\bm{\delta}^{\top}\left(\E\left[V_{m,1}\right]-V\right)\bm{\delta}\\
		&\geq \Lambda_{\min}\left(V\right)\|\bm{\delta}\|_2^2 - \left\|(1-\E)V_{m,1}\right\|_{\max}\|\bm{\delta}\|_1^2+\bm{\delta}^{\top}\left(\E\left[V_{m,1}\right]-V\right)\bm{\delta}\\
		&\geq\Lambda_{\min}\left(V\right)\|\bm{\delta}\|_2^2 - s\left\|(1-\E)V_{m,1}\right\|_{\max}\|\bm{\delta}\|_2^2-\abs{\bm{\delta}^{\top}\left(\E\left[V_{m,1}\right]-V\right)\bm{\delta}}.
	\end{aligned}
	\label{eq:REdeltaL}
\end{equation} 
By \eqref{eq:Dm-hd},
\[s\left\|(1-\E)V_{m,1}\right\|_{\max}=O_{\Prob}\paren{\sqrt{\frac{s^2\log p}{mh^3}}}=o_{\mathbb{P}}\left(1\right).\] 
By \eqref{eq:EDm-hd}, 
\[\abs{\bm{\delta}^{\top}\left(\E\left[V_{m,1}\right]-V\right)\bm{\delta}}\lesssim \left(\sqrt{s}\delta_{m,0}+h^{\alpha}\right)\norm{\bm \delta}_2^2=o(\norm{\bm \delta}_2^2).\]
Therefore, \eqref{eq:REdeltaL} leads to 
\[\bm{\delta}^{\top}V_{m,1}\bm{\delta} \geq \Lambda_{\min}\left(V\right)\|\bm{\delta}\|_2^2-o_{\Prob}\left(\|\bm{\delta}\|_2^2\right) \geq  \left(\Lambda_{\min}\left(V\right)/2\right)\|\bm{\delta}\|_2^2,\]
with probability tending to one.

On the other hand, combining \eqref{eq:beta1leqlambda}, \eqref{eq:beta*leqlambda} and \eqref{eq:beta*leqlambda-bias} yields that
\begin{equation}
	\begin{aligned}
		&\quad \bm{\delta}^{\top}V_{m,1}\bm{\delta} \\
		&=  \bm{\delta}^{\top}\left\{V_{m,1}\hbe^{(1)} -\paren{V_{m,1} \hbe^{\left(0\right)}- U_{n}}-(1-\E)\left[V_{m,1}\be^{*}-\paren{V_{m,1}\hbe^{\left(0\right)}-U_{n}}\right]\right\}\\
		&\quad -\bm{\delta}^{\top}\E\left[V_{m,1}\be^{*}-\paren{V_{m,1}\hbe^{\left(0\right)}-U_{n}}\right]\\
		&\leq 	C_{\lambda} \left(\sqrt{\frac{\log p}{nh}}+\sqrt{\frac{s\log p}{mh^3}}\delta_{m,0}\right)\|\bm{\delta}\|_1+ C_{\lambda}(s\delta_{m ,0}^2+h^{\alpha})\|\bm{\delta}\|_2\\
		&\leq C_{\lambda} \left(\sqrt{\frac{s\log p}{nh}}+\sqrt{\frac{s^2\log p}{mh^3}}\delta_{m,0}+s\delta_{m ,0}^2+h^{\alpha}\right)\|\bm{\delta}\|_2.
	\end{aligned}
\end{equation}
Combining the two inequalities above completes the proof.
\end{proof}

\noindent\textbf{Proof of Theorem \ref{thm:tstep-hd}}

\begin{proof}
\label{pf:thm:tstep-hd}
First note that $\sqrt{\frac{s\log p}{nh}}+\sqrt{s}h^{\alpha}\asymp\sqrt{s}\paren{\frac{\log p}{n}}^{\frac{\alpha}{2\alpha+1}}$ when $h=h^*=\paren{\frac{s\log p}{n}}^{\frac{1}{2\alpha+1}}$. Let $\alpha_0=\max\left\{\frac{3}{2c\,\left( 1-2r\right)}+\frac{r}{2(1-2r)}, \frac{3r}{2(1-4r)}\right\}$ and $\delta_{m,0}=(s\log p/m)^{\alpha/(2\alpha+1)}$.
Since $s=O\left(m^r\right)$ and $n=O\left(m^{1/c}\right)$ for some $0<c<1$ and $0<r<1/4$, the condition $\alpha>\alpha_0$ guarantees $\frac{ s^2 \log p}{m(h^*)^3}=o\left(1\right)$, $s^{3/2}\delta_{m,0}=o(1)$, (which implies $r_m=o(1)$),  $s\delta_{m,0}^2=O((h^*)^{3})$, and $s(h^*)^{\alpha}=o\left(1\right)$. Adding the requirement of $\delta_{m,0}$, the assumptions in Theorem \ref{thm:1step-hd} hold, which proves Theorem \ref{thm:tstep-hd} when $t=1$.  

Now we show Theorem \ref{thm:tstep-hd} by induction. Define $\delta_{m ,t}=\sqrt{s}\left(\frac{\log p}{n}\right)^{\frac{\alpha}{2\alpha+1}}+(r_m)^{t}\delta_{m ,0}=\sqrt{s}\lambda_{n}^{(t)}$. Assume that $\norm{\hbe^{(t)}-\be^*}_2=O_{\Prob}(\delta_{m ,t})$ and $\norm{\hbe^{(t)}-\be^*}_1=O_{\Prob}(\sqrt{s}\delta_{m ,t})$ is true for $t$. Note that 
$\sqrt{s}\paren{\frac{\log p}{n}}^{\frac{\alpha}{2\alpha+1}}<\delta_{m,0}$, which implies $\delta_{m,t}<\delta_{m,0}$, and thus $s^{3/2}\delta_{m,t}=o\left(1\right)$ and $\sqrt{s}\delta_{m,t}=O\left((h^*)^{3/2}\right)$. Then by Theorem \ref{thm:1step-hd}, when
\[\lambda^{\left( t+1\right)}_{n}=C_\lambda\left(\sqrt{ \frac{\log p}{nh^{*}}}+\delta_{m,t}\sqrt{\frac{s\log p}{m(h^*)^3}}+ s\delta^2_{m,t}+(h^{*})^{\alpha}\right),\]
we have
\begin{align*}
	\left\|\hbe^{\,\left(t+1\right)}-\be^{*}\right\|_2 &  = O_{\mathbb{P}}\left[\sqrt{s}\left(\frac{\log p}{n}\right)^{\frac{\alpha}{2\alpha+1}}+\left(\sqrt{\frac{s^2\log p}{m(h^*)^3}}+s^{3/2}\delta_{m,t}\right)\delta_{m,t}\right]\\
	& = O_{\mathbb{P}}\left[\sqrt{s}\left(\frac{\log p}{n}\right)^{\frac{\alpha}{2\alpha+1}}+\left(\sqrt{\frac{s^2\log p}{m(h^*)^3}}+s^{3/2}\delta_{m,0}\right)^{t+1}\delta_{m,0}\right],
\end{align*}
and
\[\left\|\hbe^{\,\left(t+1\right)}-\be^{*}\right\|_1\leq 2\sqrt{s} \left\|\hbe^{\,\left(t+1\right)}-\be^{*}\right\|_2,\]
with probability tending to 1. This completes the proof.
\end{proof}

\noindent\textbf{Proof of Data Adaptive Methods for Unknown Parameters}

\begin{proof}[Proof of Theorem \ref{thm:tstep-hd-unknowns}]
First we prove the theorem for $t=1$.  Define $s^*=2^{\lfloor\log_2(s)\rfloor+1} \in \calS$ and define a good event
\begin{align*}
	E_{m, n}:=&\bigg\{\norm{V_{m,1}\be^{*}-\paren{V_{m,1}\hbe^{\left(0\right)}-U_{n}}}_{\infty} \leq \lambda_{n,  s^*}^{(1)} \bigg\} \cap\\
	&  \bigg\{\bm{\delta}^{\top}V_{m,1}\bm{\delta} \geq  \frac{\Lambda_{\min}\left(V\right)}{2}\|\bm{\delta}\|_2^2, \quad \forall \, \bm{\delta} \mathrm{\, such \, that \, }  \norm{\bm{\delta}}_1 \leq 2\sqrt{s} \norm{\bm{\delta}}_2 \bigg\}.
\end{align*}Since $s< s^* \leq 2s$, Lemma \ref{lem:D-hd} and Equations  \eqref{eq:beta*leqlambda-var},  \eqref{eq:beta*leqlambda-bias}, and \eqref{eq:REdeltaL} ensure that $\Prob(E_{m, n}) \to 1$. Note that $\lambda_{n, s'}^{(t)}$  increases in $s'$, which implies that, under $E_{m, n}$, 
\[\norm{V_{m,1}\be^{*}-\paren{V_{m,1}\hbe^{\left(0\right)}-U_{n}}}_{\infty} \leq \lambda_{n,  s'}^{(1)},\] for any $\lambda_{n, s'}^{(1)}$ with $s' \geq s^*$. Therefore, following the proof of Theorem \ref{thm:1step-hd}, we have that, under $E_{m, n}$,
\[\norm{\hbe^{(1)}_{s'}-\be^*}_2 \lesssim \sqrt{s'}\lambda_{n,  s'}^{(1)} \mathrm{\quad and\quad} \norm{\hbe^{(1)}_{s'}-\be^*}_1 \lesssim s'\lambda_{n,  s'}^{(1)},\]
for all $s' \geq  s^*$, which further implies
\[\norm{\hbe^{(1)}_{s^*}-\hbe^{(1)}_{s'}}_2 \leq \norm{\hbe^{(1)}_{s^*}-\be^*}_2+\norm{\hbe^{(1)}_{s'}-\be^*}_2 \lesssim \sqrt{s'}\lambda_{n, s'}^{(1)},\]
and
\[\norm{\hbe^{(1)}_{s^*}-\hbe^{(1)}_{s'}}_1 \leq \norm{\hbe^{(1)}_{s^*}-\be^*}_1+\norm{\hbe^{(1)}_{s'}-\be^*}_1 \lesssim s'\lambda_{n, s'}^{(1)}.\]
By the definition of $\widehat{s}^{(1)}$, we obtain that $\widehat{s}^{(1)} \leq s^*$, and hence \[\norm{\hbe^{(1)}_{\widehat{s}^{(1)}}-\hbe^{(1)}_{s^*}}_2 \lesssim \sqrt{s^*}\lambda_{n,  s^*}^{(1)} \asymp \delta_{m,1} \mathrm{\quad and\quad} \norm{\hbe^{(1)}_{\widehat{s}^{(1)}}-\hbe^{(1)}_{s^*}}_1 \lesssim s^*\lambda_{n,  s^*}^{(1)} \asymp \sqrt{s}\delta_{m,1},\] 
since $s^* \asymp s$. Adding that  $\norm{\hbe^{(1)}_{s^*}-\be^*}_2 \lesssim \sqrt{s^*}\lambda_{n,  s^*}^{(1)}$ and $\norm{\hbe^{(1)}_{s^*}-\be^*}_1 \lesssim s^*\lambda_{n,  s^*}^{(1)}$, we complete the proof of \eqref{eq:tstep-hd-unknowns} for $t=1$. The proof for $t \geq 2$ is straightforward by combining the proof for Theorem \ref{thm:tstep-hd} and that for $t=1$.
\end{proof}

\begin{proof}[Proof of Theorem \ref{thm:tstep-hd-unknownalpha}]
First we prove the theorem for $t=1$. Recall that $h^*=(\log p / n)^{1/(2\alpha+1)}$ and define $h^{**}=2^{\lfloor\log_2(h^*)\rfloor-1} \in \calD$. Then $h^* /2 < h^{**} \leq h^*$. For any $h'\in \calD$, by the proof of Lemma \ref{lem:D-hd}, Equations \eqref{eq:Dn-hd}--\eqref{eq:Ez-hd} hold for $h'$ with probability at least $1-12/p$, which, by the proof of Theorem \ref{thm:1step-hd}, further implies that 
\begin{equation}
	\label{eq:unknown-h}
	\norm{\hbe^{(1)}_{h'}-\be^*}_2\lesssim \sqrt{s}\lambda_{n,  h'}^{(1)} \mathrm{\quad and\quad} \norm{\hbe^{(1)}_{h'}-\be^*}_1 \lesssim s\lambda_{n,  h'}^{(1)}.
\end{equation}
Therefore, since the cardinality of $\calD$ is less than $\log_2(m)$, we have that, with probability at least $1-12\log_2(m)/p$, 	Equation \eqref{eq:unknown-h} holds for all $h' \in \calD$. In particular, when $h' \leq h^{**}$, we have that
\[\norm{\hbe^{(1)}_{h^{**}}-\hbe^{(1)}_{h'}}_2 \leq \norm{\hbe^{(1)}_{h^{**}}-\be^*}_2+\norm{\hbe^{(1)}_{h'}-\be^*}_2 \lesssim \sqrt{s}\lambda_{n, h'}^{(1)},\]
and
\[\norm{\hbe^{(1)}_{h^{**}}-\hbe^{(1)}_{h'}}_1 \leq \norm{\hbe^{(1)}_{h^{**}}-\be^*}_1+\norm{\hbe^{(1)}_{h'}-\be^*}_1 \lesssim s\lambda_{n, h'}^{(1)},\]
where we use the fact that $\sqrt{\frac{\log p}{nh'}} \geq (h')^{\alpha}$ for all $h' \leq h^{**} \leq h^*$, and thus \begin{align*}
	\lambda^{(t)}_{n,  h'}&\asymp \left(\sqrt{\frac{\log p}{nh'}}+(h')^{\alpha}+\frac{1}{\sqrt{s}}(r_{m, h'})^{t} \delta_{m,0}\right)\\
	&\asymp \left(\sqrt{\frac{\log p}{nh'}}+\frac{1}{\sqrt{s}}(r_{m, h'})^{t} \delta_{m,0}\right),
\end{align*} 
which decreases in $h'$. By the definition of $\widehat{h}^{(1)}$, we obtain that $\widehat{h}^{(1)} \geq h^{**}$ and \[\norm{\hbe^{(1)}_{\widehat{h}^{(1)}}-\hbe^{(1)}_{h^{**}}}_2 \lesssim \sqrt{s}\lambda^{(1)}_{n,h^{**}} \asymp \delta_{m, 1},  \norm{\hbe^{(1)}_{\widehat{h}^{(1)}}-\hbe^{(1)}_{h^{**}}}_1  \lesssim s\lambda^{(1)}_{n,h^{**}} \asymp \sqrt{s}\delta_{m, 1},\]
since $h^{**} \asymp h^*$. Together with $\norm{\hbe^{(1)}_{h^{**}}-\be^*}_2 \lesssim \sqrt{s}\lambda_{n,  h^{**}}^{(1)}$ and $\norm{\hbe^{(1)}_{h^{**}}-\be^*}_1 \lesssim s\lambda_{n,  h^{**}}^{(1)}$, we complete the proof of \eqref{eq:tstep-hd-unknownalpha} for $t=1$. The proof for $t \geq 2$ is straightforward by combining the proof for Theorem \ref{thm:tstep-hd} and that for $t=1$.
\end{proof}

\section{Discussions on the Super-Efficiency Phenomenon}
\label{sec:sup-eff}
In this section, we show that our estimator $\hbe^{(T)}$ achieves the same asymptotic performance over a class of underlying distributions under certain uniform assumptions. In model \eqref{eq:binary-model}, for any $\be^*$, denote the density function of $\zeta:=X+\bZ^{\top}\be^*$ conditional on $\bZ$ by $\rho\left( \cdot\mid \bZ\right)$ and the cumulative distribution function of $\epsilon$ conditional on $\bZ$ by $F(\cdot\mid\bZ)$. We define the distribution class $\Theta$ to be the set of tuples $(\be^*, \rho, F)$ that satisfy the following assumptions:
\begin{assumption}
Assume that 
for all $(\be^*, \rho, F) \in \Theta$ and all integers $1\leq k\leq\alpha$, the $k$-th order derivative of $\rho\left( \cdot\mid\bZ\right)$ exists 
for almost every $\bZ$. Furthermore, there exists a constant $C_{\Theta, 1}>0$ such that $\sup_{\zeta, \bZ, k} \left|\rho^{(k)}\left( \zeta\mid\bZ\right)\right| < C_{\Theta, 1}$. 
\label{A2''}
\end{assumption}

\begin{assumption}
Assume that $\epsilon$ and $X$ are independent given $\bZ$, and 
for all $(\be^*, \rho, F) \in \Theta$ and all integers $1\leq k\leq\alpha+1$, the $k$-th order derivative of $F\left( \cdot \mid \bZ \right)$ exists 
for almost every $\bZ$. Furthermore, there exists a constant $C_{\Theta, 2}>0$ such that $\sup_{\epsilon, \bZ, k} \left|F^{(k)}\left( \epsilon\mid\bZ\right)\right| < C_{\Theta, 2}$. 
\label{A3''}
\end{assumption}

\begin{assumption}
Assume that there exists a constant $c_\Theta>1$ such that, for all $(\be^*, \rho, F) \in \Theta$, the matrices $V=2\mathbb{E}\left[\rho\left( 0\mid \bZ\right)F^{\prime}\left( 0 \mid \bZ\right)\bZ\bZ^{\top}\right]$ and $V_s=\pi_V\mathbb{E}\left[\rho\left( 0\mid \bZ\right)\bZ\bZ^{\top}\right]$ satisfy $c_\Theta^{-1}<\Lambda_{\min}\left( V\right)<\Lambda_{\max}\left(V\right)<c_\Theta$, $c_\Theta^{-1}<\Lambda_{\min}\left( V_s\right)<\Lambda_{\max}\left(V_s\right)<c_\Theta$, where $\Lambda_{\min}$ ($\Lambda_{\max}$) denotes the minimum (maximum) eigenvalue.
\label{A4''}
\end{assumption}

Assumptions \ref{A2''}--\ref{A4''} for the distribution class $\Theta$ are parallel to Assumptions \ref{A2}--\ref{A4} for a fixed distribution. Assumptions \ref{A2''}--\ref{A3''} require $\alpha$-order smoothness for all $\rho$ and $F$, which ensures that the Taylor's expansion in the technical proof always hold when $n$ is sufficiently large. Furthermore, the constants $C_{\Theta, 1}$ and $C_{\Theta,2}$ provide uniform upper bounds for the derivatives of $\rho$ and $F$ over $\Theta$. Similarly, Assumption \ref{A4''} ensures that the population Hessian matrix is always positive semi-definite with eigenvalues uniformly bounded away from 0 and $\infty$. Under these assumptions, replicating the analysis of {\mSMSE} in Section \ref{sec:theory} leads to the following result:
\begin{thm}\label{thm:sup-eff}
Assume Assumptions \ref{A1}, \ref{A5}, \ref{A2''}, \ref{A3''} and \ref{A4''} hold, and there exists a constant $0<c_2<1$ such that $p=O(m^{c_2})$. Further assume that $\sup_{(\be^*, \rho, F) \in \Theta}\big\|\hbe^{(0)}-\be^*\big\|_2=O_{\Prob}((p/m)^{1/3})$. When $T$ satisfies \eqref{eq:iteration_number}, by choosing $h_t=\max\big\lbrace (p/n)^{\frac{1}{2\alpha+1}},  (p/m)^{\frac{2^{t}}{3\alpha}}\big\rbrace$ at iteration $t=1,2,\dots, T$, we have: $\forall \varepsilon>0$, $\exists M_{\varepsilon}, N_{\varepsilon}$, such that $\forall n \geq N_{\varepsilon}$, it holds that 
\begin{equation}
	\sup_{(\be^*, \rho, F) \in \Theta}\Prob\left(\big\| \hbe^{\,\left( T\right)}-\be^{*}\big\|_2 > M_{\varepsilon}(p/n)^{\frac{\alpha}{2\alpha+1}}\right) < \varepsilon.
	\label{eq:sup-eff-supp}
\end{equation}
\end{thm}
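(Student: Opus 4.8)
The plan is to treat Theorem~\ref{thm:sup-eff} as the \emph{uniform} counterpart of Theorem~\ref{thm:tstep-ld} and to argue that no new probabilistic machinery is needed beyond verifying that every constant appearing in the proofs of Proposition~\ref{thm:1step-ld} and Theorem~\ref{thm:tstep-ld} can be taken to depend only on the uniform quantities $(\alpha, p, \overline{B}, C_{\Theta,1}, C_{\Theta,2}, c_\Theta)$ and on the fixed kernel $H$, rather than on the particular triple $(\be^*, \rho, F)\in\Theta$. The essential observation is that the appendix proof of Proposition~\ref{thm:1step-ld} already produces a \emph{high-probability} bound: the one-step estimate \eqref{eq:1stepbetax} holds off an event of probability at most $4(5n^{-\gamma})^p$. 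Hence the task reduces to (i) pinning down the uniformity of the constants over $\Theta$, and (ii) chaining the per-iteration tail bounds across the $O(\log\log n)$ Newton steps while keeping the accumulated failure probability below $\varepsilon$.

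For step (i) I would revisit the three steps of that proof. In \textbf{Step 3} (the bias), the Taylor coefficients $M_k(\bZ)$ in \eqref{eq:Taylor} are bounded by a single constant $M$ determined by the sup-norms of $\rho^{(k)}(\cdot\mid\bZ)$ and $F^{(k)}(\cdot\mid\bZ)$; Assumptions~\ref{A2''} and \ref{A3''} supply the uniform bounds $C_{\Theta,1}, C_{\Theta,2}$ on a \emph{fixed} neighborhood of $0$, so $M$ and hence the bias constant $C_E$ in \eqref{eq:ld-U-step3} are uniform over $\Theta$ once $n$ is large enough that $h_t$ shrinks inside that neighborhood. In \textbf{Step 1} and \textbf{Step 2}, the Bernstein and central-limit computations rely only on $\sup_i\|\bz_i\|_\infty\le\overline{B}$ (Assumption~\ref{A5}), the boundedness of $H',H''$ (Assumption~\ref{A1}), and the boundedness of $\rho(0\mid\bZ)$, all uniform over $\Theta$; consequently the variance proxy $\bv^\top V_s\bv$ and the equicontinuity constant $C_\phi$ in \eqref{eq:ld-U-step1} are uniform. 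Finally the inversion $\hbe^{(1)}-\be^*=V_{n,h}^{-1}U_{n,h}$ uses $\Lambda_{\min}(V)>c_\Theta^{-1}$, which Assumption~\ref{A4''} guarantees uniformly. I conclude that there exist $\gamma>0$ and a constant $C^\dagger$, depending only on the uniform quantities, so that for \emph{every} $(\be^*,\rho,F)\in\Theta$ the bound
\[
\|\hbe^{(1)}-\be^*\|_2 \le C^\dagger\Big(\delta_{m,0}^2+h_1^\alpha+\tfrac{1}{\sqrt{nh_1}}+\delta_{m,0}\sqrt{\tfrac{\log n}{nh_1^3}}\Big)
\]
holds with probability at least $1-4(5n^{-\gamma})^p$.

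For step (ii) I would run the induction of Theorem~\ref{thm:tstep-ld} verbatim, now carrying the uniform constant $C^\dagger$ through the recursion $\delta_{m,t}\mapsto\delta_{m,t+1}$; because the schedule $h_t=\max\{(\lambda_h/n)^{1/(2\alpha+1)},m^{-2^t/(3\alpha)}\}$ and the admissibility condition $\delta_{m,t-1}=O(h_t)$ are purely deterministic, the same schedule works simultaneously for all $(\be^*,\rho,F)\in\Theta$. The initialization is controlled uniformly: the hypothesis $\sup_{\Theta}\|\hbe^{(0)}-\be^*\|_2=O_{\Prob}(m^{-1/3})$ means that for any $\varepsilon>0$ there is $K_\varepsilon$ with $\sup_\Theta\Prob(\|\hbe^{(0)}-\be^*\|_2>K_\varepsilon m^{-1/3})<\varepsilon/2$. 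Conditioning on this good event and taking a union bound over the $T=O(\log\log n)$ iterations, the total failure probability is at most $\varepsilon/2+T\cdot4(5n^{-\gamma})^p$, which is below $\varepsilon$ for all $n\ge N_\varepsilon$ since $T$ grows only like $\log\log n$ while $n^{\gamma p}\to\infty$. On the complementary event, once $T$ satisfies \eqref{eq:iteration_number} the deterministic recursion yields $\|\hbe^{(T)}-\be^*\|_2\le M_\varepsilon n^{-\alpha/(2\alpha+1)}$ with $M_\varepsilon$ built from $C^\dagger$, $K_\varepsilon$ and $\lambda_h$, which is exactly \eqref{eq:sup-eff-supp}.

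\textbf{The main obstacle} I anticipate is the bookkeeping needed to make the chaining legitimate, rather than the analysis of any single step. Each iteration's one-step bound is conditional on the \emph{random} input $\hbe^{(t-1)}$, which has already used the full sample, so the high-probability estimate must be established uniformly over all admissible initializers $\{\be:\|\be-\be^*\|_2\le\delta_{m,t-1}\}$ — this is why the appendix covers the ball by a net of cardinality $n^{\gamma p}$. I must verify that the net argument, the Bousquet/Bernstein tail exponents, and the fixed neighborhood of $0$ required for the Taylor expansion are all uniform across $\Theta$ \emph{simultaneously}, and that the admissibility $\delta_{m,t-1}=O(h_t)$ propagates with a $\Theta$-independent constant at every iteration. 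Once this uniform bookkeeping is in place, the super-efficiency conclusion follows with no genuinely new idea.
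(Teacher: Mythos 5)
Your proposal is correct and takes essentially the same route as the paper: the paper's own proof consists precisely of re-running Proposition~\ref{thm:1step-ld} and Theorem~\ref{thm:tstep-ld} while observing that the Taylor expansion \eqref{eq:Taylor}, the bias computation \eqref{eq:EvU-computation}, and the constant in \eqref{eq:EvU} are uniform over $\Theta$ thanks to Assumptions~\ref{A2''}--\ref{A4''}. Your extra bookkeeping (uniform per-step polynomial tails via the net over admissible initializers and a union bound over the $O(\log\log n)$ iterations) merely makes explicit what the paper leaves implicit.
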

Note that \eqref{eq:sup-eff-supp} is equivalent to $\big\| \hbe^{\,\left( T\right)}-\be^{*}\big\|_2=O_{\Prob}( (p/n)^{\frac{\alpha}{2\alpha+1}})$ if $\Theta$ only contains a single distribution. The proof of Theorem \ref{thm:sup-eff} is almost the same as the proof of Proposition \ref{thm:1step-ld-supp} and Theorem \ref{thm:tstep-ld}, by noting that, for all distributions in $\Theta$, the Taylor's expansion in \eqref{eq:Taylor} and the computation related to the bias in \eqref{eq:EvU-computation} are always correct. Moreover, the constant in the big O notation in $\eqref{eq:EvU}$ is uniform for all distributions in $\Theta$, which is guaranteed by Assumptions \ref{A2''} and \ref{A3''}.

\section{Additional Results in Simulations}\label{sec:supp-simulation}

\begin{table}[!t]
\caption{The bias, variance and coverage rates of {\mSMSE} ($t=1, 2, 3$), {\AvgMSE}, {\AvgSMSE} and pooled-SMSE, with $p=1$, $\log_m(n)$ from 1.5 to 1.9 and homoscedastic normal noise. }
\resizebox{\textwidth}{!}{

	\begin{tabular}{c|ccc|ccc|ccc}
		\hline
		$\log_m(n)$ & \thead{ \normalsize Bias \\ \normalsize($\times 10^{-2}$)} & \thead{\normalsize Variance\\ \normalsize ($\times 10^{-4}$)} & \thead{\normalsize Coverage \\ \normalsize Rate}   & \thead{\normalsize Bias \\ \normalsize($\times 10^{-2}$)} & \thead{\normalsize Variance\\ \normalsize ($\times 10^{-4}$)} & \thead{\normalsize Coverage \\ \normalsize Rate} & \thead{\normalsize Bias \\ \normalsize($\times 10^{-2}$)} & \thead{\normalsize Variance\\ \normalsize ($\times 10^{-4}$)} & \thead{\normalsize Coverage \\ \normalsize Rate}
		\\
		\hline
		&  \multicolumn{3}{c|}{{\mSMSE} $t=1$} &  \multicolumn{3}{c|}{{\mSMSE} $t=3$} & \multicolumn{3}{c}{{\AvgSMSE}}\\
		\hline
		
		1.5 & $-0.19$ & 2.52 & 0.92 & 0.21 & 1.29 & 0.93 & $-0.20$ & 1.18 & 0.95 \\ 
		1.6 & $-0.47$ & 3.92 & 0.85 & 0.07 & 0.88 & 0.92 & $-0.57$ & 0.71 & 0.86 \\ 
		1.7 & $-0.45$ & 2.11 & 0.84 & 0.03 & 0.38 & 0.95 & $-1.02$ & 0.32 & 0.60 \\ 
		1.8 & $-0.23$ & 1.45 & 0.86 & 0.12 & 0.27 & 0.96 & $-1.46$ & 0.21 & 0.13 \\ 
		1.9 & $-0.32$ & 1.91 & 0.83 & 0.04 & 0.12 & 0.96 & $-1.95$ & 0.08 & 0.00 \\ 
		\hline
		& \multicolumn{3}{c|}{{\mSMSE} $t=2$}	&  \multicolumn{3}{c|}{{\AvgMSE}} &  \multicolumn{3}{c}{pooled-SMSE} \\
		\hline
		1.5 & 0.19 & 1.32 & 0.92 & $-1.32$ & 1.70 & 0.84 & 0.20 & 1.29 & 0.93 \\ 
		1.6 & 0.10 & 0.97 & 0.85 & $-1.39$ & 1.04 & 0.69 & 0.07 & 0.87 & 0.93 \\ 
		1.7 & 0.03 & 0.41 & 0.84 & $-1.35$ & 0.47 & 0.48 & 0.02 & 0.38 & 0.95 \\ 
		1.8 & 0.11 & 0.30 & 0.86 & $-1.26$ & 0.25 & 0.28 & 0.11 & 0.27 & 0.96 \\ 
		1.9 & 0.02 & 0.13 & 0.83 & $-1.32$ & 0.09 & 0.01 & 0.03 & 0.12 & 0.96 \\ 
		\hline
	\end{tabular}
	
}

\label{tab:p=1}

\end{table}

\begin{table}[!t]
\centering

\caption{The bias, variance and coverage rates of {\mSMSE} ($t=1,2,3,4$), {\AvgSMSE} and pooled-SMSE, with $p=10$, $\log_m(n)$ from $1.5$ to $1.9$ and homoscedastic normal noise. }
\resizebox{1.0\textwidth}{!}{
	\begin{tabular}{c|ccc|ccc|ccc}
		\hline
		$\log_m(n)$ & \thead{ \normalsize Bias \\ \normalsize($\times 10^{-2}$)} & \thead{\normalsize Variance\\ \normalsize ($\times 10^{-4}$)} & \thead{\normalsize Coverage \\ \normalsize Rate}   & \thead{\normalsize Bias \\ \normalsize($\times 10^{-2}$)} & \thead{\normalsize Variance\\ \normalsize ($\times 10^{-4}$)} & \thead{\normalsize Coverage \\ \normalsize Rate} & \thead{ \normalsize Bias \\ \normalsize($\times 10^{-2}$)} & \thead{\normalsize Variance\\ \normalsize ($\times 10^{-4}$)} & \thead{\normalsize Coverage \\ \normalsize Rate}  
		\\
		\hline
		&  \multicolumn{3}{c|}{{\mSMSE} $t=1$} &  \multicolumn{3}{c|}{{\mSMSE} $t=3$} & \multicolumn{3}{c}{{\AvgSMSE}}\\ 
		\hline
		
		1.5 & $-7.00$ & 114.33 & 0.66 & 0.54 & 14.50 & 0.93 & $-0.39$ & 7.94 & 0.95 \\ 
		1.6 & $-9.33$ & 119.20 & 0.44 & 0.69 & 8.43 & 0.93 & $-1.77$ & 4.43 & 0.89 \\ 
		1.7 & $-9.68$ & 130.22 & 0.34 & 0.39 & 5.97 & 0.95 & $-2.94$ & 2.08 & 0.57 \\ 
		1.8 & $-10.71$ & 140.50 & 0.23 & 0.32 & 3.63 & 0.90 & $-3.97$ & 1.19 & 0.10 \\ 
		1.9 & $-11.56$ & 151.23 & 0.12 & 0.12 & 1.39 & 0.94 & $-4.73$ & 0.65 & 0.01 \\ 
		\hline
		&  \multicolumn{3}{c|}{{\mSMSE} $t=2$} &  \multicolumn{3}{c|}{{\mSMSE} $t=4$}  & \multicolumn{3}{c}{pooled-SMSE} \\
		\hline
		1.5 & 0.64 & 29.11 & 0.86 & 1.22 & 9.83 & 0.95 & 1.21 & 9.74 & 0.95 \\ 
		1.6 & 0.65 & 16.98 & 0.84 & 0.94 & 5.55 & 0.94 & 0.97 & 5.51 & 0.95 \\ 
		1.7 & 0.22 & 12.94 & 0.80 & 0.76 & 2.56 & 0.98 & 0.77 & 2.56 & 0.98 \\ 
		1.8 & 0.47 & 7.67 & 0.80 & 0.60 & 1.92 & 0.93 & 0.60 & 1.91 & 0.93 \\ 
		1.9 & $-0.11$ & 4.69 & 0.74 & 0.27 & 0.89 & 0.97 & 0.24 & 0.88 & 0.97 \\ 
		\hline
	\end{tabular} 
}

\label{tab:p=10}

\end{table}

\subsection{Bias and Variance}
\label{supp:simu-bias}

In this section, we report the bias and the variance of the estimators with different $\log_m(n)$ in Tables \ref{tab:p=1} and \ref{tab:p=10}. In the tables, when $t\geq 3$, both the bias and the variance of our proposed multiround method generally decrease as $n$ increases, and they are close to the bias and the variance of the pooled-SMSE.  This is consistent with our theoretical analysis in Section \ref{sec:theory-mSMSE}, where we establish that the bias and the variance of {\mSMSE} are both of the rate $n^{-\alpha/(2\alpha+1)}$. On the contrary, the biases of the averaging methods are much larger than the bias of our method and stay large as $n$  increases, as the bias cannot be reduced by averaging in a distributed environment. Note that the bias of {\AvgSMSE} is high since the necessary condition $L=\big(m^{\frac{2(\alpha-1)}{3}}/(p\log m)^{\frac{2\alpha+1}{3}}\big)$ in Theorem \ref{thm:asym-DC} is violated. While the bias stays large, the variance decreases as $n$ increases, and therefore we observe the failure of inference when $n$ is large for {\AvgMSE} and {\AvgSMSE}. 

\begin{table}[!t]
\centering
\caption{The coverage rates (nominal 95\%) of {\mSMSE} with different values of $\lambda_h$ and $\log_m(n)$. The noise is homoscedastic normal.}
\resizebox{0.6\textwidth}{!}{
	\begin{tabular}{cccc|cccc}
		\hline
		$\lambda_h$ &$\log_m(n)$ & $p=1$ & $p=10$ & $\lambda_h$ & $\log_m(n)$ & $p=1$ & $p=10$ \\
		\hline
		
		\multirow{3}{*}{1}
		&1.5       &    0.94     &    0.93      & \multirow{3}{*}{10}&1.5      &    0.95         &  0.92  \\
		&1.7     &  0.93& 0.92     &        &	1.7  & 0.94 & 0.94 \\
		&	1.9         &   0.96      &      0.94     &         &  1.9 & 0.96	&	0.92   \\
		\hline
		\multirow{3}{*}{30}
		&1.5       &      0.93  &    0.91     &	\multirow{3}{*}{$\widehat{\lambda_h^*}$}	&1.5      &  0.94         &  0.93 \\
		&1.7     &       0.95      &  0.95   &         &    1.7    & 0.94	& 0.94 \\
		&	1.9         &    0.94     &     0.95     &    &  1.9  & 0.96	&	0.92  \\

		\hline
	\end{tabular}
}

\label{tab:sensitivity}

\end{table} 

	%
	%
	%
%
%
%

\subsection{Sensitivity Analysis}
\label{sec:sensitivity}

In this section, we use numerical experiments to show the sensitivity of the constant $\lambda_h$ in the bandwidth $h_t$ in Theorem \ref{thm:normality}. An expression of the optimal value $\lambda_h^*$ is given in \eqref{eq:lambda_opt} by minimizing the asymptotic mean squared error. We estimate $\lambda_h^*$ using $\widehat{U}$, $\widehat{V}$, and $\widehat{V}_s$. Under our experiment settings, the estimated constant $\widehat{\lambda^*_h}$ ranges from 10 to 25 in practice. To study the effect of $\lambda_h$ on the validity of inference, we choose a wider range for $\lambda_h$, from 1 to 30, and report the coverage rates of {\mSMSE} in Table \ref{tab:sensitivity} for $p=1$ and $10$, with different $\lambda_h$ and $\log_m(n)$. 
In summary, {\mSMSE} generally allows arbitrary choices of $\lambda_h$ in a wide range, which suggests that our proposed {\mSMSE} algorithm is robust with respect to $\lambda_h$.

\begin{table}[!t]
\centering
\caption{The CPU time (in seconds) that different methods take to compute the estimator, with $p=10$, $\log_m(n)$ from 1.5 to 1.9, and homoscedastic normal noise.}

\resizebox{0.7\textwidth}{!}{
	\begin{tabular}{c|cccc}
		\hline
		$\log_m(n)$ & {\mSMSE} $t=2$ & {\mSMSE} $t=3$ & {\AvgSMSE} & pooled-SMSE \\
		\hline

		1.5 & 0.086 & 0.123 & 0.121 & 0.264 \\ 
		1.6 & 0.092 & 0.128 & 0.127 & 0.511 \\ 
		1.7 & 0.106 & 0.152 & 0.146 & 1.025 \\ 
		1.8 & 0.139 & 0.200 & 0.148 & 1.847 \\ 
		1.9 & 0.195 & 0.279 & 0.156 & 3.782 \\ 
		\hline
	\end{tabular}
}

\label{tab:time}

\end{table}

\subsection{Time Complexity}\label{sec:supp_time}
In this section, we compare the computational complexity of each method. The average CPU time that each method takes when $p=10$ is reported in Table \ref{tab:time}. The computation time is recorded in a simulated distributed environment on a RedHat Enterprise Linux cluster containing 524 Lenovo SD650 nodes interconnected by high-speed networks. On each computer node, two Intel Xeon Platinum 8268 24C 205W 2.9GHz Processors are equipped with 48 processing cores.  

In Table \ref{tab:time}, we first notice that the speed of {\mSMSE} is much faster than the pooled estimator, and the discrepancy greatly increases when $n$ gets larger. Second, the computation time of {\mSMSE} is comparable to {\AvgSMSE}. This result may seem counterintuitive since {\mSMSE} still requires running an SMSE on the first machine for the initial estimator. However, since the computation time of {\AvgSMSE} is mainly determined by the maximum computation time of the $L$ local machines, {\AvgSMSE} greatly suffers from the computational performance of the ``worst'' machine, especially when the number of machines is large. On the other hand, {\mSMSE} only runs SMSE on one machine and therefore achieves comparable computation time in the experiments. 

{Now we focus on our {\mSMSE} and demonstrate the computational complexity as we increase the local sample size $m$, number of machines $L$, and dimension $p$. The computational cost of Algorithm \ref{alg:mSMSE} is composed of four parts: (a) computing the initial estimator in step 1, whose time complexity depends on what initial is used, (b) computing the local gradient and Hessian in step 5, whose time complexity is $O(mp^2)$ per iteration, (c) computing global gradient and Hessian in step 8, whose time complexity is $O(Lp^2)$ per iteration, and (d) the Newton's update in step 9, whose time complexity is $O(p^3)$ per iteration. We note that $p$ is smaller than $m$, and therefore the computation cost of {\mSMSE} is dominated by (b) and (c), which is scalable with $m$, $L$, and $p^2$. 
To verify the computational scalability, we run numerical simulations for increasing $m$, $L$, and $p$, and present the computation time of {\mSMSE} with one, three, and five iterations in Figure \ref{fig:scale}, which verifies that the computation time increases linearly in $m$ and $L$ and superlinearly in $p$.}

\begin{figure}[!t]
\centering
\begin{subfigure}[b]{0.32\textwidth}
	\centering
	\includegraphics[width=\textwidth]{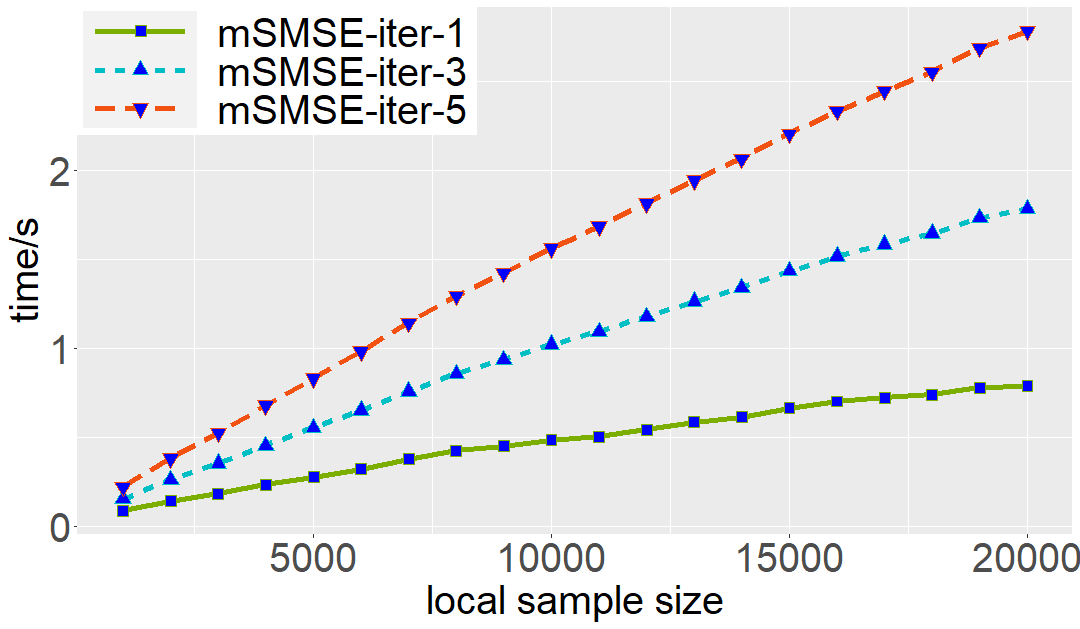}
	\caption{$p=10$, $L=50$}
	\label{fig:scale_m}
\end{subfigure}
\begin{subfigure}[b]{0.32\textwidth}
	\centering
	\includegraphics[width=\textwidth]{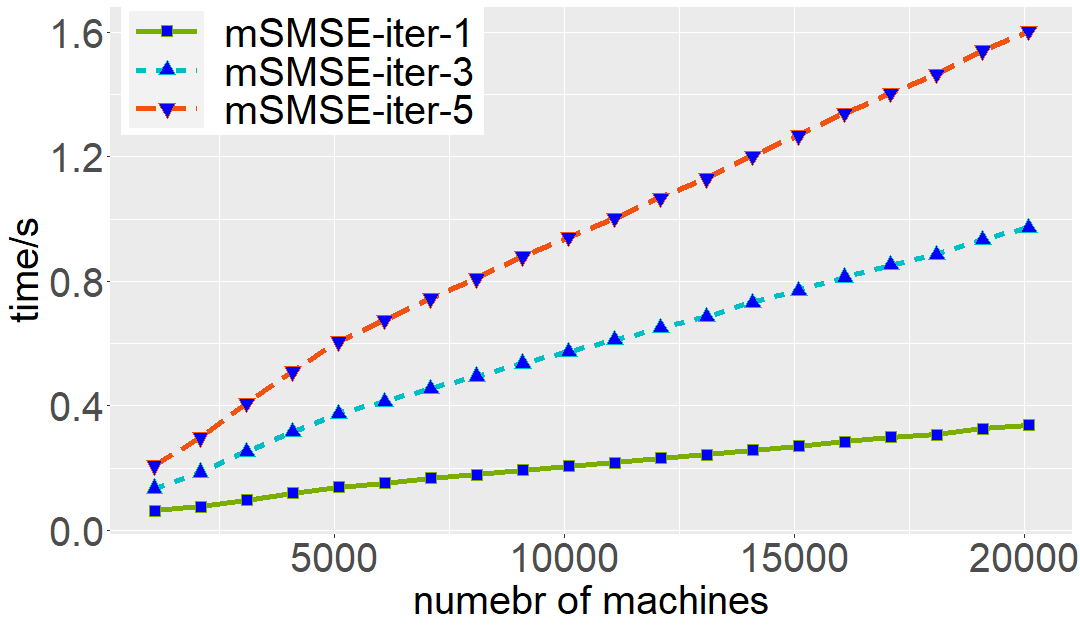}
	\caption{$p=10$, $m=100$}
	\label{fig:scale_L}
\end{subfigure}
\begin{subfigure}[b]{0.32\textwidth}
	\centering
	\includegraphics[width=\textwidth]{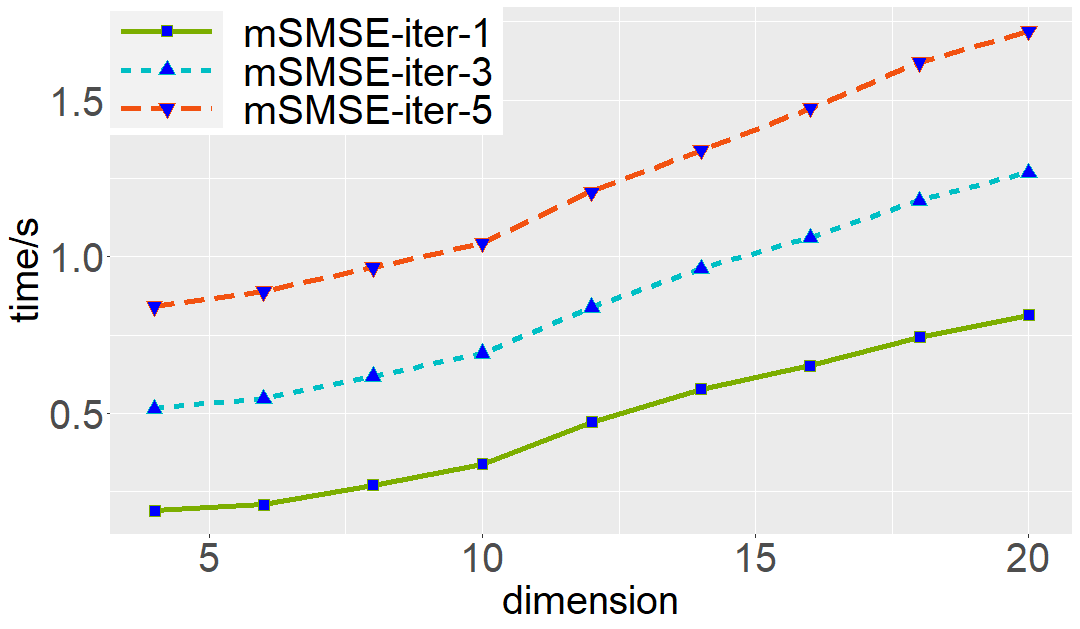}
	\caption{$m=5000$, $L=500$}
	\label{fig:scale_p}
\end{subfigure}
\caption{The CPU time (in seconds) of {\mSMSE} as we increase $m$, $L$, $p$ in subfigures (a), (b), (c), respectively.}
\label{fig:scale}
\end{figure}

\subsection{Results for Non-Gaussian Noises}\label{sec:supp-non-Gaussian}
In this section, we report the bias, variance and coverage rates in Tables \ref{tab:p=1-unif}--\ref{tab:p=10-hetero} for the other two noise types, i.e., the homoscedastic uniform and heteroscedastic normal noise, with $p=1$ and $10$. From these tables, we can still see the failure of inference of {\AvgMSE} and {\AvgSMSE} when $\log_m(n)$ is large, while the {\mSMSE} method with $t\geq 3$ achieves near-nominal coverage rates no matter how large $\log_m(n)$ is. In addition, we also report the time cost of each method in  Table \ref{tab:supp-time}, which shows that the computational time of {\mSMSE} is comparable to {\AvgSMSE}. These findings are all consistent with the results for the homoscedastic normal noise.

\begin{table}[!htp]
\centering
\caption{The bias, variance and coverage rates of {\mSMSE} ($t=1, 2, 3$), {\AvgMSE}, {\AvgSMSE} and pooled-SMSE, with $p=1$, $\log_m(n)$ from 1.5 to 1.9 and homoscedastic uniform noise. }
\resizebox{0.9\textwidth}{!}{

	\begin{tabular}{c|ccc|ccc|ccc}
		\hline
		$\log_m(n)$ & \thead{ \normalsize Bias \\ \normalsize($\times 10^{-2}$)} & \thead{\normalsize Variance\\ \normalsize ($\times 10^{-4}$)} & \thead{\normalsize Coverage \\ \normalsize Rate}   & \thead{\normalsize Bias \\ \normalsize($\times 10^{-2}$)} & \thead{\normalsize Variance\\ \normalsize ($\times 10^{-4}$)} & \thead{\normalsize Coverage \\ \normalsize Rate} & \thead{\normalsize Bias \\ \normalsize($\times 10^{-2}$)} & \thead{\normalsize Variance\\ \normalsize ($\times 10^{-4}$)} & \thead{\normalsize Coverage \\ \normalsize Rate}
		\\
		\hline
		&  \multicolumn{3}{c|}{{\mSMSE} $t=1$} &  \multicolumn{3}{c|}{{\mSMSE} $t=3$} & \multicolumn{3}{c}{{\AvgSMSE}}\\
		\hline
		
		1.5 & $-0.17$ & 5.16 & 0.90 & 0.17 & 2.37 & 0.93 & $-0.89$ & 1.53 & 0.95 \\ 
		1.6 & $-0.40$ & 5.91 & 0.81 & 0.21 & 1.42 & 0.90 & $-1.38$ & 1.02 & 0.74 \\ 
		1.7 & $-0.69$ & 7.06 & 0.84 & 0.09 & 0.80 & 0.94 & $-2.11$ & 0.48 & 0.23 \\ 
		1.8 & $-0.27$ & 2.13 & 0.86 & 0.15 & 0.41 & 0.94 & $-2.78$ & 0.23 & 0.00 \\ 
		1.9 & $-0.14$ & 0.83 & 0.86 & 0.08 & 0.22 & 0.97 & $-3.42$ & 0.13 & 0.00 \\

		\hline
		&  \multicolumn{3}{c|}{{\mSMSE} $t=2$} &  \multicolumn{3}{c|}{{\AvgMSE}} & \multicolumn{3}{c}{pooled-SMSE}\\
		\hline
		
		1.5 & 0.15 & 2.63 & 0.90 & $-1.37$ & 2.70 & 0.81 & 0.15 & 2.35 & 0.93 \\ 
		1.6 & 0.19 & 1.68 & 0.81 & $-1.26$ & 1.27 & 0.78 & 0.20 & 1.41 & 0.90 \\ 
		1.7 & 0.10 & 1.08 & 0.84 & $-1.40$ & 0.59 & 0.57 & 0.08 & 0.80 & 0.94 \\ 
		1.8 & 0.13 & 0.43 & 0.86 & $-1.25$ & 0.33 & 0.41 & 0.13 & 0.41 & 0.94 \\ 
		1.9 & 0.09 & 0.23 & 0.86 & $-1.29$ & 0.21 & 0.16 & 0.07 & 0.22 & 0.97 \\ 
		
		\hline
	\end{tabular}
	
}

\label{tab:p=1-unif}

\end{table}

\begin{table}[!htp]
\centering
\caption{The bias, variance and coverage rates of {\mSMSE} ($t=1,2,3,4$), {\AvgSMSE} and pooled-SMSE, with $p=10$, $\log_m(n)$ from 1.5 to 1.9 and homoscedastic uniform noise. }
\resizebox{0.9\textwidth}{!}{
	\begin{tabular}{c|ccc|ccc|ccc}
		\hline
		$\log_m(n)$ & \thead{ \normalsize Bias \\ \normalsize($\times 10^{-2}$)} & \thead{\normalsize Variance\\ \normalsize ($\times 10^{-4}$)} & \thead{\normalsize Coverage \\ \normalsize Rate}   & \thead{\normalsize Bias \\ \normalsize($\times 10^{-2}$)} & \thead{\normalsize Variance\\ \normalsize ($\times 10^{-4}$)} & \thead{\normalsize Coverage \\ \normalsize Rate} & \thead{ \normalsize Bias \\ \normalsize($\times 10^{-2}$)} & \thead{\normalsize Variance\\ \normalsize ($\times 10^{-4}$)} & \thead{\normalsize Coverage \\ \normalsize Rate}  
		\\
		\hline
		&  \multicolumn{3}{c|}{{\mSMSE} $t=1$} &  \multicolumn{3}{c|}{{\mSMSE} $t=3$} & \multicolumn{3}{c}{{\AvgSMSE}}\\ 
		\hline
		
		1.5 & $-7.75$ & 129.45 & 0.65 & 0.52 & 23.40 & 0.91 & $-2.00$ & 12.00 & 0.97 \\ 
		1.6 & $-10.50$ & 211.98 & 0.49 & 0.12 & 15.64 & 0.91 & $-3.84$ & 4.68 & 0.89 \\ 
		1.7 & $-8.79$ & 120.42 & 0.43 & 0.43 & 8.29 & 0.92 & $-5.34$ & 2.79 & 0.37 \\ 
		1.8 & $-10.10$ & 196.01 & 0.29 & 0.32 & 3.91 & 0.92 & $-6.38$ & 1.15 & 0.02 \\ 
		1.9 & $-8.48$ & 96.22 & 0.23 & 0.39 & 2.69 & 0.92 & $-7.18$ & 0.71 & 0.00 \\ 
		
		\hline
		&  \multicolumn{3}{c|}{{\mSMSE} $t=2$} &  \multicolumn{3}{c|}{{\mSMSE} $t=4$}  & \multicolumn{3}{c}{pooled-SMSE} \\
		\hline
		
		1.5 & 0.02 & 44.75 & 0.82 & 1.07 & 19.06 & 0.91 & 1.03 & 18.97 & 0.92 \\ 
		1.6 & $-1.73$ & 36.80 & 0.77 & 0.50 & 9.83 & 0.94 & 0.52 & 9.75 & 0.95 \\ 
		1.7 & $-1.08$ & 21.49 & 0.78 & 0.58 & 5.94 & 0.95 & 0.54 & 5.89 & 0.95 \\ 
		1.8 & $-0.56$ & 12.31 & 0.77 & 0.53 & 3.37 & 0.92 & 0.43 & 3.37 & 0.93 \\ 
		1.9 & $-0.29$ & 8.00 & 0.80 & 0.47 & 2.01 & 0.94 & 0.39 & 2.00 & 0.95 \\

		\hline
	\end{tabular} 
}

\label{tab:p=10-unif}

\end{table}

\begin{table}[!htp]
\centering
\caption{The bias, variance and coverage rates of {\mSMSE} ($t=1, 2, 3$), {\AvgMSE}, {\AvgSMSE} and pooled-SMSE, with $p=1$, $\log_m(n)$ from 1.5 to 1.9 and heteroscedastic normal noise. }
\resizebox{0.9\textwidth}{!}{

	\begin{tabular}{c|ccc|ccc|ccc}
		\hline
		$\log_m(n)$ & \thead{ \normalsize Bias \\ \normalsize($\times 10^{-2}$)} & \thead{\normalsize Variance\\ \normalsize ($\times 10^{-4}$)} & \thead{\normalsize Coverage \\ \normalsize Rate}   & \thead{\normalsize Bias \\ \normalsize($\times 10^{-2}$)} & \thead{\normalsize Variance\\ \normalsize ($\times 10^{-4}$)} & \thead{\normalsize Coverage \\ \normalsize Rate} & \thead{\normalsize Bias \\ \normalsize($\times 10^{-2}$)} & \thead{\normalsize Variance\\ \normalsize ($\times 10^{-4}$)} & \thead{\normalsize Coverage \\ \normalsize Rate}
		\\
		\hline
		&  \multicolumn{3}{c|}{{\mSMSE} $t=1$} &  \multicolumn{3}{c|}{{\mSMSE} $t=3$} & \multicolumn{3}{c}{{\AvgSMSE}}\\

		\hline
		
		1.5 & 0.02 & 2.11 & 0.90 & 0.35 & 1.41 & 0.92 & $-0.08$ & 1.25 & 0.92 \\ 
		1.6 & $-0.25$ & 3.23 & 0.86 & 0.19 & 0.70 & 0.94 & $-0.54$ & 0.75 & 0.84 \\ 
		1.7 & $-0.25$ & 1.90 & 0.84 & 0.07 & 0.39 & 0.95 & $-1.01$ & 0.30 & 0.56 \\ 
		1.8 & $-0.06$ & 0.58 & 0.85 & 0.11 & 0.27 & 0.90 & $-1.46$ & 0.16 & 0.08 \\ 
		1.9 & $-0.22$ & 0.70 & 0.78 & 0.06 & 0.15 & 0.92 & $-1.80$ & 0.08 & 0.00 \\ 
		
		\hline
		&  \multicolumn{3}{c|}{{\mSMSE} $t=2$} &  \multicolumn{3}{c|}{{\AvgMSE}} & \multicolumn{3}{c}{pooled-SMSE}\\
		\hline
		1.5 & 0.34 & 1.42 & 0.90 & $-1.25$ & 1.95 & 0.80 & 0.34 & 1.41 & 0.92 \\ 
		1.6 & 0.19 & 0.80 & 0.86 & $-1.41$ & 1.09 & 0.64 & 0.19 & 0.69 & 0.94 \\ 
		1.7 & 0.05 & 0.41 & 0.84 &$-1.44$ & 0.43 & 0.42 & 0.06 & 0.39 & 0.95 \\ 
		1.8 & 0.11 & 0.27 & 0.85 & $-1.46$ & 0.23 & 0.15 & 0.10 & 0.27 & 0.90 \\ 
		1.9 & 0.06 & 0.16 & 0.78 & $-1.40$ & 0.11 & 0.00 & 0.05 & 0.15 & 0.93 \\ 
		
		\hline
	\end{tabular}
	
}

\label{tab:p=1-hetero}

\end{table}

\begin{table}[!htp]
\centering
\caption{The bias, variance and coverage rates of {\mSMSE} ($t=1,2,3,4$), {\AvgSMSE} and pooled-SMSE, with $p=10$, $\log_m(n)$ from 1.5 to 1.9 and heteroscedastic normal noise. }
\resizebox{0.9\textwidth}{!}{
	\begin{tabular}{c|ccc|ccc|ccc}
		\hline
		$\log_m(n)$ & \thead{ \normalsize Bias \\ \normalsize($\times 10^{-2}$)} & \thead{\normalsize Variance\\ \normalsize ($\times 10^{-4}$)} & \thead{\normalsize Coverage \\ \normalsize Rate}   & \thead{\normalsize Bias \\ \normalsize($\times 10^{-2}$)} & \thead{\normalsize Variance\\ \normalsize ($\times 10^{-4}$)} & \thead{\normalsize Coverage \\ \normalsize Rate} & \thead{ \normalsize Bias \\ \normalsize($\times 10^{-2}$)} & \thead{\normalsize Variance\\ \normalsize ($\times 10^{-4}$)} & \thead{\normalsize Coverage \\ \normalsize Rate}  
		\\
		\hline
		&  \multicolumn{3}{c|}{{\mSMSE} $t=1$} &  \multicolumn{3}{c|}{{\mSMSE} $t=3$} & \multicolumn{3}{c}{{\AvgSMSE}}\\ 
		\hline
		
		1.5 & $-5.32$ & 51.33 & 0.54 & 0.67 & 6.80 & 0.91 & 1.35 & 4.11 & 0.94 \\ 
		1.6 & $-5.42$ & 54.03 & 0.42 & 0.42 & 5.15 & 0.92 & 0.35 & 2.25 & 0.94 \\ 
		1.7 & $-4.95$ & 41.12 & 0.38 & 0.34 & 2.90 & 0.90 & $-0.34$ & 1.40 & 0.90 \\ 
		1.8 & $-5.17$ & 61.55 & 0.34 & $-0.07$ & 5.18 & 0.88 & $-0.92$ & 0.61 & 0.74 \\ 
		1.9 & $-6.65$ & 75.74 & 0.22 & $-0.27$ & 3.92 & 0.87 & $-1.35$ & 0.30 & 0.28 \\ 
		\hline
		&  \multicolumn{3}{c|}{{\mSMSE} $t=2$} &  \multicolumn{3}{c|}{{\mSMSE} $t=4$}  & \multicolumn{3}{c}{pooled-SMSE} \\
		\hline
		1.5 & 1.67 & 9.03 & 0.87 & 1.12 & 3.53 & 0.94 & 1.12 & 3.53 & 0.94 \\ 
		1.6 & 1.17 & 6.30 & 0.83 & 0.82 & 1.92 & 0.94 & 0.82 & 1.92 & 0.94 \\ 
		1.7 & 1.12 & 7.13 & 0.74 & 0.70 & 1.17 & 0.93 & 0.69 & 1.17 & 0.94 \\ 
		1.8 & 1.21 & 8.72 & 0.77 & 0.50 & 0.65 & 0.92 & 0.45 & 0.65 & 0.94 \\ 
		1.9 & 1.28 & 8.44 & 0.68 & 0.29 & 0.36 & 0.94 & 0.28 & 0.36 & 0.95 \\ 
		\hline
	\end{tabular} 
}

\label{tab:p=10-hetero}

\end{table}

\begin{table}[!htp]
\centering
\caption{The cpu times (in seconds) that different methods take to compute the estimator, with $p=10$, $\log_m(n)$ from 1.5 to 1.9 and two types of noise.}
\resizebox{0.77\textwidth}{!}{
	\begin{tabular}{cc|cccc}
		\hline
		Noise Type &	$\log_m(n)$ & {\mSMSE} $t=2$ & {\mSMSE} $t=3$ & {\AvgSMSE} & pooled-SMSE \\
		\hline
		
		\multirow{5}{*}{\thead{\normalsize  Homoscedastic \\ \normalsize Uniform}}
		&1.5 & 0.091 & 0.126 & 0.111 & 0.323 \\ 
		&1.6 & 0.094 & 0.133 & 0.125 & 0.634 \\ 
		&1.7 & 0.109 & 0.154 & 0.145 & 1.276 \\ 
		&1.8 & 0.141 & 0.202 & 0.148 & 2.359 \\ 
		&1.9 & 0.196 & 0.282 & 0.158 & 4.784 \\ 
		
		\hline
		\multirow{5}{*}{\thead{\normalsize  Heteroscedastic \\ \normalsize Normal}}     
		
		&1.5 & 0.088 & 0.122 & 0.113 & 0.984 \\ 
		&1.6 & 0.090 & 0.126 & 0.132 & 2.157 \\ 
		&1.7 & 0.106 & 0.156 & 0.144 & 4.768 \\ 
		&1.8 & 0.139 & 0.198 & 0.145 & 10.688 \\ 
		&1.9 & 0.190 & 0.276 & 0.156 & 22.036 \\ 
		\hline 
	\end{tabular}
	
}

\label{tab:supp-time}

\end{table}

\begin{figure}[!t]
\centering
\includegraphics[width=0.6\textwidth]{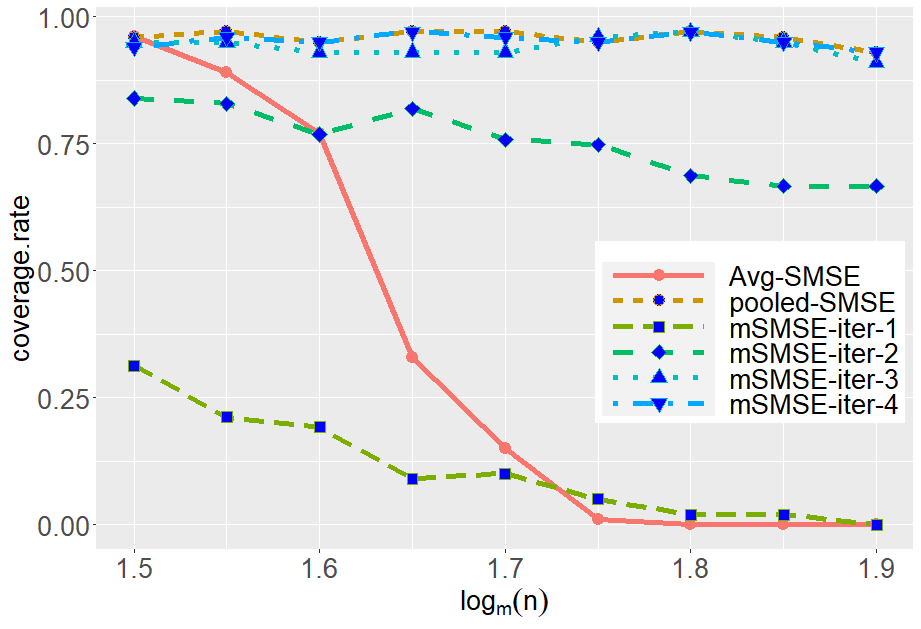}
\caption{Coverage rates for different methods with $p=20$ and homoscedastic normal noise.}
\label{fig:p=20}
\end{figure}

\begin{table}[!htp]
\centering
\caption{The bias, variance and coverage rates of {\mSMSE} ($t=1,2,3,4$), {\AvgSMSE} and pooled-SMSE, with $p=20$, $\log_m(n)$ from 1.5 to 1.9 and homoscedastic normal noise. }
\resizebox{0.9\textwidth}{!}{
	\begin{tabular}{c|ccc|ccc|ccc}
		\hline
		$\log_m(n)$ & \thead{ \normalsize Bias \\ \normalsize($\times 10^{-2}$)} & \thead{\normalsize Variance\\ \normalsize ($\times 10^{-4}$)} & \thead{\normalsize Coverage \\ \normalsize Rate}   & \thead{\normalsize Bias \\ \normalsize($\times 10^{-2}$)} & \thead{\normalsize Variance\\ \normalsize ($\times 10^{-4}$)} & \thead{\normalsize Coverage \\ \normalsize Rate} & \thead{ \normalsize Bias \\ \normalsize($\times 10^{-2}$)} & \thead{\normalsize Variance\\ \normalsize ($\times 10^{-4}$)} & \thead{\normalsize Coverage \\ \normalsize Rate}  
		\\
		\hline
		&  \multicolumn{3}{c|}{{\mSMSE} $t=1$} &  \multicolumn{3}{c|}{{\mSMSE} $t=3$} & \multicolumn{3}{c}{{\AvgSMSE}}\\ 
		\hline
		
		1.5 & $-14.78$ & 215.96 & 0.31 & 0.97 & 15.58 & 0.96 & $-0.58$ & 5.49 & 0.96 \\ 
		1.6 & $-13.39$ & 151.88 & 0.20 & 0.71 & 4.83 & 0.93 & $-2.41$ & 2.42 & 0.77 \\ 
		1.7 & $-14.47$ & 138.67 & 0.10 & 0.45 & 5.73 & 0.93 & $-3.83$ & 1.38 & 0.15 \\ 
		1.8 & $-15.57$ & 142.76 & 0.02 & 0.49 & 1.08 & 0.97 & $-5.13$ & 0.72 & 0.00 \\ 
		1.9 & $-16.12$ & 125.10 & 0.00 & 0.45 & 1.33 & 0.91 & $-5.71$ & 0.30 & 0.00 \\ 
		\hline
		&  \multicolumn{3}{c|}{{\mSMSE} $t=2$} &  \multicolumn{3}{c|}{{\mSMSE} $t=4$}  & \multicolumn{3}{c}{pooled-SMSE} \\
		\hline
		1.5 & 1.22 & 18.83 & 0.84 & 1.77 & 6.55 & 0.95 & 1.79 & 6.49 & 0.96 \\ 
		1.6 & 0.73 & 17.80 & 0.77 & 1.11 & 3.53 & 0.95 & 1.08 & 3.52 & 0.95 \\ 
		1.7 & $-0.07$ & 8.09 & 0.76 & 0.92 & 1.97 & 0.96 & 0.86 & 1.94 & 0.97 \\ 
		1.8 & $-0.53$ & 3.63 & 0.68 & 0.61 & 0.95 & 0.97 & 0.54 & 0.95 & 0.97 \\ 
		1.9 & $-0.25$ & 3.80 & 0.67 & 0.58 & 0.76 & 0.93 & 0.50 & 0.76 & 0.93 \\ 
		\hline
	\end{tabular} 
}
\label{tab:p=20}
\end{table}

\subsection{Results for $p=20$}
\label{sec:supp-p=20}
In this section, we report the performance of {\AvgSMSE} and {\mSMSE} under local size $m=2000$ and dimension $p=20$. 	Figure \ref{fig:p=20} presents the coverage rates as a function of $\log_m(n)$ with $p=20$. Our proposed {\mSMSE}, as well as the pooled estimator, achieves a high coverage rate around 95\% no matter how large $\log_m(n)$ is, while the averaging methods both fail when $\log_m(n)$ is large. Table \ref{tab:p=20} reports the bias and the variance of the estimators with different $\log_m(n)$. When $t=4$, both the bias and the variance of our proposed multiround method generally decrease as $n$ increases, and they are close to the bias and the variance of the pooled-SMSE. These findings are all consistent with the results for $p=1$ and $p=10$. 

\subsection{Results for High-Dimensional Simulations}
\label{sec:simulations-hd}
In this section, we present the performance of Algorithm \ref{alg:hd} using high-dimensional simulations. Following the settings in \cite{feng2019nonregular}, we first generate the covariates $(x_i, \bz_i) \sim \mathcal{N}(0, \Sigma_{0.5})$, where $\Sigma_{0.5}$ is an AR(1) covariance matrix with correlation coefficient $0.5$. The parameter of interest $\be^* \in \R^p$ is set to be $(\underbrace{1/\sqrt{s},\dots, 1/\sqrt{s},}_{s \, \mathrm{entries}} 0, \dots, 0)^{\top}$, where we fix the dimension $p=500$ and the sparsity $s=10$. The responses are generated by $y_i=\mathrm{sign}\big(x_i+\bz_i^{\top}\be^*+\epsilon_i\big)$ for $i=1,2,\dots,n$, with $\epsilon_i \sim \mathcal{N}(0, (0.5)^2)$. The $n$ observations are then evenly divided into $L=n/m$ subsets, where we choose the local sample size $m \in \{400, 800\}$, and vary the total sample size $n \in [2400, 8800]$. On the simulated datasets, we compare the $L_2$ estimation errors of the following three algorithms: 
\begin{enumerate}
\item[(1)] ``mSMSE-hd'': our proposed high-dimensional mSMSE algorithm in Algorithm \ref{alg:hd};
\item[(2)] ``Avg-pf'': the Averaged Divide-and-Conquer path-following algorithm, which applies the path-following algorithm proposed in \cite{feng2019nonregular} on each subset and aggregates the local estimators by averaging;
\item[(3)] ``pooled-pf'': the path-following algorithm using the entire dataset. 
\end{enumerate}
For the smoothing kernel in Algorithm \ref{alg:hd}, we use a higher-order biweight kernel with $\alpha=6$:
\[H'(x)= \frac{4725}{2048}(1-x^2)^2 \Big(1-\frac{22}{3}x^2+\frac{143}{15}x^4\Big)\mathbb{I}(x\leq 1).\]
Moreover, in each iteration, the bandwidth $h_t$ and the penalty parameter $\lambda_n^{(t)}$ are chosen through  cross-validation. Concretely, we use the score function defined in \eqref{eq:MSE} as a measure and,  among a grid of possible values, select the combination $(h_t, \lambda_n^{(t)})$ that achieves the highest cross-validated score. The same strategy is also used for tuning the other two algorithms.  
\begin{figure}[!t]
\centering
\begin{subfigure}[b]{0.45\textwidth}
	\centering
	\includegraphics[width=\textwidth]{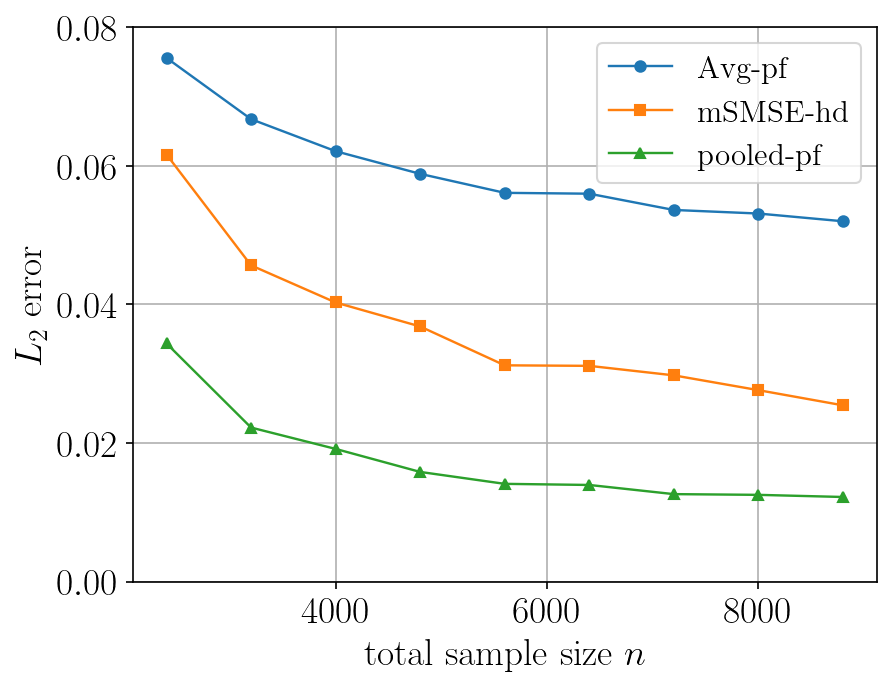}
	\caption{$m=400$}
	\label{fig:EE_m=400}
\end{subfigure}
\hfill
\begin{subfigure}[b]{0.45\textwidth}
	\centering
	\includegraphics[width=\textwidth]{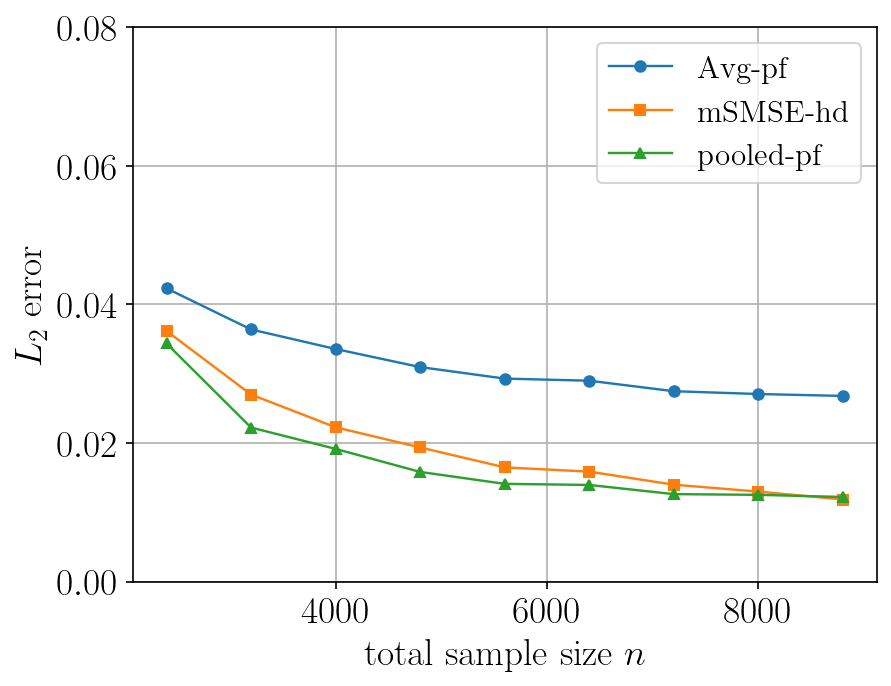}
	\caption{$m=800$}
	\label{fig:EE_m=800}
\end{subfigure}
\caption{The $L_2$ estimation errors of different methods in the high-dimensional setting.}
\label{fig:EE-hd}
\end{figure}

Figure \ref{fig:EE-hd} presents the $L_2$ estimation errors of the three algorithms averaged over 500 independent runs. Clearly, the proposed high-dimensional mSMSE algorithm (``mSMSE-hd'') always achieves lower estimation error compared to the Averaged Divide-and-Conquer algorithm (``Avg-pf''), and their difference increases as $n$ grows. In Figure \ref{fig:EE_m=800}, we also see that the $L_2$ errors of ``mSMSE-hd'' get very close to those of the oracle, the path-following algorithm applied to the pooled data (``pooled-pf'').

\subsection{More discussions on choosing bandwidth for {\AvgSMSE}}
\label{sec:bandwidth-supp}
\begin{table}[!t]
\centering
\caption{The bias, variance and coverage rates of {\AvgSMSE}, with bandwidth chosen by 5-fold cross-validation, $p=10$, $\log_m(n)$ from $1.4$ to $2.0$, and homoscedastic normal noise. }
\resizebox{0.6\textwidth}{!}{
	\begin{tabular}{c|ccc}
		\hline
		$\log_m(n)$ & bias ($\times 10^{-2}$)  & variance ($\times 10^{-4}$) & coverage rate \\
		\hline
		
		1.4 & $-2.80$  & 34.72 & 0.95 \\ 
		1.5 &  $-2.89$ & 38.53 &   0.89 \\ 
		1.6 &  $-3.25$ & 29.01 &  0.76 \\ 
		1.7 &  $-3.94$ & 15.31& 0.57 \\ 
		1.8 &  $-3.18$ & 16.20 &  0.53 \\ 
		1.9 &  $-1.97$&18.70&0.52\\
		2.0 & $-0.59$ & 21.21 & 0.45\\
		\hline 
	\end{tabular}
}
\label{tab:cv-avg}

\end{table}

In this section, we provide more discussions on the choice of bandwidth $h$ for the {\AvgSMSE} algorithm.
Theorem \ref{thm:asym-DC} for {\AvgSMSE} shows that, under a constraint $L=o\big( m^{\frac{2}{3}\left( \alpha-1\right)}/ \left( p\log m\right)^{\frac{2\alpha+1}{3}}\big)$,	{\AvgSMSE} achieves the optimal convergence rate $(p/n)^{\alpha/(2\alpha+1)}$. This constraint comes from a condition $\frac{p\log m}{mh^3}=o(1)$, which is necessary to ensure the convergence of the empirical Hessian to the population Hessian of the smoothed objective. Aiming to obtain the optimal convergence rate $(p/n)^{\alpha/(2\alpha+1)}$, one would like to choose the bandwidth $h \asymp h^* = (p/n)^{\frac{1}{2\alpha+1}}$, and the constraint $L=o\big( m^{\frac{2}{3}\left( \alpha-1\right)}/ \left( p\log m\right)^{\frac{2\alpha+1}{3}}\big)$ follows from plugging $h^* = (p/n)^{\frac{1}{2\alpha+1}}$ into $\frac{p\log m}{m(h^*)^3}=o(1)$. This constraint translates to the total sample size $n=mL=o(m^{\frac{2\alpha+1}{3}}/ \left( p\log m\right)^{\frac{2\alpha+1}{3}})$. 

However, in the simulation, we increase the total sample size $n$ beyond this constraint, and therefore the constraint is violated. If we look at the theoretical analysis in this case, there would be an additional bias term of the order \[O\Big(\sqrt{\frac{p\log m}{m(h^*)^3}}\Big)=O\Big(\sqrt{\frac{n^{\frac{3}{2\alpha+1}}p^{\frac{2\alpha-2}{2\alpha+1}}\log m}{m}}\Big),\] 
which increases in $n$ and invalidates the asymptotic normality established in \eqref{eq:asym-DC}. This is reflected in Tables \ref{tab:p=1} and \ref{tab:p=10} in Section \ref{supp:simu-bias}, where we can see that, as $\log_m(n)$ increases, the bias of {\AvgSMSE} becomes larger accompanied by a reduction in the coverage rate. This phenomenon calls for the need to propose a multiround procedure {\mSMSE} which can completely remove this constraint on $L$.

Additionally, as discussed in Remark \ref{rmk:h><h*}, {\AvgSMSE} still works for bandwidth $h>h^*$ at the sacrifice of the convergence rate. In particular, when the constraint $\frac{p\log m}{m(h^*)^3}=o(1)$ is violated, we can alternatively select a bandwidth $h>h^*$ such that $\frac{p\log m}{mh^3}=o(1)$, which results in a slower (sub-optimal) convergence rate. In addition, no asymptotic distribution is provided in theory for this sub-optimal bandwidth since the bias term will dominate the error.

In addition to Tables \ref{tab:p=1} and \ref{tab:p=10}, we provide additional simulations to confirm this phenomenon numerically in Table \ref{tab:cv-avg}, where we use a scale constant $c_h$ to fine-tune the bandwidth $h=c_h (p/n)^{1/(2\alpha+1)}$. The constant $c_h$ is determined by a five-fold cross-validation (CV). As compared to the results for fixed $c_h$ in Table \ref{tab:p=10}, when $\log_m(n)$ is small, the performance of {\AvgSMSE} with fine-tuned $c_h$ (via CV) is comparable to that with fixed $c_h$. As $\log_m(n)$ goes larger, the fine-tuned $c_h$ leads to smaller bias than the fixed $c_h$. Nonetheless, the convergence rates still fall down as $\log_m(n)$ goes larger, and moreover, the variances are significantly larger than those with fixed $c_h$. Therefore, there is no perfect way to choose bandwidth when the constraint $\frac{p\log m}{m(h^*)^3}=o(1)$ is violated, which necessitates the development of {\mSMSE}.


\end{document}